\numberwithin{equation}{section}
\DeclareFontFamily{U}{BOONDOX-calo}{\skewchar\font=45 }
\DeclareFontShape{U}{BOONDOX-calo}{m}{n}{
  <-> s*[1.05] BOONDOX-r-calo}{}
\DeclareFontShape{U}{BOONDOX-calo}{b}{n}{
  <-> s*[1.05] BOONDOX-b-calo}{}
\DeclareMathAlphabet{\mathcalboondox}{U}{BOONDOX-calo}{m}{n}
\SetMathAlphabet{\mathcalboondox}{bold}{U}{BOONDOX-calo}{b}{n}
\DeclareMathAlphabet{\mathbcalboondox}{U}{BOONDOX-calo}{b}{n}
\newcommand{\mcb}[1]{{\mathcalboondox #1}}
\tikzset{
    place/.style={
        circle,
        thick,
        draw=black,
        fill=gray!50,
        minimum size=20mm,
    },
        state/.style={
        circle,
        thick,
        draw=blue!75,
        fill=blue!20,
        minimum size=20mm,
    },
}
\tikzset{
    cross/.pic = {
    \draw[rotate = 45] (-0.2,0) -- (0.2,0);
    \draw[rotate = 45] (0,-0.2) -- (0, 0.2);
    }
}
\newtheorem{thm}{Theorem}[section]
\newtheorem{lem}[thm]{Lemma}
\newtheorem{prop}[thm]{Proposition}
\newtheorem{definition}[thm]{Definition}
\newtheorem{rem}[thm]{Remark}
\newcommand\ve{\varepsilon}
\title[{ Diffusive fluctuations of Long-range symmetric exclusion with a slow barrier}
]{Diffusive fluctuations of Long-range\\ symmetric exclusion with a slow barrier }
\author{Pedro Cardoso, Patr\'icia   Gon\c calves, Byron Jim\'enez-Oviedo}
\begin{document}
\subjclass[2010]{60K35, 35R11, 35S15}
\begin{abstract}
In this article we obtain the equilibrium fluctuations of a symmetric exclusion process in $\mathbb{Z}$ with long jumps. The transition probability of the jump from $x$ to $y$ is proportional to $|x-y|^{-\gamma-1}$. Here we restrict to the choice  $\gamma \geq 2$ so that the system has a diffusive behavior. Moreover, when particles move between $\mathbb{Z}_{-}^{*}$ and $\mathbb N$, the jump rates are slowed down by a factor $\alpha n^{-\beta}$, where $\alpha>0$, $\beta\geq 0$ and $n$ is the  scaling parameter. Depending on the values of $\beta$ and $\gamma$, we obtain several  stochastic partial differential equations, corresponding to a heat equation without boundary conditions, or with Robin boundary conditions or Neumann boundary conditions.
\end{abstract}
\maketitle

\section{Introduction}

Since  Spitzer \cite{spitzer} introduced in the mathematical community the subject of  Interacting Particle Systems (IPS), this has become quite an active field of research involving mathematicians   and also theoretical physicists from different fields. The physical motivation to study this type of systems comes from Statistical Mechanics, where the goal is to  study the global behavior of some thermodynamic quantity (ies) (for example, the density of a fluid) from the microscopic interactions between its constituent molecules/particles. Since the number of molecules is of the order of Avogrado's number, one cannot give a complete description of the microscopic state of the system. Alternatively, the goal is to understand the macroscopic behavior of the system from the microscopic interactions,  which are assumed to be random, i.e. each molecule/particle behaves as a continuous-time random walk, which evolves according to some prescribed dynamical rule. In this way, one can make a probabilistic analysis of the whole system. One of the goals in the literature of IPS is to describe the macroscopic evolution equations of some quantity(ies) by a scaling limit procedure. We fix then a scaling parameter $n$ which connects the macroscopic space, a continuous space where the solutions of the evolution equations will be defined, to the microscopic space, a discrete space where the  particles will evolve (randomly).

In this setting two questions naturally appear: first, how to deduce the space-time evolution of the thermodynamic quantity of interest? This  is known as the \textit{hydrodynamic limit}, which gives  a partial differential equation (PDE),  a deterministic limit,   for the space-time evolution of that quantity of interest. Second, how to describe the \textit{fluctuations} around this deterministic limit? This is described by a stochastic PDE (SPDE). In the latter case,  typically ones assumes to start the system from a stationary state, since otherwise the analysis is usually extremely intricate. 

One of the most classical IPS is the exclusion process that we denote by $\eta_t$. The dynamics of this process can be defined as follows. At each site of the microscopic space one can have at most one particle (exclusion rule) so that if for a site $x\in\mathbb Z$ and a time $t$, the quantity $\eta_t(x)$ denotes the quantity of particles at site $x$ then $\eta_t(x)\in\{0,1\}$.  The dynamics is defined as follows: each particle waits a random time (which is exponentially distributed) after which it jumps to another position of the  discrete space according to some transition probability $p(\cdot)$. The process is said to be simple if jumps are restricted to nearest-neighbour sites. In this dynamics the number of particles is conserved and therefore it is the relevant quantity to  investigate.

In this article,  we analyse an exclusion process evolving on the discrete set $\mathbb{Z}$ and    particles jump according to the transition probability $p: \mathbb{Z} \mapsto [0,1]$, given  by
\begin{equation} \label{prob}
p(x) =
\begin{cases}
0, \; \; & x=0; \\
c_{\gamma} |x|^{-\gamma-1} , & x \neq 0.
\end{cases}
\end{equation}
Above $c_{\gamma}$ is a normalizing constant that turns $p(\cdot)$ into a probability measure. 
There are two important regimes to distinguish  the value of $\gamma$. When $\gamma\in[2,\infty)$ the behavior of the system is diffusive,  while for $\gamma\in(0,2)$, it is superdiffusive (in this work we only consider the former regime). This is a consequence of the fact that for $\gamma \in(2,\infty)$, the transition probability  $p(\cdot)$ has finite variance since  $\sum_{x\in\mathbb Z} x^2 p(x) < \infty$ and therefore, to observe a non-trivial evolution, we will speed up the process in the diffusive time scale $tn^2$. Note also that for $\gamma=2$, despite the variance of $p(\cdot)$ being infinite, by taking the time  scale $\frac{n^2}{\log(n)}$ we still see a diffusive behaviour.

On the exclusion dynamics defined above, we add a \textit{slow} barrier and the goal is to understand its macroscopic effect at the level of the evolution equations for the density of particles. 
To properly introduce the barrier, we denote the  set of bonds  by $\mcb B$, and we consider a set of slow bonds given by 
\begin{equation*}
    \mcb S \subset \mcb S_0:=\big\{\{x,y\} \in \mcb B: x \in\mathbb Z_-^*, y \in\mathbb N \big\},
\end{equation*}
and the complement of $\mcb S$ with respect to $\mcb B$ will be denoted by $\mcb F$, the set of  fast bonds. Now we fix two parameters $\alpha >0$ and $\beta \geq 0$. 
At a bond $\{x,y\}\in\mcb  F$,  particles swap positions  according to the transition probability $p(y-x)$, but when $\{x,y\}\in\mcb S$, then the jump rate becomes  equal to $\alpha n^{-\beta}p(y-x)$, see the figure below for a scheme of the dynamics.

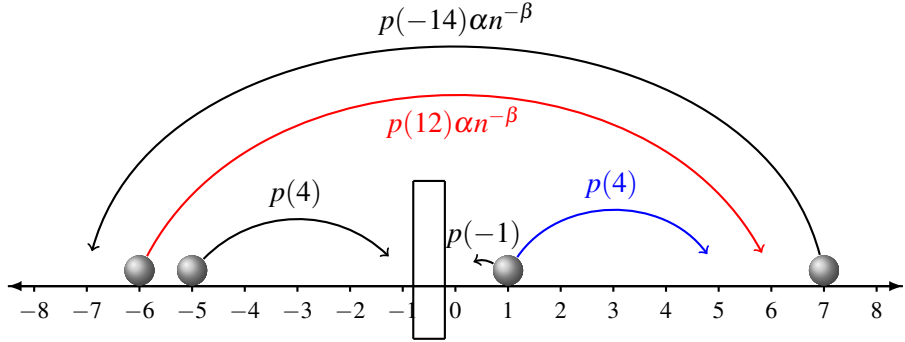
\begin{figure}[htb]
	\begin{center}
		\begin{tikzpicture}[thick, scale=0.7]
			\draw[latex-] (-8.5,0) -- (8.5,0) ;
			\draw[-latex] (-8.5,0) -- (8.5,0) ;
			\foreach \x in  {-8,-7,-6,-5,-4,-3,-2,-1,0,1,2,3,4,5,6,7,8}
			\draw[shift={(\x,0)},color=black] (0pt,0pt) -- (0pt,-3pt) node[below] 
			{};
			
  \draw[] (-8,-0.1) node[below] {\footnotesize{$-8$}};			
			\draw[] (-7,-0.1) node[below] {\footnotesize{$-7$}};
			\draw[] (-6,-0.1) node[below] {\footnotesize{$-6$}};
			\draw[] (-5,-0.1)  node[below] {\footnotesize{$-5$}};
			\draw[] (-4,-0.1)  node[below] {\footnotesize{$-4$}};
			\draw[] (-3,-0.1)  node[below] {\footnotesize{$-3$}};
			\draw[] (-2,-0.1)  node[below] {\footnotesize{$-2$}};
			\draw[] (-1,-0.1)  node[below] {\footnotesize{$-1$}};
			\draw[] (0,-0.1)  node[below] {\footnotesize{$0$}};
			\draw[] (1,-0.1)  node[below] {\footnotesize{$1$}};
			\draw[] (2,-0.1) node[below] {\footnotesize{$2$}};
			\draw[] (3,-0.1) node[below] {\footnotesize{$3$}};
			\draw[] (4,-0.1) node[below] {\footnotesize{$4$}};
			\draw[] (5,-0.1)  node[below] {\footnotesize{$5$}};
			\draw[] (6,-0.1)  node[below] {\footnotesize{$6$}};
			\draw[] (7,-0.1)  node[below] {\footnotesize{$7$}};
			\draw[] (8,-0.1)  node[below] {\footnotesize{$8$}};
			\draw [thick, black]  (-0.8,-1) -- (-0.8,2) ; 
			\draw [thick, black]  (-0.8,2) -- (-0.2,2) ;
			\draw [thick, black]  (-0.2,2) -- (-0.2,-1) ;
			\draw [thick, black]  (-0.2,-1) -- (-0.8,-1) ;

			\node[shape=circle,minimum size=0.5cm] (F) at (-7,0.3) {};
			\node[shape=circle,minimum size=0.5cm] (G) at (0,0.3) {};
			\node[shape=circle,minimum size=0.5cm] (H) at (6,0.3) {};
			\node[shape=circle,minimum size=0.5cm] (I) at (5,0.3) {};
			\node[shape=circle,minimum size=0.5cm] (J) at (-1,0.3) {};

			\node[ball color=black!30!, shape=circle, minimum size=0.4cm] (A) at (7,0.3) {};
			\node[ball color=black!30!, shape=circle, minimum size=0.4cm] (B) at (-6,0.3) {};
			\node[ball color=black!30!, shape=circle, minimum size=0.4cm] (C) at (-5.,0.3) {};
			\node[ball color=black!30!, shape=circle, minimum size=0.4cm] (D) at (1,0.3) {};

			\path [->] (A) edge[bend right =75] node[above] {\textcolor{black}{$p(-14) \alpha n^{-\beta}  $}} (F);           
			\path [->] [red]  (B) edge[bend left=62] node[below] {\textcolor{red}{$p(12) \alpha n^{-\beta}$}} (H);        
			\path [->] [blue] (D) edge[bend left=55] node[above]  {\textcolor{blue}{$p(4)$}} (I);
			\path [->] (C) edge[bend left=45] node[above]  {\textcolor{black}{$p(4) $}} (J);
			\path [->] (D) edge[bend right=25] node[above]  {\textcolor{black}{$ p(-1) $}} (G);		
		\end{tikzpicture}
		\bigskip
		\caption{Exclusion process with long-jumps and a slow barrier for $\mcb S = \mcb S_0$.}\label{figure6int}
	\end{center}
\end{figure}

In \cite{casodif} the hydrodynamic limit in the diffusive regime  was obtained. To properly state it, let $\mcb g: \mathbb{R} \rightarrow [0,1]$ be a measurable function, which corresponds to the initial condition of the PDE. Since the density is the relevant quantity to look, then we  consider  the empirical measure given  by \begin{equation}\label{eq:emp_mea_n}\pi^n_t(\eta,du):=\frac 1n\sum_x\eta_t(x)\delta_{\frac xn}(du).\end{equation} 
Let us now assume that the initial distribution of the process, denoted by $\mu_n$, is associated to $\mcb g(\cdot)$, i.e. the initial empirical measure $\pi_0^n(\eta,du)$ converges to the deterministic measure $\mcb g(u)du$ in probability with respect to $\mu_n$. This means that for every continuous function $G:\mathbb R\to\mathbb R $ with compact support and for every $\delta >0$, it holds  
\begin{equation*}
\lim_{n \rightarrow \infty} \mu_n \left( \eta: \Big| \langle \pi_0^n, G \rangle - \int_{\mathbb{R}} G(u) \mcb g(u) du \Big| > \delta \right) =0. 
\end{equation*}
The goal in the hydrodynamic limit consists in showing that the same result above holds at any time $t$, i.e. for any $t$, the sequence of random measures $\pi_t^n(\eta,du)$ converges, as $n \rightarrow \infty$, to the deterministic measure $\rho(t,u) du$ in probability with respect to the distribution of the system at time $t$, where $\rho(t,u)$ is the unique weak solution of  a PDE, the hydrodynamic equation. 
 The results of \cite{casodif} can be summarized as follows. 

\begin{itemize}

\item [I)] if particles can move between $\mathbb{Z}_{-}^{*}$ {and}  $\mathbb N$ using a path without slow bonds (this can only happen if $\mcb S \subsetneq \mcb S_0$) then one gets the heat equation as hydrodynamic equation:
\begin{equation} \label{eqhyddifreal}
\begin{cases}
\partial_t \varrho (t,u) = \frac{\sigma^2}{2} \Delta \varrho (t,u), (t,u) \in [0,T] \times \mathbb{R}, \\
\varrho(0,u) =\mcb g(u), u \in \mathbb{R};
\end{cases}
\end{equation}

\item [II)] if particles  cannot move between $\mathbb{Z}_{-}^{*}$  and $\mathbb N$ without using at least one slow bond, then the hydrodynamic equation depends on the value of $\beta$: 
\item for $0\leq \beta<1$,  it is the heat equation given in \eqref{eqhyddifreal};

\item  for $\beta=1$, it is the heat equation with Robin (linear) boundary conditions:
\begin{equation} \label{eqhyddifrob}
\begin{cases}
\partial_t \varrho(t,u) = \frac{\sigma^2}{2} \Delta \varrho(t,u), (t,u) \in [0,T] \times \mathbb{R}, \\
\partial_u \varrho(t,0^+) = \partial_u \varrho(t,0^-)= \kappa[ \varrho(t,0^+) - \varrho(t,0^{-})], t \in (0,T], \\
\varrho(0,u) =\mcb  g(u), u \in \mathbb{R}; 
\end{cases}
\end{equation}

\item  for $\beta>1$, it is the heat equation with  Neumann boundary conditions, i.e.  \eqref{eqhyddifrob} with $\kappa=0$.
\end{itemize}
We refer the reader to \cite{casodif} for the proper notion of weak solutions that are obtained. In \cite{superdif} it was analysed the  superdiffusive case, corresponding to the regime $\gamma\in(0,2)$. There, the equations obtained at the hydrodynamic limit have   the Laplacian operator replaced by the fractional Laplacian operator  or a regional fractional Laplacian operator defined on an unbounded domain. We refer the interested reader to \cite{superdif} for the exact statements.  

In this article we are now interested in investigating the fluctuations around the deterministic limits \eqref{eqhyddifreal} and \eqref{eqhyddifrob}, i.e. in the diffusive regime corresponding to $\gamma\in[2,\infty)$. In order to observe some dependence on $\beta$ and obtain analogous results to II) above, we assume that $\mcb S = \mcb S_0$, so exchanges of particles between $\mathbb{Z}_{-}^{*}$ and $\mathbb N$ always occur through slow bonds. Moreover,  we assume that our system starts from an invariant measure, which is the Bernoulli product measure with a 
constant parameter $b \in(0,1)$, that we denote by $\nu_b$. Therefore our quantity of interest is the density fluctuation field, which is given on a test function $f$  by 
\begin{equation*}
\mathcal{Y}_t^n (f) :=  \frac{1}{\sqrt{n}} \sum_{x} f ( \tfrac{x}{n} ) \big[ \eta_t^n(x)- b \big].
\end{equation*}

Observe that the field above is centred with respect to the invariant measure since $E_{\nu_b}[\eta_t(x)]=b$ for all $x\in\mathbb Z$ and $t>0$.  Typically, the density fluctuation field is described in terms of a solution to a SPDE, which depends strongly on the dynamics of the underlying microscopic system. To give some examples,  the generalized Ornstein-Uhlenbeck process has been obtained in \cite{tertufluc, flucstefano}; the stochastic Burgers equation has been obtained in \cite{goncalves2017stochastic}; and fractional versions  of those equations have been obtained in \cite{jarafluc}. In \cite{tertufluc} it was studied the same problem that we address here, but there the exclusion process was simple, since jumps were only allowed to nearest-neighbors and only a slow bond at $\{-1,0\}$ was present. Depending on the strength of the slow bond in \cite{tertufluc} it was derived an Ornstein-Uhlenbeck process with several boundary conditions. Our goal is to extend those results to the exclusion with  long-jumps  and with a slow barrier. 

In this work our space of test functions has to be carefully chosen, due to the slow barrier. Indeed, we need to impose conditions on these functions in order to control the evolution equations of the density field when the scaling parameter $n \rightarrow \infty$. On the other hand, we cannot impose too strong assumptions, since this would lead to a very small space of test functions, and we would not be able to obtain the uniqueness of the solutions for the aforementioned equations.

Our results can be stated as follows. When  the barrier is  not very slow (more precisely, when $0 \leq  \beta < 1$), we do not see any effect at the level of the fluctuations and they are described by a generalized Ornstein-Uhlenbeck process. For $\gamma >2$, when the intensity of the barrier attains a critical level (corresponding to $\beta=1$) we see an effect at the level of the Ornstein-Uhlenbeck process, which now has Robin  boundary  conditions; and those boundary conditions are replaced by Neumann boundary conditions when the barrier is very slow (corresponding to  $\beta>1$). This behaviour is reminiscent from the choice of the space of test functions where the fluctuation field acts; and it has similarities to the results that were obtained at the level of the  hydrodynamic limits. On the other hand, for $\gamma=2$,  there is no value of $\beta$ for which Robin boundary conditions are present, and we have Neumann boundary conditions for every $\beta \geq 1$.

  Now we give some comments about the proof. Since we are working with a model that is evolving in the full lattice  $\mathbb Z$, our arguments in the proofs become lengthy and technical, since we have to control many tails from several series that appear involving the transition probability $p(\cdot)$. Due to the fact that we have introduced a slow barrier, many boundary conditions have to be obtained from the microscopic level and this obliges us to use several replacement lemmas that allow closing the equations and take limits to reach the notions of solutions at the  macroscopic level. We therefore successfully extend the results of \cite{tertufluc} to the case when the exclusion process has long jumps and a slow barrier. Naturally, if we choose a transition probability which allows only jumps of size $1$ to the left and to the right with probability $1/2$, we recover the results of \cite{tertufluc}.

We leave some open problems for the future. The first one has to do with the understanding the limit of other observables of the process such as  the  fluctuations of either the current of the system or a tagged particle. In  \cite{tertufluc}, the aforementioned results were obtained as a consequence of the fluctuations of the density of particles. The strategy  was to use an identity relating the current of particles with the density fluctuation field evaluated  at a proper test function (more precisely, an Heaviside function). Then since the density fluctuations are known for a large collection of test functions, by an approximation argument, one can obtain the density fluctuations for the Heaviside function and from this the current fluctuations. For the tagged particle,  its position can be related with both the current and the density of particles, therefore the result for the tagged particle is a consequence of the corresponding results for the current and the density. We observe however that this procedure can only be used when all jumps are  nearest-neighbor  because it is crucial to have an order between particles. However, if jumps of size greater that $1$ are allowed, the order of the particles can be destroyed, so one requires an alternative strategy.  Another problem that can be addressed is the extension of our results to the superdiffusive  regime, i.e. for  $\gamma\in(0,2)$.  

This article is divided as  follows. In Section \ref{secstat}, we define precisely  our model and we state our main result, namely Theorem \ref{clt}. In Section \ref{tightfluc}, we prove tightness of the sequence of the density fluctuation fields. In Section \ref{characfluc}, we characterize the limit points of the aforementioned sequence in terms of a martingale problem. In Section \ref{bgibsgeral}, we obtain some useful $L^2$ estimates that  are needed to prove our main theorem. Finally, in the Appendix \ref{secdiscconv}, we show a multitude of results related to the convergence of discrete operators to continuous ones for each of the space of test functions. 

\section{Statement of results} \label{secstat}

\subsection{The model}\label{sec:model_hydro}

Our goal is to study the evolution of the exclusion process in $\mathbb{Z}$: this is an IPS which allows at most one particle per site, therefore our space state is $\Omega:=\{0,1\}^{\mathbb{Z}}$. The elements of $\mathbb{Z}$ are called \textit{sites} and will be denoted by Latin letters, such as $x$, $y$ and $z$. The elements of $\Omega$ are called \textit{configurations} and will be denoted by Greek letters, such as $\eta$. Moreover, we denote the number of particles at a site $x$ according to a configuration $\eta$ by $\eta(x)$; this means that the site $x$ is \textit{ empty } if $\eta(x)=0$ and it is \textit{ occupied } if $\eta(x)=1$. 

Recall the expression for $p$ given in \eqref{prob}. We denote $m:= \sum_{x \geq 1} xp(x) < \infty$. Moreover, in the case $\gamma >2$, we denote $\sigma^2:= \sum_{x} x^2 p(x) < \infty$.

According to the \textit{exclusion rule}, given $x_1,x_2 \in \mathbb{Z}$, a particle jumps from $x_1$ to $x_2$ if $x_2$ is empty and $x_1$ is occupied, i.e. $\eta(x_2)=0$ and $\eta(x_1)=1$. This means that a movement between $x_1$ and $x_2$ (in one of the two directions) is only possible if $\eta(x_1)[1-\eta(x_2)]+ \eta(x_2)[1-\eta(x_1)]=1.$

Therefore, the movement of a particle between $x$ and $y$ acts on an initial configuration $\eta \in \Omega$ and transforms it into $\eta^{x,y} \in \Omega$, which is defined by
\begin{equation*}
\eta^{x,y}(z)=
\begin{cases}
\eta(y), \quad & z= x; \\
\eta(x), \quad & z= y; \\
\eta(z),\quad & z \notin \{x, y\}.
\end{cases}
\end{equation*}
The elements of $\mcb {B}:=\big\{ \{x, y \}: x \neq y \in \mathbb{Z} \big\} $ are called \textit{bonds}. We denote $\mathbb{N}:=\{0, 1, 2, \ldots \}$ and $\mathbb{Z}_{-}^{*}:= \mathbb{Z} - \mathbb{N} = \{-1, -2, \ldots \}$. We also denote $\mcb{S}:=\big\{ \{x, y \} \in \mcb B: x \in \mathbb{Z}_{-}^{*}, y \in \mathbb{N}  \big\}$ and $\mcb{F}:= \mcb{B} - \mcb{S}$. 

Now we describe the effect of our slow barrier. Hereafter we fix $\alpha > 0$ and $\beta \geq 0$. Starting from a configuration $\eta$, a particle jumps from $x$ to $y$ with rate $\eta(x) [ 1 - \eta (y) ] r_{x,y}^n$, where 
\begin{equation}
r_{x,y}^n:=
\begin{cases}
\alpha n^{-\beta}, \quad & \{x, y\} \in \mcb{S}; \\
1, \quad & \{x, y\} \in \mcb{F}.
\end{cases}
\end{equation}
When $\beta >0$, we have $\lim_{n \rightarrow \infty} r_{x,y}^n=0$ for every $\{x,y\}$ in $\mcb{S}$, creating a physical barrier between $\mathbb{Z}_{-}^{*}$ and $\mathbb{N}$. Then $\mcb{S}$ and $\mcb {F}$ are the sets of slow and fast bonds, respectively.  

We say that a function $f: \Omega \mapsto \mathbb{R}$ is  \textit{local} if there exists a finite set $\Lambda \subset \mathbb{Z}$ such that
$
\forall x \in \Lambda, \eta_1(x) = \eta_2(x) \Rightarrow f(\eta_1) = f(\eta_2).
$
The exclusion process with slow bonds is described by the infinitesimal generator $ \mcb{L}_n$, defined by
\begin{align}\label{eq:generator}
( \mcb{L}_n f ) (\eta) =& \sum_{x,y} p( y - x) \eta(x) [1 - \eta (y) ] r_{x,y}^n \nabla_{x,y}f(\eta)   \\
= & \frac{1}{2} \sum_{x,y} p( y - x)   r_{x,y}^n \nabla_{x,y}f(\eta),
\end{align}
where $f$ is a local function and for $x,y\in\mathbb Z$ we have $\nabla_{x,y}f(\eta): = [ f ( \eta^{x,y} ) - f (\eta ) ]$. From now on, unless it is stated differently, we will assume that our discrete variables in a summation always range over $\mathbb{Z}$. 

\subsection{Notation}

If $G \in C^{\infty}(\mathbb{R})$ (i.e., $G: \mathbb{R} \rightarrow \mathbb{R}$ has continuous derivatives of all orders), $u \in \mathbb{R}$ and $k \geq 1$, we denote the $k$-th derivative of $G$ at $u$ by $G^{(k)}(u)$. For $k=0$, we denote $G^{(0)}(u)$ by $G(u)$. Following Section V.3 of \cite{ReedSimon}, the Schwartz space $\mathcal{S}( \mathbb{R})$ is the subspace of functions $G \in C^{\infty}(\mathbb{R})$ such that for every $(k,\ell) \in \mathbb{N}^2$, it holds
\begin{equation*} 
\| G \|_{k, \ell, \mathcal{S}( \mathbb{R})}:= \sup_{u \in \mathbb{R}}  |u^{k} G^{(\ell)} (u)| < \infty.
\end{equation*}
Moreover, we denote by $\mathcal{S}( \mathbb{R}^{*})$ the space of functions $G: \mathbb{R} \rightarrow \mathbb{R}$ such that there exist $G_{-},G_{+}$ in $\mathcal{S}( \mathbb{R})$ satisfying $G(u)= \mathbbm{1}_{ \{ u <0 \}}G_{-}(u) + \mathbbm{1}_{ \{ u \geq 0 \} }G_{+}(u)$, hence $G^{(k)}(u) \mathbbm{1}_{ \{ u \neq 0 \} } =\mathbbm{1}_{ \{ u < 0 \} } G_{-}^{(k)}(u) + \mathbbm{1}_{ \{ u > 0 \} } G_{+}^{(k)}(u) $ for every $k \geq 0$. By convention, we say that $G^{(k)}(0)=G_{+}^{(k)}(0)$ for every $k \geq 0$. We denote by $\mathcal{S}_{Neu}(\mathbb{R}^{*})$ the closed subspace of functions $G \in \mathcal{S}( \mathbb{R}^{*})$ such that
\begin{align*}
\forall k \geq 0, \quad G_{-}^{(2k+1)}(0) = G_{+}^{(2k+1)}(0) =0.
\end{align*}
For the critical case $\gamma>2$ and $\beta=1$, it will be useful to denote
\begin{equation} \label{alphahat}
\hat{\alpha}:= 2  \frac{\alpha m}{ \sigma^2} = \frac{\alpha m}{  \kappa_{\gamma}},
\end{equation}
where $\kappa_{\gamma}$ is given by
\begin{equation} \label{defkdifgamma}
\kappa_{\gamma}:=
\begin{cases}
c_2, & \gamma=2; \\
\sigma^2 , & \gamma > 2.
\end{cases}
\end{equation}
Moreover, we denote by $\mathcal{S}_{Rob}(\mathbb{R}^{*})$ the closed subspace of functions $G \in \mathcal{S}( \mathbb{R}^{*})$ such that
\begin{align*}
 \forall k \geq 0, \quad G_{-}^{(2k+1)}(0) = G_{+}^{(2k+1)}(0) = \hat{\alpha}\Big[G_{+}^{(2k)}(0) - G_{-}^{(2k)}(0)\Big].
\end{align*}
\begin{definition} \label{defsbetagamma}
For every $(\beta, \gamma) \in [0, \infty) \times [2, \infty)$, we denote our space of test functions by $\mathcal{S}_{\beta,\gamma}$, where $\mathcal{S}_{\beta,\gamma}:=\mathcal{S}( \mathbb{R})$ for $(\beta, \gamma) \in [0,1) \times [2, \infty)$, $\mathcal{S}_{\beta,\gamma}:=\mathcal{S}_{Rob}( \mathbb{R}^{*})$ for $(\beta, \gamma) \in \{1\} \times (2,\infty)$ and $\mathcal{S}_{\beta,\gamma}:=\mathcal{S}_{Neu}( \mathbb{R}^{*})$ otherwise. In particular, $\mathcal{S}_{\beta,\gamma}$ is always a closed subspace of $\mathcal{S}( \mathbb{R}^{*})$.

Furthermore, in order to state explicitly that certain results hold for all the possible values for $(\beta,\gamma)$, it will be convenient to define $R_0 :=[0, \infty) \times [2, \infty)$. 
\end{definition}
\begin{rem} \label{remnucfre}
First, we observe that $\mathcal{S}( \mathbb{R})$ is a nuclear Fr\'echet space; see Chapter 2, Section 4 (resp. Chapter 3, Section 7) of \cite{nuclear} for more details regarding the definition of Fr\'echet space (resp. nuclear space). 

From Chapter 2, Section 4 (resp. Chapter 3, Section 7) of \cite{nuclear} we have that the every countable product of Fr\'echet spaces is also a Fr\'echet space (resp. every arbitrary product of nuclear spaces is also a nuclear space). Thus, since $\mathcal{S}( \mathbb{R}^{*})$ can be identified with $\mathcal{S}( \mathbb{R}) \times \mathcal{S}( \mathbb{R})$, we have that $\mathcal{S}( \mathbb{R}^{*})$ is a nuclear Fr\'echet space.

Finally, from Chapter 2, Section 4 (resp. Chapter 3, Section 7) of \cite{nuclear} we have that the every closed subspace of a Fr\'echet space is also a Fr\'echet space (resp. every arbitrary subspace of a nuclear space is also a nuclear space). Combining this with Definition \ref{defsbetagamma}, we have that $\mathcal{S}_{\beta,\gamma}$ is a nuclear Fr\'echet space, for every $(\beta, \gamma) \in R_0$.
\end{rem}
For every $(\beta, \gamma) \in R_0$ we define the operator $\Delta_{\beta, \gamma}: \mathcal{S}_{\beta, \gamma} \rightarrow \mathcal{S}(\mathbb{R}^{*})$ by
\begin{equation} \label{defdeltafluc}
(\Delta_{\beta, \gamma} G) (u) = 
\begin{cases}
G_{-}^{(2)}(u), & u<0; \\
G_{+}^{(2)}(u), & u \geq 0;
\end{cases}
\end{equation}
and the operator $\nabla_{\beta, \gamma}: \mathcal{S}_{\beta, \gamma} \rightarrow \mathcal{S}(\mathbb{R}^{*})$ by
\begin{equation} \label{defnablafluc}
(\nabla_{\beta, \gamma} G) (u) = 
\begin{cases}
G_{-}^{(1)}(u),   & u<0; \\
G_{+}^{(1)}(u), & u \geq 0.
\end{cases}
\end{equation}
In particular, given $(\beta, \gamma) \in R_0$ it holds $\Delta_{\beta, \gamma} G  \in \mathcal{S}_{\beta,\gamma}$ for every $G  \in \mathcal{S}_{\beta,\gamma}$. For every function $G \in \mathcal{S}_{\beta,\gamma}$, we define $\| \cdot \|_{2,\beta,\gamma}$ by
\begin{equation*}
\| G \|^2_{2,\beta,\gamma}:=2 \kappa_{\gamma} \Big(  \int_{\mathbb{R}} [G(u)]^2 du  +  \frac{ 1} {\hat{\alpha}}  \mathbbm{1}_{\{ \gamma > 2 \} } \mathbbm{1}_{\{ \beta = 1 \} }  [  G(0)]^2 \Big),
\end{equation*}
where $\kappa_{\gamma}$ is given in \eqref{defkdifgamma}. For every $t>0$ and $x \in \mathbb{R}$, let $\phi_t(x): = \frac{e^{-\frac{x^2}{4t}}}{\sqrt{4 \pi t}}$ be the heat kernel. Then $\phi_t \in \mathcal{S}(\mathbb{R})$ for every $t>0$.

For every $ t \geq 0$, we define the following semigroups:
\begin{enumerate}
\item
\begin{align*}
P_t g(x) :=
\begin{cases}
g(x), & t=0; \\
  (\phi_t * g)(x),  &  t>0.
 \end{cases}
\end{align*}
\item
\begin{align*}
P_t^{Neu} g (x):= 
\begin{cases}
g(x), & (t,x) \in \{0\} \times \mathbb{R}; \\
\frac{1}{\sqrt{4 \pi t}} \int_0^{\infty} \big[ e^{- \frac{(x-y)^2}{4t}} + e^{- \frac{(x+y)^2}{4t}}  ] g(y) dy, &  (t,x) \in (0, \infty) \times [0, \infty); \\
\frac{1}{\sqrt{4 \pi t}} \int_0^{\infty} \big[ e^{- \frac{(x-y)^2}{4t}} + e^{- \frac{(x+y)^2}{4t}}  ] g(-y) dy,   & (t,x) \in (0, \infty) \times(- \infty,0). 
\end{cases}
\end{align*}
\item
\begin{align*}
P_t^{Dir} g(x):= 
\begin{cases}
g(x), &  (t,x) \in \{0\} \times  \mathbb{R}; \\
 \int_0^{\infty} [ \phi_t(x-y) - \phi_t(x+y)] g(y) dy, & (t,x) \in (0, \infty) \times \mathbb{R}.
 \end{cases}
\end{align*}
\item
\begin{align*}
P_t^{ \hat{\alpha} } g(x): = e^{2  \hat{\alpha}  x} \int_x^{\infty} e^{-2  \hat{\alpha}  y} P_t^{Dir} ( 2  \hat{\alpha}  g - g^{(1)}) (y) dy, \quad  (t,x) \in [0, \infty) \times \mathbb{R}.
\end{align*}
\item
\begin{align*}
P_t^{Rob} g  (x):= 
\begin{cases}
 P_t ( \mathcal{E} g)(x) + \tilde{P}_t^{\hat{\alpha}} ( \mathcal{O} g)(x),  & (t,x) \in [0, \infty) \times [0, \infty); \\
 - P_t ( \mathcal{E} g)(-x) - \tilde{P}_t^{\hat{\alpha}} ( \mathcal{O} g)(x-), & (t,x) \in [0, \infty) \times (- \infty,0),
\end{cases}
\end{align*}
where for any function $g: \mathbb{R} \rightarrow \mathbb{R}$, $\mathcal{E} g: \mathbb{R} \rightarrow \mathbb{R}$ and $\mathcal{O} g: \mathbb{R} \rightarrow \mathbb{R}$ are defined by
\begin{align*}
 (\mathcal{E} g ) (x) = \frac{g(x)+g(-x)}{2},  \quad \quad   (\mathcal{O} g ) (x) = \frac{g(x)-g(-x)}{2},
\end{align*}
for every $x \in \mathbb{R}$.
\end{enumerate}
Finally, for every $(\beta, \gamma) \in R_0$, we define the semigroup $P_t^{\beta,\gamma}$ acting on $\mathcal{S}_{\beta,\gamma}$ by $P_t^{\beta,\gamma}: = P_t$ for $(\beta, \gamma) \in [0,1) \times [2, \infty)$, $P_t^{\beta,\gamma}: = P_t^{Rob}$ for $(\beta,\gamma) \in \{1 \} \times (2,\infty)$ and $P_t^{\beta}: = P_t^{Neu}$ otherwise. From \cite{corrigendum}, we know that given $t \geq 0$ and $(\beta, \gamma) \in [0, \infty) \times [2, \infty)$, it holds $P_t^{\beta,\gamma} G  \in \mathcal{S}_{\beta,\gamma}$ for every $G \in \mathcal{S}_{\beta,\gamma}$.

Moreover, for every $p \in [1, \infty]$ let $L^p(\mathbb{R}):=L^p(\mathbb{R}, \mu)$, where $\mu$ is the Lebesgue measure in $\mathbb{R}$. We also denote the norm in $L^p(\mathbb{R})$ by $\| \cdot \|_{p}$ for $p< \infty$ and by $\| \cdot \|_{\infty}$ for $p = \infty$.

In order to observe a non-trivial macroscopic limit, our Markov process will be accelerated in time by a factor of $\Theta(n)$, given by
\begin{equation} \label{timescale}
\Theta(n):=
\begin{cases}
n^2, & \gamma > 2; \\
\frac{n^2}{  \log(n)}, & \gamma = 2.
\end{cases}
\end{equation}
We study the process $\eta_t^n(\cdot):=\eta_{t \Theta(n)}(\cdot)$, whose infinitesimal generator is $\Theta(n) \mcb {L}_n$. We fix an arbitrary $T>0$, which leads to a finite time horizon $[0,T]$. In particular, $(\eta^n_t)_{t \in [0,T]} \in \mcb {D} [0,T], \Omega )$, the space of the c\`adl\`ag (right-continuous and with left limits) trajectories in $\Omega$. The goal of this work is to study the fluctuations of the \textit{empirical measure}, denoted by $\pi_t^n$ and defined by \eqref{eq:emp_mea_n}. We denote the integral of a function $ G \in L^1(\mathbb{R})$  with respect to the empirical measure by $\langle \pi_t^n, G \rangle$,  so that 
\begin{align} \label{intmedemp}
\forall t \in [0,T], \quad \langle \pi_t^n, G \rangle: = \frac{1}{n} \sum_x \eta_t^n(x) G (\tfrac{x}{n} ).
\end{align} 
Next we introduce a measure on $\Omega$ which will be relevant in this work. Hereinafter, we fix $b \in (0,1)$ and denote by $\nu_b$ the Bernoulli product measure with marginals given by 
\begin{align} \label{defberprod}
\forall x \in  \mathbb{Z}, \quad  1 - \nu_b \big( \eta: \eta(x) = 0 \big) = \nu_b \big(\eta: \eta(x) = 1 \big) = b.
\end{align}
Since $\nu_b(\eta^{x,y})=\nu_b(\eta)$ for every $\eta \in \Omega$ and every $x,y \in \mathbb{Z}$, we conclude that $\nu_b$ is reversible (and in particular invariant) with respect to $\mcb L_n$. In particular,
\begin{equation} \label{invmeas}
\forall t \in [0,T], \forall x \in \mathbb{Z} \quad  E_{\nu_b} [ \eta_t^n(x) ]= b \quad \textrm{and} \quad  E_{\nu_b} \big[\big( \eta_t^n(x) - b \big)^2 \big]= \chi(b):=b(1-b).
\end{equation}
Moreover, the random variables $\big((\eta_t^n(x)\big)_{x \in \mathbb{Z}}$ are independent under $\nu_b$ for every $t \in [0,T]$.

\subsection{Main result}

Hereinafter, we fix $b \in (0,1)$. Before stating the main result of this article, we characterize the generalized Ornstein-Uhlenbeck process, which is a solution of
\begin{equation} \label{spde}
d \mathcal{Y}_t = \kappa_{\gamma} \Delta_{\beta,\gamma} \mathcal{Y}_t dt + \sqrt{2 \chi(b)} \nabla_{\beta,\gamma} d \mathcal{W}_t,
\end{equation}
 in terms of a martingale problem. Above $\mathcal{W}_t$ is a space-time white noise of variance $1$. In spite of having a dependence of $\mathcal{Y}_t$ on $\beta$ and $\gamma$, in order to keep notation simpler, we do not index on these parameters.

In what follows, $\mathcal{S}_{\beta,\gamma}'$ denotes the space of bounded linear functionals $f: \mathcal{S}_{\beta,\gamma} \rightarrow \mathbb{R}$. Moreover, $ \mcb{D} ( [0,T], \mathcal{S}_{\beta,\gamma}')$ (respectively $ \mcb{C} ( [0,T],    \mathcal{S}_{\beta,\gamma}')$  is the space of c\`adl\`ag functions (respectively continuous) $\mathcal{S}_{\beta,\gamma}-'$ valued functions endowed with the Skorokod topology.

The proof of the  next result is presented in Section 5 of \cite{tertufluc} and Section 4 of \cite{corrigendum}, therefore we do not prove it here and we refer the interested reader to those articles.
\begin{prop} \label{uniqou}
There exists a unique random element $\mathcal{Y}$ taking values in $ \mcb{C} ( [0,T], \mathcal{S}_{\beta,\gamma}')$ such that:
\begin{itemize}
\item
For every function $G \in \mathcal{S}_{\beta,\gamma}$, the processes $\mathcal{M}_t(G)$ and $\mathcal{N}_t(G)$ given by
\begin{equation} \label{martMou}
\mathcal{M}_t(G) = \mathcal{Y}_t(G) - \mathcal{Y}_0(G) - \kappa_{\gamma} \int_0^{t} \mathcal{Y}_s( \Delta_{\beta,\gamma} G)ds
\end{equation}
and
\begin{equation} \label{martNou}
\mathcal{N}_t(G) =[ \mathcal{M}_t(G)]^2  - 2 \chi(b) t   \| \nabla_{ \beta, \gamma} G \|_{2, \beta, \gamma}^2 
\end{equation}
are $\mathcal{F}_t$-martingales, where for each $t \in [0,T]$, $\mathcal{F}_{t}:= \sigma ( \mathcal{Y}_s (G): (s,G) \in [0,t] \times \mathcal{S}_{\beta,\gamma})$.
\item
$\mathcal{Y}_0$ is a mean zero Gaussian field with covariance given on $G_1,G_2 \in \mathcal{S}_{\beta,\gamma}$ by
\begin{equation} \label{covinifie}
\mathbb{E}_{\nu_{b}} [ \mathcal{Y}_0(G_1) \mathcal{Y}_0(G_2) ] = \chi(b) \int_{\mathbb{R}}  G_1(u) G_2(u) du. 
\end{equation}
Moreover for every $G \in \mathcal{S}_{\beta,\gamma}$, the stochastic process $\{ \mathcal{Y}_t (G); t \geq 0 \}$ is Gaussian, being the distribution of $\mathcal{Y}_t(G)$ conditionally to $\mathcal{F}_{s}$, for $s<t$, normal of mean $\mathcal{Y}_{s} ( P_{t-s}^{\beta,\gamma} G)$ and variance $\int_0^{t-s} \| \nabla_{\beta,\gamma} P_r^{\beta,\gamma} G \|_{2, \beta, \gamma}^2 dr$.  
\end{itemize}
\end{prop}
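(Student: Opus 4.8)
The plan is to prove existence and uniqueness separately through the associated martingale problem, following the Holley--Stroock-type characterization of generalized Ornstein--Uhlenbeck processes used in \cite{tertufluc, corrigendum} and adapting it to the nuclear Fr\'echet spaces $\mathcal{S}_{\beta,\gamma}$.

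\emph{Uniqueness.} Since $\mathcal{S}_{\beta,\gamma}$ is nuclear Fr\'echet (Remark \ref{remnucfre}), a probability law on $\mcb{C}([0,T],\mathcal{S}_{\beta,\gamma}')$ is determined by the joint laws of the real vectors $(\mathcal{Y}_{t_1}(G_1),\dots,\mathcal{Y}_{t_k}(G_k))$, so it is enough to pin these down. Fix $G\in\mathcal{S}_{\beta,\gamma}$ and a horizon $t$, and first upgrade \eqref{martMou}--\eqref{martNou} to the time-dependent test function $s\mapsto P^{\beta,\gamma}_{t-s}G$; this uses that $P^{\beta,\gamma}_r G$ stays in $\mathcal{S}_{\beta,\gamma}$ (cited from \cite{corrigendum}) and that $r\mapsto P^{\beta,\gamma}_r G$ is a $C^1$ curve there with $\partial_r P^{\beta,\gamma}_r G=\kappa_\gamma\Delta_{\beta,\gamma}P^{\beta,\gamma}_r G$. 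A Riemann-sum decomposition of $\mathcal{Y}_s(P^{\beta,\gamma}_{t-s}G)-\mathcal{Y}_0(P^{\beta,\gamma}_t G)$ then exhibits it as an $L^2$-limit of increments of the martingales $\mathcal{M}_\cdot(G')$ --- the drift produced by moving $s$ inside $\mathcal{Y}_s$ cancelling exactly the drift produced by moving $s$ inside $P^{\beta,\gamma}_{t-s}G$ --- so $s\mapsto\mathcal{Y}_s(P^{\beta,\gamma}_{t-s}G)$ is a continuous $\mathcal{F}_s$-martingale, and the analogous argument based on the quadratic variation $2\chi(b)s\|\nabla_{\beta,\gamma}G'\|^2_{2,\beta,\gamma}$ of $\mathcal{M}_\cdot(G')$ read off from \eqref{martNou} gives that its own quadratic variation is the \emph{deterministic} function $2\chi(b)\int_0^s\|\nabla_{\beta,\gamma}P^{\beta,\gamma}_{t-r}G\|^2_{2,\beta,\gamma}\,dr$. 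A continuous martingale with deterministic quadratic variation is Gaussian with independent increments (L\'evy's criterion, via the exponential martingale); running the argument with $a_1 G_1+a_2 G_2$ and using linearity in the test function makes these increments jointly Gaussian and independent of $\mathcal{F}_s$, so $\mathcal{Y}_t(G)=\mathcal{Y}_s(P^{\beta,\gamma}_{t-s}G)+(\text{Gaussian independent of }\mathcal{F}_s)$, which is the asserted conditional law. Together with the prescribed Gaussian law of $\mathcal{Y}_0$ and an induction over $0\le t_1<\dots<t_k$, this fixes every finite-dimensional distribution, hence the law of $\mathcal{Y}$.

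\emph{Existence.} I would construct $\mathcal{Y}$ explicitly as a centered Gaussian field. By the Bochner--Minlos theorem the continuous nonnegative bilinear form $(G_1,G_2)\mapsto\chi(b)\int_{\mathbb{R}}G_1 G_2\,du$ on the nuclear space $\mathcal{S}_{\beta,\gamma}$ is the covariance of a centered Gaussian $\mathcal{Y}_0\in\mathcal{S}_{\beta,\gamma}'$; take an independent white noise $\mathcal{W}$ whose covariance is the inner product associated with $\|\cdot\|_{2,\beta,\gamma}$ (this is where the boundary weight $\hat\alpha^{-1}G(0)^2$, active precisely when $\gamma>2$ and $\beta=1$, enters), and set
\begin{equation*}
\mathcal{Y}_t(G):=\mathcal{Y}_0\big(P^{\beta,\gamma}_t G\big)+\sqrt{2\chi(b)}\int_0^t\nabla_{\beta,\gamma}P^{\beta,\gamma}_{t-s}G\;d\mathcal{W}_s,\qquad G\in\mathcal{S}_{\beta,\gamma}.
\end{equation*}
Using $P^{\beta,\gamma}_t G-G=\kappa_\gamma\int_0^t\Delta_{\beta,\gamma}P^{\beta,\gamma}_s G\,ds$ and a stochastic Fubini, one checks that $\mathcal{M}_t(G)$ in \eqref{martMou} collapses to $\sqrt{2\chi(b)}\int_0^t\nabla_{\beta,\gamma}G\,d\mathcal{W}_s$, an $\mathcal{F}_t$-martingale; the It\^o isometry then gives $\langle\mathcal{M}(G)\rangle_t=2\chi(b)t\,\|\nabla_{\beta,\gamma}G\|^2_{2,\beta,\gamma}$, so $\mathcal{N}_t(G)$ in \eqref{martNou} is a martingale, while the covariance \eqref{covinifie} of $\mathcal{Y}_0$ and the Gaussian conditional law of $\{\mathcal{Y}_t(G)\}$ are built in. It remains to produce a version of $\mathcal{Y}$ in $\mcb{C}([0,T],\mathcal{S}_{\beta,\gamma}')$: this follows from Mitoma's criterion together with Kolmogorov-type moment bounds on $\mathcal{Y}_t(G)-\mathcal{Y}_{t'}(G)$, which reduce to quantitative continuity of $t\mapsto P^{\beta,\gamma}_t G$ in the seminorms of $\mathcal{S}_{\beta,\gamma}$.

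\emph{Main obstacle.} The genuinely delicate inputs are analytic, not probabilistic: that each $P^{\beta,\gamma}_t$ maps $\mathcal{S}_{\beta,\gamma}$ into itself --- i.e.\ preserves the boundary conditions defining these spaces, which is subtle especially in the Robin regime, where $P_t^{Rob}$ is self-adjoint not in $L^2(\mathbb{R})$ but in the space carrying the boundary weight, and which is precisely what is imported from \cite{corrigendum} --- together with the accompanying quantitative seminorm control of $r\mapsto P^{\beta,\gamma}_r G$, needed both to differentiate this curve inside the martingale problem and to run Kolmogorov's continuity argument. Granting these, everything else is bookkeeping; specializing $p(\cdot)$ to the nearest-neighbor step distribution recovers the statement proved in \cite{tertufluc}.
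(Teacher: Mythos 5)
The paper does not prove this proposition at all: it explicitly defers to Section 5 of \cite{tertufluc} and Section 4 of \cite{corrigendum}, and your sketch is precisely the Holley--Stroock-type argument carried out there (uniqueness via the time-reversed test function $s\mapsto P^{\beta,\gamma}_{t-s}G$ turning $\mathcal{Y}_s(P^{\beta,\gamma}_{t-s}G)$ into a continuous martingale with deterministic quadratic variation, existence via the explicit Gaussian mild solution). You also correctly isolate the one nontrivial analytic input, the invariance $P^{\beta,\gamma}_t\,\mathcal{S}_{\beta,\gamma}\subset\mathcal{S}_{\beta,\gamma}$, which is exactly what this paper imports from \cite{corrigendum}; so your proposal is correct and takes essentially the same route as the proof the paper points to.
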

The random element $\mathcal{Y}$ is called the generalized Ornstein-Uhlenbeck process of characteristics $\Delta_{\beta,\gamma}$ and $\nabla_{\beta,\gamma}$, see \eqref{defdeltafluc} and \eqref{defnablafluc}. From \eqref{martNou} and Levy's Theorem on the martingale characterization of Brownian motion, the process
\begin{equation*}
\mathcal M_t (G) \sqrt{2 \chi(b)}  \| \nabla_{\beta,\gamma}  G \|_{2, \beta, \gamma}
\end{equation*} 
is a standard Brownian motion, for every $G \in \mathcal{S}_{\beta,\gamma}$ fixed. Therefore, in view of Proposition \ref{uniqou}, it makes sense to say that $\mathcal{Y}$ is the formal solution of \eqref{spde}. 

Recall the definition of the $\nu_{b}$ in \eqref{defberprod}. In order to state the Central Limit Theorem (CLT) for the empirical measure $\pi_t^n$ defined in \eqref{eq:emp_mea_n}, we introduce the \textit{density fluctuation field} $\mathcal{Y}_t^n$. It is defined as the $\mathcal{S}_{\beta,\gamma}'$-valued process $\{ \mathcal{Y}_t^n: t \in [0,T] \}$ given on $f \in \mathcal{S}_{\beta, \gamma}$ by
\begin{equation*}
\mathcal{Y}_t^n (f) :=  \frac{1}{\sqrt{n}} \sum_{x} f ( \tfrac{x}{n} ) \bar{\eta}_t^n(x),
\end{equation*}
for any $t \in [0,T]$ and any $n \geq 1$. Above $\bar{\eta}_t^n(x):= \eta_t^n(x) - \mathbb{E}_{\nu_b}[\eta_t^n(x) ] = \eta_t^n(x)-b$.

Finally we state the main result of this article.
\begin{thm} \textbf{(CLT for the density of particles)} \label{clt}
Assume  $(\beta, \gamma ) \in R_0$. Consider the Markov process $\{ \eta_{t}^n : t \in [0,T] \}:=\{ \eta_{t \Theta(n)} : t \in [0,T] \}$  with generator given by \eqref{eq:generator} and with $\Theta(n)$  given by \eqref{timescale}. Suppose that it  starts  from the invariant state $\nu_{b}$. Then, the sequence of processes $( \mathcal{Y}_t^n)_{n \geq 1}$ converges in distribution, as $n \rightarrow \infty$, with respect to the Skorohod topology of $ \mcb{D} ( [0,T],  \mathcal{S}_{\beta,\gamma}' )$ to $\mathcal{Y}_t$ in $ \mcb{C} ( [0,T],  \mathcal{S}_{\beta,\gamma}' )$, the generalized Ornstein-Uhlenbeck process $\mathcal{Y}_{t}$ defined in Proposition \ref{uniqou}.
\end{thm}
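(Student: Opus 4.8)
The plan is to follow the classical three-step strategy for equilibrium fluctuations: (i) tightness of $(\mathcal{Y}^n_\cdot)_{n\geq 1}$ in $\mcb{D}([0,T],\mathcal{S}_{\beta,\gamma}')$; (ii) every limit point solves the martingale problem characterizing $\mathcal{Y}$ in Proposition \ref{uniqou}; (iii) conclude by the uniqueness part of Proposition \ref{uniqou} that the whole sequence converges in distribution to $\mathcal{Y}$. The backbone of (i) and (ii) is Dynkin's formula: for fixed $G\in\mathcal{S}_{\beta,\gamma}$, the process
\begin{equation*}
\mathcal{M}^n_t(G):=\mathcal{Y}^n_t(G)-\mathcal{Y}^n_0(G)-\int_0^t \Theta(n)\,\mcb{L}_n\,\mathcal{Y}^n_s(G)\,ds
\end{equation*}
is a martingale, with predictable quadratic variation $\langle\mathcal{M}^n(G)\rangle_t=\int_0^t\big(\Theta(n)\mcb{L}_n[\mathcal{Y}^n_s(G)]^2-2\mathcal{Y}^n_s(G)\,\Theta(n)\mcb{L}_n\mathcal{Y}^n_s(G)\big)\,ds$. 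Two algebraic features of symmetric exclusion will be used repeatedly. First, the quadratic part of $\mcb{L}_n$ cancels on the linear functional $\mathcal{Y}^n(G)$, so after a summation by parts $\Theta(n)\mcb{L}_n\mathcal{Y}^n_s(G)=\tfrac{1}{\sqrt n}\sum_x(\mathbb{L}_nG)(x)\,\bar{\eta}^n_s(x)$, where $(\mathbb{L}_nG)(x):=\Theta(n)\sum_y p(y-x)\,r^n_{x,y}\,[G(\tfrac yn)-G(\tfrac xn)]$ is a \emph{deterministic} discrete operator acting on test functions; in particular no Boltzmann--Gibbs principle is needed to write the drift as a linear functional of the field. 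Second, $\nabla_{x,y}\mathcal{Y}^n(G)=\tfrac{1}{\sqrt n}(\eta(x)-\eta(y))(G(\tfrac yn)-G(\tfrac xn))$, so the integrand of $\langle\mathcal{M}^n(G)\rangle_t$ is $\tfrac{\Theta(n)}{2n}\sum_{x,y}p(y-x)r^n_{x,y}(\eta^n_s(x)-\eta^n_s(y))^2(G(\tfrac yn)-G(\tfrac xn))^2$.

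For tightness (Section \ref{tightfluc}) I would use Mitoma's criterion to reduce to tightness of $(\mathcal{Y}^n_\cdot(G))_{n\geq1}$ in $\mcb{D}([0,T],\mathbb{R})$ for each fixed $G$, and then treat the three terms of the Dynkin decomposition separately: $\mathcal{Y}^n_0(G)$ is a sum of independent centred bounded variables under $\nu_b$, hence tight by the classical CLT; $\mathcal{M}^n_\cdot(G)$ is tight by the Aldous--Rebolledo criterion, using $\sup_n\mathbb{E}_{\nu_b}[\langle\mathcal{M}^n(G)\rangle_T]<\infty$ and the fact that its jumps are $O(n^{-1/2})$; and the integral term is controlled by the identity $\mathbb{E}_{\nu_b}[(\Theta(n)\mcb{L}_n\mathcal{Y}^n_s(G))^2]=\tfrac{\chi(b)}{n}\sum_x(\mathbb{L}_nG)(x)^2$, which is bounded uniformly in $n$ and $s$ by the operator estimates of Appendix \ref{secdiscconv}. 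These same bounds force the jumps of $\mathcal{Y}^n_\cdot(G)$ to vanish, so every limit point is in $\mcb{C}([0,T],\mathcal{S}_{\beta,\gamma}')$.

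For the characterization of limit points (Section \ref{characfluc}) I would first identify the limit of $\mathcal{Y}^n_0$ as the Gaussian field with covariance \eqref{covinifie}, via $\mathbb{E}_{\nu_b}[\mathcal{Y}^n_0(G_1)\mathcal{Y}^n_0(G_2)]=\tfrac{\chi(b)}{n}\sum_x G_1(\tfrac xn)G_2(\tfrac xn)\to\chi(b)\int_{\mathbb{R}}G_1G_2$ and the multivariate CLT for i.i.d.\ Bernoulli variables. The core is then to pass to the limit in the drift and in the quadratic variation. For the drift one needs $\int_0^t\Theta(n)\mcb{L}_n\mathcal{Y}^n_s(G)\,ds\to\kappa_\gamma\int_0^t\mathcal{Y}_s(\Delta_{\beta,\gamma}G)\,ds$; since the integrand is $\mathcal{Y}^n_s$ evaluated at the function $\mathbb{L}_nG$, this reduces, through the $L^2$ bound above, to the convergence of $\mathbb{L}_nG$ to the continuous operator $\kappa_\gamma\Delta_{\beta,\gamma}G$, which is exactly what Appendix \ref{secdiscconv} establishes (case by case on $\mathcal{S}(\mathbb{R})$, $\mathcal{S}_{Rob}(\mathbb{R}^*)$ and $\mathcal{S}_{Neu}(\mathbb{R}^*)$), the near-barrier error terms being discarded via the replacement lemmas and $L^2$ estimates of Section \ref{bgibsgeral}. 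For the quadratic variation, the $L^2$ estimates of Section \ref{bgibsgeral} show that $\langle\mathcal{M}^n(G)\rangle_t$ concentrates on its mean (its variance is $o(1)$, because $(\eta(x)-\eta(y))^2$ over disjoint bonds are independent under the product measure), and its mean $\tfrac{\Theta(n)}{n}\chi(b)\,t\sum_{x,y}p(y-x)r^n_{x,y}(G(\tfrac yn)-G(\tfrac xn))^2$ converges to $2\chi(b)\,t\,\|\nabla_{\beta,\gamma}G\|_{2,\beta,\gamma}^2$, again by Appendix \ref{secdiscconv}. Using standard uniform integrability (from the $L^2$ bounds) to pass the martingale property to the limit, one obtains that $\mathcal{M}_t(G)$ in \eqref{martMou} and $\mathcal{N}_t(G)$ in \eqref{martNou} are martingales; since $\mathcal{N}_t(G)$ makes the bracket of the limiting martingale deterministic, Lévy's theorem gives that $\{\mathcal{Y}_t(G)\}_t$ is Gaussian with the prescribed conditional law. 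Hence every limit point meets all the requirements of Proposition \ref{uniqou}, and the uniqueness stated there yields Theorem \ref{clt}.

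The main obstacle I anticipate is the operator convergence $\Theta(n)\mcb{L}_n\to\kappa_\gamma\Delta_{\beta,\gamma}$ in the presence of the slow barrier $\mcb{S}=\mcb{S}_0$. Away from the origin $\mathbb{L}_nG$ is the long-range discrete Laplacian, and even there its convergence to a multiple of $G''$ demands careful control of the tails of $\sum_x x^2p(x)$, which is finite exactly for $\gamma>2$ and only logarithmically divergent for $\gamma=2$ --- hence the extra $1/\log n$ in $\Theta(n)$ when $\gamma=2$, and also the reason why no Robin regime survives for $\gamma=2$. Near the origin, the slow bonds produce boundary contributions involving several $p(\cdot)$-weighted half-lattice series whose \emph{a priori} orders blow up; these must cancel against the corresponding anomalous pieces of the bulk discrete Laplacian, and what survives is: negligible for $0\leq\beta<1$; proportional to $G'_\pm(0)-\hat\alpha[G_+(0)-G_-(0)]$ for $\beta=1$, $\gamma>2$; and proportional to $G'_\pm(0)$ for $\beta>1$ (or for any $\beta\geq1$ when $\gamma=2$). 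These vanish precisely on $\mathcal{S}(\mathbb{R})$, $\mathcal{S}_{Rob}(\mathbb{R}^*)$ and $\mathcal{S}_{Neu}(\mathbb{R}^*)$ respectively, which is what dictates the choice of $\mathcal{S}_{\beta,\gamma}$ and produces the Robin or Neumann boundary conditions. Matching these boundary terms at the microscopic level, uniformly in $n$, while simultaneously estimating the many error series built from $p(\cdot)$ over the half-lattices, is the technical heart of the argument, and it is where the replacement lemmas and $L^2$ estimates of Section \ref{bgibsgeral} and the convergence lemmas of Appendix \ref{secdiscconv} do the real work.
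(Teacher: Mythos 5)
Your plan is correct and coincides with the paper's proof in every essential respect: the Dynkin decomposition into initial field, martingale and integral terms, Mitoma plus Aldous for tightness, convergence of the discrete operators to $\kappa_\gamma\Delta_{\beta,\gamma}$ on the adapted test-function spaces (Appendix \ref{secdiscconv}), Kipnis--Varadhan-type replacement estimates for the slow-bond boundary contributions, and the conclusion via the uniqueness in Proposition \ref{uniqou}. The one imprecision is that the static identity $\mathbb{E}_{\nu_b}[(\Theta(n)\mcb{L}_n\mathcal{Y}^n_s(G))^2]=\chi(b)n^{-1}\sum_x(\mathbb{L}_nG)(\tfrac xn)^2$ is \emph{not} uniformly bounded in $n$ near the barrier (the $\mcb{R}_{n,\beta}$ part of the operator blows up there for small $\beta$), so tightness of the integral term cannot rest on that bound alone: one must split it as in \eqref{princnbfluc}--\eqref{extranbfluc} and dispose of the error piece through the dynamical $L^2$ estimates of Section \ref{bgibsgeral}, exactly as your closing paragraph anticipates.
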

We denote by $\mathbb{P} _{\nu_b}$ be the probability measure on $\mcb D ([0,T],\Omega)$ induced by the Markov process $\{\eta_{t}^{n};{t\geq 0}\}$ and the initial distribution $\nu_{b}$. Furthermore, we denote the expectation with respect to $\mathbb{P}_{\nu_b}$ by $\mathbb{E}_{\nu_b}$. In Section \ref{tightfluc}, we prove tightness of $( \mathcal{Y}_t^n)_{n \geq 1}$, i.e., $( \mathcal{Y}_t^n)_{n \geq 1}$ converges (weakly) along subsequences. In Section \ref{characfluc}, we characterize the limit points of $( \mathcal{Y}_t^n)_{n \geq 1}$ as random elements satisfying the three conditions stated in Proposition \ref{uniqou}. The uniqueness established in Proposition \ref{uniqou} leads to the desired result. Finally, some $L^2(\mathbb{P}_{\nu_b})$ estimates that are used in the article are proved in Section \ref{bgibsgeral}.

\section{Tightness of the  sequence of fluctuation fields} \label{tightfluc}

From Dynkin's formula (Appendix 1.5 of \cite{kipnis1998scaling}), for every $G \in \mathcal{S}_{\beta,\gamma}$, the process $\{ \mathcal{M}_t^n(G); t \in [0,T] \}$ defined by
\begin{equation} \label{defMtngfluc}
 \mathcal{M}_t^n(G) := \mathcal{Y}_t^n (G) - \mathcal{Y}_0^n (G) - \int_{0}^t  \Theta(n)  \mcb{L}_n  \mathcal{Y}_s^n (G) ds
\end{equation}
is a martingale  with respect to the natural filtration $\mcb F_t^n = \sigma ( \eta_s^n: 0 \leq s \leq t)$, whose quadratic variation $\langle \mathcal M^n (G)  \rangle_t$ is given by
\begin{equation} \label{quadvarMtnG}
\begin{split}
\langle \mathcal M^n (G)  \rangle_t :=& \int_0^t \frac{\Theta(n)}{n}   \sum_{ \{x,y\} \in \mcb F} p(x-y) [G( \tfrac{y}{n}) - G( \tfrac{x}{n})]^2[ \eta_s^n(y) - \eta_s^n(x)]^2 ds   \\
+&\int_0^t  \alpha \Theta(n) n^{-1-\beta}     \sum_{  \{x,y\} \in \mcb S  } p(x-y) [G( \tfrac{y}{n}) - G( \tfrac{x}{n})]^2  [ \eta_s^n(y) - \eta_s^n(x)]^2 ds. 
\end{split}
\end{equation}
Thus, $\mathcal{N}_t^n(G):=[\mathcal{M}_t^n(G)]^2 - \langle \mathcal M^n (G)  \rangle_t$ is also a martingale with respect to $\mcb F_t^n$. The rightmost term on the right-hand side of \eqref{defMtngfluc} will be called the \textit{integral term} and it will be denoted by $\mathcal{I}_t^{n}$. Hence from \eqref{defMtngfluc}, we get
\begin{align*}
\mathcal{Y}_t^n (G)  = \mathcal{Y}_0^n (G) + \mathcal{M}_t^n(G)  + \mathcal{I}_t^n(G).
\end{align*}
Therefore, the tightness of $( \mathcal{Y}_t^n)_{n \geq 1}$ is a consequence of the tightness of the sequence of initial fields $\big( \mathcal{Y}_0^n (G) \big)_{n \geq 1}$, of the sequence of the martingale terms $ \big( \mathcal{M}_t^n(G) \big)_{n \geq 1}$ and of the sequence of integral terms $\big(\mathcal{I}_t^n(G) \big)_{n \geq 1}$. From Remark \ref{remnucfre}, we can use the Mitoma's criterion (see \cite{mitoma}) in the same way as it was done in \cite{tertufluc, corrigendum} since for every $(\beta,\gamma) \in R_0$, our space of test functions  $\mathcal{S}_{\beta,\gamma}$,  are nuclear and  Fr\'echet spaces, see Remark \ref{remnucfre}.

\begin{prop} \textbf{Mitoma's criterion}. Let $N$ be a nuclear Fr\'echet space. A sequence $\{ x_t^n; t \in [0,T] \}_{n \geq 1}$ in $\mcb{D} ([0,T], N' )$ of stochastic processes is tight with respect to the Skorohod topology if, and only if, the sequence of real-valued processes $\{ x_t^n (G); t \in [0,T] \}_{n \geq 1}$  is tight with respect to the Skorohod topology of $\mcb{D} ([0,T], \mathbb{R})$, for every $G \in  N$ fixed.
\end{prop}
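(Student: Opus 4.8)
The statement is Mitoma's theorem \cite{mitoma}, so we only outline the strategy of its proof. The plan rests on the structure of a nuclear Fr\'echet space: $N$ admits an increasing fundamental family $\|\cdot\|_0 \le \|\cdot\|_1 \le \cdots$ of Hilbertian seminorms such that each canonical map $N_{p+1}\to N_p$ between the associated local Hilbert spaces is Hilbert--Schmidt; dually, $N'=\bigcup_{p\ge 0}N_{-p}$ is the inductive limit of the separable Hilbert spaces $N_{-p}:=(N_p)'$, with $N_{-p}\hookrightarrow N_{-p-1}$ Hilbert--Schmidt. The necessity direction is immediate: for fixed $G\in N$ the evaluation $x\mapsto x(G)$ is a continuous linear map $N'\to\mathbb{R}$, hence carries relatively compact families of trajectories in $\mcb D([0,T],N')$ to relatively compact families in $\mcb D([0,T],\mathbb{R})$, so tightness of $(x^n)$ forces tightness of $(x^n(G))$ for every $G$.

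For sufficiency, the plan is to reduce the problem to tightness in a single Hilbert space. First we would pin down an index $p$ for which the whole sequence lives, with uniform control, in $\mcb D([0,T],N_{-p})$; since the inclusion $N_{-p}\hookrightarrow N'$ is continuous, tightness in $\mcb D([0,T],N_{-p})$ then transfers to $\mcb D([0,T],N')$. To locate $p$, fix $q\ge 1$ so that $N_q\hookrightarrow N_0$ is Hilbert--Schmidt, pick an orthonormal basis $\{e_j\}\subset N$ of $N_q$ (whence $\sum_j\|e_j\|_0^2<\infty$), and exploit that tightness of each scalar process $(x^n(e_j))$ yields, through the Prokhorov and Aldous criteria, the compact-containment bound $\lim_{A\to\infty}\limsup_n\PP(\sup_{t\le T}|x^n_t(e_j)|>A)=0$. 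Summing these bounds against the weights $\|e_j\|_0^2$ gives, for a suitable $p\ge q$, a uniform-in-$n$ stochastic bound on $\sup_{t\le T}\|x^n_t\|_{-p}$; this simultaneously exhibits $(x^n)$ as a $\mcb D([0,T],N_{-p})$-valued sequence and provides the compact-containment half of tightness in that Hilbert space.

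Next we would establish tightness in $\mcb D([0,T],N_{-p})$ through a Hilbert-space Aldous criterion: compact containment (just obtained) together with
\[
\forall\,\epsilon>0,\qquad \lim_{\delta\downarrow 0}\ \limsup_{n\to\infty}\ \sup_{\substack{\tau\ \text{stopping time}\\ \theta\le\delta}}\PP\big(\|x^n_{\tau+\theta}-x^n_\tau\|_{-p}>\epsilon\big)=0 .
\]
Expanding $\|x^n_{\tau+\theta}-x^n_\tau\|_{-p}^2=\sum_j\big(x^n_{\tau+\theta}(f_j)-x^n_\tau(f_j)\big)^2$ over an orthonormal basis $\{f_j\}\subset N$ of $N_p$, one bounds the contribution of any finite block of modes using the scalar Aldous condition satisfied by each $(x^n(f_j))$, and controls the remaining tail uniformly in $n$ through the compact-containment estimate, with Hilbert--Schmidtness of $N_{-p}\hookrightarrow N_{-p-1}$ ensuring the tail is summable. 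This yields tightness of $(x^n)$ in $\mcb D([0,T],N_{-p})$, hence in $\mcb D([0,T],N')$ as explained above.

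The hard part will be the uniformization step: upgrading the separate, coordinate-by-coordinate hypotheses into control of a single Hilbert norm $\|\cdot\|_{-p}$ that is uniform in $n$ (and over $t\le T$ and over stopping times $\tau$). This is precisely where nuclearity enters, via the summability $\sum_j\|e_j\|_0^2<\infty$ of an orthonormal basis of a Hilbert--Schmidt-smaller norm, which is what allows the countably many one-dimensional tightness statements to be assembled into a statement about Hilbert-space-valued processes. The remaining ingredients --- the reduction of c\`adl\`ag tightness to compact containment plus an Aldous modulus, and the scalar Aldous estimates extracted from the hypotheses --- are standard.
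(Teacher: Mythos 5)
The paper does not prove this proposition at all: it is quoted verbatim from Mitoma's article \cite{mitoma} and used as a black box (the text only verifies, in Remark \ref{remnucfre}, that the spaces $\mathcal{S}_{\beta,\gamma}$ are nuclear Fr\'echet so that the criterion applies). So there is no in-paper argument to compare yours against; what can be assessed is whether your outline would actually yield a proof. The overall architecture you describe --- necessity via continuity of evaluations, sufficiency by descending to a single Hilbert space $N_{-p}$ in the countably Hilbertian chain, compact containment plus an Aldous-type modulus there, with nuclearity (Hilbert--Schmidt embeddings and the summability $\sum_j\|e_j\|_q^2<\infty$ for an orthonormal basis of a finer level) providing the summable tails --- is indeed the standard skeleton of Mitoma's proof.

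There is, however, a genuine gap at the step you yourself identify as the hard part, and the fix you propose does not close it. You claim that the coordinatewise bounds $\lim_{A\to\infty}\limsup_n\PP(\sup_{t\le T}|x^n_t(e_j)|>A)=0$, "summed against the weights $\|e_j\|_0^2$", produce a uniform-in-$n$ bound on $\sup_t\|x^n_t\|_{-p}$. They do not: these bounds hold for each $j$ separately with no uniformity in $j$, so the thresholds $A_j$ needed to make each probability small may grow arbitrarily fast, and no weighted summation of non-uniform one-dimensional estimates controls $\sum_j x^n_t(e_j)^2$. What is actually needed --- and what Mitoma supplies --- is an \emph{equicontinuity} statement: for every $\epsilon,\delta>0$ there exist a single continuous seminorm $\|\cdot\|_r$ on $N$ and $\rho>0$ such that $\sup_n\PP\big(\sup_{t\le T}|x^n_t(\phi)|>\epsilon\big)\le\delta$ for all $\phi$ with $\|\phi\|_r\le\rho$, uniformly over $\phi$. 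This is obtained from the pointwise hypothesis by a Baire category argument on the complete metrizable space $N$ (the sets $\{\phi:\sup_n\PP(\sup_t|x^n_t(\phi)|>\epsilon)\le\delta\}$ are closed and absorb $N$, hence one has nonempty interior). Only after this uniformization does nuclearity enter, converting control in the seminorm $\|\cdot\|_r$ into the Hilbert--Schmidt summability that yields $\sup_t\|x^n_t\|_{-p}<\infty$ with uniform stochastic bounds, and similarly for the Aldous increment over stopping times. Without the Baire step your reduction to $\mcb D([0,T],N_{-p})$ does not go through.
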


\subsection{Tightness of the initial field}

Next result is analogous to Proposition 3.2 of \cite{tertufluc}, therefore its proof will be omitted.
\begin{prop} \textbf{(Convergence of the initial field)} \label{tightfluc1}
Let $(\beta, \gamma) \in R_0$. For every $G \in \mathcal{S}_{\beta,\gamma}$, for every $t>0$ and every $\lambda \in \mathbb{R}$, it holds
\begin{align*}
\lim_{n \rightarrow \infty}  \mathbb{E}_{\nu_b} [ exp \{  i\lambda  \mathcal{Y}_t^n (G) \} ] = \exp \Big\{ - \frac{ \lambda^2 \chi(b)}{2} \int_{\mathbb{R}} G^2(u) du  \Big\}.
\end{align*}
In particular, the sequence $\big( \mathcal{Y}_t^n (G) \big)_{n \geq 1}$ converges in distribution to a mean zero Gaussian variable with variance $\lambda^2 \chi(b) \| G \|_{2,\mathbb{R}}^2 $ and it is tight. Finally, $( \mathcal{Y}_0^n)_{n \geq 1}$ converges in distribution to $ \mathcal{Y}_0$, where $\mathcal{Y}_0$ is a mean zero Gaussian field with covariance given by \eqref{covinifie}.
\end{prop}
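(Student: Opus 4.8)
The plan is to exploit the fact that $\nu_b$ is invariant for the dynamics, so that for each fixed $t\in[0,T]$ the law of $(\eta_t^n(x))_{x\in\mathbb Z}$ under $\mathbb P_{\nu_b}$ coincides with $\nu_b$; hence, by \eqref{invmeas} and the independence of the coordinates under $\nu_b$, the variables $\big(\bar\eta_t^n(x)\big)_{x\in\mathbb Z}$ are independent, centred, bounded by $1$ in absolute value, with common variance $\chi(b)$. In particular the statements for $t>0$ reduce to those for $t=0$, and $\mathcal Y_t^n(G)=\sum_x X_{n,x}$ with $X_{n,x}:=\tfrac{1}{\sqrt n}G(\tfrac{x}{n})\bar\eta_t^n(x)$ a triangular array of independent summands that are uniformly (in $x$ and $n$) bounded because $G$ is bounded. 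The whole proposition is therefore a standard central limit theorem, and I would argue exactly as in Proposition 3.2 of \cite{tertufluc}.

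To prove the first display I would compute the characteristic function directly. By independence,
\[
\mathbb E_{\nu_b}\big[e^{i\lambda\mathcal Y_t^n(G)}\big]=\prod_{x}\mathbb E_{\nu_b}\Big[\exp\big\{i\theta_{n,x}\bar\eta_t^n(x)\big\}\Big],\qquad \theta_{n,x}:=\tfrac{\lambda}{\sqrt n}G\big(\tfrac{x}{n}\big),\quad |\theta_{n,x}|\le \tfrac{C}{\sqrt n}.
\]
A third-order Taylor expansion of $z\mapsto\mathbb E_{\nu_b}[e^{iz\bar\eta_t^n(x)}]$ at $z=0$, using $\mathbb E_{\nu_b}[\bar\eta_t^n(x)]=0$ and $\mathbb E_{\nu_b}[\bar\eta_t^n(x)^2]=\chi(b)$, gives $\mathbb E_{\nu_b}[e^{i\theta_{n,x}\bar\eta_t^n(x)}]=1-\tfrac12\chi(b)\theta_{n,x}^2+O(|\theta_{n,x}|^3)$. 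Taking logarithms (licit for $n$ large, each factor being $1+O(n^{-1})$) and summing over $x$,
\[
\log\mathbb E_{\nu_b}\big[e^{i\lambda\mathcal Y_t^n(G)}\big]=-\frac{\lambda^2\chi(b)}{2}\,\frac1n\sum_x G\big(\tfrac{x}{n}\big)^2+\mathcal E_n,\qquad |\mathcal E_n|\le \frac{C}{\sqrt n}\Big(\frac1n\sum_x\big|G(\tfrac{x}{n})\big|^3+\frac1n\sum_x G(\tfrac{x}{n})^4\Big).
\]
Since $G\in\mathcal S(\mathbb R^*)$, every $|G|^k$ is integrable and $\tfrac1n\sum_x G(\tfrac{x}{n})^k\to\int_{\mathbb R}G(u)^k\,du$ (a Riemann-sum limit: split the sum at $0$ into the contributions of $G_-$ and $G_+$; the super-polynomial decay makes the tails uniformly small in $n$ and the single jump at the origin is harmless). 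Hence $\mathcal E_n\to0$ and the characteristic function converges to $\exp\{-\tfrac12\lambda^2\chi(b)\int_{\mathbb R}G^2\}$; by Lévy's continuity theorem $\mathcal Y_t^n(G)$ converges in distribution to a centred Gaussian of variance $\chi(b)\int_{\mathbb R}G^2$, and convergence in distribution of real random variables entails tightness of $\big(\mathcal Y_t^n(G)\big)_{n\ge1}$.

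It remains to treat the $\mathcal S_{\beta,\gamma}'$-valued field $\mathcal Y_0^n$. Tightness of $(\mathcal Y_0^n)_{n\ge1}$ in $\mcb D([0,T],\mathcal S_{\beta,\gamma}')$ follows from Mitoma's criterion together with the tightness of $\big(\mathcal Y_0^n(G)\big)_{n\ge1}$ for every $G\in\mathcal S_{\beta,\gamma}$, just established. For the identification of the limit it suffices, $\mathcal S_{\beta,\gamma}$ being a nuclear Fréchet space (Remark \ref{remnucfre}), to prove convergence of the finite-dimensional distributions: given $G_1,\dots,G_k\in\mathcal S_{\beta,\gamma}$ and $\lambda_1,\dots,\lambda_k\in\mathbb R$, linearity of $\mathcal Y_0^n$ and of $\mathcal S_{\beta,\gamma}$ give $\sum_j\lambda_j\mathcal Y_0^n(G_j)=\mathcal Y_0^n\big(\sum_j\lambda_jG_j\big)$, to which the first part applies; by the Cramér–Wold device $\big(\mathcal Y_0^n(G_1),\dots,\mathcal Y_0^n(G_k)\big)$ converges to a centred Gaussian vector whose covariance, read off from $\chi(b)\int_{\mathbb R}\big(\sum_j\lambda_jG_j\big)^2=\sum_{i,j}\lambda_i\lambda_j\,\chi(b)\int_{\mathbb R}G_iG_j$, is $\chi(b)\int_{\mathbb R}G_iG_j$, i.e.\ \eqref{covinifie}. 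I do not expect any essential obstacle here: the core is a CLT for independent uniformly bounded summands; the only points needing care are the Riemann-sum convergences $\tfrac1n\sum_xG(\tfrac{x}{n})^k\to\int_{\mathbb R}G^k$ for test functions merely Schwartz on $\mathbb R^*$ — handled by fast decay plus the irrelevance of a single jump — and the passage from finite-dimensional convergence plus tightness to convergence in distribution in $\mathcal S_{\beta,\gamma}'$, which is exactly where the nuclearity of the space of test functions is used.
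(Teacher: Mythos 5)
Your proof is correct and follows exactly the standard argument that the paper itself omits (it only cites the analogous Proposition 3.2 of \cite{tertufluc}): stationarity reduces $t>0$ to $t=0$, the product structure of $\nu_b$ gives a characteristic-function/Lindeberg-type computation with Riemann-sum convergence of $\tfrac1n\sum_x G(\tfrac{x}{n})^k$, and Cram\'er--Wold plus Mitoma handle the field-valued statement. The only remark worth making is that the limiting variance is $\chi(b)\|G\|_{2}^2$ as your computation shows; the extra factor $\lambda^2$ in the proposition's statement is a typo in the paper, not a defect of your argument.
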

\begin{rem} 
The result above is true for $\mathcal{Y}_t$ for any $t \in [0,T]$. In particular, the Gaussian white noise is a stationary solution of \eqref{spde}, for any $(\beta, \gamma) \in R_0$. 
\end{rem}

\subsection{Tightness of the martingale}

Our goal now is to prove the following result.
\begin{prop} \label{tightmartterm}
Let $(\beta, \gamma) \in R_0$. Then the sequence $\{ \mathcal{M}_t^n(G); t \in [0,T] \}_{n \geq 1}$ is tight with respect to the Skorohod topology of $\mcb{D}( [0,T], \mathbb{R})$, for every $G \in \mathcal{S}_{\beta,\gamma}$ and every $t >0$.
\end{prop}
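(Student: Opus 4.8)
The plan is to apply the standard Aldous–Rebolledo tightness criterion for the sequence of martingales $\{\mathcal{M}_t^n(G)\}_{n\geq 1}$: since these are martingales, it suffices to verify (i) that for each fixed $t$ the sequence of real-valued random variables $\mathcal{M}_t^n(G)$ is tight, and (ii) that the quadratic variations $\langle \mathcal{M}^n(G)\rangle_t$ satisfy the Aldous condition, i.e.\ for every $\varepsilon>0$,
\begin{equation*}
\lim_{\delta\to 0}\limsup_{n\to\infty}\sup_{\tau,\theta}\mathbb{P}_{\nu_b}\Big(\big|\langle\mathcal M^n(G)\rangle_{\tau+\theta}-\langle\mathcal M^n(G)\rangle_\tau\big|>\varepsilon\Big)=0,
\end{equation*}
where the supremum is over stopping times $\tau\leq T$ and $\theta\leq\delta$. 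Both reduce to obtaining a good bound on the increments of $\langle\mathcal M^n(G)\rangle_t$ from \eqref{quadvarMtnG}; by Chebyshev and stationarity it is enough to show that $\mathbb{E}_{\nu_b}[\langle\mathcal M^n(G)\rangle_{t+\delta}-\langle\mathcal M^n(G)\rangle_t]\leq C\delta$ for a constant $C=C(G)$ uniform in $n$, which combined with the fact that $\langle\mathcal{M}^n(G)\rangle$ is nondecreasing gives (ii), and taking $\tau=0$, $\theta=t$ together with $\mathbb{E}_{\nu_b}[(\mathcal{M}_t^n(G))^2]=\mathbb{E}_{\nu_b}[\langle\mathcal M^n(G)\rangle_t]\leq Ct$ gives (i).

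The crux is therefore the estimate
\begin{equation*}
\frac{\Theta(n)}{n}\sum_{\{x,y\}\in\mcb F}p(x-y)\big[G(\tfrac yn)-G(\tfrac xn)\big]^2\mathbb{E}_{\nu_b}\big[(\eta_s^n(y)-\eta_s^n(x))^2\big]\;+\;\alpha\Theta(n)n^{-1-\beta}\sum_{\{x,y\}\in\mcb S}p(x-y)\big[G(\tfrac yn)-G(\tfrac xn)\big]^2\mathbb{E}_{\nu_b}\big[(\eta_s^n(y)-\eta_s^n(x))^2\big]\;\leq\;C(G),
\end{equation*}
uniformly in $n$ and $s$. Using $\mathbb{E}_{\nu_b}[(\eta_s^n(y)-\eta_s^n(x))^2]=2\chi(b)\leq 1/2$ under the invariant measure, the slow-bond term is dominated (up to the constant $\alpha n^{-\beta}$, which only helps) by the fast-bond term, so it suffices to bound $\frac{\Theta(n)}{n}\sum_{x,y}p(x-y)[G(\tfrac yn)-G(\tfrac xn)]^2$. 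Writing $x-y=z$, this becomes $\frac{\Theta(n)}{n}\sum_z p(z)\sum_x[G(\tfrac{x+z}{n})-G(\tfrac xn)]^2$. For $G\in\mathcal{S}(\mathbb{R})$ one has the elementary bound $[G(\tfrac{x+z}{n})-G(\tfrac xn)]^2\leq \min\{\,C\|G^{(1)}\|_\infty^2 (z/n)^2,\ 4\|G\|_\infty^2\,\}$ for $|z|\leq$ some threshold, and more carefully $\sum_x[G(\tfrac{x+z}{n})-G(\tfrac xn)]^2\leq C\,n\,\min\{(z/n)^2\|G^{(1)}\|_2^2,\ \|G\|_2^2\}$ using a discrete Minkowski/telescoping argument (this is exactly the type of discrete-to-continuous estimate collected in the Appendix). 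Then $\frac{\Theta(n)}{n}\cdot n\sum_z p(z)\min\{(z/n)^2,1\}\cdot C(G)$; since $\Theta(n)=n^2$ for $\gamma>2$ and $p(z)\sim c_\gamma|z|^{-\gamma-1}$ with $\sum_z z^2 p(z)=\sigma^2<\infty$, the sum $\sum_z z^2 p(z)$ converges and we get $O(1)$, while for $\gamma=2$, $\Theta(n)=n^2/\log n$ and $\sum_{|z|\leq n}z^2 p(z)\sim 2c_2\log n$, so $\frac{n^2}{\log n}\cdot\frac{1}{n^2}\sum_{|z|\leq n}z^2p(z)+\frac{n^2}{\log n}\cdot\sum_{|z|>n}p(z)$ is again $O(1)$ (the tail $\sum_{|z|>n}p(z)\sim c\,n^{-2}$ contributes the bounded quantity $c/\log n$). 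For the Neumann/Robin test-function spaces $G\in\mathcal S(\mathbb R^*)$ one argues separately on the two half-lines plus the bonds crossing $0$; the crossing bonds contribute $\frac{\Theta(n)}{n}\sum_{x<0\leq y}p(y-x)[G(\tfrac yn)-G(\tfrac xn)]^2$, which is handled by the same telescoping bound together with the observation that $|G(\tfrac yn)-G(\tfrac xn)|\leq C(|x|+|y|)/n$ near the origin (using boundedness of the one-sided derivatives) and $\leq 2\|G\|_\infty$ globally — this is precisely where the regularity built into $\mathcal S_{Neu}$, $\mathcal S_{Rob}$ is used, and it again yields an $n$-uniform bound.

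The main obstacle is this uniform control of $\frac{\Theta(n)}{n}\sum_{x,y}p(x-y)[G(\tfrac yn)-G(\tfrac xn)]^2$ in the borderline case $\gamma=2$, where both the logarithmic time scale and the logarithmically divergent truncated second moment of $p(\cdot)$ must be tracked carefully, and — for the Robin/Neumann spaces — the handling of the bonds straddling the origin, for which the naive bound $[G(\tfrac yn)-G(\tfrac xn)]^2\leq C(z/n)^2$ with $z=y-x$ is not directly available because $G$ need not be $C^1$ across $0$; one must instead split at the origin and use the one-sided bounds. Once the $n$-uniform bound $\mathbb{E}_{\nu_b}[\langle\mathcal M^n(G)\rangle_{t+\delta}-\langle\mathcal M^n(G)\rangle_t]\leq C(G)\delta$ is in place, tightness follows from Aldous–Rebolledo exactly as in Section 5 of \cite{tertufluc}; in fact, the same estimate will later show the quadratic variation converges to the deterministic limit $2\chi(b)t\|\nabla_{\beta,\gamma}G\|_{2,\beta,\gamma}^2$, but for tightness only the $O(\delta)$ upper bound is needed.
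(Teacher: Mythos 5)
Your proposal is correct and follows essentially the same route as the paper: Aldous' criterion applied to the martingale, with both conditions reduced to the uniform-in-$n$ bound $\sup_{t}\mathbb{E}_{\nu_b}[\langle\mathcal M^n(G)\rangle_t]\lesssim 1$, which the paper extracts from Lemma \ref{lemconvmartterm1} (convergence of the expected quadratic variation, proved via Propositions \ref{prop1lem1convmart} and \ref{prop2lem1convmart} in the Appendix) and which you establish by the same $\min\{(z/n)^2,1\}$ splitting of $\frac{\Theta(n)}{n}\sum_{x,y}p(y-x)[G(\tfrac yn)-G(\tfrac xn)]^2$, including the logarithmic bookkeeping at $\gamma=2$. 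One small imprecision: for $G\in\mathcal S_{Rob}(\mathbb R^*)$ or $\mathcal S_{Neu}(\mathbb R^*)$ the bound $|G(\tfrac yn)-G(\tfrac xn)|\leq C(|x|+|y|)/n$ on crossing bonds fails when $G_+(0)\neq G_-(0)$, but this is harmless since in those regimes ($\beta\geq 1$) the crossing bonds carry the prefactor $\alpha\Theta(n)n^{-1-\beta}\lesssim 1$ and the global bound $2\|G\|_\infty$ together with $\sum_{\{x,y\}\in\mcb S}p(y-x)=2m<\infty$ already suffices, as you also note.
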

We will use Aldous' criterion to prove last result.

\begin{prop} (Aldous' criterion) \label{tightmarterm0}\\
The sequence $\{ x_t^n; t \in [0,T] \}_{n \geq 1}$ is tight with respect to the Skorohod topology of $\mcb{D}( [0,T], \mathbb{R})$ if
\begin{enumerate}
\item
$\lim_{A \rightarrow \infty} \limsup_{n \rightarrow \infty}  \mathbb{P}_{\nu_b} \big( \sup_{t \in [0,T]} |x_t^n | > A \big)=0;$
\item
For every $\varepsilon >0$, we have
\begin{align*}
\lim _{\omega \rightarrow 0^+} \limsup_{n \rightarrow\infty} \sup_{\tau  \in \mathcal{T}_{T},\bar\tau \leq \omega} \mathbb{P}_{\nu_b} \big(  |x_{\tau+ \bar\tau}^n- x_{\tau}^n  |> \ve \big)  =0,
\end{align*}
where $\mathcal{T}_T$ denotes the set of stopping times bounded by $T$.
\end{enumerate}
\end{prop}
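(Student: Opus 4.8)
This is the classical Aldous tightness criterion, so the plan is to derive it from the standard characterization of relative compactness in $\mcb{D}([0,T],\mathbb{R})$ endowed with the Skorohod topology. Recall (see e.g.\ \cite{kipnis1998scaling}) that a sequence is tight there precisely when two properties hold: a \emph{compact containment} property, namely that for each $t$ in a dense subset of $[0,T]$ the real family $\{x_t^n\}_n$ is tight in $\mathbb{R}$, and a \emph{modulus-of-continuity} property
\begin{equation*}
\forall \varepsilon > 0,\qquad \lim_{\delta \to 0^+}\limsup_{n \to \infty}\mathbb{P}_{\nu_b}\big(w'_\delta(x^n) > \varepsilon\big)=0,
\end{equation*}
where the c\`adl\`ag modulus is $w'_\delta(x):=\inf\max_i \sup_{s,t\in[t_{i-1},t_i)}|x(t)-x(s)|$, the infimum ranging over partitions $0=t_0<\cdots<t_r=T$ with $\min_i(t_i-t_{i-1})>\delta$. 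I would take this equivalence as given and verify the two properties from hypotheses (1) and (2) respectively.

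The compact containment property follows at once from hypothesis (1): since $\lim_{A\to\infty}\limsup_n \mathbb{P}_{\nu_b}(\sup_{t\in[0,T]}|x_t^n|>A)=0$, for every fixed $t$ the variables $\{x_t^n\}_n$ are uniformly bounded in probability and hence tight in $\mathbb{R}$; this even yields the marginal tightness for all $t\in[0,T]$ simultaneously.

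The core of the argument is to extract the modulus property from hypotheses (1) and (2). Fix $\varepsilon>0$ and, for each $n$, introduce the successive $\varepsilon$-crossing stopping times $\tau_0^n:=0$ and $\tau_{k+1}^n:=\inf\{t>\tau_k^n:|x_t^n-x_{\tau_k^n}^n|>\varepsilon\}\wedge T$. Each $\tau_k^n$ belongs to $\mathcal{T}_T$, and by construction the oscillation of $x^n$ on every block $[\tau_k^n,\tau_{k+1}^n)$ is at most $2\varepsilon$. Hence the random partition $\{\tau_k^n\}$ certifies $w'_\delta(x^n)\le 2\varepsilon$ as soon as it is an admissible $\delta$-partition reaching $T$, i.e.\ as soon as all its gaps exceed $\delta$; consequently
\begin{equation*}
\{w'_\delta(x^n)>2\varepsilon\}\subseteq\big\{\exists\,k:\ \tau_k^n<T,\ \tau_{k+1}^n-\tau_k^n\le\delta\big\}.
\end{equation*}
On the right-hand side one may take $\tau:=\tau_k^n\in\mathcal{T}_T$ and $\bar\tau:=\tau_{k+1}^n-\tau_k^n\le\delta$, for which $|x_{\tau+\bar\tau}^n-x_\tau^n|>\varepsilon$; this is exactly an increment of the type bounded in hypothesis (2). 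Moreover, while the gaps exceed $\delta$ the partition can contain at most $\lceil T/\delta\rceil$ blocks, so only crossings with index up to $\lceil T/\delta\rceil$ are relevant before time runs out. The plan is then to decompose the right-hand event according to the index of the \emph{first} short gap and to estimate each piece by $\sup_{\tau\in\mathcal{T}_T,\,\bar\tau\le\delta}\mathbb{P}_{\nu_b}(|x_{\tau+\bar\tau}^n-x_\tau^n|>\varepsilon)$, which vanishes in the double limit $\limsup_n$ then $\delta\to 0^+$ by hypothesis (2).

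The main obstacle is precisely this last estimate. A naive union bound over the $\asymp T/\delta$ candidate crossings multiplies the single-increment probability of (2) by a factor of order $1/\delta$, which is not absorbed by (2) alone (that hypothesis only forces the single-increment probability to be $o(1)$, not $o(\delta)$). The real content of Aldous' theorem is to control the c\`adl\`ag modulus $w'_\delta$ using only an order-one number of invocations of (2): this must be carried out directly at the level of the stopping times $\tau_k^n$ (rather than on a deterministic grid, since $w'_\delta$ tolerates genuine jumps), taking care that each $\tau_k^n$ and each shifted time $\tau_k^n+\bar\tau$ remains admissible for (2). Overcoming this counting issue is the delicate step; once it is settled, letting $\delta\to0^+$ after $\limsup_n$ in the displayed inclusion gives the modulus property, and combined with compact containment the characterization yields the asserted tightness in $\mcb{D}([0,T],\mathbb{R})$.
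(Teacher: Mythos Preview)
The paper does not prove this proposition: Aldous' criterion is stated as a classical tool and immediately applied to the martingale sequence, with no argument supplied. There is thus no paper proof to compare against.

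As for your attempt: you set up the right framework and correctly isolate the genuine difficulty --- the naive union bound over the $O(T/\delta)$ crossing times $\tau_k^n$ loses a factor $1/\delta$ that hypothesis~(2) alone does not absorb --- but you then explicitly leave that step open (``Overcoming this counting issue is the delicate step; once it is settled\dots''). That step \emph{is} the theorem: compact containment from~(1), the definition of the $\varepsilon$-crossing times, and the inclusion $\{w'_\delta(x^n)>2\varepsilon\}\subseteq\{\exists\,k:\tau_{k+1}^n-\tau_k^n\le\delta\}$ are all routine, so what you have written is an outline with the only substantive piece missing.

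For reference, the standard resolution (Billingsley, \emph{Convergence of Probability Measures}, 2nd ed., Theorem~16.10, or Jacod--Shiryaev, \emph{Limit Theorems for Stochastic Processes}, Chapter~VI.4) does not attack $w'_\delta$ directly via a union bound. One passes instead through the auxiliary modulus
\[
w''_\delta(x):=\sup\big\{\,|x(t)-x(t_1)|\wedge|x(t_2)-x(t)|:\ t_1\le t\le t_2,\ t_2-t_1\le\delta\,\big\},
\]
whose defining minimum allows it to be controlled from~(2) by a \emph{single} well-placed stopping time, with no $1/\delta$ factor. Control of $w'_\delta$ then follows from $w''_\delta$ together with a bound on the maximal jump size (again a single-time estimate). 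If you want to turn your sketch into a proof, that is the mechanism you need to supply.
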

We apply now last proposition to the sequence $(\mathcal{M}_t^n(G))_{n \geq 1}$ of martingales. We begin with the next lemma.
\begin{lem} \label{lemconvmartterm1}
Let $(\beta, \gamma) \in R_0$, $t \in [0,T]$ and $G \in \mathcal{S}_{\beta,\gamma}$. Then
\begin{align*}
\lim_{n \rightarrow \infty} \mathbb{E}_{\nu_b} [ \langle \mathcal{M}^n(G)  \rangle_t ]  =  2 \chi(b) t    \| \nabla_{\beta,\gamma} G \|^2_{2,\beta,\gamma}.
\end{align*}
\end{lem}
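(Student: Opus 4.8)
The plan is to compute the limit of $\mathbb{E}_{\nu_b}[\langle \mathcal{M}^n(G)\rangle_t]$ starting from the explicit formula \eqref{quadvarMtnG}. First I would take expectation with respect to $\nu_b$ inside the time integral: since $\nu_b$ is invariant and the occupation variables are i.i.d.\ Bernoulli$(b)$, we have $\mathbb{E}_{\nu_b}[(\eta_s^n(y)-\eta_s^n(x))^2] = 2\chi(b)$ for every $x\neq y$ and every $s$. Hence
\begin{align*}
\mathbb{E}_{\nu_b}[\langle \mathcal{M}^n(G)\rangle_t] = 2\chi(b)\, t\, \frac{\Theta(n)}{n}\sum_{\{x,y\}} r_{x,y}^n\, p(x-y)\,\Big[G(\tfrac{y}{n})-G(\tfrac{x}{n})\Big]^2,
\end{align*}
where $r_{x,y}^n = 1$ on $\mcb F$ and $r_{x,y}^n = \alpha n^{-\beta}$ on $\mcb S$. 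So everything reduces to identifying the limit of $\tfrac{\Theta(n)}{n}\sum_{\{x,y\}} r_{x,y}^n p(x-y)[G(\tfrac yn)-G(\tfrac xn)]^2$, which I will split as a ``fast'' contribution over $\mcb F$ and a ``slow'' contribution over $\mcb S$.

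For the fast part, rewrite the sum over ordered pairs and set $z = y-x$, so that it becomes $\tfrac{\Theta(n)}{2n}\sum_{x}\sum_{z\neq 0} \mathbbm{1}[\{x,x+z\}\in\mcb F]\, p(z)[G(\tfrac{x+z}{n})-G(\tfrac xn)]^2$. For $G\in\mathcal{S}(\mathbb{R}^*)$, a Taylor expansion gives $G(\tfrac{x+z}{n})-G(\tfrac xn) \approx \tfrac zn (\nabla_{\beta,\gamma}G)(\tfrac xn)$ away from the origin, producing $\tfrac{\Theta(n)}{2n}\cdot\tfrac1n\sum_x (\nabla_{\beta,\gamma}G)^2(\tfrac xn)\cdot\tfrac1n\sum_z z^2 p(z)$; since $\tfrac1n\sum_x (\nabla_{\beta,\gamma}G)^2(\tfrac xn)\to \int_{\mathbb{R}}(\nabla_{\beta,\gamma}G)^2(u)\,du$ and the time scale $\Theta(n)$ is chosen precisely so that $\tfrac{\Theta(n)}{n^2}\sum_z z^2 p(z)\to \kappa_\gamma$ (this is $\sigma^2$ for $\gamma>2$ and, for $\gamma=2$, the $\log n$ in $\Theta(n)$ compensates the logarithmically divergent truncated second moment, giving $c_2$), this produces $\kappa_\gamma\int_{\mathbb{R}}(\nabla_{\beta,\gamma}G)^2(u)\,du = \tfrac12\|\nabla_{\beta,\gamma}G\|_{2,\beta,\gamma}^2$ in the case $\beta\in[0,1)$ (where $\mathcal S_{\beta,\gamma}=\mathcal S(\mathbb R)$ and the extra boundary term in the norm is absent). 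I would replace $\mcb F$ by all of $\mcb B$ at the cost of an error of order $\tfrac{\Theta(n)}{n}\sum_{x\in\mathbb Z_-^*,\,y\in\mathbb N}p(y-x)[G(\tfrac yn)-G(\tfrac xn)]^2$, and control this ``missing'' $\mcb S$-contribution separately; the relevant discrete-to-continuous convergence statements are exactly the kind of results collected in the Appendix (Section \ref{secdiscconv}), which I would invoke rather than reprove.

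The slow part carries the prefactor $\alpha\,\Theta(n)\,n^{-1-\beta}\sum_{\{x,y\}\in\mcb S}p(x-y)[G(\tfrac yn)-G(\tfrac xn)]^2$. Here the key input is the asymptotics of $\sum_{x\leq -1,\,y\geq 0}p(y-x)[G(\tfrac yn)-G(\tfrac xn)]^2$. Writing this with $z=y-x\geq 1$ and summing over $x\in\{-z,\dots,-1\}$, and using that $G$ is bounded with bounded derivative and that $\sum_z z^{k}p(z)<\infty$ for $k\le 2$ (finite first moment $m$ always, finite second moment $\sigma^2$ for $\gamma>2$), one sees this sum is $O(1)$; more precisely, for $\beta=1,\ \gamma>2$ the leading behaviour is governed by $n^{-1}$ times a constant multiple of $[G_+(0)-G_-(0)]^2\sum_z z\,p(z) = m[G(0^+)-G(0^-)]^2$, since the dominant contribution comes from pairs $x,y$ straddling the origin at distance $O(1)$. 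Multiplying by $\alpha\Theta(n)n^{-2} = \alpha n^{-\beta}\cdot n^{\beta-1}\Theta(n)n^{-1}$: for $\beta<1$ this slow term vanishes (consistent with $G$ having no jump since $\mathcal S_{\beta,\gamma}=\mathcal S(\mathbb R)$); for $\beta>1$ it also vanishes, so only the fast (full-lattice) term survives and gives $\tfrac12\|\nabla_{\beta,\gamma}G\|^2_{2,\beta,\gamma}$ with the Neumann norm (no boundary term, $\mathbbm 1_{\{\beta=1\}}=0$); and for $\beta=1,\ \gamma>2$ it converges to $\alpha m\,[G(0^+)-G(0^-)]^2 = 2\kappa_\gamma\hat\alpha\,[G(0^+)-G(0^-)]^2$ by \eqref{alphahat}, which after the factor $2\chi(b)t$ is exactly the extra contribution needed to upgrade the bare Dirichlet-energy term to the Robin norm $\|\nabla_{\beta,\gamma}G\|_{2,\beta,\gamma}^2$ — one should check here that for $G\in\mathcal S_{Rob}(\mathbb R^*)$ the boundary identity $G_\pm^{(1)}(0)=\hat\alpha[G_+(0)-G_-(0)]$ makes $\tfrac1{\hat\alpha}[G(0)]^2$-type corrections bookkeep correctly, but this is algebra, not analysis. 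Collecting the fast and slow limits in each of the three regimes $\beta<1$, $\beta=1\ \&\ \gamma>2$, $\beta\geq 1$ otherwise, yields $2\chi(b)t\,\|\nabla_{\beta,\gamma}G\|^2_{2,\beta,\gamma}$ in all cases.

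The main obstacle is the second point above: justifying the replacement of $\mcb F$-sums by full-lattice sums and the precise evaluation of the $\mcb S$-boundary contribution, i.e.\ showing that the discrete energy $\tfrac{\Theta(n)}{n}\sum r_{x,y}^n p(x-y)[G(\tfrac yn)-G(\tfrac xn)]^2$ converges to the right continuous functional uniformly enough, including careful tail estimates on $\sum_{|z|>\varepsilon n}z^2 p(z)$ (for $\gamma>2$) and on the $\gamma=2$ logarithmic truncation. These estimates are the technical heart and I expect to defer them to the Appendix convergence lemmas; with those in hand, the proof is the bookkeeping of the three regimes sketched above.
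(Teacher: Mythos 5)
Your overall strategy coincides with the paper's: use stationarity and Fubini to reduce $\mathbb{E}_{\nu_b}[\langle\mathcal M^n(G)\rangle_t]$ to $2\chi(b)t$ times a deterministic discrete energy, split that energy into a bulk (``fast'') piece and a slow-bond correction, and defer the discrete-to-continuum convergence to the Appendix --- indeed the paper's proof is exactly this reduction into $\mathcal A_{n,\beta}(G)+\mathcal B_{n,\beta}(G)$ followed by Propositions \ref{prop1lem1convmart} and \ref{prop2lem1convmart}. Your method for the bulk piece (direct Taylor expansion of the squared increment, truncation at $|z|\le\varepsilon n$, separate tail estimates) is a legitimate alternative to the paper's route, which instead performs a discrete summation by parts to rewrite the energy as $-\tfrac 2n\sum_xG(\tfrac xn)\,\Theta(n)(\mcb K_{n,\beta}G)(\tfrac xn)$ plus explicit boundary corrections, and then invokes the convergence $\Theta(n)\mcb K_{n,\beta}G\to\kappa_\gamma\Delta_{\beta,\gamma}G$ (Proposition \ref{convknbeta}) together with a continuum integration by parts. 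Your identification of where each piece of $\|\nabla_{\beta,\gamma}G\|^2_{2,\beta,\gamma}$ comes from (bulk Dirichlet energy from the fast bonds; the $\hat\alpha^{-1}[\nabla_{\beta,\gamma}G(0)]^2$ term from the slow bonds exactly at $\beta=1$, $\gamma>2$, via the Robin identity $G_\pm^{(1)}(0)=\hat\alpha[G_+(0)-G_-(0)]$) is correct.

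There is, however, one step that fails as written. For $\beta\ge 1$ you propose to ``replace $\mcb F$ by all of $\mcb B$'' and control the missing contribution $\tfrac{\Theta(n)}{n}\sum_{\mcb S}p(y-x)[G(\tfrac yn)-G(\tfrac xn)]^2$ separately, and in the final bookkeeping you assert that for $\beta>1$ ``only the fast (full-lattice) term survives.'' But for $\beta\ge1$ the test functions live in $\mathcal S_{Neu}(\mathbb R^*)$ or $\mathcal S_{Rob}(\mathbb R^*)$ and generically have $G_+(0)\ne G_-(0)$; for a bond $\{x,y\}\in\mcb S$ with $x,y=O(1)$ the increment $[G(\tfrac yn)-G(\tfrac xn)]^2$ tends to $[G_+(0)-G_-(0)]^2\ne0$, and since $\sum_{\mcb S}p(y-x)$ is a finite positive constant (Proposition \ref{aux1}), this ``error'' term is of order $\Theta(n)/n\to\infty$. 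So the full-lattice energy does not converge and the replacement is not an admissible $o(1)$ correction. The fix is precisely what the paper builds into the definitions of $\mathcal A_{n,\beta}$ and $\mcb K_{n,\beta}$ for $\beta\ge 1$: keep the sum over $\mcb F$ and treat each half-line separately, where $G$ coincides with the smooth functions $G_\pm$, so the Taylor-expansion argument applies without ever crossing the origin; the jump of $G$ then enters only through the slow-bond term, which carries the damping factor $\alpha n^{-\beta}$. With that correction (and after tightening the garbled ``$n^{-1}$ times'' description of the slow-sum asymptotics --- the relevant facts are that $\sum_{\mcb S}p(y-x)<\infty$ and that the prefactor $\alpha\Theta(n)n^{-1-\beta}$ is $O(1)$ exactly at $\beta=1$, $\gamma>2$, and vanishes otherwise), your argument goes through; the remaining discrepancies with the paper are factor-of-two bookkeeping issues on which the paper itself is not internally consistent.
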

\begin{proof}
For every $s \geq 0$, for every $x \neq y \in \mathbb{Z}$, we have $\mathbb{E}_{\nu_b} \big[ [ \eta_s^n(y) - \eta_s^n(x)]^2 \big] = 2 \chi(b)$.
Combining Fubini's Theorem and \eqref{quadvarMtnG} we get
\begin{align*} 
\mathbb{E}_{\nu_b} [\langle \mathcal M^n (G)  \rangle_t ] = &2 \chi(b) t   \frac{\Theta(n)}{n}    \sum_{   \{x,y\} \in \mcb F } p(y-x) [G( \tfrac{y}{n}) - G( \tfrac{x}{n})]^2 \\
+& 2 \chi(b) t  \alpha \frac{\Theta(n)}{n^{1+\beta}} \sum_{   \{x,y\} \in \mcb S } p(y-x) [G( \tfrac{y}{n}) - G( \tfrac{x}{n})]^2 \\
=& 2  \chi(b) t [ \mathcal{A}_{n,\beta} (G) + \mathcal{B}_{n,\beta} (G) ],
\end{align*}
where  
\begin{equation}\label{op_Anb}
\mathcal{A}_{n,\beta} (G): =
\begin{dcases}
\frac{\Theta(n)}{n}   \sum_{x,y } p(y-x) [G( \tfrac{y}{n}) - G( \tfrac{x}{n})]^2, & \beta \in [0,1); \\
 \frac{\Theta(n)}{n} \sum_{   \{x,y\} \in \mcb F } p(y-x) [G( \tfrac{y}{n}) - G( \tfrac{x}{n})]^2, & \beta \geq 1; \\
\end{dcases}
\end{equation}
and
\begin{equation}\label{op_Bnb}
\mathcal{B}_{n,\beta} (G): =
\begin{dcases}
 ( \alpha n^{-\beta} -1) \frac{\Theta(n)}{n} \sum_{   \{x,y\} \in \mcb S } p(y-x) [G( \tfrac{y}{n}) - G( \tfrac{x}{n})]^2 , &  \beta \in [0,1); \\
\alpha \Theta(n) n^{-1-\beta} \sum_{   \{x,y\} \in \mcb S }    p(y-x) [G( \tfrac{y}{n}) - G( \tfrac{x}{n})]^2,  & \beta \geq 1.
\end{dcases}
\end{equation}
Therefore the proof ends as a consequence of Propositions \ref{prop2lem1convmart} and \ref{prop1lem1convmart}.
\end{proof}
\begin{lem} \label{lemconvmartterm2}
Let $(\beta, \gamma) \in R_0$ and $G \in \mathcal{S}_{\beta,\gamma}$. Then
\begin{align} \label{expconvmartterm2}
 \sup_{t \in [0,T]} \mathbb{E}_{\nu_b} \Big[ \Big(  \langle \mathcal M^n (G)  \rangle_t - \mathbb{E}_{\nu_b} [   \langle \mathcal M^n (G)  \rangle_t ] \Big)^2 \Big] \lesssim 1. 
\end{align}
\end{lem}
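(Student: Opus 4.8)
The strategy is to write $\langle \mathcal M^n(G)\rangle_t$ as a time integral of a function of the configuration, namely
\begin{equation*}
\langle \mathcal M^n(G)\rangle_t = \int_0^t \Phi_n(\eta_s^n)\, ds, \qquad
\Phi_n(\eta) := \frac{\Theta(n)}{n}\sum_{\{x,y\}}  r^n_{x,y}\, p(y-x)\, [G(\tfrac yn)-G(\tfrac xn)]^2\, [\eta(y)-\eta(x)]^2,
\end{equation*}
where the sum runs over all bonds and we have absorbed the slow/fast distinction into $r^n_{x,y}$. Since $\mathbb E_{\nu_b}[\langle \mathcal M^n(G)\rangle_t] = \int_0^t \mathbb E_{\nu_b}[\Phi_n(\eta_s^n)]\,ds = t\, \mathbb E_{\nu_b}[\Phi_n]$ by stationarity, the centred quantity is $\int_0^t (\Phi_n(\eta_s^n) - \mathbb E_{\nu_b}[\Phi_n])\, ds$. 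By Cauchy–Schwarz in the time integral, its second moment is bounded by $T\int_0^T \mathbb E_{\nu_b}[(\Phi_n(\eta_s^n) - \mathbb E_{\nu_b}[\Phi_n])^2]\, ds = T^2\, \mathrm{Var}_{\nu_b}(\Phi_n)$, again using stationarity. So the whole lemma reduces to showing $\mathrm{Var}_{\nu_b}(\Phi_n) \lesssim 1$, uniformly in $n$.

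To estimate this variance, I would expand $\Phi_n$ as a double sum over bonds $\{x,y\}$ and $\{x',y'\}$ of the covariances $\mathrm{Cov}_{\nu_b}\big([\eta(y)-\eta(x)]^2, [\eta(y')-\eta(x')]^2\big)$, weighted by $\frac{\Theta(n)^2}{n^2} r^n_{x,y} r^n_{x',y'} p(y-x) p(y'-x') [G(\tfrac yn)-G(\tfrac xn)]^2 [G(\tfrac{y'}n)-G(\tfrac{x'}n)]^2$. Under the Bernoulli product measure $\nu_b$ the random variables $[\eta(y)-\eta(x)]^2$ and $[\eta(y')-\eta(x')]^2$ are independent unless the bonds $\{x,y\}$ and $\{x',y'\}$ share a site; hence only the "diagonal-ish" terms with $\{x,y\}\cap\{x',y'\}\neq\emptyset$ survive, and each surviving covariance is bounded by $\chi(b)$ (in fact at most $\tfrac14$). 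This collapses the double sum to essentially a single sum: up to a constant,
\begin{equation*}
\mathrm{Var}_{\nu_b}(\Phi_n) \lesssim \frac{\Theta(n)^2}{n^2}\sum_{x}\Big( \sum_{z\neq 0} (r^n_{x,x+z})^{1/1}\, p(z)\, [G(\tfrac{x+z}n)-G(\tfrac xn)]^2 \Big)^2,
\end{equation*}
where I have used $r^n \le 1$ to drop the slow factors and pairing up the two bonds through their common site introduces the square. The inner sum is bounded using $[G(\tfrac{x+z}n)-G(\tfrac xn)]^2 \le \min\{4\|G\|_\infty^2,\ (z/n)^2 \|G'\|_\infty^2\}$ when $G\in\mathcal S(\mathbb R)$, or the piecewise analogue for $G\in\mathcal S(\mathbb R^*)$ treating bonds straddling $0$ separately with the crude bound $4\|G\|_\infty^2$ together with $p(z)\lesssim |z|^{-\gamma-1}$ to control the at most $O(n)$-per-$x$ such straddling contributions.

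The remaining work is the same kind of tail estimation already needed for $\mathcal A_{n,\beta}(G)$ and $\mathcal B_{n,\beta}(G)$ in Lemma~\ref{lemconvmartterm1}: split the inner $z$-sum at $|z|\le n$ and $|z|>n$, use $\Theta(n)\asymp n^2$ (or $n^2/\log n$ when $\gamma=2$) and $\sum_{|z|\le n} z^2 p(z) \lesssim \sigma^2$ (resp. $\lesssim \log n$ when $\gamma=2$, which is exactly cancelled by the $1/\log n$ in $\Theta(n)$), plus $\sum_{|z|>n} p(z) \lesssim n^{-\gamma}$, to see that the inner sum is $O(n^{-1})$; squaring gives $O(n^{-2})$, and summing over the $O(n)$ relevant values of $x$ (those with $x/n$ in the support-region of $G$, or using the Schwartz decay of $G$ to make the $x$-sum converge) yields the bound $\lesssim 1$. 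The main obstacle, and where care is genuinely required, is the bookkeeping of the straddling bonds $\{x,y\}$ with $x<0\le y$ in the $\gamma=2$ Neumann case and in the $\beta=1,\ \gamma>2$ Robin case: there $G$ need not be continuous at $0$, so $[G(\tfrac yn)-G(\tfrac xn)]^2$ does not vanish as the bond shrinks, and one must check that the slow factor $\alpha n^{-\beta}$ (for $\beta=1$) or $\alpha n^{-\beta}\to 0$ (for $\beta>1$), combined with the polynomial decay of $p$, still renders the contribution of these $O(n)\times O(n)$ bond-pairs $O(1)$; this is precisely the computation packaged into Propositions~\ref{prop2lem1convmart} and \ref{prop1lem1convmart} and I would invoke those same estimates here.
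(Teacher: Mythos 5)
Your proposal follows essentially the same route as the paper: reduce, via Cauchy--Schwarz in time and stationarity, to a uniform bound on $\mathrm{Var}_{\nu_b}$ of the integrand, use the product structure of $\nu_b$ to kill the covariances of disjoint bonds so that only the diagonal and shared-site terms survive, and bound the resulting sums by Taylor/sup-norm estimates on $G$ and the decay of $p$, treating the slow bonds crudely through $[\Theta(n)]^2 n^{-2-2\beta}\lesssim 1$ and $\sum_{\{x,y\}\in\mcb S}p(y-x)=2m$. Two minor corrections: the packaged estimate you want for the smooth part is Proposition \ref{proplemconvmart2} rather than Propositions \ref{prop2lem1convmart} and \ref{prop1lem1convmart} (those concern the first-moment quantities of Lemma \ref{lemconvmartterm1}), and your order bookkeeping is loose (the inner sum carries a $\Theta(n)/n$ prefactor you do not track consistently) --- done carefully the bound is in fact $o(1)$, which of course still yields \eqref{expconvmartterm2}.
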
 
\begin{proof}
Combining Fubini's Theorem and \eqref{quadvarMtnG} we obtain
\begin{align*}
  \langle \mathcal M^n (G)  \rangle_t - \mathbb{E}_{\nu_b} [  \langle \mathcal M^n (G)  \rangle_t ]  =& \int_0^t \frac{\Theta(n)}{n} \sum_{ \{x, y\} \in \mcb F } [ G( \tfrac{y}{n} ) - G( \tfrac{x}{n} )]^2 p(y-x)Z_{x,y}(\eta_s^n) ds \\
  +& \alpha \int_0^t \frac{\Theta(n)}{n^{1+\beta}} \sum_{ \{x, y\} \in \mcb S } [ G( \tfrac{y}{n} ) - G( \tfrac{x}{n} )]^2 p(y-x) Z_{x,y}(\eta_s^n) ds, 
\end{align*}
where $Z_{x,y}(\eta_s^n):=[ \eta_s^n(y) - \eta_s^n(x)]^2 - 2 \chi(b)$. Since $(u+v)^2 \leq 2 (u^2+v^2)$ for every $u,v \in \mathbb{R}$,  the expectation in \eqref{expconvmartterm2} is bounded from above by a constant times the sum of
\begin{equation} \label{expconvmartterm2a}
[ \Theta(n) ]^2 n^{-2} \mathbb{E}_{\nu_b} \Big[  \Big(  \int_0^t  \sum_{ \{x, y\} \in \mcb F } [ G( \tfrac{y}{n} ) - G( \tfrac{x}{n} )]^2 p(y-x) Z_{x,y}(\eta_s^n) ds \Big)^2 \Big] 
\end{equation}
and
\begin{equation} \label{expconvmartterm2b}
[ \Theta(n) ]^2 n^{-2-2 \beta} \mathbb{E}_{\nu_b} \Big[  \Big(  \int_0^t  \sum_{ \{x, y\} \in \mcb S } [ G( \tfrac{y}{n} ) - G( \tfrac{x}{n} )]^2 p(y-x) Z_{x,y}(\eta_s^n) ds \Big)^2 \Big]. 
\end{equation}
From the fact that $\mathbb{E}_{\nu_b} [  Z_{x,y}(\eta_s^n)]=0=\mathbb{E}_{\nu_b} [  Z_{x,y}(\eta_s^n) Z_{w,r}(\eta_s^n)]=0$ for disjoint sets $\{x,y\}$, $\{w,r\}$, together with Cauchy-Schwarz inequality and Fubini's Theorem, the expectation in \eqref{expconvmartterm2a} is bounded from above by
\begin{align*}
& t \int_0^t \mathbb{E}_{\nu_b} \Big[ \Big( \sum_{ \{x, y\} \in \mcb F } [ G( \tfrac{y}{n} ) - G( \tfrac{x}{n} )]^2 p(y-x) Z_{x,y}(\eta_s^n) \Big)^2 \Big] ds \\
\lesssim&  \int_0^t  \sum_{ \{x, y\} \in \mcb F } [ G( \tfrac{y}{n} ) - G( \tfrac{x}{n} )]^4 [p(y-x)]^2 \mathbb{E}_{\nu_b} \big[ \big( Z_{x,y}(\eta_s^n) \big)^2 \big] ds \\
+&   \int_0^t  \sum_{ \{x, y\} \in \mcb F } \sum_{w: \{x,w\} \in \mcb F} [ G( \tfrac{y}{n} ) - G( \tfrac{x}{n} )]^2 [ G( \tfrac{w}{n} ) - G( \tfrac{x}{n} )]^2 p(y-x) p(x-w)  \mathbb{E}_{\nu_b} \big[  Z_{x,y}(\eta_s^n) Z_{x,w}(\eta_s^n)  \big] ds.
\end{align*}
Since $\mathbb{E}_{\nu_b} \big[ \big( Z_{x,y}(\eta_s^n) \big)^2 \big] $ and $\mathbb{E}_{\nu_b} \big[  Z_{x,y}(\eta_s^n) Z_{x,w}(\eta_s^n)  \big]$ are constant and finite, last display is bounded from above by a constant times
\begin{align} \label{expconvmartterm2c}
& [ \Theta(n) ]^2 n^{-2} \Big\{ \sum_{ \{x, y\} \in \mcb F } [ G( \tfrac{y}{n} ) - G( \tfrac{x}{n} )]^4 p^2(y-x) +  \sum_{ x } \Big[ \sum_{y: \{x,y\} \in \mcb F } p(y-x) [ G( \tfrac{y}{n} ) - G( \tfrac{x}{n} )]^2 \Big]^2 \Big\}.
\end{align}
Performing an analogous procedure, \eqref{expconvmartterm2b} is bounded from above by a constant times
\begin{align} \label{expconvmartterm2d}
& [ \Theta(n) ]^2 n^{-2-2\beta} \Big\{ \sum_{ \{x, y\} \in \mcb S } [ G( \tfrac{y}{n} ) - G( \tfrac{x}{n} )]^4 p^2(y-x) +  \sum_{ x } \Big[ \sum_{y: \{x,y\} \in \mcb S } p(y-x) [ G( \tfrac{y}{n} ) - G( \tfrac{x}{n} )]^2 \Big]^2 \Big\}.
\end{align}
If $\beta \in [0,1)$, then $G \in \mathcal{S}_{\beta, \gamma} = \mathcal{S}(\mathbb{R})$, and we bound the sum of \eqref{expconvmartterm2c} and \eqref{expconvmartterm2d} from above by
\begin{align*}
[ \Theta(n) ]^2 n^{-2} \Big\{ \sum_{ x,y } [ G( \tfrac{y}{n} ) - G( \tfrac{x}{n} )]^4 p^2(y-x) +  \sum_{ x } \Big[ \sum_{y  } p(y-x) [ G( \tfrac{y}{n} ) - G( \tfrac{x}{n} )]^2 \Big]^2 \Big\},
\end{align*}
hence the proof ends in this regime as a consequence of Proposition \ref{proplemconvmart2}. 

When $\beta \geq 1$ and $\gamma \geq 2$ we get $[ \Theta(n) ]^2 n^{-2-2\beta} \lesssim 1$, hence \eqref{expconvmartterm2d} is bounded from above by
\begin{align*}
&[ \Theta(n) ]^2 n^{-2-2\beta} (2 \| G \|_{\infty})^{4} \Big\{ \sum_{ \{x, y\} \in \mcb S }  p^2(y-x) +  \sum_{ x } \Big[ \sum_{y: \{x,y\} \in \mcb S } p(y-x)  \Big]^2 \Big\} \\
\lesssim & [ \Theta(n) ]^2 n^{-2-2\beta} \Big\{ \sum_{ \{x, y\} \in \mcb S }  p(y-x) +  \sum_{ x }  \sum_{y: \{x,y\} \in \mcb S } p(y-x)  \Big\} = 2m [ \Theta(n) ]^2 n^{-2-2\beta} \lesssim 1.
\end{align*}
Finally, we bound \eqref{expconvmartterm2c} from above by
\begin{align*}
&[ \Theta(n) ]^2 n^{-2} \Big\{ \sum_{ x,y } [ G_{-}( \tfrac{y}{n} ) - G_{-}( \tfrac{x}{n} )]^4 p^2(y-x) +  \sum_{ x } \Big[ \sum_{y  } p(y-x) [ G_{-}( \tfrac{y}{n} ) - G_{-}( \tfrac{x}{n} )]^2 \Big]^2 \Big\} \\
+ & [ \Theta(n) ]^2 n^{-2} \Big\{ \sum_{ x,y } [ G_{+}( \tfrac{y}{n} ) - G_{+}( \tfrac{x}{n} )]^4 p^2(y-x) +  \sum_{ x } \Big[ \sum_{y  } p(y-x) [ G_{+}( \tfrac{y}{n} ) - G_{+}( \tfrac{x}{n} )]^2 \Big]^2 \Big\}.
\end{align*}
In last display, $G_{-}, G_{+} \in \mathcal{S}(\mathbb{R})$ are such that $G \equiv G_{-}$ on $(-\infty,0)$ and $G \equiv G_{+}$ on $[0, \infty)$. Then the desired result comes in this regime from the application of Proposition \ref{proplemconvmart2} to $G_{-}$ and $G_{+}$.
\end{proof}
A direct consequence of Lemma \ref{lemconvmartterm1} is that
\begin{equation} \label{M0thnL2}
\lim_{n \rightarrow \infty} \mathbb{E}_{\nu_b} \big[ \big( \mathcal{M}_t^n(G) \big)^2 \big] =  \lim_{n \rightarrow \infty} \mathbb{E}_{\nu_b} [ \langle  \mathcal{M}^n(G)\rangle_t ] =  2 \chi(b) t  \kappa_{\gamma}  \| \nabla_{\beta,\gamma} G \|^2_{2,\beta,\gamma} < \infty.
\end{equation} 
In particular, for every $t \in [0,T]$ and $G \in \mathcal{S}_{\beta,\gamma}$, we have
\begin{equation} \label{MthnL2}
\sup_{t \in [0,T]} \mathbb{E}_{\nu_b} \big[ \big( \mathcal{M}_t^n(G) \big)^2 \big] = \sup_{t \in [0,T]} \mathbb{E}_{\nu_b} [  \langle  \mathcal{M}^n(G)\rangle_t  ]  \lesssim 1.
\end{equation}
Finally, we will prove Proposition \ref{tightmartterm} as a consequence of Proposition \ref{tightmarterm0}. 
\begin{proof} [Proof of Proposition \ref{tightmartterm}]
From Doob's inequality and \eqref{MthnL2}, the sequence of martingales $\{ \mathcal{M}_t^n(G); t \in [0,T] \}_{n \geq 1}$ satisfies the first condition of Proposition \ref{tightmarterm0}. In order to verify the second one, we use Chebyshev's inequality, which leads to
\begin{align*}
& \mathbb{P}_{\nu_b} \big(  |\mathcal{M}_{\tau+ \bar\tau}^n(G)- \mathcal{M}_{\tau}^n(G)  |> \ve \big) \leq \frac{1}{\varepsilon^2} \mathbb{E}_{\nu_b} \big[  \big(\mathcal{M}_{\tau+ \bar\tau}^n(G)- \mathcal{M}_{\tau}^n(G)  \big)^2 \big] \\
=& \frac{1}{\varepsilon^2} \mathbb{E}_{\nu_b} \Big[ \int_{\tau}^{\tau+ \bar\tau}  \langle  \mathcal{M}^n(G)\rangle_s ds  \Big] \leq \frac{1}{\varepsilon^2}  \int_{\tau}^{\tau+ \bar\tau}  \sup_{t \in [0,T]} \mathbb{E}_{\nu_b} [   \langle  \mathcal{M}^n(G) \rangle_s   ] ds \lesssim \frac{\bar\tau}{\varepsilon^2} \leq \frac{\omega}{\varepsilon^2},
\end{align*}
which goes to zero as $\omega \rightarrow 0$.  Above we used Fubini's Theorem and \eqref{MthnL2}.
\end{proof}

\subsection{Tightness of the integral term}

Our goal is to prove the following result.
\begin{prop} \label{tightintterm}
Let $(\beta, \gamma) \in R_0$. Then the sequence $\{ \mathcal{I}_t^n(G); t \in [0,T] \}_{n \geq 1}$  is tight with respect to the Skorohod topology of $\mcb{D}( [0,T], \mathbb{R})$, for every $G \in \mathcal{S}_{\beta,\gamma}$.
\end{prop}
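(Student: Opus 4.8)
We verify the two conditions of Aldous' criterion, Proposition \ref{tightmarterm0}, for $\{\mathcal I_t^n(G)\}_{n\ge 1}$. Write $V_n := \Theta(n)\mcb L_n\mathcal Y^n(G)$, where $\mathcal Y^n(G)$ is regarded as the function on $\Omega$ given by $\eta\mapsto \frac{1}{\sqrt n}\sum_x G(\tfrac xn)(\eta(x)-b)$; then $\mathcal I_t^n(G) = \int_0^t V_n(\eta_s^n)\,ds$, and by Dynkin's formula $\mathcal I_t^n(G) = \mathcal Y_t^n(G) - \mathcal Y_0^n(G) - \mathcal M_t^n(G)$. The central point is that $V_n$ lies in the range of the accelerated generator, with potential $\mathcal Y^n(G)$, so that its $H_{-1}$ norm with respect to $\nu_b$ is exactly the Dirichlet form (carré du champ) of $\mathcal Y^n(G)$:
$$\|V_n\|_{-1,n}^2 \;:=\; \sup_{f}\Big\{2\,\mathbb{E}_{\nu_b}[V_n f] - \mathbb{E}_{\nu_b}\big[f\,(-\Theta(n)\mcb L_n)f\big]\Big\}
\;=\; \big\langle \mathcal Y^n(G),\,(-\Theta(n)\mcb L_n)\,\mathcal Y^n(G)\big\rangle_{\nu_b}
\;=\; \tfrac{1}{2t}\,\mathbb{E}_{\nu_b}\big[\langle\mathcal M^n(G)\rangle_t\big],$$
the last identity being the general relation between the carré du champ of a function and the expected quadratic variation of its Dynkin martingale. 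By Lemma \ref{lemconvmartterm1} (via Propositions \ref{prop2lem1convmart} and \ref{prop1lem1convmart}) the right-hand side is independent of $t$ and converges, as $n\to\infty$, to $\chi(b)\|\nabla_{\beta,\gamma}G\|_{2,\beta,\gamma}^2<\infty$; hence $\sup_{n\ge1}\|V_n\|_{-1,n}^2<\infty$. (Note that the $L^2(\nu_b)$-norm of $V_n$ itself does diverge, because of the contributions of the slow bonds near the origin, so no naive second-moment bound on $\mathcal I_t^n(G)$ is available; the divergence is absorbed by the factor $\Theta(n)$ in the denominator of the Dirichlet form.)

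Granting this uniform $H_{-1}$ bound, the first Aldous condition follows from the maximal Kipnis--Varadhan $L^2$ estimate of Section \ref{bgibsgeral}:
$$\mathbb{E}_{\nu_b}\Big[\sup_{0\le t\le T}\big(\mathcal I_t^n(G)\big)^2\Big]
= \mathbb{E}_{\nu_b}\Big[\sup_{0\le t\le T}\Big(\int_0^t V_n(\eta_s^n)\,ds\Big)^2\Big]
\;\lesssim\; T\,\|V_n\|_{-1,n}^2 \;\lesssim\; 1,$$
whence $\mathbb{P}_{\nu_b}\big(\sup_{t\le T}|\mathcal I_t^n(G)|>A\big)\le A^{-2}\mathbb{E}_{\nu_b}[\sup_t(\mathcal I_t^n(G))^2]\lesssim A^{-2}\to 0$ uniformly in $n$. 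For the second condition we use $\mathcal I_{\tau+\bar\tau}^n(G)-\mathcal I_\tau^n(G)=\big[\mathcal Y_{\tau+\bar\tau}^n(G)-\mathcal Y_\tau^n(G)\big]-\big[\mathcal M_{\tau+\bar\tau}^n(G)-\mathcal M_\tau^n(G)\big]$ for a stopping time $\tau\le T$ and $\bar\tau\le\omega$. The martingale increment is controlled exactly as in the proof of Proposition \ref{tightmartterm}, noting that the density of $\langle\mathcal M^n(G)\rangle$ is dominated by the deterministic constant $\mathcal A_{n,\beta}(G)+\mathcal B_{n,\beta}(G)\lesssim1$ (replace $[\eta(y)-\eta(x)]^2$ by $1$ in \eqref{quadvarMtnG} and invoke the convergence behind Lemma \ref{lemconvmartterm1}), so that $\mathbb{E}_{\nu_b}[(\mathcal M_{\tau+\bar\tau}^n(G)-\mathcal M_\tau^n(G))^2]\lesssim\omega$; the field increment is controlled via the strong Markov property at $\tau$, the bound $\mathbb{E}_{\nu_b}[\sup_{t\le T}(\mathcal Y_t^n(G))^2]\lesssim1$ (obtained by adding the maximal estimate above to the analogous ones for $\mathcal Y_0^n(G)$, from Proposition \ref{tightfluc1}, and for $\mathcal M_t^n(G)$, from Doob's inequality and \eqref{MthnL2}), and the $H_{-1}$ estimates of Section \ref{bgibsgeral}, yielding $\mathbb{E}_{\nu_b}[(\mathcal Y_{\tau+\bar\tau}^n(G)-\mathcal Y_\tau^n(G))^2]\lesssim\omega$. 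Chebyshev's inequality then closes the second condition, and Proposition \ref{tightmarterm0} gives the claim.

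The real work, then, is not in this proposition but in its two inputs: the discrete-to-continuous convergence results of Appendix \ref{secdiscconv}, which make $\mathcal A_{n,\beta}(G)+\mathcal B_{n,\beta}(G)$ -- and therefore $\|V_n\|_{-1,n}^2$ -- bounded uniformly in $n$ (this is precisely where the slow barrier forces the boundary conditions built into $\mathcal S_{\beta,\gamma}$ and a delicate control of the several tails of series involving $p(\cdot)$), and the uniform-in-$n$ $H_{-1}$/Kipnis--Varadhan $L^2$ estimates of Section \ref{bgibsgeral}, in their maximal and stopping-time forms.
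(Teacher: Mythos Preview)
Your route is genuinely different from the paper's. The paper does \emph{not} apply Aldous' criterion (or Kipnis--Varadhan) to $\mathcal I_t^n(G)$ directly; instead it splits $\mathcal I_t^n(G)=\mathcal C_t^n(G)+\mathcal E_t^n(G)$, where $\mathcal C_t^n(G)=\kappa_\gamma\int_0^t\mathcal Y_s^n(\Delta_{\beta,\gamma}G)\,ds$ and $\mathcal E_t^n(G)$ is a remainder. Tightness of $\mathcal C_t^n(G)$ follows from a straightforward second-moment bound and Kolmogorov--Centsov (since $\mathbb E_{\nu_b}[(\mathcal Y_s^n(\Delta_{\beta,\gamma}G))^2]$ is bounded uniformly in $s,n$), while $\mathcal E_t^n(G)$ is shown to vanish in $L^2$ uniformly in $t$ (Proposition~\ref{convrem}), which makes its tightness trivial. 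Your observation that $V_n=\Theta(n)\mcb L_n\mathcal Y^n(G)$ has $\|V_n\|_{-1,n}^2=\chi(b)[\mathcal A_{n,\beta}(G)+\mathcal B_{n,\beta}(G)]\lesssim 1$ is correct and elegant, and the first Aldous condition via the maximal Kipnis--Varadhan bound is fine.

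The second Aldous condition, however, is circular as written. You reduce the $\mathcal I$-increment to the $\mathcal Y$- and $\mathcal M$-increments, then assert $\mathbb E_{\nu_b}[(\mathcal Y_{\tau+\bar\tau}^n(G)-\mathcal Y_\tau^n(G))^2]\lesssim\omega$ via ``the bound $\mathbb E[\sup_t(\mathcal Y_t^n(G))^2]\lesssim 1$ and the $H_{-1}$ estimates of Section~\ref{bgibsgeral}''. A sup bound gives only $\lesssim 1$, not $\lesssim\omega$; and the $H_{-1}$ estimates apply to time-integrals of $V_n$, not to $\mathcal Y_t^n(G)$ itself. Any honest control of the $\mathcal Y$-increment here must pass through the $\mathcal I$-increment, which is what you are trying to bound. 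The fix is immediate and you should just take it: apply the Kipnis--Varadhan bound \emph{directly} to $\int_\tau^{\tau+\bar\tau}V_n(\eta_s^n)\,ds$. By the strong Markov property and stationarity of $\nu_b$, $\mathbb E_{\nu_b}\big[(\int_\tau^{\tau+\bar\tau}V_n\,ds)^2\big]=\mathbb E_{\nu_b}\big[(\int_0^{\bar\tau}V_n\,ds)^2\big]\lesssim\bar\tau\,\|V_n\|_{-1,n}^2\lesssim\omega$, and the detour through $\mathcal Y$ is unnecessary.

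A final remark on economy: even with this fix, your argument does not shortcut the paper. The decomposition $\mathcal I_t^n=\mathcal C_t^n+\mathcal E_t^n$ and Proposition~\ref{convrem} are needed anyway in Section~\ref{characfluc} to identify the limiting martingale problem (one must show that the integral term converges to $\kappa_\gamma\int_0^t\mathcal Y_s(\Delta_{\beta,\gamma}G)\,ds$, which is precisely the statement that $\mathcal E_t^n\to 0$). So the paper's decomposition is not an avoidable detour for tightness; it is the same estimate doing double duty.
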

By performing some algebraic manipulations, it holds $\mathcal{I}_t^n(G)=\mathcal{C}_t^n(G)+\mathcal{E}_t^n(G)$, where 
\begin{align}
\mathcal{C}_t^n(G): =&  \int_{0}^{t}  \frac{1}{\sqrt{n}} \sum _{x }  \kappa_{\gamma} \Delta_{\beta,\gamma} G\left( \tfrac{x}{n} \right)  \bar{\eta}_{s}^{n}(x) ds = \kappa_{\gamma} \int_0^t  \mathcal{Y}_s^n ( \Delta_{\beta,\gamma}   G )ds; \label{princnbfluc} \\
 \mathcal{E}_t^n(G):=&  \int_{0}^{t} \frac{1}{\sqrt{n}} \sum _{x } \big[ \Theta(n)  \mcb {R}_{n,\beta} G \left( \tfrac{x}{n} \right) + \Theta(n)  \mcb {K}_{n,\beta} G \left( \tfrac{x}{n} \right) - \kappa_{\gamma} \Delta_{\beta,\gamma} G\left( \tfrac{x}{n} \right) \big]   \bar{\eta}_{s}^{n}(x) ds. \label{extranbfluc}  
\end{align}
Above, $\mcb {K}_{n,\beta}$ and $\mcb {R}_{n,\beta}$ are defined by
\begin{equation}\label{op_Knb}
\mcb{K}_{n,\beta} G \left( \tfrac{x}{n} \right): =
\begin{dcases}
 \sum_{y} \left[ G( \tfrac{y}{n}) -G( \tfrac{x}{n}) \right] p(y-x) = \sum_{r } [ G( \tfrac{x+r}{n}) -G( \tfrac{x}{n}) ] p(r),  & \beta \in [0,1); \\
 \sum_{y=0}^{\infty} \big[ G( \tfrac{y}{n} )  -  G( \tfrac{x}{n} ) - n^{-1} G_{+}^{'}(0) (y-x) \big]p(y-x), & \beta \geq 1, x \geq 0; \\
  \sum_{y=-\infty}^{-1} \big[ G( \tfrac{y}{n} )  -  G( \tfrac{x}{n} ) - n^{-1} G_{-}^{'}(0) (y-x) \big]p(y-x), & \beta \geq 1, x \leq -1;
\end{dcases}
\end{equation}
and
\begin{equation}\label{op_Rnb}
\mcb{R}_{n,\beta} G \left( \tfrac{x}{n} \right): =
\begin{dcases}
 (1 -  \alpha n^{-\beta} ) \sum_{y: \{x, y\} \in \mcb S} \left[ G( \tfrac{y}{n}) -G( \tfrac{x}{n}) \right] p(y-x), & \beta \in [0,1); \\
\frac{\alpha}{n^{\beta}}   \sum_{y=-\infty}^{-1} \left[ G( \tfrac{y}{n}) -G( \tfrac{x}{n}) \right] p(y-x) + \frac{G_{+}^{'}(0)}{n}  \sum_{y=0}^{\infty}(y - x)  p(y-x),  & \beta \geq 1, x \geq 0; \\
\frac{\alpha}{n^{\beta}} \sum_{y=0}^{\infty} \left[ G( \tfrac{y}{n}) -G( \tfrac{x}{n}) \right] p(y-x) + \frac{G_{-}^{'}(0)}{n}  \sum_{y=-\infty}^{-1}(y - x)  p(y-x), & \beta \geq 1, x \leq -1.
\end{dcases}
\end{equation}
Therefore we are done if we can prove the next result. 
\begin{prop}  \label{tightfluc2}
The sequences $ \big\{ \mathcal{C}_t^n(G) ; t \in [0,T] \big\}_{n \geq 1}$ and $ \big\{ \mathcal{E}_t^n(G) ; t \in [0,T] \big\}_{n \geq 1}$ are tight, for every $(\beta, \gamma) \in R_0$ and every $G \in \mathcal{S}_{\beta,\gamma}$.
\end{prop}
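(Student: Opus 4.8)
The plan is to prove tightness of the two sequences $\{\mathcal{C}_t^n(G)\}_{n\geq 1}$ and $\{\mathcal{E}_t^n(G)\}_{n\geq 1}$ separately, in each case via Aldous' criterion (Proposition \ref{tightmarterm0}), exactly as for the martingale term. For $\mathcal{C}_t^n(G) = \kappa_\gamma \int_0^t \mathcal{Y}_s^n(\Delta_{\beta,\gamma}G)\,ds$, the key observation is that since $G \in \mathcal{S}_{\beta,\gamma}$ we have $\Delta_{\beta,\gamma}G \in \mathcal{S}_{\beta,\gamma}$ (this is noted just after \eqref{defnablafluc}), so Proposition \ref{tightfluc1} applies with test function $\Delta_{\beta,\gamma}G$ and gives $\sup_{s\in[0,T]}\mathbb{E}_{\nu_b}[(\mathcal{Y}_s^n(\Delta_{\beta,\gamma}G))^2] \lesssim \chi(b)\|\Delta_{\beta,\gamma}G\|_{2,\mathbb{R}}^2 \lesssim 1$ uniformly in $n$ (the limiting variance being finite and the convergence of the characteristic functions together with a uniform-integrability / direct second-moment computation under the product measure $\nu_b$ yielding the uniform bound). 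From this the first Aldous condition follows from $|\mathcal{C}_t^n(G)| \leq \kappa_\gamma \int_0^T |\mathcal{Y}_s^n(\Delta_{\beta,\gamma}G)|\,ds$ together with Markov's inequality, and the second follows from Cauchy--Schwarz: for stopping times $\tau \leq \bar\tau + \tau \leq T$ with $\bar\tau \leq \omega$,
\[
\mathbb{E}_{\nu_b}\Big[\big(\mathcal{C}_{\tau+\bar\tau}^n(G) - \mathcal{C}_\tau^n(G)\big)^2\Big] \leq \kappa_\gamma^2\, \bar\tau \int_{\tau}^{\tau+\bar\tau} \mathbb{E}_{\nu_b}\big[(\mathcal{Y}_s^n(\Delta_{\beta,\gamma}G))^2\big]\,ds \lesssim \omega^2,
\]
and then Chebyshev finishes condition (2).

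For $\mathcal{E}_t^n(G)$ the structure is the same: writing $H_n := \Theta(n)\mcb{R}_{n,\beta}G + \Theta(n)\mcb{K}_{n,\beta}G - \kappa_\gamma \Delta_{\beta,\gamma}G$ on the grid, one has $\mathcal{E}_t^n(G) = \int_0^t \frac{1}{\sqrt n}\sum_x H_n(\tfrac xn)\bar\eta_s^n(x)\,ds$. The random variable $\frac{1}{\sqrt n}\sum_x H_n(\tfrac xn)\bar\eta_s^n(x)$ has, under the invariant measure $\nu_b$, mean zero and variance $\chi(b)\frac1n\sum_x H_n(\tfrac xn)^2$ (using independence of the $\bar\eta_s^n(x)$ under $\nu_b$). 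Hence by the same Cauchy--Schwarz/Chebyshev scheme, both Aldous conditions for $\mathcal{E}_t^n(G)$ reduce to the single uniform bound
\[
\sup_{n\geq 1}\ \frac1n \sum_x \big[H_n(\tfrac xn)\big]^2 \lesssim 1 .
\]
So the whole matter is to show that the discrete approximation $\Theta(n)(\mcb{R}_{n,\beta} + \mcb{K}_{n,\beta})G$ converges to $\kappa_\gamma\Delta_{\beta,\gamma}G$ in the (weighted) $\ell^2$ sense of the grid, uniformly in $n$. This is precisely the content of the discrete-to-continuous convergence estimates collected in the Appendix (the analogues of $\mathcal{A}_{n,\beta}$, $\mathcal{B}_{n,\beta}$ convergence used in Lemma \ref{lemconvmartterm1}, now for the ``second-order'' operators $\mcb{K}$, $\mcb{R}$), and I would simply invoke those propositions case by case in $(\beta,\gamma)\in R_0$.

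The main obstacle is exactly this last $\ell^2$ estimate, and it is genuinely delicate because of the long jumps and the slow barrier. For $\beta\in[0,1)$ one must show $\Theta(n)\mcb{K}_{n,\beta}G \to \kappa_\gamma G''$ in $\ell^2$ on the grid: when $\gamma>2$ this is a Taylor expansion argument using $\sum_r r^2 p(r) = \sigma^2 < \infty$, but when $\gamma = 2$ the variance diverges and the $\log n$ in $\Theta(n)$ is essential, so one has to carefully split the $r$-sum into a bulk part (where the quadratic Taylor term gives the $\log n$) and a tail part $|r| \gtrsim n$ (controlled by $\sum_{|r|>n} p(r) \sim c_2 n^{-2}\log$-free decay, using $\|G\|_\infty$), while the remainder term $\Theta(n)\mcb{R}_{n,\beta}G$ must be shown to vanish in $\ell^2$ thanks to the extra factor $(1-\alpha n^{-\beta})$ being $O(1)$ and the restriction to $\mcb{S}$-bonds forcing $|x-y|$ large near the barrier. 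For $\beta\geq 1$ the subtlety is the correction terms $n^{-1}G_\pm'(0)(y-x)$ built into the definitions \eqref{op_Knb}--\eqref{op_Rnb}: one must check that for $G$ in $\mathcal{S}_{Neu}(\mathbb{R}^*)$ (or $\mathcal{S}_{Rob}(\mathbb{R}^*)$ when $\gamma>2$, $\beta=1$) the boundary conditions encoded in the space make the leftover boundary contributions either vanish ($\beta>1$, Neumann) or converge to the Robin term $\hat\alpha[G_+(0)-G_-(0)]$ absorbed correctly into $\kappa_\gamma\Delta_{\beta,\gamma}G$ via the matching of $\|\cdot\|_{2,\beta,\gamma}$; the non-smoothness of $G$ across $0$ means the expansions must be done one-sidedly and the cross terms between $x\geq 0$ and $y\leq -1$ estimated using $|x-y|\geq 1$ and the decay of $p$. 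All of these are exactly the statements proved in Section \ref{secdiscconv}, so once they are available the proof of Proposition \ref{tightfluc2}, and hence of Proposition \ref{tightintterm}, is the short Aldous-criterion argument sketched above.
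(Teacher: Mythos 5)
Your treatment of $\mathcal{C}_t^n(G)$ is fine: the uniform bound $\sup_{t,n}\mathbb{E}_{\nu_b}\big[\big(\mathcal{Y}_t^n(\Delta_{\beta,\gamma}G)\big)^2\big]=\sup_n\frac{\chi(b)}{n}\sum_x[\Delta_{\beta,\gamma}G(\tfrac{x}{n})]^2<\infty$ is exactly the paper's estimate (obtained by the direct computation under the product measure, not via Proposition \ref{tightfluc1}, which only gives convergence in distribution), and whether one then packages the increment bound as Aldous' condition (2) or as the Kolmogorov--Centsov criterion, as the paper does, is immaterial.

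The gap is in the $\mathcal{E}_t^n(G)$ part. You claim that both Aldous conditions reduce to the static bound $\sup_n\frac{1}{n}\sum_x[H_n(\tfrac{x}{n})]^2\lesssim 1$ with $H_n=\Theta(n)\mcb{R}_{n,\beta}G+\Theta(n)\mcb{K}_{n,\beta}G-\kappa_\gamma\Delta_{\beta,\gamma}G$. For the $\mcb{K}$-contribution this works (it is how the paper proves Proposition \ref{convrem1}, via Proposition \ref{convknbeta}), but for the $\mcb{R}$-contribution the bound is simply \emph{false}: from \eqref{op_Rnb}, at sites $x=O(1)$ near the barrier one has, e.g.\ for $\beta\in[0,1)$, $\sum_{y\leq-1}[G(\tfrac{y}{n})-G(0)]p(y)\approx -G'(0)\tfrac{m}{n}$, so $\Theta(n)\mcb{R}_{n,\beta}G(0)$ is of order $n$, and $\frac{1}{n}[\Theta(n)\mcb{R}_{n,\beta}G(\tfrac{0}{n})]^2\sim n$ diverges; the same happens in the critical case $\beta=1$, $\gamma>2$, where the Robin condition makes $\sum_{x\geq 0}\Theta(n)\mcb{R}_{n,1}G(\tfrac{x}{n})$ cancel only \emph{after} summing over $x$, not pointwise. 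This is precisely why the paper does not argue statically: Proposition \ref{convrem2} is proved by first replacing $\bar{\eta}_s(x)$ by $\bar{\eta}_s(0)$ (resp.\ $\bar{\eta}_s(-1)$, or box averages) so that the deterministic sum over $x$ can be exploited, and the replacement costs are controlled by the Kipnis--Varadhan variational bound (Lemma 4.3 of \cite{CLO}) together with the Dirichlet form, i.e.\ by a genuinely dynamic estimate on the time integral $\int_0^t[\bar{\eta}_s(x)-\bar{\eta}_s(0)]ds$ that Cauchy--Schwarz in time cannot see. These estimates live in Section \ref{bgibsgeral} (Propositions \ref{prop1bgcrit}--\ref{prop3bgcrit} and \ref{convrem3}--\ref{thirdbg}), not in the Appendix, and they cannot be replaced by the $\ell^2$ convergence you invoke because that $\ell^2$ statement does not hold for $\mcb{R}_{n,\beta}$.
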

In Section \ref{bgibsgeral} we will prove the next result, which is useful to treat $ \big\{ \mathcal{E}_t^n(G) ; t \in [0,T] \big\}_{n \geq 1}$.
\begin{prop} \label{convrem}
Let $(\beta,\gamma) \in R_0$. For every $G \in \mathcal{S}_{\beta,\gamma}$, it holds
\begin{align*} 
\lim_{n \rightarrow \infty} \mathbb{E}_{\nu_b} \Big[ \sup_{t \in [0,T]} \big(\mathcal{E}_t^n(G) \big)^2 \Big] =0. 
\end{align*}
\end{prop}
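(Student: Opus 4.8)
The plan is to realise $\mathcal{E}_t^n(G)$ as an additive functional of the equilibrium process and to estimate it via the Kipnis--Varadhan energy inequality. First I would rewrite \eqref{extranbfluc} as
\[
\mathcal{E}_t^n(G)=\int_0^t V_n^G(\eta_s^n)\,ds,\qquad V_n^G(\eta):=\frac{1}{\sqrt n}\sum_x h_n(x)\,\bar\eta(x),\qquad h_n(x):=\Theta(n)\big[\mcb K_{n,\beta}+\mcb R_{n,\beta}\big]G(\tfrac xn)-\kappa_\gamma \Delta_{\beta,\gamma}G(\tfrac xn).
\]
Using that $p(\cdot)$ is symmetric one checks that $\Theta(n)[\mcb K_{n,\beta}+\mcb R_{n,\beta}]G(\tfrac xn)$ is $\Theta(n)$ times the action on $G(\tfrac\cdot n)$ of the discrete operator $\mathcal{L}_n^{\mathrm{disc}}g(x):=\sum_y p(y-x)r_{x,y}^n\,(g(y)-g(x))$ (up to the boundary renormalisation built into $\mcb K_{n,\beta},\mcb R_{n,\beta}$ when $\beta\ge1$); thus $h_n$ is exactly the error committed in approximating $\kappa_\gamma\Delta_{\beta,\gamma}G$ by $\Theta(n)\mathcal{L}_n^{\mathrm{disc}}$ evaluated on the test function. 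In particular $V_n^G$ has zero $\nu_b$-mean, and $h_n\in\ell^2(\mathbb Z)$ because $G\in\mathcal{S}(\mathbb R^*)$ and the tails of $p(\cdot)$ are summable (this uses $m<\infty$ and, for $\gamma>2$, $\sigma^2<\infty$).

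Since $\nu_b$ is reversible for $\mcb L_n$ and $V_n^G$ is centred, the energy estimate (Appendix~1.6 of \cite{kipnis1998scaling}) gives
\[
\mathbb{E}_{\nu_b}\Big[\sup_{t\in[0,T]}\big(\mathcal{E}_t^n(G)\big)^2\Big]\ \lesssim\ T\,\|V_n^G\|_{-1,n}^2,\qquad \|V_n^G\|_{-1,n}^2:=\sup_f\Big\{2\langle V_n^G,f\rangle_{\nu_b}-\langle f,(-\Theta(n)\mcb L_n)f\rangle_{\nu_b}\Big\}.
\]
The crude bound $\mathbb{E}_{\nu_b}[\sup_t(\mathcal{E}_t^n(G))^2]\le T^2\,\mathbb{E}_{\nu_b}[(V_n^G)^2]=T^2\chi(b)\,n^{-1}\|h_n\|_{\ell^2}^2$ is \emph{not} enough: near the origin $h_n$ develops a boundary layer of height of order $\Theta(n)/n$ (or $\Theta(n)n^{-\beta}$) over $O(1)$ sites, so $n^{-1}\|h_n\|_{\ell^2}^2$ need not vanish; the $H_{-1}$-norm is the right quantity because it is insensitive to such localised zero-average perturbations. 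As $V_n^G$ is linear in the occupation variables and $(-\Theta(n)\mcb L_n)^{-1}$ maps a centred degree-one function to a degree-one function — explicitly, if $\ell$ solves the discrete Poisson equation $\Theta(n)(-\mathcal{L}_n^{\mathrm{disc}})\ell=n^{-1/2}h_n$ then $\sum_z\ell(z)\bar\eta(z)=(-\Theta(n)\mcb L_n)^{-1}V_n^G$ — the variational problem collapses to the deterministic identity
\[
\|V_n^G\|_{-1,n}^2=\frac{\chi(b)}{n\,\Theta(n)}\,\big\langle h_n,(-\mathcal{L}_n^{\mathrm{disc}})^{-1}h_n\big\rangle_{\ell^2(\mathbb Z)},
\]
so it remains to show $\frac{1}{n\,\Theta(n)}\langle h_n,(-\mathcal{L}_n^{\mathrm{disc}})^{-1}h_n\rangle_{\ell^2}\to 0$.

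I would prove this by splitting $h_n=h_n^{\mathrm{bulk}}+h_n^{\mathrm{bd}}$. The bulk part is the macroscopic discretisation error of $\Theta(n)\mathcal{L}_n^{\mathrm{disc}}$ against $\kappa_\gamma\Delta_{\beta,\gamma}$, of the form $\widehat r_n(\tfrac\cdot n)$ with $\widehat r_n\to0$ in $L^2(\mathbb R)$ (Taylor expansion using the finite moments of $p(\cdot)$; for $\gamma=2$ it is precisely the logarithmic time scale \eqref{timescale} that produces this); since $\mathcal{L}_n^{\mathrm{disc}}$ is macroscopically comparable to $\tfrac{\kappa_\gamma}{\Theta(n)}\Delta_{\beta,\gamma}$, its Green operator contributes a factor of order $n\Theta(n)$ that cancels the prefactor and leaves a multiple of $\|\widehat r_n\|_{H^{-1}(\mathbb R)}^2\to0$. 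For the boundary part $h_n^{\mathrm{bd}}$ — supported on $O(1)$ sites near $0$, of height $O(\Theta(n)/n)$ or $O(\Theta(n)n^{-\beta})$ — the key observation, read off from the explicit forms of $\mcb K_{n,\beta}$, $\mcb R_{n,\beta}$ (including the subtracted first-order term for $\beta\ge1$), the symmetry of $p(\cdot)$, and the boundary relations defining $\mathcal{S}_{Neu}(\mathbb R^*)$, $\mathcal{S}_{Rob}(\mathbb R^*)$, is that $h_n^{\mathrm{bd}}$ is a \emph{discrete dipole}: its total mass and its mass on each half-line tend to $0$. For such functions the discrete $H_{-1}$-norm on $\mathbb Z$ is bounded by a constant times the square of the height, contributing $O(\Theta(n)/n^3)\to0$; the cross term is handled by Cauchy--Schwarz. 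The bulk and boundary estimates, uniformly over $(\beta,\gamma)\in R_0$, together with the required convergences of discrete operators, are exactly the content of Section~\ref{secdiscconv}, which is where the bulk of the work sits. The hard part is this discrete-analysis step, and above all the dipole property of $h_n^{\mathrm{bd}}$ in the critical case $\beta=1,\gamma>2$: there the $\alpha n^{-1}$ cross-bond term and the renormalising term $n^{-1}G_\pm'(0)\sum_y(y-x)p(y-x)$ each contribute at order $\Theta(n)/n$ and must cancel by virtue of the Robin identity $G_\pm'(0)=\hat\alpha[G_+(0)-G_-(0)]$; this, with the uniform control of the many tail sums of $p(\cdot)$ and the logarithmic corrections at $\gamma=2$, is the technical heart of the argument.
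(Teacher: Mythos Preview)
Your framework—reduce to the $H_{-1}$ norm via the Kipnis--Varadhan inequality and exploit that $V_n^G$ is linear in the occupation variables—is sound and is indeed what the paper uses for the boundary contribution (through Lemma~4.3 of \cite{CLO}). But two points of your boundary analysis do not survive scrutiny, and the paper's route differs from yours in an essential way.

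First, the paper separates $\mcb K_{n,\beta}$ from $\mcb R_{n,\beta}$ rather than ``bulk versus boundary layer''. The $\mcb K$-piece (Proposition~\ref{convrem1}) requires no energy estimate: plain Cauchy--Schwarz plus the deterministic $\ell^1$ convergence $\Theta(n)\mcb K_{n,\beta}G\to\kappa_\gamma\Delta_{\beta,\gamma}G$ of Proposition~\ref{convknbeta} suffices. Your Green-operator route here is correct but needlessly heavy.

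The genuine gap is your description of $h_n^{\mathrm{bd}}$. The $\mcb R_{n,\beta}$-contribution is \emph{not} supported on $O(1)$ sites: for $x\ge0$ one has $|\Theta(n)\mcb R_{n,\beta}G(\tfrac xn)|\lesssim \Theta(n)\,x^{-\gamma}$, so the support extends over all of $\mathbb N$. Nor do the half-line masses vanish: for $\beta<1$, $\gamma>2$ a direct computation gives $\sum_{x\ge0}\Theta(n)\mcb R_{n,\beta}G(\tfrac xn)\sim -n\,G'(0)\,\sigma^2/2$, which diverges. And even for a true dipole straddling $\{-1,0\}$, the $H_{-1}$ norm with respect to $-\mathcal L_n^{\mathrm{disc}}$ sees the effective resistance across the slow barrier, which is of order $n^\beta$, not $O(1)$; your $O(\Theta(n)/n^3)$ omits this factor and would fail precisely at $\beta=1$.

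What the paper does instead for $\mcb R_{n,\beta}$ (Proposition~\ref{convrem2}): it first strips the heavy tail by Cauchy--Schwarz (Proposition~\ref{convrem3}), reducing to a window $|x|,|y|<\varepsilon n$. Inside the window it does not solve a Poisson equation; it uses the variational bound to \emph{replace} each $\bar\eta(x)$, $x\ge0$, by $\bar\eta(0)$ or a box average, paying only with the Dirichlet form along \emph{fast} bonds within the half-line (Propositions~\ref{firstbg}--\ref{thirdbg} and, in the critical case, Propositions~\ref{prop2bgcrit}--\ref{prop3bgcrit}). After replacement one is left with a single occupation variable times the deterministic coefficient $n^{-1/2}\sum_{x\ge0}\Theta(n)\mcb R_{n,\beta}G(\tfrac xn)$; for $\beta=1$, $\gamma>2$ it is exactly the Robin identity that makes this \emph{summed} coefficient $O(1)$ rather than $O(\sqrt n)$ (Proposition~\ref{prop1bgcrit}). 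So the Robin cancellation enters through a one-block replacement followed by a sum identity, not through a dipole $H_{-1}$ bound, and the replacement step—Section~\ref{bgibsgeral}, not Section~\ref{secdiscconv}—is where the analytic work lies.
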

Assuming last result, we now show Proposition \eqref{tightfluc2}.
\begin{proof}[Proof of Proposition \ref{tightfluc2}]
Let $G \in \mathcal{S}_{\beta,\gamma}$. In order to treat $ \big\{ \mathcal{C}_t^n(G) ; t \in [0,T] \big\}_{n \geq 1}$, we follow the strategy of \cite{jarafluc}. Since $\mathcal{C}_0^n(G)=0$, from Kolmogorov-Centsov's criterion (see Proposition 4.3 of \cite{jarafluc}), it is enough to verify that there exists $K >0$ such that
\begin{align*}
\forall r,t \in [0,T], \quad \mathbb{E}_{\nu_b} [  |  \mathcal{C}_t^n(G) -  \mathcal{C}_r^n(G) |^2   ] \leq K |t-r|^2.
\end{align*} 
Without loss of generality, we can assume that $r \leq t$. We observe that
\begin{align*} 
K_1:=& \sup_{t \in [0,T], n \geq 1} \mathbb{E}_{\nu_b} \big[  \big( \mathcal{Y}_t^n ( \Delta_{\beta,\gamma}  G ) \big)^2 \big]  = \sup_{ n \geq 1} \Big\{ \frac{\chi(b)}{n} \sum_{x} [\Delta_{\beta,\gamma}  G (\tfrac{x}{n})]^2  \Big\}< \infty.
\end{align*}
Therefore, Cauchy-Schwarz's inequality and Fubini's Theorem lead to
\begin{align*}
&\mathbb{E}_{\nu_b} [  |  \mathcal{C}_t^n(G) -  \mathcal{C}_r^n(G) |^2   ] = \mathbb{E}_{\nu_b}\Big[  \Big| \kappa_{\gamma} \int_r^t  \mathcal{Y}_s^n ( \Delta_{\beta,\gamma}   G )ds \Big|^2  \Big] \\ 
\leq & (\kappa_{\gamma})^2 (t-r)     \int_r^t \sup_{t \in [0,T], n \geq 1} \mathbb{E}_{\nu_b} \big[  \big( \mathcal{Y}_t^n ( \Delta_{\beta,\gamma}  G ) \big)^2 \big] ds  \leq K_1  (\kappa_{\gamma})^2 (t-r)^2,
\end{align*}
leading to the tightness of $ \big\{  \mathcal{C}_t^n(G) ; t \in [0,T] \big\}_{n \geq 1}$. It remains to analyse $ \big\{  \mathcal{E}_t^n(G) ; t \in [0,T] \big\}_{n \geq 1}$ for $G \in \mathcal{S}_{\beta,\gamma}$. We do so by applying Aldous' criterion. From Chebyshev's inequality and Proposition \ref{convrem}, we get
\begin{align*}
\limsup_{n \rightarrow \infty} \mathbb{P}_{\nu_b} \big( \sup_{t \in [0,T]} | \mathcal{E}_t^n(G) | > A \big) \leq & \limsup_{n \rightarrow \infty} \frac{1}{A^2} \mathbb{E}_{\nu_b} \big[ \big( \sup_{t \in [0,T]} | \mathcal{E}_t^n(G)| \big)^2 \big] \\
=& \limsup_{n \rightarrow \infty} \frac{1}{A^2} \mathbb{E}_{\nu_b} \big[  \sup_{t \in [0,T]} \big( \mathcal{E}_t^n(G) \big)^2 \big] =0 
\end{align*}
for every $A >0$, hence the first condition of Proposition \ref{tightmarterm0} is satisfied. Another application of Chebyshev's inequality and Proposition \ref{convrem} leads to
\begin{align*}
\limsup_{n \rightarrow\infty} \sup_{\tau  \in \mathcal{T}_{T},\bar\tau \leq \omega} \mathbb{P}_{\nu_b} \big(  |\mathcal{E}_{\tau+ \bar\tau}^n(G) - \mathcal{E}_{\tau}^n(G)  |> \ve \big) \leq& \frac{1}{\ve^2} \limsup_{n \rightarrow\infty} \sup_{\tau  \in \mathcal{T}_{T},\bar\tau \leq \omega} \mathbb{E}_{\nu_b} \big[  |\mathcal{E}_{\tau+ \bar\tau}^n(G) - \mathcal{E}_{\tau}^n(G)  |^2  \big] \\
\leq& \frac{2}{\ve^2} \limsup_{n \rightarrow\infty} \sup_{\tau  \in \mathcal{T}_{T},\bar\tau \leq \omega} \mathbb{E}_{\nu_b} \big[  \big(\mathcal{E}_{\tau+ \bar\tau}^n(G) \big)^2+ \big( \mathcal{E}_{\tau}^n(G) \big)^2  \big] \\
 \leq & \frac{4}{\ve^2} \limsup_{n \rightarrow\infty} \sup_{\tau  \in \mathcal{T}_{T},\bar\tau \leq \omega} \mathbb{E}_{\nu_b} \big[  \sup_{t \in [0,T]} \big( \mathcal{E}_t^n(G) \big)^2 \big] =0
 \end{align*}
for every $\omega, \varepsilon >0$. Therefore the second condition of Proposition \ref{tightmarterm0} is also satisfied and we conclude that $ \big\{ \mathcal{E}_t^n(G) ; t \in [0,T] \big\}_{n \geq 1}$ is tight.
\end{proof}

\section{Characterization of limit points} \label{characfluc}

From Propositions \ref{tightfluc1}, \ref{tightmartterm} and \ref{tightfluc2}, we know that there exists at least a subsequence $(n_j)_{j \geq 1}$ such that $(\mathcal{Y}_0^{n_j})_{j \geq 1}$, $(\mathcal{M}_t^{n_j})_{j \geq 1}$ and $(\mathcal{I}_t^{n_j})_{j \geq 1}$ converge in distribution to $\mathcal{Y}_0$, $\mathcal{M}_t$ and $\mathcal{I}_t$, respectively; in particular,  $(\mathcal{Y}_t^{n_j})_{j \geq 1}$ converges in distribution to some limit point $\mathcal{Y}_t$. In this section we will characterize such limit point.

Fix $(\beta, \gamma) \in R_0$ and $G \in \mathcal{S}_{\beta,\gamma}$. From \eqref{MthnL2}, $\mathcal{M}_t(G)$ is the limit in distribution of the uniformly integrable sequence $\big(\mathcal{M}_t^{n_j}(G) \big)_{j \geq 1}$ of martingales; in particular, $\mathcal{M}_t(G)$ is also a martingale.

Since $\mathcal{N}_t^{n_j}(G)=[\mathcal{M}_t^{n_j}(G)]^2 - \langle \mathcal M^{n_j} (G)  \rangle_t$ for every $t \in [0,T]$ and $j \geq 1$, from Lemma \ref{lemconvmartterm1} we get that $\mathcal{N}_t^n(G)$ converges in distribution to
\begin{align} \label{defNtG}
\mathcal{N}_t(G):= [ \mathcal{M}_t(G)]^2 - 2 \chi(b) t \kappa_{\gamma}  \| \nabla_{\beta,\gamma} G  \|_{2, \beta, \gamma}.
\end{align}
We claim that $\mathcal{N}_t(G)$ is  \textcolor{magenta}{ an} $\mathcal{F}_t$-martingale. Since it is the limit in distribution of the sequence of martingales $\big( \mathcal{N}_t^n(G) \big)_{n \geq 1}$, it is enough to prove the uniform integrability of the sequence. A sufficient condition is
\begin{align*}
\sup_{t \in [0,T], n \geq 1} \mathbb{E}_{\nu_b} \Big[ \big( \mathcal{N}_t^n(G) \big)^2 \Big]  = \sup_{t \in [0,T], n \geq 1} \mathbb{E}_{\nu_b} \Big[ \big( [ \mathcal{M}_t^n(G)]^2 + \langle  \mathcal{M}^n(G) \rangle_t  \big)^2 \Big]  < \infty. 
\end{align*}
Since $(u+v)^2 \leq 2 (u^2 + v^2)$, last display is bounded from above by a constant times
\begin{equation} \label{sumsupr}
\sup_{t \in [0,T], n \geq 1} \mathbb{E}_{\nu_b} \big[ \big(   \langle  \mathcal{M}^n(G) \rangle_t  \big)^2 \big]  + \sup_{t \in [0,T], n \geq 1} \mathbb{E}_n \big[  [ \mathcal{M}_t^n(G)]^4 \big] 
\end{equation}
From Lemmas \ref{lemconvmartterm1} and \ref{lemconvmartterm2}, we get
\begin{align*}
\limsup_{n \rightarrow \infty} \mathbb{E}_{\nu_b} \Big[ \big(   \langle  \mathcal{M}^n(G) \rangle_t  \big)^2 \big] = & \lim_{n \rightarrow \infty} \big( \mathbb{E}_{\nu_b} [ \langle \mathcal{M}^n(G)  \rangle_t ] \big)^2 + \limsup_{n \rightarrow \infty} \mathbb{E}_{\nu_b} \Big[  \Big(  \langle \mathcal{M}^n (G)  \rangle_t - \mathbb{E}_{\nu_b} [  \langle \mathcal{M}^n (G)  \rangle_t ] \Big)^2 \Big] \\
\lesssim& \big[ 2 \chi(b) t \kappa_{\gamma} \| \nabla_{\beta,\gamma} G \|_{2, \beta, \gamma} \big]^2 + 1 \lesssim 1,
\end{align*}
and then the first supremum in \eqref{sumsupr} is finite. From the definition of \eqref{defMtngfluc} we observe that
\begin{equation*} 
|\mathcal M_{t-}^n(G) -\mathcal M_t^n(G)| = | \mathcal{Y}_{t-}^n (G)  -  \mathcal{Y}_t^n (G)  | = \Big| \frac{1}{\sqrt{n}} \sum_{x } G( \tfrac{x}{n} )[ \eta_{t-}^n (x)   - \eta_{t}^n (x) ] \Big| \leq  \frac{2 \|G \|_{\infty} }{\sqrt{n}}.
\end{equation*}
The inequality holds since in an infinitesimal interval of time at most one jump occurs. Now we treat the second supremum by using Lemma 3 of \cite{Dittrich91}, from where we know that there exists $C>0$ such that
\begin{align*}
\mathbb{E}_{\nu_b} \big[  [ \mathcal{M}_t^n(G)]^4 \big]  \leq C \Big( \mathbb{E}_{\nu_b} \big[  [ \mathcal{M}_t^n(G)]^2 \big] +   \mathbb{E}_{\nu_b} \big[ \sup_{t \in [0,T]} |\mathcal M_{t-}^n(G) -\mathcal M_t^n(G)|^{4} \big] \Big) \\
\leq C  \mathbb{E}_{\nu_b} [  \langle \mathcal{M}^n(G) \rangle_t] + C  \mathbb{E}_{\nu_b} \Big[ \Big(  \frac{2 \|G \|_{\infty} }{\sqrt{n}} \Big)^{4} \Big] = C 2 b(1-b) t \kappa_{\gamma} \| \nabla_{\beta,\gamma} G \|_{2, \mathbb{R}}  + \frac{16 C \|G \|_{\infty}^4}{n^2},
\end{align*}
and then the second supremum in \eqref{sumsupr} is also finite. Therefore, $\mathcal{N}_t(G)$ is indeed  \textcolor{magenta}{ an} $\mathcal{F}_t$-martingale. 

Furthermore, combining \eqref{defMtngfluc}, \eqref{princnbfluc}, \eqref{extranbfluc} and Proposition \ref{convrem}, for every $j \geq 1$ it holds
\begin{align*}
\mathcal{M}_t^n(G) := \mathcal{Y}_t^n (G) - \mathcal{Y}_0^n (G) - \kappa_{\gamma} \int_0^t  \mathcal{Y}_s^n ( \Delta_{\beta,\gamma}   G )ds, 
\end{align*}
plus a term that goes to zero in $L^2(\mathbb{P}_{\nu_b})$, as $j \rightarrow \infty$. Therefore, we conclude that $\mathcal{Y}$ satisfies the conditions stated in Proposition \ref{uniqou}. Therefore, in order to finish the proof of Theorem \ref{clt}, it only remains to show Proposition \ref{convrem}, which is done in next section. 

\section{Useful $L^2(\mathbb{P}_{\nu_b})$ estimates} \label{bgibsgeral}

From the definition of $\mathcal{E}_t^n(G)$ given in \eqref{extranbfluc}, Proposition \ref{convrem} is a direct consequence of next two results.
\begin{prop} \label{convrem1}
Let $(\beta,\gamma) \in R_0$. For every $G \in \mathcal{S}_{\beta,\gamma}$, it holds
\begin{align} \label{eqconvrem1}
\lim_{n \rightarrow \infty} \mathbb{E}_{\nu_b} \Big[ \sup_{t \in [0,T]} \Big( \int_{0}^{t} \frac{1}{\sqrt{n}} \sum _{x } \big[  \Theta(n)  \mcb {K}_{n,\beta} G \left( \tfrac{x}{n} \right) - \kappa_{\gamma} \Delta_{\beta,\gamma} G\left( \tfrac{x}{n} \right) \big]   \bar{\eta}_{s}^{n}(x) ds \Big)^2 \Big] =0. 
\end{align}
\end{prop}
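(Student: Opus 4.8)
The plan is to separate the probabilistic input from the analytic one. Set
\begin{equation*}
H_n(u) := \Theta(n)\,\mcb{K}_{n,\beta}G(u) - \kappa_{\gamma}\,\Delta_{\beta,\gamma}G(u), \qquad u\in\tfrac1n\mathbb{Z},
\end{equation*}
and $f_s^n := \tfrac{1}{\sqrt n}\sum_x H_n(\tfrac xn)\,\bar{\eta}_s^n(x)$, so that the quantity inside the supremum in \eqref{eqconvrem1} is $\big(\int_0^t f_s^n\,ds\big)^2$. First I would note that $\big|\int_0^t f_s^n\,ds\big|\le\int_0^T|f_s^n|\,ds$ for every $t\in[0,T]$, so that the Cauchy--Schwarz inequality gives $\sup_{t\in[0,T]}\big(\int_0^t f_s^n\,ds\big)^2\le T\int_0^T(f_s^n)^2\,ds$. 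Taking $\mathbb{E}_{\nu_b}$, using Fubini's theorem, the stationarity of the process started from the invariant measure $\nu_b$, and the fact \eqref{invmeas} that the variables $\bar{\eta}_0^n(x)$ are centred, independent and of variance $\chi(b)$ under $\nu_b$, we obtain
\begin{equation*}
\mathbb{E}_{\nu_b}\Big[\sup_{t\in[0,T]}\Big(\int_0^t f_s^n\,ds\Big)^2\Big]
\ \le\ T\int_0^T\mathbb{E}_{\nu_b}\big[(f_s^n)^2\big]\,ds
\ =\ T^2\,\mathbb{E}_{\nu_b}\big[(f_0^n)^2\big]
\ =\ \frac{\chi(b)\,T^2}{n}\sum_x H_n\big(\tfrac xn\big)^2 .
\end{equation*}
Hence the proposition reduces to the deterministic statement
\begin{equation*}
\lim_{n\to\infty}\ \frac1n\sum_x\Big[\Theta(n)\,\mcb{K}_{n,\beta}G\big(\tfrac xn\big)-\kappa_{\gamma}\,\Delta_{\beta,\gamma}G\big(\tfrac xn\big)\Big]^2 = 0 ,
\end{equation*}
that is, to the convergence of the rescaled discrete operator $\Theta(n)\mcb{K}_{n,\beta}$ to the continuum operator $\kappa_{\gamma}\Delta_{\beta,\gamma}$ in the discrete norm of $L^2(\tfrac1n\mathbb{Z})$ on the space of test functions $\mathcal{S}_{\beta,\gamma}$.

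This last convergence is one of the results established in Appendix \ref{secdiscconv}, and it is proved by a case analysis on $(\beta,\gamma)\in R_0$. When $\beta\in[0,1)$, so that $\mathcal{S}_{\beta,\gamma}=\mathcal{S}(\mathbb{R})$ and $\mcb{K}_{n,\beta}G(\tfrac xn)=\sum_r[G(\tfrac{x+r}n)-G(\tfrac xn)]\,p(r)$, one expands $G$ by Taylor: the odd-order terms cancel by the symmetry of $p(\cdot)$, the second-order term multiplied by $\Theta(n)$ converges to $\kappa_{\gamma}\Delta_{\beta,\gamma}G$ (this is precisely where the time scale \eqref{timescale} is tuned to the behaviour of $\sum_r r^2 p(r)$, which equals $\sigma^2<\infty$ for $\gamma>2$ and, truncated at $|r|\le n$, grows like $2c_2\log n$ for $\gamma=2$), and the remainder has vanishing discrete $L^2$-norm, which is checked by splitting each $r$-series at $|r|=\delta n$ — a further Taylor expansion and the symmetry of $p(\cdot)$ controlling the part $|r|\le\delta n$, the rapid decay of $G$ together with tail bounds on $\sum_{|r|>\delta n}p(r)$ the rest — and letting $\delta\to0$ afterwards. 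When $\beta\ge1$, $\mcb{K}_{n,\beta}$ is instead the one-sided operator of \eqref{op_Knb} with the linear correction $n^{-1}G_{\pm}^{(1)}(0)(y-x)$ subtracted, $\mathcal{S}_{\beta,\gamma}$ is $\mathcal{S}_{Neu}(\mathbb{R}^{*})$ or $\mathcal{S}_{Rob}(\mathbb{R}^{*})$, and $\Delta_{\beta,\gamma}G$ is the corresponding one-sided second derivative; in the bulk $|x|\ge\delta n$ the argument is unchanged, while the $O(\delta n)$ sites with $|x|<\delta n$ — where the one-sided truncation and the linear correction interact — are handled by bounding $H_n(\tfrac xn)$ there uniformly in $n$, so that their total contribution to $\tfrac1n\sum_x H_n(\tfrac xn)^2$ is $O(\delta)$, and again letting $\delta\to0$.

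The main obstacle is this discrete-to-continuum estimate; everything preceding it is soft. Within it the two genuinely delicate points are: for $\gamma=2$, matching the logarithmic time scale $\Theta(n)=n^2/\log(n)$ against the logarithmically divergent truncated second moment of $p(\cdot)$, which requires sharp two-sided control of partial sums such as $\sum_{0<|r|\le k}r^2 p(r)$ and $\sum_{0<|r|\le k}|r|\,p(r)$; and, for $\beta\ge1$, showing that the boundary-type contributions produced by the one-sided truncation — terms of the form $n^{-1}|G_{\pm}^{(1)}(0)|\sum_y|y-x|\,p(y-x)$, which do not decay rapidly in $x$ — sum to $o(n)$ in $\sum_x H_n(\tfrac xn)^2$. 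It is exactly here that one uses the hypothesis $G\in\mathcal{S}_{\beta,\gamma}$, i.e.\ the vanishing (resp.\ matching) of the odd one-sided derivatives of $G$ at the origin, rather than merely $G\in\mathcal{S}(\mathbb{R}^{*})$. Once the displayed limit is proved, Proposition \ref{convrem1} follows at once from the reduction above.
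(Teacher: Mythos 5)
Your proposal is correct and follows essentially the same route as the paper: the same Cauchy--Schwarz/Fubini/stationarity reduction to a deterministic discrete-to-continuum estimate for $\Theta(n)\mcb{K}_{n,\beta}G-\kappa_{\gamma}\Delta_{\beta,\gamma}G$, which is then proved by the same Taylor-expansion and tail-splitting arguments as Proposition \ref{convknbeta} in the Appendix. The only cosmetic difference is that you state the deterministic input as a discrete $\ell^2$ convergence, whereas the paper obtains it by bounding $\tfrac1n\sum_x H_n(\tfrac xn)^2\le\sup_x|H_n(\tfrac xn)|\cdot\tfrac1n\sum_x|H_n(\tfrac xn)|$ and invoking the uniform bound \eqref{convknbetasup} together with the $\ell^1$ convergence \eqref{convknbetasum}.
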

\begin{prop} \label{convrem2}
Let $(\beta,\gamma) \in R_0$. For every $G \in \mathcal{S}_{\beta,\gamma}$, it holds
\begin{align*} 
\lim_{n \rightarrow \infty} \mathbb{E}_{\nu_b} \Big[ \sup_{t \in [0,T]} \Big(\int_{0}^{t} \frac{1}{\sqrt{n}} \sum _{x } [ \Theta(n)  \mcb {R}_{n,\beta} G \left( \tfrac{x}{n} \right)   ]   \bar{\eta}_{s}^{n}(x) ds \Big)^2 \Big] =0. 
\end{align*}
\end{prop}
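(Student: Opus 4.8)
The plan is to reduce the statement to a deterministic estimate by means of the standard $H_{-1}$ (energy) inequality for stationary reversible Markov processes. Set $V^n_s(\eta) := \frac1{\sqrt n}\sum_x \Theta(n)\,\mcb{R}_{n,\beta}G(\tfrac xn)\,\bar\eta(x)$, a degree‑one (linear) function of the configuration with $\mathbb{E}_{\nu_b}[V^n_s]=0$. Since $\nu_b$ is reversible for $\mcb{L}_n$, there is a universal constant $C$ with
\[
\mathbb{E}_{\nu_b}\Big[\sup_{t\in[0,T]}\Big(\int_0^t V^n_s(\eta^n_s)\,ds\Big)^2\Big]\;\le\; C\,T\,\|V^n\|_{-1,n}^2 ,
\]
where $\|\cdot\|_{-1,n}$ is the norm dual to the Dirichlet form of the accelerated generator $\Theta(n)\mcb{L}_n$; so it suffices to show $\|V^n\|_{-1,n}^2\to 0$. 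Because $V^n$ is linear, its $H_{-1}$ norm is given by the variational formula: with $a_x:=\tfrac1{\sqrt n}\Theta(n)\mcb{R}_{n,\beta}G(\tfrac xn)$,
\[
\|V^n\|_{-1,n}^2 \;=\; \sup_{h}\Big\{\,2\chi(b)\sum_x a_x h_x \;-\; \tfrac{\chi(b)\Theta(n)}{2}\sum_{x,y}p(y-x)\,r_{x,y}^{n}\,(h_x-h_y)^2\,\Big\},
\]
equivalently, by Thomson's principle, a constant times $\Theta(n)^{-1}$ times the minimal energy $\sum_{x,y}F^2_{x,y}/\big(p(y-x)r_{x,y}^{n}\big)$ over antisymmetric flows $F$ with $(\operatorname{div}F)_x=a_x$.

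I would build the competitor from $G$ itself. Note that $\mcb{R}_{n,\beta}G$ is already a discrete divergence — for $\beta<1$ it equals $\sum_y[G(\tfrac yn)-G(\tfrac xn)]p(y-x)(1-r_{x,y}^{n})$, and for $\beta\ge1$ the analogous expression with the first‑order correction displayed in \eqref{op_Rnb} — so the natural flow is supported on the slow bonds; the freedom to add divergence‑free (loop) flows then lets one reroute the mean‑zero part of $a$ through the fast bonds inside each half‑line, which is essential because, $\mcb{S}=\mcb{S}_0$ being the \emph{full} set of crossing bonds, the net current must pass through slow bonds. The analysis splits by regime. When $\beta$ is large (roughly $\beta>3/2$), the smallness of $\Theta(n)n^{-\beta}$ makes a crude Cauchy–Schwarz in time, combined with the stationary identity $\mathbb{E}_{\nu_b}[(V^n_s)^2]=\tfrac{\chi(b)}{n}\sum_x(\Theta(n)\mcb{R}_{n,\beta}G(\tfrac xn))^2$, already enough, the main input being the pointwise bound $|\mcb{R}_{n,\beta}G(\tfrac xn)|\lesssim n^{-\beta}(1+|x|)^{1-\gamma}$ away from the origin (from $\sum_k\min(1,k/n)k^{-\gamma-1}$‑type sums) and a matching tail bound for $|x|>n$. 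In the remaining regimes — $\beta<1$ with $G\in\mathcal{S}(\mathbb{R})$, $1\le\beta\le 3/2$ in the Neumann case, and the critical Robin case $\beta=1$, $\gamma>2$ — one needs the energy/flow estimate in earnest. The decisive quantitative point is that the net current $\sum_{x\ge 0}\mcb{R}_{n,\beta}G(\tfrac xn)$ is small, and this is exactly where the boundary condition built into $\mathcal{S}_{\beta,\gamma}$ enters: when $\beta<1$ the continuity of $G$ at $0$ makes it $O(n^{-1})$ (up to $\log n$ at $\gamma=2$), which after the energy estimate yields a bound of the form $\|V^n\|_{-1,n}^2\lesssim n^{\beta-1}+n^{4-2\gamma}+n^{-1}\to 0$ for $\gamma>2$ and, with the extra $\log n$ from $\Theta(n)$, also at $\gamma=2$; in the Robin case the identity $G^{(1)}_\pm(0)=\hat{\alpha}[G_+(0)-G_-(0)]$ defining $\mathcal{S}_{Rob}(\mathbb{R}^*)$, together with $\hat{\alpha}=2\alpha m/\sigma^2$ and the second‑moment identity for $p$, produces the cancellation between the leading $O(n^{-1})$ contributions of the $\alpha$‑term and of the first‑derivative correction term in $\mcb{R}_{n,1}G$, so that again the flow energy is $o(\Theta(n))$.

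The hard part, I expect, will be this last point: pinning down the exact algebraic cancellation that the definition of $\mathcal{S}_{Rob}(\mathbb{R}^*)$ is calibrated to supply at the microscopic level, and then estimating the remainder — which forces one to control many tails of series $\sum_k k^a p(k)$, heaviest when $\gamma$ is close to $2$, precisely the range in which $\sum_{k\le n}k^{2-\gamma}$ grows polynomially. A secondary difficulty is making the loop‑flow construction in the $\beta<1$, $\gamma$‑near‑$2$ regime sharp enough to pass the threshold $\gamma=2+\beta$ at which the crudest flow bound (all of $a$ placed directly on the slow bonds) fails; this is resolved exactly by rerouting the mean‑zero part of $a$ through the fast bonds, and rests on an elementary estimate for the half‑line Green's function of the long‑range walk. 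These verifications, together with the analogous (and more delicate) ones needed for Proposition~\ref{convrem1}, form the bulk of what is deferred to Section~\ref{bgibsgeral} and Appendix~\ref{secdiscconv}.
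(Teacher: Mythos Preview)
Your starting point---the Kipnis--Varadhan $H_{-1}$ inequality (Lemma~4.3 of \cite{CLO} in the paper's notation)---is exactly what the paper uses, and you have correctly identified the crucial algebraic cancellation in the Robin case: the identity $\hat\alpha[G_+(0)-G_-(0)]=G_+'(0)$ together with $\sum_{x\ge0}\sum_{r\ge x+1}rp(r)=\sigma^2/2$ is precisely what makes the leading term in $\sum_{x\ge0}\Theta(n)\mcb R_{n,1}G(\tfrac xn)$ vanish (see the computation around \eqref{eqprop1bgcrita} in Proposition~\ref{prop1bgcrit}).

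The execution, however, is organized quite differently. You propose a single global Thomson/flow estimate on $\|V^n\|_{-1,n}$; the paper instead works by \emph{replacement lemmas}. Concretely, in the critical case $(\beta,\gamma)\in\{1\}\times(2,\infty)$ the paper first replaces each $\bar\eta_s(x)$ by $\bar\eta_s(0)$ (resp.\ $\bar\eta_s(-1)$) on $\mathbb N$ (resp.\ $\mathbb Z_-^*$), shows the resulting expression vanishes \emph{pointwise} thanks to the Robin cancellation (Proposition~\ref{prop1bgcrit}), and then controls the replacement cost $\bar\eta_s(x)-\bar\eta_s(0)$ via the variational bound with a nearest-neighbor telescoping sum (Propositions~\ref{prop2bgcrit}--\ref{prop3bgcrit}). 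In the non-critical case $(\beta,\gamma)\in R_1$ the paper first localizes to $|x|,|y|<\varepsilon n$ by a crude $L^2$ bound (Proposition~\ref{convrem3}), then replaces $\bar\eta_s(x)$ and $\bar\eta_s(y)$ by box averages $\overrightarrow{\bar\eta}^\ell_s(0)$, $\overleftarrow{\bar\eta}^\ell_s(0)$ of mesoscopic size $\ell=\varepsilon\Theta(n)/n$ (Propositions~\ref{firstbg}--\ref{thirdbg}); the two averages then cancel against each other by symmetry. In every piece the variational supremum is bounded by writing $\eta(x)-\eta(0)$ as a telescoping sum of nearest-neighbor increments and choosing Young constants $A_{w,w-1}$ bond by bond---this is your ``rerouting through fast bonds'', done explicitly rather than via an abstract flow.

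Both routes should succeed. Your Thomson-principle formulation is conceptually cleaner and makes the role of the net current $\sum_{x\ge0}a_x$ transparent, but it obliges you to construct and estimate a competitor flow in one shot, including the delicate control of long-range tails near $\gamma=2$. The paper's modular decomposition (localize, then replace by averages/point values, then telescope) is more pedestrian but isolates each difficulty; in particular the introduction of the box scale $\ell$ is what replaces your ``half-line Green's function'' estimate and avoids having to analyze the long-range walk directly.
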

We begin by proving the former result.
\begin{proof}[Proof of Proposition \ref{convrem1}]
From Cauchy-Schwarz's inequality and Fubini's Theorem, the expectation in \eqref{eqconvrem1} is bounded from above by
\begin{align*}
 &  T \int_{0}^{T} \mathbb{E}_{\nu_b} \Big[ \Big( \frac{1}{\sqrt{n}} \sum _{x } \big[  \Theta(n)  \mcb {K}_{n,\beta} G \left( \tfrac{x}{n} \right) - \kappa_{\gamma} \Delta_{\beta,\gamma} G\left( \tfrac{x}{n} \right) \big]   \bar{\eta}_{s}^{n}(x)  \Big)^2 \Big] ds   \\
&T  \int_{0}^{T} \frac{1}{n} \sum _{x } \big[  \Theta(n)  \mcb {K}_{n,\beta} G \left( \tfrac{x}{n} \right) - \kappa_{\gamma} \Delta_{\beta,\gamma} G\left( \tfrac{x}{n} \right) \big]^2 \mathbb{E}_{\nu_b} \big[ \big(   \bar{\eta}_{s}^{n}(x) \big)^2 \big] ds \\
\leq& 2 \chi(b) T^2 \sup_{n \geq 1, x \in \mathbb{Z}} |\Theta(n)  \mcb {K}_{n,\beta} G \left( \tfrac{x}{n} \right) - \kappa_{\gamma} \Delta_{\beta,\gamma} G\left( \tfrac{x}{n} \right)  | \frac{1}{n} \sum _{x } |  \Theta(n)  \mcb {K}_{n,\beta} G \left( \tfrac{x}{n} \right) - \kappa_{\gamma} \Delta_{\beta,\gamma} G\left( \tfrac{x}{n} \right) |,
\end{align*}
which goes to zero as $n \rightarrow \infty$, from Proposition \ref{convknbeta}. 
\end{proof}
To prove Proposition \ref{convrem2}, we treat two cases separately: $(\beta, \gamma) \in \{1\} \times (2, \infty)$, presented in Subsection \ref{bgibbscrit} ; and $(\beta, \gamma) \in R_1: = R_0 - \{1\} \times (2, \infty)$, presented in Subsection \ref{bgibbs}.

\subsection{Case $(\beta, \gamma) \in \{1\} \times (2, \infty)$} \label{bgibbscrit}

In this case, Proposition \eqref{convrem2} is a direct consequence of Propositions \ref{prop1bgcrit}, \ref{prop2bgcrit} and \ref{prop3bgcrit}.  
\begin{prop} \label{prop1bgcrit}
Let $(\beta,\gamma) \in \{1\} \times (2,\infty)$ and $G \in \mathcal{S}_{\beta,\gamma}$. Then
\begin{equation}   \label{bgcritpos1}
  \lim_{n \rightarrow \infty} \mathbb{E}_{\nu_b} \Big[ \sup_{t \in [0,T]} \Big(    \int_0^t \frac{\Theta(n)}{\sqrt{n}} \sum_{x=0}^{\infty} \mcb{R}_{n,\beta} G \left( \tfrac{x}{n} \right) \bar{\eta}_s(0) ds  \Big)^2 \Big] =0,
\end{equation}
\begin{equation}   \label{bgcritneg1}
  \lim_{n \rightarrow \infty} \mathbb{E}_{\nu_b} \Big[ \sup_{t \in [0,T]} \Big(    \int_0^t \frac{\Theta(n)}{\sqrt{n}} \sum_{x=-\infty}^{-1} \mcb{R}_{n,\beta} G \left( \tfrac{x}{n} \right) \bar{\eta}_s(-1) ds  \Big)^2 \Big] =0.
\end{equation}
\end{prop}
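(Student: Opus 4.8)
The plan is to reduce the two limits to a single deterministic estimate on the inner $x$-sum, and then to extract the required decay from the Robin boundary condition.

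\textbf{Step 1: isolating the deterministic part.} The quantity $c_n^{+}:=\sum_{x=0}^{\infty}\mcb{R}_{n,\beta}G(\tfrac xn)$ (and likewise $c_n^{-}:=\sum_{x=-\infty}^{-1}\mcb{R}_{n,\beta}G(\tfrac xn)$) is a deterministic real number, so it factors out of the expectations in \eqref{bgcritpos1} and \eqref{bgcritneg1}. Since $|\bar{\eta}_s^{n}(0)|\le1$ pointwise, one has $\sup_{t\in[0,T]}\big(\int_0^t\bar{\eta}_s^{n}(0)\,ds\big)^2\le\big(\int_0^T|\bar{\eta}_s^{n}(0)|\,ds\big)^2\le T^2$, and the same for $\bar{\eta}_s^{n}(-1)$. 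Hence the left-hand side of \eqref{bgcritpos1} is bounded by $T^2\big(\tfrac{\Theta(n)}{\sqrt n}\,c_n^{+}\big)^2$ and that of \eqref{bgcritneg1} by $T^2\big(\tfrac{\Theta(n)}{\sqrt n}\,c_n^{-}\big)^2$. Since $\Theta(n)=n^{2}$ in this regime ($\gamma>2$), the whole proposition reduces to the purely analytic estimates $n^{3/2}c_n^{\pm}\to0$ as $n\to\infty$.

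\textbf{Step 2: the analytic estimate via the Robin condition.} For $x\ge0$, \eqref{op_Rnb} gives $\mcb{R}_{n,1}G(\tfrac xn)=\tfrac{\alpha}{n}\sum_{y\le-1}[G(\tfrac yn)-G(\tfrac xn)]p(y-x)+\tfrac{G_+^{(1)}(0)}{n}\sum_{y\ge0}(y-x)p(y-x)$. Summing over $x\ge0$ and reordering the sums (using $p(z)=p(-z)$, and summing in $y$ first in the second one, which is only conditionally convergent), I would use the two exact identities $\sum_{x\ge0}\sum_{y\le-1}p(y-x)=m$ and $\sum_{x\ge0}\sum_{y\ge0}(y-x)p(y-x)=\tfrac{\sigma^2}{2}$, the latter finite because $\gamma>2$. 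Writing $G(\tfrac yn)-G(\tfrac xn)=(G_-(0)-G_+(0))+(G_-(\tfrac yn)-G_-(0))-(G_+(\tfrac xn)-G_+(0))$ for $y\le-1$ and $x\ge0$ (the branches $G_\pm$ are treated separately since $G\in\mathcal{S}_{Rob}(\mathbb{R}^{*})$ may jump at $0$), and bounding the last two differences by $\|G_-^{(1)}\|_\infty|\tfrac yn|$ and $\|G_+^{(1)}\|_\infty\tfrac xn$ — whose weighted sums against $p(y-x)$ are finite, and hence $O(n^{-1})$, precisely because $\sum_z z^2p(z)<\infty$ — I would obtain
\begin{equation*}
c_n^{+}=\frac1n\Big[\alpha m\big(G_-(0)-G_+(0)\big)+\frac{\sigma^2}{2}\,G_+^{(1)}(0)\Big]+O(n^{-2}).
\end{equation*}
The bracket vanishes \emph{exactly} by the Robin boundary condition defining $\mathcal{S}_{Rob}(\mathbb{R}^{*})$: with $G_+^{(1)}(0)=\hat{\alpha}[G_+(0)-G_-(0)]$ and $\hat{\alpha}=2\alpha m/\sigma^2$ one has $\alpha m(G_-(0)-G_+(0))+\tfrac{\sigma^2}{2}\hat{\alpha}(G_+(0)-G_-(0))=0$. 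Hence $c_n^{+}=O(n^{-2})$, so $n^{3/2}c_n^{+}=O(n^{-1/2})\to0$. The estimate for $c_n^{-}$ is the mirror image, now using $\sum_{x\le-1}\sum_{y\ge0}p(y-x)=m$, $\sum_{x\le-1}\sum_{y\le-1}(y-x)p(y-x)=-\tfrac{\sigma^2}{2}$ and the relation $G_-^{(1)}(0)=\hat{\alpha}[G_+(0)-G_-(0)]$, which produces the same cancellation.

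\textbf{Main obstacle.} The delicate part is the bookkeeping in Step 2: making the reordering of $\sum_{x,y}(y-x)p(y-x)$ rigorous (it is not absolutely convergent), and checking that all remainder sums are $O(n^{-2})$ uniformly, so that no further $O(n^{-1})$ term survives besides the one killed by the Robin condition. For $2<\gamma\le3$, where $p(\cdot)$ has finite variance but infinite third moment, one cannot Taylor-expand $G$ to second order uniformly over jumps of length up to order $n$, which is why the first-order expansion above — whose error is controlled solely by $\sigma^2<\infty$ — is the right tool rather than a higher-order one. These are estimates of exactly the same nature as the discrete-to-continuous convergence results collected in Appendix \ref{secdiscconv} (cf.\ Proposition \ref{convknbeta}, invoked in the proof of Proposition \ref{convrem1}); granting them, Proposition \ref{prop1bgcrit} follows from the elementary reduction of Step 1.
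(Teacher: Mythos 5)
Your proposal is correct and follows essentially the same route as the paper's proof: factor out the deterministic sum $\sum_{x}\mcb{R}_{n,\beta}G(\tfrac{x}{n})$ (the paper keeps $\bar{\eta}_s(0)$ inside and bounds it by $1$, which is the same reduction), use the identities $\sum_{x\ge0}\sum_{y\le-1}p(y-x)=m$ and $\sum_{x\ge0}\sum_{r>x}rp(r)=\sigma^2/2$ together with the Robin condition $G_{+}^{(1)}(0)=\hat{\alpha}[G_{+}(0)-G_{-}(0)]$, $\hat{\alpha}=2\alpha m/\sigma^2$, to cancel the leading $O(n^{-1})$ term exactly, and control the remainder by a first-order Taylor expansion whose error sums against $(x-y)p(x-y)$ to $\sigma^2/2<\infty$, giving the same final $O(n^{-1/2})$ decay. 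The only cosmetic difference is that the paper resolves your convergence worry by applying the symmetry of $p$ to the inner sum first, reducing it to the nonnegative series $\sum_{r=x+1}^{\infty}rp(r)$ before interchanging sums.
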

\begin{proof}
We will prove only \eqref{bgcritpos1} but the proof of \eqref{bgcritneg1} is analogous. It is enough to prove that
\begin{align} \label{suprob0}
\lim_{n \rightarrow \infty} \sup_{s \in [0,T]} \Big| \frac{\Theta(n)}{\sqrt{n}} \sum_{x=0}^{\infty} \mcb{R}_{n,\beta} G \left( \tfrac{x}{n} \right) \bar{\eta}_s(0)  \Big| =0.
\end{align}
 Using the symmetry of $p(\cdot)$ and the hypothesis that $(\beta,\gamma) \in \{1\} \times (2,\infty)$, we can rewrite the expression inside the absolute value in \eqref{suprob0} as
\begin{align}
n^{3/2}   \sum_{x=0}^{\infty} \mcb{R}_{n,\beta} G \left( \tfrac{x}{n} \right) \bar{\eta}_s(0)  =& \sqrt{n} \bar{\eta}_s(0) \Big[ \alpha \sum_{x=0}^{\infty}  \sum_{y=-\infty}^{-1} \left[ G( \tfrac{y}{n}) -G( \tfrac{x}{n}) \right] p(y-x) + G_{+}^{'}(0) \sum_{x=0}^{\infty} \sum_{r=x+1}^{\infty} r  p(r) \Big] \nonumber \\
=& \sqrt{n} \bar{\eta}_s(0) \Big[ \alpha \left[ G_{-}( 0) -G_{+}( 0) \right]  \sum_{x=0}^{\infty}  \sum_{y=-\infty}^{-1}  p(y-x) + G_{+}^{'}(0) \sum_{x=0}^{\infty} \sum_{r=x+1}^{\infty} r  p(r) \Big] \label{eqprop1bgcrita} \\
+& \sqrt{n} \bar{\eta}_s(0) \Big[ \alpha \sum_{x=0}^{\infty}  \sum_{y=-\infty}^{-1} \big( \left[ G_{-}( \tfrac{y}{n}) -G_{-}( 0) \right]  + [G_{+}( 0)  - G_{+}( \tfrac{x}{n})   ] \big) p(y-x) \Big]. \label{eqprop1bgcritb}
\end{align}
Since $(\beta,\gamma) \in \{1\} \times (2,\infty)$, $G \in \mathcal{S}_{Rob}(\mathbb{R}^{*})$ and we can rewrite \eqref{eqprop1bgcrita} as
\begin{align*}
\sqrt{n} \bar{\eta}_s(0) G_{+}^{'}(0) \Big[ -\frac{\alpha}{\hat{\alpha}}    \sum_{x=0}^{\infty}  \sum_{y=-\infty}^{-1}  p(y-x) +  \sum_{x=0}^{\infty} \sum_{r=x+1}^{\infty} r  p(r) \Big] = \sqrt{n} \bar{\eta}_s(0) G_{+}^{'}(0) \Big[ -\frac{\alpha}{\hat{\alpha}} m + \frac{\sigma^2}{2} \Big] =0,
\end{align*}
hence we need only to treat \eqref{eqprop1bgcritb}. In last equality we used the definition of $\hat{\alpha}$, given in \eqref{alphahat}. Using the fact that $|\bar{\eta}^n_s(\cdot)| \leq 1$ and Taylor expansions of first order on $G_{-}$ and $G_{+}$, the expression inside the limit in  \eqref{suprob0} is bounded from above by
\begin{align*}
\frac{1}{\sqrt{n}}  \Big[ \alpha \sum_{x=0}^{\infty}  \sum_{y=-\infty}^{-1} ( \| G_{-} \|_{\infty} |y| +\| G_{+} \|_{\infty} x  ) p(y-x) \Big] \lesssim \frac{1}{\sqrt{n}} \sum_{x=0}^{\infty}  \sum_{y=-\infty}^{-1} (x-y) p (x-y) = \frac{1}{\sqrt{n}} \frac{\sigma^2}{2},
\end{align*}
which ends the proof.
\end{proof}
\begin{prop} \label{prop2bgcrit}
Let $(\beta,\gamma) \in (1/2, \infty) \times [2,\infty)$ and $G \in \mathcal{S}(\mathbb{R}^{*})$. Then 
\begin{equation}   \label{bgcritpos2}
  \lim_{n \rightarrow \infty} \mathbb{E}_{\nu_b} \Big[\sup_{t \in [0,T]} \Big(    \int_0^t \frac{\Theta(n)}{\sqrt{n}}  \sum_{x=0}^{\infty} \alpha n^{-\beta} \sum_{y=-\infty}^{-1} [ G \left( \tfrac{y}{n} \right) - G \left( \tfrac{x}{n} \right) ] p(y-x)  [ \bar{\eta}_s(x) - \bar{\eta}_s(0)] ds  \Big)^2 \Big] =0,
\end{equation}
\begin{equation}   \label{bgcritneg2}
  \lim_{n \rightarrow \infty} \mathbb{E}_{\nu_b} \Big[\sup_{t \in [0,T]} \Big(    \int_0^t \frac{\Theta(n)}{\sqrt{n}} \sum_{x=-\infty}^{-1} \alpha n^{-\beta} \sum_{y=-\infty}^{-1} [ G \left( \tfrac{y}{n} \right) - G \left( \tfrac{x}{n} \right) ] p(y-x)  [ \bar{\eta}_s(x) - \bar{\eta}_s(-1)] ds  \Big)^2 \Big] =0.
\end{equation}
\end{prop}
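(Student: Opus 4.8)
The plan is to prove \eqref{bgcritpos2} in detail, the proof of \eqref{bgcritneg2} being entirely analogous with the roles of $\mathbb{N}$ and $\mathbb{Z}_-^*$ (and of the reference sites $0$ and $-1$) interchanged. Write the integrand as $V^n(\eta_s^n)$, where, discarding the vanishing term $x=0$,
\[
V^n(\eta):=\frac{\Theta(n)\,\alpha}{\sqrt n\, n^{\beta}}\sum_{x\geq 1}q_x^n\,\big[\bar\eta(x)-\bar\eta(0)\big],\qquad q_x^n:=\sum_{y\leq -1}\big[G(\tfrac{y}{n})-G(\tfrac{x}{n})\big]\,p(y-x).
\]
The crude bound $|\bar\eta|\leq 1$ together with $|q_x^n|\leq 2\|G\|_\infty\sum_{s\geq x+1}p(s)$ only controls the integrand by a constant multiple of $\Theta(n)\,n^{-1/2-\beta}$, which does not vanish when $\beta$ is close to $1/2$, so the time average must be exploited. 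Since $V^n$ has zero mean with respect to $\nu_b$ and $\nu_b$ is reversible for $\Theta(n)\mcb{L}_n$, I would invoke the Kipnis--Varadhan inequality \cite{kipnis1998scaling}:
\[
\mathbb{E}_{\nu_b}\Big[\sup_{t\leq T}\Big(\int_0^t V^n(\eta_s^n)\,ds\Big)^2\Big]\;\lesssim\;T\,\|V^n\|_{-1,n}^2,\qquad \|V^n\|_{-1,n}^2:=\sup_{f}\Big\{2\langle V^n,f\rangle_{\nu_b}-\langle f,-\Theta(n)\mcb{L}_n f\rangle_{\nu_b}\Big\}.
\]

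Because $\mcb{L}_n$ maps the linear functionals of $\bar\eta$ into themselves, one may restrict the supremum to $f=\sum_z c_z\bar\eta(z)$; and since $V^n$ has total coefficient zero, the normalisation $c_0=0$ is harmless. Using $\mathbb{E}_{\nu_b}[\bar\eta(u)\bar\eta(v)]=\chi(b)\mathbbm{1}_{\{u=v\}}$ one gets $\langle V^n,f\rangle_{\nu_b}=\chi(b)\tfrac{\Theta(n)\alpha}{\sqrt n\,n^\beta}\sum_{x\geq1}q_x^n c_x$, while the standard computation of the exclusion Dirichlet form gives $\langle f,-\Theta(n)\mcb{L}_n f\rangle_{\nu_b}=\Theta(n)\chi(b)\sum_{x<y}p(y-x)\,r_{x,y}^n\,(c_x-c_y)^2$. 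Bounding the latter from below by keeping only the fast nearest-neighbour bonds $\{z,z+1\}$ with $z\geq 0$, writing $c_x=\sum_{z=0}^{x-1}d_z$ so that $\sum_{x\geq1}q_x^n c_x=\sum_{z\geq0}d_z\,Q_z^n$ with $Q_z^n:=\sum_{x\geq z+1}q_x^n$, and maximising over each $d_z$ separately (a one-variable completion of squares), I obtain
\[
\|V^n\|_{-1,n}^2\;\lesssim\;\frac{\Theta(n)}{n^{1+2\beta}}\sum_{z\geq 0}\big(Q_z^n\big)^2.
\]

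It then remains to bound $\sum_{z\geq0}(Q_z^n)^2$ uniformly in $n$. From $|q_x^n|\leq 2\|G\|_\infty\,\tilde P(x+1)$ with $\tilde P(m):=\sum_{s\geq m}p(s)\lesssim m^{-\gamma}$, one gets $|Q_z^n|\leq\sum_{x\geq z+1}|q_x^n|\lesssim\sum_{x\geq z+1}x^{-\gamma}\lesssim (z+1)^{1-\gamma}$, uniformly in $n$ (this uses $\gamma\geq 2>1$), hence $\sum_{z\geq0}(Q_z^n)^2\lesssim\sum_{z\geq1}z^{2-2\gamma}<\infty$ because $\gamma\geq 2>\tfrac32$. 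Substituting back, $\|V^n\|_{-1,n}^2\lesssim\Theta(n)\,n^{-1-2\beta}\lesssim n^{1-2\beta}$ (with an additional factor $1/\log n$ when $\gamma=2$), which tends to $0$ precisely because $\beta>1/2$, completing the proof. The main obstacle is the passage through the Kipnis--Varadhan inequality and the reduction of the $H_{-1}$ norm to the one-dimensional variational problem, via the telescoping identity $\bar\eta(x)-\bar\eta(0)=\sum_{z=0}^{x-1}[\bar\eta(z+1)-\bar\eta(z)]$; once this is set up, the bound on $Q_z^n$ is a routine tail estimate for $p(\cdot)$.
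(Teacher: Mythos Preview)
Your proof is correct and follows the same route as the paper: both invoke the Kipnis--Varadhan variational bound (the paper cites it as Lemma~4.3 of \cite{CLO}), telescope $\bar\eta(x)-\bar\eta(0)$ into nearest-neighbour increments, retain only the fast nearest-neighbour part of the Dirichlet form, and arrive at the identical final estimate $\Theta(n)\,n^{-1-2\beta}\sum_{w\geq 1}w^{2-2\gamma}\to 0$ for $\beta>1/2$. The only cosmetic difference is that you first restrict the variational supremum to linear test functions $f=\sum_z c_z\bar\eta(z)$---which is legitimate because the symmetric exclusion generator preserves each level of the chaos decomposition (your one-line justification ``$\mcb{L}_n$ maps linear functionals into themselves'' is one ingredient of this, but orthogonality of the chaoses and non-negativity of the Dirichlet form on each level are also needed)---whereas the paper keeps general $g\in L^2(\nu_b)$ and uses Young's inequality \eqref{young2} in place of your explicit completion of squares; the two optimisations are equivalent.
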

\begin{proof}
We will prove only \eqref{bgcritpos2} but we the proof of \eqref{bgcritneg2} is analogous. From Lemma 4.3 of \cite{CLO}, the expectation in \eqref{bgcritpos2} is bounded from above by a constant times
\begin{align} \label{sup01crit}
 \sup_{g \in L^2 (  \nu_{b})} \Big\{  \frac{\Theta(n)}{n^{\frac{1}{2}+\beta}}  \sum_{x=0}^{\infty}  \sum_{y=-\infty}^{-1} [ G \left( \tfrac{y}{n} \right) - G \left( \tfrac{x}{n} \right) ] p(y-x) \langle \bar{\eta}(x) - \bar{\eta}(0) , g \rangle_{\nu_{b}}   +\Theta(n) \langle \mcb {L}_n g, g \rangle_{\nu_{b}}   \Big\},
\end{align}
Simple computations show that for $g \in L^2 (  \nu_{b})$,
 \begin{align}\label{bound2}
\Theta(n) \langle\mcb {L}_{n} g , g \rangle_{\nu_b} = - \frac{\Theta(n)}{2}  D_n (g, \nu_b) ,
\end{align}
where $D_n (g, \nu_b ) := D^{\mcb F} (g, \nu_b ) + \alpha n^{-\beta}  D^{\mcb S} (g, \nu_b )$, and for $\mcb K \in \{\mcb F, \mcb S\}$, we have
\begin{align*}
D^{\mcb K}  (g, \nu_b) := \frac{1}{2} \sum_{\{ x, y \} \in \mcb K} p(y-x) I_{x,y}  (g, \nu_b ).
\end{align*}
Above, $I_{x,y}  (g, \nu_b ) := \int_{\Omega} [ g \left( \eta^{x,y} \right)  - g \left( \eta \right)  ]^2 d \nu_b$. Since $\nu_b(\eta)=\nu_b(\eta^{x,y})$ for every $x,y \in \mathbb{Z}$ and every $\eta \in \Omega$, with the change of variables $\eta \rightarrow \eta^{x,y}$ and Young's inequality, we obtain that
\begin{align} \label{young2}
\forall  A_{x,y}>0, \quad \Big| \int_{\Omega} [\eta(x) - \eta(y)] g(\eta) d \nu_b \Big| \leq   \frac{ I_{x,y}  (g, \nu_b )}{4A_{x,y}} +  A_{x,y},
\end{align}
for every $g \in L^2(\Omega, \nu_b)$. From \eqref{bound2}, the second term inside the supremum in \eqref{sup01crit} is bounded from above by
\begin{align}  \label{bgcrit1a}
- \frac{\Theta(n)}{4}   \sum_{w= 1}^{\infty} p (1)  I_{w,w-1}  (g, \nu_{b} ) 
\end{align}
From a telescopic sum, it holds
\begin{equation} \label{telsum}
\forall x \geq 1, \forall g \in L^2 (  \nu_{b}), \quad  \langle \bar{\eta}(x) - \bar{\eta}(0) , g \rangle_{\nu_{b}} = \sum_{w=0}^{x-1} \int_{\Omega} [\eta(w+1) - \eta(w)] g(\eta) d \nu_b.
\end{equation}
From \eqref{telsum}, the first term inside the supremum in \eqref{sup01crit} from above by
\begin{align*}
& \frac{\Theta(n)}{n^{\frac{1}{2}+\beta}}  \sum_{x=1}^{\infty}  \sum_{y=-\infty}^{-1} 2 \| G \|_{\infty}  p(y-x) \sum_{w=0}^{x-1} \Big| \int_{\Omega} [ \eta(w+1) - \eta(w) ] g (\eta) d \nu_{b} \Big| \\
=& \frac{\Theta(n)}{n^{\frac{1}{2}+\beta}} \sum_{w=0}^{\infty} \Big| \int_{\Omega} [ \eta(w+1) - \eta(w) ] g (\eta) d \nu_{b} \Big| \Big(2  \| G  \|_{\infty}  \sum_{x=w+1}^{\infty}  \sum_{y=-\infty}^{-1}    p(x-y) \Big) \\
\lesssim&  C \frac{\Theta(n)}{n^{\frac{1}{2}+\beta}} \sum_{w=0}^{\infty} \Big| \int_{\Omega} [ \eta(w+1) - \eta(w) ] g (\eta) d \nu_{b} \Big|   \sum_{x=w+1}^{\infty} x^{-\gamma} \\
\leq& C \frac{\Theta(n)}{n^{\frac{1}{2}+\beta}} \sum_{w=1}^{\infty} w^{1-\gamma} \Big| \int_{\Omega} [ \eta(w) - \eta(w-1) ] g (\eta) d \nu_{b} \Big|,
\end{align*}
for some $C>0$. In the equality above we used Fubini's Theorem. By choosing $A_{w,w-1}=\frac{C w^{1-\gamma}}{n^{\frac{1}{2}+\beta} p(1)}$ in \eqref{young2} and using \eqref{bgcrit1a}, the expression inside the supremum in \eqref{sup01crit} is bounded from above by a constant times
\begin{align*}
\frac{\Theta(n)}{n^{1+2 \beta}} \sum_{w=1}^{\infty} w^{2-2 \gamma},
\end{align*}
and this vanishes as $n \rightarrow \infty$, since $\beta >1/2$, $\gamma \geq 2$ and last sum is convergent. This ends the proof.
\end{proof}
\begin{prop} \label{prop3bgcrit}
Let $(\beta,\gamma) \in R_0$ and $G \in \mathcal{S}(\mathbb{R}^{*})$. Then
\begin{equation}  \label{bgcritpos3}
  \lim_{n \rightarrow \infty} \mathbb{E}_{\nu_b} \Big[ \sup_{t \in [0,T]} \Big(    \int_0^t \frac{\Theta(n)}{\sqrt{n}}  \sum_{x=0}^{\infty} \frac{G_{+}'(0)}{n} \sum_{y=0}^{\infty} (y-x) p(y-x)   [ \bar{\eta}_s(x) - \bar{\eta}_s(0)] ds  \Big)^2 \Big] =0,
\end{equation}
\begin{equation}  \label{bgcritneg3}
  \lim_{n \rightarrow \infty} \mathbb{E}_{\nu_b} \Big[ \sup_{t \in [0,T]}\Big(    \int_0^t \frac{\Theta(n)}{\sqrt{n}} \sum_{x=-\infty}^{-1} \frac{G_{-}'(0)}{n} \sum_{y=-\infty}^{-1} (y-x) p(y-x)   [ \bar{\eta}_s(x) - \bar{\eta}_s(-1)] ds  \Big)^2 \Big] =0.
\end{equation}
\end{prop}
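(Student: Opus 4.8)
I would treat \eqref{bgcritpos3}, the estimate \eqref{bgcritneg3} being entirely symmetric. First, using the symmetry $p(-r)=p(r)$ and $\sum_{r}r\,p(r)=0$ one checks that
\[
\sum_{y=0}^{\infty}(y-x)\,p(y-x)=\sum_{s=x+1}^{\infty}s\,p(s)=:T(x),
\]
so that $T(0)=m$ and, since $p(s)=c_{\gamma}s^{-\gamma-1}$, $T(x)\le \tfrac{c_{\gamma}}{\gamma-1}\,x^{1-\gamma}$ for $x\ge 1$; moreover $\sum_{x\ge 0}T(x)=\sigma^{2}/2$ when $\gamma>2$. (For $\gamma=2$ the relevant test functions belong to $\mathcal{S}_{Neu}(\mathbb{R}^{*})$, whence $G_{+}'(0)=0$ and \eqref{bgcritpos3} is void; so I assume $\gamma>2$ from now on.) Thus the expression inside \eqref{bgcritpos3} equals $\int_{0}^{t}\tfrac{\Theta(n)G_{+}'(0)}{n^{3/2}}\sum_{x\ge 0}T(x)[\bar\eta_{s}(x)-\bar\eta_{s}(0)]\,ds$.

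\textbf{The variational scheme, paralleling Proposition \ref{prop2bgcrit}.} Exactly as in the passage from \eqref{bgcritpos2} to \eqref{sup01crit}, I would use Lemma 4.3 of \cite{CLO} together with Fubini's theorem to bound the expectation in \eqref{bgcritpos3} by a constant times
\[
\sup_{g\in L^{2}(\nu_{b})}\Big\{\tfrac{\Theta(n)G_{+}'(0)}{n^{3/2}}\sum_{x=0}^{\infty}T(x)\,\langle\bar\eta(x)-\bar\eta(0),g\rangle_{\nu_{b}}+\Theta(n)\langle\mcb{L}_{n}g,g\rangle_{\nu_{b}}\Big\}.
\]
Then I would use \eqref{bound2} and discard the slow part, $\Theta(n)\langle\mcb{L}_{n}g,g\rangle_{\nu_{b}}=-\tfrac{\Theta(n)}{2}D_{n}(g,\nu_{b})\le -\tfrac{\Theta(n)}{4}\sum_{w\ge 0}p(1)\,I_{w,w+1}(g,\nu_{b})$, since every bond $\{w,w+1\}$ with $w\ge 0$ is fast; telescope via \eqref{telsum}, $\langle\bar\eta(x)-\bar\eta(0),g\rangle_{\nu_{b}}=\sum_{w=0}^{x-1}\langle\eta(w+1)-\eta(w),g\rangle_{\nu_{b}}$; interchange the two sums, writing the first term as $\tfrac{\Theta(n)G_{+}'(0)}{n^{3/2}}\sum_{w\ge 0}S(w)\langle\eta(w+1)-\eta(w),g\rangle_{\nu_{b}}$ with $S(w):=\sum_{x>w}T(x)\lesssim w^{2-\gamma}$ for $w\ge 1$; and finally apply Young's inequality \eqref{young2} on the bonds $\{w,w+1\}$ with $A_{w,w+1}$ of order $S(w)/(n^{3/2}p(1))$, so that the Dirichlet contribution so produced is absorbed by the generator term. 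As in \eqref{bgcrit1a}, the leftover is of order $\tfrac{\Theta(n)}{n^{3}}\sum_{w\ge 1}w^{4-2\gamma}$.

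\textbf{Conclusion and the main obstacle.} For $\gamma>5/2$ the series $\sum_{w}w^{4-2\gamma}$ converges and $\Theta(n)/n^{3}=n^{-1}\to 0$, which ends the proof in that regime. The delicate point is the remaining range $\gamma\in(2,5/2]$: here $T(x)\asymp x^{1-\gamma}$ decays one full power more slowly than the coefficient $\asymp x^{-\gamma}$ that appeared in Proposition \ref{prop2bgcrit}, so the telescoped coefficients $S(w)\asymp w^{2-\gamma}$ are no longer square–summable; and since for $\gamma>2$ the variance of $p$ is finite, the long-range fast Dirichlet form is comparable to the nearest-neighbour one, so merely keeping the long jumps does not help. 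I would therefore split the $x$-sum at a scale $K=K(n)\to\infty$ (a suitable power of $n$): for $x\le K$ telescope as above but with the truncated sums $S_{K}(w)=\sum_{w<x\le K}T(x)$, which are uniformly bounded, and for $x>K$ use the telescoped form with coefficients $S(K)$ on $w\le K$ and $S(w)$ on $w>K$, again dominated by the fast Dirichlet form; one then optimizes $K$ against the time scale $\Theta(n)$. I expect that making this work uniformly in $\gamma\in(2,\infty)$ is precisely the subtle step — most likely it requires an iterated (multiscale) replacement of $\bar\eta_{s}(x)$ by $\bar\eta_{s}(0)$ through intermediate sites rather than a single truncation — whereas the overall scheme above is routine.
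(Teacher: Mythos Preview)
Your reduction and the variational scheme are exactly what the paper does, and your identification of the obstacle at $\gamma\in(2,5/2]$ is correct. However, the paper's resolution is much simpler than a multiscale replacement: split the $x$-sum at $x=n$ and treat the two pieces by \emph{different} methods. For the far piece $x\ge n$ the paper does \emph{not} use the variational scheme at all; instead it applies Cauchy--Schwarz in time and uses that under $\nu_b$ the centered occupation variables are independent, obtaining
\[
\mathbb E_{\nu_b}\Big[\sup_{t\in[0,T]}\big(\cdots\big)^{2}\Big]\ \lesssim\ \frac{\Theta(n)^{2}}{n^{3}}\sum_{x\ge n}T(x)^{2}\ \lesssim\ \frac{\Theta(n)^{2}}{n^{2\gamma}}\ \longrightarrow\ 0.
\]
For the near piece $0\le x<n$ the variational scheme, exactly as you outlined, yields the \emph{finite} sum $\tfrac{\Theta(n)}{n^{3}}\sum_{w=1}^{n-1}w^{4-2\gamma}$; for $\gamma\in(2,5/2)$ this is of order $n^{-1}\cdot n^{5-2\gamma}=n^{4-2\gamma}\to 0$, for $\gamma=5/2$ it is of order $n^{-1}\log n\to 0$, and for $\gamma>5/2$ it is of order $n^{-1}\to 0$.

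The flaw in your proposed fix is that if you insist on the variational bound also for the far piece $x>K$, the resulting cost contains the tail $\tfrac{\Theta(n)}{n^{3}}\sum_{w>K}S(w)^{2}$, and since $S(w)^{2}$ is of order $w^{4-2\gamma}$ this tail is \emph{infinite} for every finite $K$ whenever $\gamma\le 5/2$ --- no choice of cutoff rescues it. The missing idea is precisely to abandon the Kipnis--Varadhan/$H_{-1}$ bound for sites far from the origin and replace it by the elementary second-moment estimate exploiting the product structure of $\nu_b$; the truncation at $x=n$ then makes the near-part sum finite and its polynomial growth is killed by the prefactor. No iterated replacement is needed.
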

\begin{proof}
We will prove only \eqref{bgcritpos3} but the proof of \eqref{bgcritneg3} is analogous. From the inequality $(u+v)^2 \leq 2(u^2+v^2)$, it is enough to prove that
\begin{equation}   \label{bgcritpos4}
   \lim_{n \rightarrow \infty} \mathbb{E}_{\nu_b} \Big[\sup_{t \in [0,T]} \Big(    \int_0^t \frac{\Theta(n)}{\sqrt{n}}  \sum_{x=n}^{\infty} \frac{G_{+}'(0)}{n} \sum_{y=0}^{\infty} (y-x) p(y-x)   [ \bar{\eta}_s(x) - \bar{\eta}_s(0)] ds  \Big)^2 \Big] =0,
\end{equation}
\begin{equation}  \label{bgcritpos5}
  \lim_{n \rightarrow \infty} \mathbb{E}_{\nu_b} \Big[ \sup_{t \in [0,T]} \Big(    \int_0^t \frac{\Theta(n)}{\sqrt{n}}  \sum_{x=0}^{n-1} \frac{G_{+}'(0)}{n} \sum_{y=0}^{\infty} (y-x) p(y-x)   [ \bar{\eta}_s(x) - \bar{\eta}_s(0)] ds  \Big)^2 \Big] =0.
\end{equation}
We begin by proving the former limit. Combining Cauchy-Schwarz inequality with Fubini's Theorem and the fact that the random variables $ \bar{\eta}_s(x) - \bar{\eta}_s(0)$ have mean zero for every $s \in [0,T]$ and are independent, the expression inside the limit in \eqref{bgcritpos4} is bounded from above by
\begin{align*}
& T \mathbb{E}_{\nu_b} \Big[      \int_0^T \Big( \frac{\Theta(n)}{\sqrt{n}}  \sum_{x=n}^{\infty} \frac{G_{+}'(0)}{n} \sum_{y=0}^{\infty} (y-x) p(y-x)   [ \bar{\eta}_s(x) - \bar{\eta}_s(0) ] \Big)^2 ds   \Big] \\
 = &  T [G_{+}'(0)]^2 \frac{\Theta^2(n)}{n^3} \sum_{x=n}^{\infty} [r_{n,+}(x)]^2 \int_0^T \mathbb{E}_{\nu_b} \big[ \big( \bar{\eta}_s(x) - \bar{\eta}_s(0) \big)^2 \big] ds \\
 = & 2 \chi(b) [T G_{+}'(0)]^2   \frac{\Theta^2(n)}{n^3} \sum_{x=n}^{\infty} [r_{n,+}(x)]^2 \lesssim  \frac{\Theta^2(n)}{n^2} \frac{1}{n} \sum_{x=an}^{\infty} x^{2-2\gamma},
\end{align*}
where for every $x >0$, we have
\begin{align*}
r_{n,+}(x):= \sum_{y=0}^{\infty} (y-x) p(y-x) = \sum_{r=x+1}^{\infty} r p(r) \lesssim x^{1-\gamma}.
\end{align*}
Then the expression inside the limit in \eqref{bgcritpos4} is bounded from above by a constant times
\begin{align*}
\frac{\Theta^2(n)}{n^2} n^{2-2 \gamma} \frac{1}{n} \sum_{x=n}^{\infty} \Big( \frac{x}{n} \Big) ^{2-2\gamma} \lesssim n^{4-2 \gamma} \frac{\Theta^2(n)}{n^{2 \gamma}} \int_{1}^{\infty} u^{2-2 \gamma} du \lesssim \frac{\Theta^2(n)}{n^{2 \gamma}},
\end{align*}
and this vanishes as $n \rightarrow \infty$, for every $\gamma \geq 2$, leading to \eqref{bgcritpos4}. It remains to treat \eqref{bgcritpos5}.

 From the symmetry of $p(\cdot)$ and Lemma 4.3 of \cite{CLO}, the expectation in \eqref{bgcritpos3} is bounded from above by a constant times
\begin{align} \label{sup02crit}
 \sup_{g \in L^2 (  \nu_{b})} \Big\{  \frac{\Theta(n)}{n^{3/2}}  \sum_{x=0}^{n-1} G_{+}'(0) \sum_{r=x+1}^{\infty} r p(r)  \langle \bar{\eta}(x) - \bar{\eta}(0) , g \rangle_{\nu_{b}}   +\Theta(n) \langle \mcb{L}_n g, g \rangle_{\nu_{b}}   \Big\},
\end{align}
From \eqref{bound2}, the second term inside this supremum is bounded from above by
\begin{align}  \label{bgcrit2a}
- \frac{\Theta(n)}{4}  \sum_{w=1}^{n-1} p (1)  I_{w,w-1}  (g, \nu_{b} ) 
\end{align}
From \eqref{telsum}, the first term inside the supremum in \eqref{sup02crit} is bounded from above by
\begin{align*}
& \frac{\Theta(n)}{n^{3/2}}  \sum_{x=0}^{n-1}  \sum_{r=x+1}^{\infty}  \| G'_{+} \|_{\infty} r   p(r) \sum_{w=0}^{x-1} \Big| \int_{\Omega} [ \eta(w+1) - \eta(w) ] g (\eta) d \nu_{b} \Big| \\
=&  \frac{\Theta(n)}{n^{3/2}} \sum_{w=0}^{n-2} \Big| \int_{\Omega} [ \eta(w+1) - \eta(w) ] g (\eta) d \nu_{b} \Big| \Big(  \| G'_{+}  \|_{\infty}   \sum_{x=0}^{n-1}  \sum_{r=x+1}^{\infty} r p(r) \Big) \\
\lesssim&   \frac{\Theta(n)}{n^{3/2}} \sum_{w=0}^{n-2} \Big| \int_{\Omega} [ \eta(w+1) - \eta(w) ] g (\eta) d \nu_{b} \Big|   \sum_{x=w+1}^{n-1} x^{1-\gamma} \\
\leq& C \frac{\Theta(n)}{n^{3/2}} \sum_{w=1}^{n-1} w^{2-\gamma} \Big| \int_{\Omega} [ \eta(w) - \eta(w-1) ] g (\eta) d \nu_{b} \Big|,
\end{align*}
for some $C>0$. In the equality above we used Fubini's Theorem. By choosing $A_{w,w-1}=\frac{C w^{2-\gamma}}{n^{3/2} p(1)}$ in \eqref{young2} and using \eqref{bgcrit2a}, the expression inside the supremum in \eqref{sup02crit} is bounded from above by a constant times
\begin{align} \label{bgcrit3}
\frac{\Theta(n)}{n^3} \sum_{w=1}^{n-1} w^{4-2 \gamma}.
\end{align}
If $\gamma > 5/2$, the sum is convergent and \eqref{bgcrit3} is of order $\Theta(n) n^{-3}$. If $\gamma=5/2$, the sum is of order $\log(n)$ and \eqref{bgcrit3} is of order $\Theta(n) \log(a n) n^{-3}$. Finally, if $\gamma \in [2, 5/2)$, \eqref{bgcrit3} is of order $\Theta(n) n^{2-2 \gamma} a^{5-2 \gamma}$. Therefore \eqref{bgcrit3} goes to zero when $n \rightarrow \infty$, ending the proof.
\end{proof}
Next, we treat the case $(\beta, \gamma) \in R_1$.

\subsection{Case $(\beta, \gamma) \in R_1$} \label{bgibbs}

In this case, it will be useful to introduce extra notation. Observe that when $\beta \geq 1$ and $(\beta, \gamma) \in R_1$, $\mathcal{S}_{\beta,\gamma}:=\mathcal{S}_{Neu}( \mathbb{R}^{*})$, hence for $G \in \mathcal{S}_{\beta,\gamma}$ we have $G_{+}^{'}(0)=G_{+}^{'}(0)=0$. This motivates us to rewrite $\mcb{R}_{n,\beta} G$ as
\begin{align*}
\mcb{R}_{n,\beta} G \left( \tfrac{x}{n} \right) = C_{\alpha,\beta,n} \sum_{y=-\infty}^{-1} \left[ G( \tfrac{y}{n}) -G( \tfrac{x}{n}) \right] p(y-x), \quad  x \geq 0, \; \beta \geq 0,
\end{align*}  
where $C_{\alpha,\beta,n}$ is given by
\begin{equation} \label{defcabn}
C_{\alpha,\beta,n}:=
\begin{cases}
1-\alpha n^{-\beta} \lesssim n^{-o_{\beta}}, & \beta \in [0,1); \\
\alpha n^{-\beta} \lesssim n^{-o_{\beta}}, & \beta \geq 1.
\end{cases}
\end{equation}
Above, $o_{\beta}:=0$ for $\beta \in [0,1)$ and $o_{\beta}:=\beta$ for $\beta \geq 1$. This extra notation is useful to treat the cases $\beta \in [0,1)$ and $\beta \geq 1$ simultaneously.
The following result will be useful  to obtain Proposition \ref{convrem2}.
\begin{prop} \label{convrem3}
Let $(\beta,\gamma) \in R_1$ and $G \in \mathcal{S}_{\beta,\gamma}$. Then
\begin{equation} \label{limbeta00a}
 \limsup_{\varepsilon \rightarrow 0^+} \limsup_{n \rightarrow \infty} \mathbb{E}_{\nu_b} \Big[\sup_{t \in [0,T]} \Big(  \int_0^t \frac{ \Theta(n) C_{\alpha,\beta,n} }{\sqrt{n}}    \sum_{|x| \geq \varepsilon n}  \sum_{y=-\varepsilon n +1}^{-1}  [ G( \tfrac{y}{n} ) - G( \tfrac{x}{n} ) ] p(y-x)   [\bar{\eta}_s ^{\rightarrow \ell} (0) - \bar{\eta}_s^n(0) ] ds \Big)^2 \Big] =0,
\end{equation}
\begin{equation} \label{limbeta00b}
\limsup_{\varepsilon \rightarrow 0^{+}} \limsup_{n \rightarrow \infty} \mathbb{E}_{\nu_b} \Big[\sup_{t \in [0,T]} \Big( \int_{0}^{t} \frac{ C_{\alpha,\beta,n} }{\sqrt{n}}  \sum _{|x| < \varepsilon n} \Theta(n) \sum_{|y| \geq \varepsilon n: \{x,y\} \in \mcb S}  [G( \tfrac{y}{n} ) - G( \tfrac{x}{n} ) ] p(y-x)    \bar{\eta}_{s}^{n}(x) ds \Big)^2 \Big] =0.
\end{equation}
\end{prop}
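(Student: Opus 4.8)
The plan is to prove the two displays by rather different routes: \eqref{limbeta00b} will come from a crude second–moment bound, whereas \eqref{limbeta00a}, though superficially similar, genuinely needs the Kipnis--Varadhan inequality (Lemma 4.3 of \cite{CLO}) to produce an extra smallness factor.

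First I would dispose of \eqref{limbeta00b}. Set $c_x:=\sum_{|y|\geq\varepsilon n:\,\{x,y\}\in\mcb S}[G(\tfrac{y}{n})-G(\tfrac{x}{n})]p(y-x)$, so that the integrand in \eqref{limbeta00b} equals $\tfrac{\Theta(n)C_{\alpha,\beta,n}}{\sqrt{n}}\sum_{|x|<\varepsilon n}c_x\,\bar{\eta}_s^n(x)$. Applying Cauchy--Schwarz in time and using that under $\nu_b$ the variables $(\bar{\eta}_s^n(x))_x$ are centred, independent and of variance $\chi(b)$, the expectation in \eqref{limbeta00b} is at most $T^2\chi(b)\,\tfrac{\Theta(n)^2C_{\alpha,\beta,n}^2}{n}\sum_{|x|<\varepsilon n}c_x^2$. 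Since $|G(\tfrac{y}{n})-G(\tfrac{x}{n})|\leq 2\|G\|_\infty$ and, for $|x|<\varepsilon n$ with $\{x,y\}\in\mcb S$, one has $|x-y|\geq\varepsilon n$, the tail of $p(\cdot)$ gives $\sum_{|y|\geq\varepsilon n:\,\{x,y\}\in\mcb S}p(y-x)\lesssim(\varepsilon n)^{-\gamma}$, hence $\sum_{|x|<\varepsilon n}c_x^2\lesssim(\varepsilon n)^{1-2\gamma}$. Recalling $C_{\alpha,\beta,n}\lesssim n^{-o_\beta}$ and $\Theta(n)\leq n^2$, the whole bound is $\lesssim n^{4-2\gamma-2o_\beta}\varepsilon^{1-2\gamma}$ for $\gamma>2$ and $\lesssim(\log n)^{-2}n^{-2o_\beta}\varepsilon^{-3}$ for $\gamma=2$; since $(\beta,\gamma)\in R_1$ forces $\beta\neq 1$ whenever $\gamma>2$, in every case the exponent of $n$ is strictly negative (resp. the logarithm dominates), so letting $n\to\infty$ and then $\varepsilon\to0$ gives \eqref{limbeta00b}.

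For \eqref{limbeta00a} the key observation is that $[\bar{\eta}_s^{\to\ell}(0)-\bar{\eta}_s^n(0)]$ does not depend on $x$ or $y$, so it factors out of the double sum. Denote by $K_n^\varepsilon(G):=\tfrac{\Theta(n)C_{\alpha,\beta,n}}{\sqrt{n}}\sum_{x\geq\varepsilon n}\sum_{y=-\varepsilon n+1}^{-1}[G(\tfrac{y}{n})-G(\tfrac{x}{n})]p(y-x)$ the remaining coefficient; we treat this representative term with $x\geq\varepsilon n$, the companion with $x\leq-\varepsilon n$ (which pairs, by the symmetry of $p(\cdot)$, with a left box–average) being handled identically. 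Exactly as above one bounds $|K_n^\varepsilon(G)|\lesssim\tfrac{\Theta(n)n^{-o_\beta}}{\sqrt{n}}(\varepsilon n)^{1-\gamma}$, the double sum $\sum_{x\geq\varepsilon n}\sum_{y=-\varepsilon n+1}^{-1}p(y-x)$ being handled by grouping pairs according to the gap $k=x-y\geq\varepsilon n+1$ (for each such $k$ there are at most $\min(\varepsilon n,k-\varepsilon n)$ admissible pairs). Thus the expectation in \eqref{limbeta00a} equals $(K_n^\varepsilon(G))^2\,\mathbb{E}_{\nu_b}\big[\sup_{t\in[0,T]}\big(\int_0^t[\bar{\eta}_s^{\to\ell}(0)-\bar{\eta}_s^n(0)]\,ds\big)^2\big]$.

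To estimate this last expectation I would invoke Lemma 4.3 of \cite{CLO}, bounding it by a constant times $\sup_{g}\big\{2\langle\bar{\eta}^{\to\ell}(0)-\bar{\eta}(0),g\rangle_{\nu_b}-\tfrac{\Theta(n)}{2}D_n(g,\nu_b)\big\}$. Writing $\bar{\eta}^{\to\ell}(0)-\bar{\eta}(0)=\tfrac{1}{\ell}\sum_{z=1}^\ell[\bar{\eta}(z)-\bar{\eta}(0)]$ and telescoping via \eqref{telsum} gives $\langle\bar{\eta}^{\to\ell}(0)-\bar{\eta}(0),g\rangle_{\nu_b}=\tfrac{1}{\ell}\sum_{w=0}^{\ell-1}(\ell-w)\int_\Omega[\eta(w+1)-\eta(w)]g\,d\nu_b$; bounding each summand by Young's inequality \eqref{young2} with parameter $A_w>0$ and using $D_n(g,\nu_b)\geq D^{\mcb F}(g,\nu_b)\geq\tfrac{p(1)}{2}\sum_{w\geq0}I_{w,w+1}(g,\nu_b)$ (all nearest–neighbour bonds $\{w,w+1\}$ with $w\geq0$ being fast), the choice $A_w=\tfrac{2(\ell-w)|K_n^\varepsilon(G)|}{\ell\,p(1)\,\Theta(n)}$ makes the coefficient of every $I_{w,w+1}$ nonpositive and leaves a remainder $\tfrac{2|K_n^\varepsilon(G)|}{\ell}\sum_{w=0}^{\ell-1}(\ell-w)A_w\lesssim\tfrac{\ell\,(K_n^\varepsilon(G))^2}{\Theta(n)}$. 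Since $\ell$ is of order $\varepsilon n$, the expectation in \eqref{limbeta00a} is therefore $\lesssim\tfrac{\ell}{\Theta(n)}(K_n^\varepsilon(G))^2\lesssim\varepsilon\,\Theta(n)\,n^{-2o_\beta}(\varepsilon n)^{2-2\gamma}$, i.e. $\lesssim\varepsilon^{3-2\gamma}n^{4-2\gamma-2o_\beta}$ for $\gamma>2$ and $\lesssim\varepsilon^{-1}n^{-2o_\beta}/\log n$ for $\gamma=2$; for every $(\beta,\gamma)\in R_1$ this vanishes as $n\to\infty$, and then $\varepsilon\to0$ proves \eqref{limbeta00a}. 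The main obstacle is precisely this display in the window $\beta\in[0,1)$, $\gamma\in[2,\tfrac{5}{2}]$, where $C_{\alpha,\beta,n}$ is of order $1$ and the trivial bound $|\int_0^t[\bar{\eta}_s^{\to\ell}(0)-\bar{\eta}_s^n(0)]\,ds|\lesssim T$ fails to beat the growth $(\varepsilon n)^{2-2\gamma}$ in $(K_n^\varepsilon(G))^2$: one is forced to exploit the Kipnis--Varadhan variational formula, which supplies the compensating factor $\ell/\Theta(n)$ of order $\varepsilon/n$; beyond this, the only care needed is with the gap–counting tail estimates for $p(\cdot)$ and the optimisation over $A_w$.
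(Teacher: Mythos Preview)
Your treatment of \eqref{limbeta00b} is correct and is exactly the paper's approach: Cauchy--Schwarz in time, independence under $\nu_b$, and the tail bound $\sum_{|r|\geq \varepsilon n}p(r)\lesssim(\varepsilon n)^{-\gamma}$. (You do not even need to invoke $\beta\neq 1$: the paper bounds $|C_{\alpha,\beta,n}|\leq 1+\alpha$ and obtains $\lesssim(\Theta(n)/n^\gamma)^2\varepsilon^{1-\gamma}$, which already vanishes as $n\to\infty$ for every $\gamma\geq 2$.)

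For \eqref{limbeta00a}, however, you have been led astray by what is a misprint in the displayed statement. The factor written as $[\bar{\eta}_s^{\to\ell}(0)-\bar{\eta}_s^n(0)]$ should be $\bar{\eta}_s^n(x)$, and the inner sum over $y$ should be the full range $\{x,y\}\in\mcb S$; this is clear both from the role of the proposition (splitting the sum over $x$ in Proposition~\ref{convrem2} according to $|x|\gtrless\varepsilon n$) and from the paper's own proof, which immediately reduces \eqref{limbeta00a} to the two displays
\[
\limsup_{\varepsilon\to0^+}\limsup_{n\to\infty}\mathbb{E}_{\nu_b}\Big[\sup_{t\in[0,T]}\Big(\int_0^t\frac{1}{\sqrt{n}}\sum_{x=\varepsilon n}^{\infty}\Theta(n)\,\mcb R_{n,\beta}G(\tfrac{x}{n})\,\bar{\eta}_s^n(x)\,ds\Big)^2\Big]=0
\]
and its mirror image for $x\leq -\varepsilon n$. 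These are proved by exactly the same crude second--moment method as \eqref{limbeta00b}: one bounds $|\mcb R_{n,\beta}G(\tfrac{x}{n})|\lesssim n^{-o_\beta}x^{-\gamma}$ for $x\geq \varepsilon n$ and sums $x^{-2\gamma}$, arriving at $\lesssim(\Theta(n)/n^\gamma)^2\varepsilon^{1-2\gamma}$. No Kipnis--Varadhan argument is used, nor is any box average introduced, at this stage of the paper; the replacements by $\overrightarrow{\bar{\eta}}^{\ell}(0)$ and $\overleftarrow{\bar{\eta}}^{\ell}(0)$ appear only later, in Propositions~\ref{firstbg}--\ref{thirdbg}, when handling the near--origin window $|x|,|y|<\varepsilon n$.

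Your Kipnis--Varadhan computation is internally coherent for the statement \emph{as printed}, but it is solving a different (and somewhat artificial) problem. In particular, your remark that the companion term with $x\leq -\varepsilon n$ ``pairs with a left box--average'' already signals that the printed formula is not quite right: in the written expression the random factor is the same right box--average regardless of the sign of $x$, and for $x\leq -\varepsilon n$ with $y\in[-\varepsilon n+1,-1]$ the bond $\{x,y\}$ is not even slow.
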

\begin{proof}
Since $(u+v)^2 \leq 2(u^2+v^2)$, \eqref{limbeta00a} is a consequence of
\begin{align}  \label{limconvrem2a}
\limsup_{\varepsilon \rightarrow 0^{+}} \limsup_{n \rightarrow \infty} \mathbb{E}_{\nu_b} \Big[\sup_{t \in [0,T]} \Big( \int_{0}^{t} \frac{1}{\sqrt{n}} \sum _{x=\varepsilon n}^{\infty} \Theta(n)  \mcb {R}_{n,\beta} G \left( \tfrac{x}{n} \right)   \bar{\eta}_{s}^{n}(x) ds \Big)^2 \Big] =0,
\end{align}
\begin{align}  \label{limconvrem2b}
\limsup_{\varepsilon \rightarrow 0^{+}} \limsup_{n \rightarrow \infty} \mathbb{E}_{\nu_b} \Big[\sup_{t \in [0,T]} \Big( \int_{0}^{t} \frac{1}{\sqrt{n}} \sum _{x= - \infty }^{-\varepsilon n} \Theta(n)  \mcb {R}_{n,\beta} G \left( \tfrac{x}{n} \right)   \bar{\eta}_{s}^{n}(x) ds \Big)^2 \Big] =0.
\end{align}
We prove only \eqref{limconvrem2a}, but the proof of \eqref{limconvrem2b} is analogous. From Cauchy-Schwarz's inequality and Fubini's Theorem, the expectation in \eqref{limconvrem2a} is bounded from above by
\begin{align} \label{auxconvrem2a}
&T  \int_{0}^{T} \frac{1}{n} \sum _{x=\varepsilon n}^{\infty} \big[  \Theta(n)  \mcb {R}_{n,\beta} G \left( \tfrac{x}{n} \right)  \big]^2 \mathbb{E}_{\nu_b} \big[ \big(   \bar{\eta}_{s}^{n}(x) \big)^2 \big] ds = 2b(1-b)  \frac{[T \Theta(n)]^2}{n} \sum _{x=\varepsilon n}^{\infty} \big[   \mcb {R}_{n,\beta} G \left( \tfrac{x}{n} \right)  \big]^2.
\end{align}
From \eqref{defcabn}, for every $\beta \geq 0$ we get
\begin{align*}
 |\mcb {R}_{n,\beta} G \left( \tfrac{x}{n} \right)| \lesssim n^{-o_{\beta}} \sum_{y=-\infty}^{-1} | G( \tfrac{y}{n} ) - G( \tfrac{x}{n} ) |p(y-x) \leq&  2 \|G \|_{\infty} c_{\gamma} n^{-\gamma-o_{\beta}} \frac{1}{n} \sum_{y=-\infty}^{-1} \Big( \frac{x-y}{n} \Big)^{-\gamma-1} \\
\lesssim & n^{-\gamma-o_{\beta}} \int_{-\infty}^{0} \Big( \frac{x}{n} - u \Big)^{-\gamma-1}du \lesssim  n^{-o_{\beta}}  x^{-\gamma},
\end{align*}
and \eqref{auxconvrem2a} is bounded from above by a constant times
\begin{align*}
\frac{[\Theta(n)]^2}{n}  \sum _{x=\varepsilon n}^{\infty} n^{-2o_{\beta}} x^{-2 \gamma} = \frac{[\Theta(n)]^2}{n^{2 \gamma+2 o_{\beta}}} \frac{1}{n} \sum _{x=\varepsilon n}^{\infty} \Big( \frac{x}{n} \Big)^{-2 \gamma} \lesssim \Big( \frac{\Theta(n)}{n^{\gamma+o_{\beta}}} \Big)^2 \varepsilon^{1-2 \gamma} \lesssim \Big( \frac{\Theta(n)}{n^{\gamma}} \Big)^2 \varepsilon^{1-2 \gamma},
\end{align*}
and this vanishes since we take first $n \rightarrow \infty$ and then $\varepsilon \rightarrow 0^{+}$. It remains to show \eqref{limbeta00b}. In order to do so, it is enough to prove that
\begin{align}  \label{limconvrem5c}
\limsup_{\varepsilon \rightarrow 0^{+}} \limsup_{n \rightarrow \infty} \mathbb{E}_{\nu_b} \Big[\sup_{t \in [0,T]} \Big( \int_{0}^{t} \frac{ C_{\alpha,\beta,n}  }{\sqrt{n}}  \sum _{x =0}^{\varepsilon n -1} \Theta(n) \sum_{y=-\infty}^{-\varepsilon n}  [G( \tfrac{y}{n} ) - G( \tfrac{x}{n} ) ] p(y-x)    \bar{\eta}_{s}^{n}(x) ds \Big)^2 \Big] =0,
\end{align}
\begin{align}  \label{limconvrem6c}
\limsup_{\varepsilon \rightarrow 0^{+}} \limsup_{n \rightarrow \infty} \mathbb{E}_{\nu_b} \Big[\sup_{t \in [0,T]} \Big( \int_{0}^{t} \frac{ C_{\alpha,\beta,n}  }{\sqrt{n}}  \sum _{x =- \varepsilon n +1}^{ -1} \Theta(n) \sum_{y=\varepsilon n}^{\infty}  [G( \tfrac{y}{n} ) - G( \tfrac{x}{n} ) ] p(y-x)    \bar{\eta}_{s}^{n}(x) ds \Big)^2 \Big] =0.
\end{align}
We prove only \eqref{limconvrem5c}, but the proof of \eqref{limconvrem6c} is analogous. Observe that $|C(\alpha, \beta, n)|$ is always bounded from above by $1+\alpha$. From Cauchy-Schwarz's inequality and Fubini's Theorem, the expectation in \eqref{limconvrem5c} is bounded from above by
\begin{align*} 
&t  \int_{0}^{t} \frac{1}{n} \sum _{x =0}^{\varepsilon n -1} \Big( (\alpha + 1) \Theta(n) \sum_{y=-\infty}^{- \varepsilon n}  |G( \tfrac{y}{n} ) - G( \tfrac{x}{n} ) | p(y-x) \Big)^2  \mathbb{E}_{\nu_b} \big[ \big(   \bar{\eta}_{s}^{n}(x) \big)^2 \big] ds \\
\lesssim& \chi(b) [  \| G \|_{\infty} t  ]^2   \frac{[\Theta(n) n^{-\gamma}]^2}{n} \sum _{x =0}^{\varepsilon n -1} \Big( \frac{1}{n}  \sum_{y=-\infty}^{-\varepsilon n}  (\tfrac{x-y}{n} )^{-\gamma-1}  \Big)^2 \\
\lesssim &  \frac{[\Theta(n) n^{-\gamma}]^2}{n} \sum _{x =0}^{\varepsilon n -1} \Big( \int_{-\infty}^{-\varepsilon} ( \tfrac{x}{n} - u )^{-\gamma-1} du  \Big)^2 \lesssim  \frac{[\Theta(n) n^{-\gamma}]^2}{n} \sum _{x =0}^{\varepsilon n -1} ( \tfrac{x}{n} + \varepsilon )^{-\gamma} \leq \Big( \frac{\Theta(n)}{n^{\gamma}} \Big)^{2} \varepsilon^{1-\gamma}, 
\end{align*}
that vanishes since we take first $n \rightarrow \infty$ and then $\varepsilon \rightarrow 0^{+}$. This ends the proof.
\end{proof} 
From a change of variables and the symmetry of $p(\cdot)$, we have the following identity:
\begin{align*}
& \sup_{t \in [0,T]} \Big( \int_{0}^{t} \frac{ \Theta(n) C_{\alpha,\beta,n}  }{\sqrt{n}}  \sum _{|x| < \varepsilon n}  \sum_{|y| < \varepsilon n: \{x,y\} \in \mcb S}  [G( \tfrac{y}{n} ) - G( \tfrac{x}{n} ) ] p(y-x)    \bar{\eta}_{s}^{n}(x) ds \Big)^2 \\
 = & \sup_{t \in [0,T]} \Big( \int_{0}^{t} \frac{ \Theta(n) C_{\alpha,\beta,n} }{\sqrt{n}}  \sum _{x =0}^{\varepsilon n -1} \sum_{y=-\varepsilon n +1}^{-1}    [G( \tfrac{y}{n} ) - G( \tfrac{x}{n} ) ] p(y-x)  [  \bar{\eta}_{s}^{n}(x) - \bar{\eta}_{s}^{n}(y)]  ds \Big)^2.
\end{align*}
Therefore, the proof of Proposition \eqref{convrem2} ends if we can show the  next result.
\begin{prop} \label{corbg}
Let $(\beta,\gamma) \in R_1$ and $G \in \mathcal{S}_{\beta,\gamma}$.  Then
\begin{equation*} 
 \limsup_{\varepsilon \rightarrow 0^+} \limsup_{n \rightarrow \infty} \mathbb{E}_{\nu_b} \Big[\sup_{t \in [0,T]} \Big(  \int_0^t \frac{ \Theta(n) C_{\alpha,\beta,n}  }{\sqrt{n}}  \sum_{x=0}^{\varepsilon n -1}  \sum_{y=-\varepsilon n +1}^{-1}  [ G( \tfrac{y}{n} ) - G( \tfrac{x}{n} ) ] p(y-x)   [ \bar{\eta}_s(x) -  \bar{\eta}_s(y) ] ds \Big)^2 \Big] =0.
\end{equation*}
\end{prop}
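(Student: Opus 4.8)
The argument follows the Boltzmann--Gibbs scheme already used to prove Proposition~\ref{prop2bgcrit}, now with the two-sided observable $\bar\eta(x)-\bar\eta(y)$ and with a telescopic path that must cross the origin. First, by Lemma 4.3 of \cite{CLO}, the expectation in the statement is bounded, up to a multiplicative constant depending only on $T$, by
\begin{equation*}
\sup_{g \in L^2(\nu_b)}\Big\{ \frac{\Theta(n)C_{\alpha,\beta,n}}{\sqrt n}\sum_{x=0}^{\varepsilon n -1}\sum_{y=-\varepsilon n+1}^{-1}\big[G(\tfrac yn)-G(\tfrac xn)\big]p(y-x)\,\langle\bar\eta(x)-\bar\eta(y),g\rangle_{\nu_b} + \Theta(n)\langle\mcb L_n g,g\rangle_{\nu_b}\Big\},
\end{equation*}
with $C_{\alpha,\beta,n}$ as in \eqref{defcabn}. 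By \eqref{bound2}, $\Theta(n)\langle\mcb L_n g,g\rangle_{\nu_b}=-\tfrac{\Theta(n)}2 D_n(g,\nu_b)$, and retaining from $D_n$ only the nearest-neighbour bonds $\{w,w-1\}$ with $|w|\vee|w-1|\le\varepsilon n$ (inside which the unique slow bond is $\{-1,0\}$) gives
\begin{equation*}
\Theta(n)\langle\mcb L_n g,g\rangle_{\nu_b}\le -\frac{\Theta(n)\,p(1)}{4}\sum_{w}\kappa_{n,w}\,I_{w,w-1}(g,\nu_b), \qquad \kappa_{n,w}:=\alpha n^{-\beta}\mathbbm{1}_{\{w=0\}}+\mathbbm{1}_{\{w\neq 0\}}.
\end{equation*}

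Next, using the two-sided form of \eqref{telsum}, namely $\langle\bar\eta(x)-\bar\eta(y),g\rangle_{\nu_b}=\sum_{w=y+1}^{x}\int_\Omega[\eta(w)-\eta(w-1)]g\,d\nu_b$ for $y\le-1\le0\le x$, together with Fubini's theorem, the linear term in the display above becomes $\sum_{w}c_w\big|\int_\Omega[\eta(w)-\eta(w-1)]g\,d\nu_b\big|$, with
\begin{equation*}
c_w=c_w(n,\varepsilon):=\frac{\Theta(n)C_{\alpha,\beta,n}}{\sqrt n}\sum_{x=\max(w,0)}^{\varepsilon n-1}\ \sum_{y=-\varepsilon n+1}^{\min(w-1,-1)}\big|G(\tfrac yn)-G(\tfrac xn)\big|\,p(y-x).
\end{equation*}
Applying \eqref{young2} on each bond $\{w,w-1\}$ with $A_{w,w-1}:=c_w/(\Theta(n)p(1)\kappa_{n,w})$, the terms $I_{w,w-1}(g,\nu_b)$ are exactly absorbed by the Dirichlet-form lower bound above, so the whole variational expression is $\lesssim\sum_{w}c_w^2/(\Theta(n)\kappa_{n,w})$; it remains to show this vanishes as $n\to\infty$, then $\varepsilon\to0^+$.

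To estimate $c_w$ one splits on $\beta$. If $\beta\in[0,1)$ then $G\in\mathcal S(\mathbb R)$, so $|G(\tfrac yn)-G(\tfrac xn)|\le\|G'\|_\infty(x-y)/n$, and summing over the relevant $(x,y)$ (for which $1\le x-y<2\varepsilon n$) yields $c_w\lesssim\Theta(n)n^{-3/2}\sum_{|w|<r<2\varepsilon n}r^2p(r)$; hence for $w\neq0$ one gets $c_w^2/(\Theta(n)\kappa_{n,w})\lesssim\Theta(n)n^{-3}\big(\sum_{r>|w|}r^2p(r)\big)^2$, whose sum over $w$ tends to $0$ by a routine case analysis ($\gamma>5/2$, $\gamma=5/2$, $\gamma\in[2,5/2)$, recalling $\Theta(n)=n^2$ for $\gamma>2$ and $\Theta(n)=n^2/\log n$ for $\gamma=2$), while the $w=0$ contribution equals $c_0^2/(\alpha\Theta(n)n^{-\beta})$, which is of order $n^{\beta-1}$ (up to logarithms at $\gamma=2$) and so also vanishes since $\beta<1$. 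If $\beta\ge1$ then (within $R_1$) either $\gamma=2$ or $\beta>1$, $G\in\mathcal S_{Neu}(\mathbb R^*)$, and $C_{\alpha,\beta,n}=\alpha n^{-\beta}$; using only $|G(\tfrac yn)-G(\tfrac xn)|\le 2\|G\|_\infty$ gives $c_w\lesssim n^{3/2-\beta}\sum_{r>|w|}r\,p(r)\lesssim n^{3/2-\beta}(1+|w|)^{1-\gamma}$ (the $r$-sum truncated at $r<2\varepsilon n$ when $\gamma=2$), so $c_w^2/(\Theta(n)\kappa_{n,w})\lesssim n^{1-2\beta}(1+|w|)^{2-2\gamma}$ (with an extra factor $1/\log n$ at $\gamma=2$), and since $\gamma>3/2$ and $\beta>1/2$ the sum over $w$ vanishes as $n\to\infty$; for $w=0$ the factor $n^{-\beta}$ inside $C_{\alpha,\beta,n}$ cancels the loss $\kappa_{n,0}^{-1}=\alpha^{-1}n^{\beta}$.

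The main obstacle is precisely the slow bond $\{-1,0\}$: it is the only nearest-neighbour bond inside the window whose Dirichlet-form weight degenerates, to $\alpha n^{-\beta}$, yet every telescopic path from a negative site to a nonnegative one must traverse it, so the argument closes only because this degeneracy is matched by a compensating smallness of $c_0$ --- supplied by the smoothness of $G\in\mathcal S(\mathbb R)$ at the origin when $\beta\in[0,1)$, and by the explicit factor $n^{-\beta}$ in $C_{\alpha,\beta,n}$ when $\beta\ge1$. Beyond that, the remaining effort is the bookkeeping of the two $\varepsilon$-truncations and of the logarithmic corrections at $\gamma=2$, of the same nature as in the proof of Proposition~\ref{prop2bgcrit}.
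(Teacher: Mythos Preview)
Your argument is correct, and it takes a genuinely different route from the paper's own proof.

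The paper does not prove Proposition~\ref{corbg} in one stroke. Instead it introduces an auxiliary microscopic scale $\ell=\varepsilon\Theta(n)/n$ and the one-sided box averages $\overrightarrow{\bar\eta}^{\ell}(0)$, $\overleftarrow{\bar\eta}^{\ell}(0)$ from \eqref{medbox2}, then writes
\[
\bar\eta(x)-\bar\eta(y)=\big[\bar\eta(x)-\overrightarrow{\bar\eta}^{\ell}(0)\big]+\big[\overrightarrow{\bar\eta}^{\ell}(0)-\bar\eta(0)\big]+\big[\bar\eta(0)-\overleftarrow{\bar\eta}^{\ell}(0)\big]+\big[\overleftarrow{\bar\eta}^{\ell}(0)-\bar\eta(y)\big],
\]
and treats the four pieces in three separate propositions (\ref{firstbg}, \ref{secondbg}, \ref{thirdbg}), each one invoking the CLO variational bound and Young's inequality on a shorter telescopic path. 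The slow bond only enters in Proposition~\ref{firstbg}; the outer pieces live entirely on one side of the origin. For $\gamma=2$ the paper needs the extra Proposition~\ref{thirdbg} because $\ell<\varepsilon n$, so the $x$-range has to be split further.

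You collapse all of this into a single variational step: one telescope from $y$ to $x$ crossing the slow bond, Fubini to collect the weight $c_w$ on each nearest-neighbour bond, and a uniform Young choice $A_{w,w-1}=c_w/(\Theta(n)p(1)\kappa_{n,w})$. The resulting estimate $\sum_w c_w^2/(\Theta(n)\kappa_{n,w})$ is then controlled case by case in $\gamma$ and $\beta$, exactly as in Proposition~\ref{prop3bgcrit}. Your handling of the slow bond is the correct one: for $\beta<1$ the Lipschitz bound on $G\in\mathcal S(\mathbb R)$ makes $c_0$ small enough to absorb the factor $n^{\beta}$ from $\kappa_{n,0}^{-1}$, while for $\beta\ge1$ the explicit $n^{-\beta}$ in $C_{\alpha,\beta,n}$ does the job.

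What each approach buys: the paper's box-average decomposition is the classical ``one-block/two-block'' replacement-lemma format, modular and reusable, but it forces the introduction of the scale $\ell$ and the extra case analysis of Proposition~\ref{thirdbg} when $\gamma=2$. Your direct argument is shorter and avoids $\ell$ altogether; the price is that the sum $\sum_w c_w^2/(\Theta(n)\kappa_{n,w})$ must be estimated in one go, which requires the same $\gamma$-dependent case splits ($\gamma>5/2$, $\gamma=5/2$, $2<\gamma<5/2$, $\gamma=2$) that you indicate. Both proofs ultimately rest on the same ingredients (Lemma~4.3 of \cite{CLO}, \eqref{bound2}, \eqref{young2}), so neither is deeper; yours is just more compact.
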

Proposition \ref{corbg} is a direct consequence of Propositions \ref{firstbg}, \ref{secondbg} and \ref{thirdbg}. For $\ell_0 \geq 1$, define 
\begin{equation} \label{medbox2}
 \overrightarrow{\bar{\eta}}^{\ell_0 }(0):=\frac{1}{\ell_0} \sum_{y=1}^{\ell_0} \bar{\eta}(y)   \; \; \text{and} \; \; \overleftarrow{\bar{\eta}}^{\ell_0 }(0):=\frac{1}{\ell_0} \sum_{y=-\ell_0}^{-1} \bar{\eta}(y).
\end{equation}
\begin{prop} \label{firstbg}
Let $(\beta,\gamma) \in R_1$, $G \in \mathcal{S}_{\beta,\gamma}$ and for $\varepsilon >0$ and $n \geq 1$, we define $\ell$ by
\begin{align} \label{defellmic}
\ell(\varepsilon,n):= \frac{\varepsilon \Theta(n)}{n}.
\end{align}
Then
\begin{equation} \label{limbeta01a}
 \limsup_{\varepsilon \rightarrow 0^+} \limsup_{n \rightarrow \infty} \mathbb{E}_{\nu_b} \Big[\sup_{t \in [0,T]} \Big(  \int_0^t \frac{ \Theta(n) C_{\alpha,\beta,n} }{\sqrt{n}}    \sum_{x=0}^{\varepsilon n-1}  \sum_{y=-\varepsilon n +1}^{-1}  [ G( \tfrac{y}{n} ) - G( \tfrac{x}{n} ) ] p(y-x)   [\overrightarrow{\bar{\eta}}^{\ell}_s (0) - \bar{\eta}_s^n(0) ] ds \Big)^2 \Big] =0,
\end{equation}
\begin{equation}  \label{limbeta01b}
 \limsup_{\varepsilon \rightarrow 0^+} \limsup_{n \rightarrow \infty} \mathbb{E}_{\nu_b} \Big[\sup_{t \in [0,T]} \Big(  \int_0^t  \frac{ \Theta(n) C_{\alpha,\beta,n}  }{\sqrt{n}}  \sum_{x=0}^{\varepsilon n-1}  \sum_{y=-\varepsilon n +1}^{-1}  [ G( \tfrac{y}{n} ) - G( \tfrac{x}{n} ) ] p(y-x)  [\bar{\eta}_s^n(0) - \overleftarrow{\bar{\eta}}^{\ell}_s(0)   ] ds \Big)^2 \Big] =0.
\end{equation}
\end{prop}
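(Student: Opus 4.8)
The plan is to prove \eqref{limbeta01a} and \eqref{limbeta01b} by a one-block estimate in the spirit of Proposition \ref{prop2bgcrit}; I will carry out \eqref{limbeta01a} and indicate at the end what changes for \eqref{limbeta01b}. First I would note that the increment $\overrightarrow{\bar{\eta}}^{\ell}_s(0)-\bar{\eta}_s^n(0)$ is independent of $x$ and $y$, so the inner double sum factors out as the deterministic constant
\begin{equation*}
\Phi_n:=\sum_{x=0}^{\varepsilon n-1}\sum_{y=-\varepsilon n+1}^{-1}\big[G(\tfrac{y}{n})-G(\tfrac{x}{n})\big]p(y-x),
\end{equation*}
and the quantity under the expectation in \eqref{limbeta01a} becomes $\big(\int_0^t \tfrac{\Theta(n)C_{\alpha,\beta,n}}{\sqrt{n}}\Phi_n[\overrightarrow{\bar{\eta}}^{\ell}_s(0)-\bar{\eta}_s^n(0)]\,ds\big)^2$. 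By Lemma 4.3 of \cite{CLO} this expectation is bounded above by a constant times
\begin{equation*}
\sup_{g\in L^2(\nu_b)}\Big\{\frac{\Theta(n)C_{\alpha,\beta,n}}{\sqrt{n}}\Phi_n\,\langle\overrightarrow{\bar{\eta}}^{\ell}(0)-\bar{\eta}(0),g\rangle_{\nu_b}+\Theta(n)\langle\mcb{L}_n g,g\rangle_{\nu_b}\Big\}.
\end{equation*}

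Next I would expand the linear term. From \eqref{telsum}, $\langle\overrightarrow{\bar{\eta}}^{\ell}(0)-\bar{\eta}(0),g\rangle_{\nu_b}=\frac1\ell\sum_{w=0}^{\ell-1}(\ell-w)\int_\Omega[\eta(w+1)-\eta(w)]g\,d\nu_b$, so its absolute value is at most $\sum_{w=0}^{\ell-1}|\int_\Omega[\eta(w+1)-\eta(w)]g\,d\nu_b|$. Every bond $\{w+1,w\}$ with $0\le w\le\ell-1$ belongs to $\mcb{F}$, hence by \eqref{bound2} one has $\Theta(n)\langle\mcb{L}_n g,g\rangle_{\nu_b}\le-\frac{\Theta(n)}{4}p(1)\sum_{w=0}^{\ell-1}I_{w+1,w}(g,\nu_b)$. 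I would then apply Young's inequality \eqref{young2} with $A_{w+1,w}$ of order $\frac{C_{\alpha,\beta,n}|\Phi_n|}{p(1)\sqrt{n}}$ for each $w$: the $I$-terms are absorbed by the Dirichlet form, and the supremum above is bounded by a constant times $\frac{\ell\,\Theta(n)\,C_{\alpha,\beta,n}^2\,\Phi_n^2}{n}$, which by the definition $\ell=\varepsilon\Theta(n)/n$ in \eqref{defellmic} equals a constant times
\begin{equation*}
\frac{\varepsilon\,[\Theta(n)]^2\,C_{\alpha,\beta,n}^2\,\Phi_n^2}{n^2}.
\end{equation*}
For \eqref{limbeta01b} the argument is the same, except that the telescoping from site $0$ to a negative site uses the slow bond $\{0,-1\}$; I would absorb its contribution into the term $\frac{\Theta(n)}{4}\alpha n^{-\beta}p(1)I_{0,-1}(g,\nu_b)$ produced by $D^{\mcb{S}}(g,\nu_b)$ in \eqref{bound2}, taking the corresponding Young parameter of order $\frac{C_{\alpha,\beta,n}|\Phi_n|}{\alpha n^{-\beta}p(1)\sqrt{n}}$, which yields a term of order at most $\frac{\varepsilon[\Theta(n)]^2 C_{\alpha,\beta,n}^2\Phi_n^2}{n^2}$ for $\beta\ge1$, and of order $\frac{\Theta(n)n^{\beta-1}\Phi_n^2}{n}$ for $\beta\in[0,1)$ (dominated by the main term in that regime).

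It then remains to estimate $|\Phi_n|$ and check that $\limsup_{\varepsilon\to0^+}\limsup_{n\to\infty}\varepsilon[\Theta(n)]^2C_{\alpha,\beta,n}^2\Phi_n^2/n^2=0$. Recall from \eqref{defcabn} that $C_{\alpha,\beta,n}\lesssim n^{-o_\beta}$, with $o_\beta=0$ for $\beta\in[0,1)$ and $o_\beta=\beta$ for $\beta\ge1$. When $\beta\in[0,1)$, $\mathcal{S}_{\beta,\gamma}=\mathcal{S}(\mathbb{R})$, so $G$ is smooth across $0$ and $|G(\tfrac{y}{n})-G(\tfrac{x}{n})|\lesssim\frac{x-y}{n}\|G^{(1)}\|_{\infty}$ (note $x\ge0>y$); since for each $r\ge1$ at most $r$ of the pairs $(x,y)$ in the summation satisfy $x-y=r$, this gives $|\Phi_n|\lesssim\frac1n\sum_{r=1}^{2\varepsilon n}r^2p(r)$, which is $\lesssim n^{-1}$ when $\gamma>2$ and $\lesssim n^{-1}\log n$ when $\gamma=2$; together with $C_{\alpha,\beta,n}\lesssim1$ and $\Theta(n)\in\{n^2,\,n^2/\log n\}$ the bound reduces to $\lesssim\varepsilon$, uniformly in $n$. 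When $\beta\ge1$, $\mathcal{S}_{\beta,\gamma}=\mathcal{S}_{Neu}(\mathbb{R}^{*})$, so $G_{-}^{(1)}(0)=G_{+}^{(1)}(0)=0$; writing $G(\tfrac{y}{n})-G(\tfrac{x}{n})=[G_{-}(\tfrac{y}{n})-G_{-}(0)]+[G_{-}(0)-G_{+}(0)]+[G_{+}(0)-G_{+}(\tfrac{x}{n})]$ and using a second-order Taylor expansion (with vanishing first derivatives at $0$) in the first and third brackets, the corresponding sums are $o(1)$, while the constant bracket contributes $[G_{-}(0)-G_{+}(0)]\sum_{x,y}p(y-x)=O(1)$; hence $|\Phi_n|\lesssim1$, and since now $C_{\alpha,\beta,n}\lesssim n^{-\beta}$, the bound becomes $\lesssim\varepsilon n^{2-2\beta}$ for $\gamma>2$ (where necessarily $\beta>1$, as $(\beta,\gamma)\in R_1$) and $\lesssim\varepsilon n^{2-2\beta}(\log n)^{-2}\le\varepsilon(\log n)^{-2}$ for $\gamma=2$. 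In every case the bound vanishes in the iterated limit. The main obstacle is precisely this bookkeeping: one must balance $C_{\alpha,\beta,n}$, the block length $\ell=\varepsilon\Theta(n)/n$, the time scale $\Theta(n)$, and the very different sizes of $|\Phi_n|$ for continuous test functions ($O(n^{-1})$, up to a logarithm) versus Neumann ones ($O(1)$), and in \eqref{limbeta01b} route the slow bond $\{0,-1\}$ through the slow part of the Dirichlet form.
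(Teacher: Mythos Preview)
Your approach is essentially the same as the paper's: apply Lemma~4.3 of \cite{CLO}, telescope the block increment into nearest-neighbor exchanges, and absorb the resulting $I_{w,w\pm1}$ terms into the Dirichlet form via \eqref{young2}. Factoring out the deterministic constant $\Phi_n$ is a clean presentation that the paper does not make explicit, but the underlying estimates match (compare your $\varepsilon[\Theta(n)]^2C_{\alpha,\beta,n}^2\Phi_n^2/n^2$ with the paper's $\ell n/\Theta(n)=\varepsilon$ for $\beta<1$ and $\ell\Theta(n)/n^{1+2\beta}=\varepsilon(\Theta(n)/n^{1+\beta})^2$ for $\beta\ge1$).

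One inaccuracy to fix: in your discussion of \eqref{limbeta01b} for $\beta\ge1$, the slow-bond cost you compute is of order
\[
\frac{\Theta(n)\,C_{\alpha,\beta,n}^2\,\Phi_n^2\,n^{\beta}}{n}\;\asymp\;\frac{\Theta(n)}{n^{1+\beta}},
\]
which is \emph{not} dominated by your main term $\varepsilon[\Theta(n)]^2C_{\alpha,\beta,n}^2\Phi_n^2/n^2\asymp\varepsilon(\Theta(n)/n^{1+\beta})^2$; in fact the ratio is $n^{1+\beta}/(\varepsilon\Theta(n))\to\infty$. This does not break the proof, because $\Theta(n)/n^{1+\beta}\to0$ on its own for every $(\beta,\gamma)\in R_1$ with $\beta\ge1$ (either $\gamma=2$ and $\Theta(n)=n^2/\log n$, or $\gamma>2$ and then $\beta>1$). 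The paper simply records the slow-bond contribution as a separate additive term $\Theta(n)/n^{1+\beta}$ and lets it vanish with $n$; you should do the same rather than claim it is absorbed by the $\varepsilon$-term.
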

\begin{proof}
We present here only the proof of \eqref{limbeta01b}, but the proof of \eqref{limbeta01a} is analogous. From Lemma 4.3 of \cite{CLO} and \eqref{defcabn}, the expectation in \eqref{limbeta01b} is bounded from above by a constant times
\begin{align} \label{sup01b}
 \sup_{g \in L^2 (  \nu_{b})} \Big\{  \frac{\Theta(n)}{\sqrt{n} n^{ o_{\beta} } } \sum_{x=0}^{\varepsilon n-1}  \sum_{y=-\varepsilon n +1}^{-1}  [ G( \tfrac{y}{n} ) - G( \tfrac{x}{n} ) ] p(y-x)   \langle \bar{\eta}(0) - \overleftarrow{\bar{\eta}}^{\ell}(0) , g \rangle_{\nu_{b}}   +\Theta(n) \langle \mcb {L}_n g, g \rangle_{\nu_{b}}   \Big\}.
\end{align}
From \eqref{bound2}, the second term inside last supremum is bounded from above by
\begin{align}  \label{bgibbs1a}
- \frac{\Theta(n)}{4}   \sum_{w=- \ell+1}^{-1} p (1)  I_{w,w-1}  (g, \nu_{b} ) - \alpha \frac{\Theta(n)}{4n^{\beta}}    p(1) I_{0,-1}  (g, \nu_{b} ).
\end{align}
Now we will treat the first term inside the supremum in \eqref{sup01b}. From \eqref{medbox2}, we get
 \begin{align*}
& |\langle \bar{\eta}(0) - \overleftarrow{\bar{\eta}}^{\ell}(0) , g \rangle_{\nu_{b}} |=\Big| \frac{1}{\ell}  \sum_{y= - \ell }^{-1} \int    [  \eta(0) -  \eta(y) ] g(\eta) d \nu_{b} \Big| \\
=& \Big|\frac{1}{\ell}  \sum_{y= - \ell }^{-1} \sum_{w=y+1}^{0} \int    [  \eta(w) -  \eta(w-1) ] g(\eta) d \nu_{b} \Big| \leq \sum_{w=-\ell+1}^{0} \Big|\int    [  \eta(w) -  \eta(w-1) ] g(\eta) d \nu_{b}  \Big|.
\end{align*}
For the remainder of the proof, we treat two cases separately: $\beta \in [0,1)$ and $\beta \geq 1$.

\textbf{I.)} In this case $\beta \in [0,1)$, $o_{\beta}=0$ and $G \in \mathcal{S}_{\beta, \gamma} = \mathcal{S}(\mathbb{R})$. Then by performing a Taylor expansion of first order on $G$, the first term inside the supremum in \eqref{sup01b} is bounded from above by
\begin{align*}
& \frac{\Theta(n)}{n \sqrt{n}} \sum_{x=0}^{\varepsilon n-1}  \sum_{y=-\varepsilon n +1}^{-1}   \| G'  \|_{\infty}  (x-y) p(x-y) | \langle \bar{\eta}(0) - \overleftarrow{\bar{\eta}}^{\ell}(0) , g \rangle_{\nu_{b}}| \\
\lesssim & \sqrt{n} \sum_{w=-\ell+1}^{0} \Big|\int    [  \eta(w) -  \eta(w-1) ] g(\eta) d \nu_{b}  \Big| \Big(  \frac{\Theta(n)}{n^2} \sum_{x=0}^{\varepsilon n-1}  \sum_{y=-\varepsilon n +1}^{-1}    (x-y) p(x-y) \Big) \\
\lesssim & \sqrt{n} \sum_{w=-\ell+1}^{0} \Big|\int    [  \eta(w) -  \eta(w-1) ] g(\eta) d \nu_{b}  \Big|  \\ 
= &   \sqrt{n}   \sum_{w=-\ell+1}^{-1}  \Big| \int    [  \eta(w) -  \eta(w-1) ] f(\eta) d \nu_{b} \Big| +  \sqrt{n}  \Big| \int    [  \eta(0) -  \eta(-1) ] f(\eta) d \nu_{b} \Big|.
\end{align*}
Above we used \eqref{thetan}. We treat $ \int   [  \eta(0) -  \eta(-1) ] g(\eta) d \nu_{b}$ separately because only for $w=0$ the set $\{w,w-1\}$ corresponds to a slow bond. From \eqref{young2}, for any positive constants $A_{w,w-1}$ and $A_{0,-1}$ last display is bounded from above by
\begin{align} \label{bgibbs1b}
  \sqrt{n} \Big\{ \sum_{w=-\ell+1}^{-1} \Big[ \frac{I_{w,w-1}  (g, \nu_{b} )}{4A_{w,w-1}} + A_{w,w-1}  \Big] +  \Big[ \frac{I_{0,-1}  (g, \nu_{b} )}{4A_{0,-1}} + A_{0,-1}  \Big] \Big\}.
\end{align}
Next, we choose $A_{w,w-1}$ and $A_{0,-1}$ carefully in order to cancel the negative terms in \eqref{bgibbs1a}. More exactly, we choose 
$A_{w,w-1}=\frac{\sqrt{n}}{\Theta(n) p(1)}$ and $A_{0,-1} = \frac{\sqrt{n} n^{\beta}}{\alpha \Theta(n) p(1)}$. Hence we can bound the sum of \eqref{bgibbs1a} and \eqref{bgibbs1b} from above by a constant times
\begin{align*}
 \frac{\ell n}{\Theta(n)} + \frac{n^{1+\beta}}{\Theta(n)}.
\end{align*}
Last display is therefore an upper bound for the expression inside the supremum inside \eqref{sup01b}. Recalling \eqref{defellmic}, we can bound the expectation in \eqref{limbeta01b} from above by a constant times $\varepsilon + n^{1+\beta}/\Theta(n)$, that vanishes as $n \rightarrow \infty$ and then $\varepsilon \rightarrow 0^{+}$. 

\textbf{II.)} In this case $\beta \geq 1$ and $o_{\beta}=\beta$, hence the first term inside the supremum in \eqref{sup01b} is bounded from above by
\begin{align*}
& \frac{\Theta(n)}{n^{\beta} \sqrt{n}} \sum_{x=0}^{\varepsilon n-1}  \sum_{y=-\varepsilon n +1}^{-1}   \| G  \|_{\infty}   p(x-y) | \langle \bar{\eta}(0) - \overleftarrow{\bar{\eta}}^{\ell}(0) , g \rangle_{\nu_{b}}| \\
\lesssim& \frac{\Theta(n)}{n^{\beta} \sqrt{n}}  \sum_{w=-\ell+1}^{0} \Big|\int    [  \eta(w) -  \eta(w-1) ] g(\eta) d \nu_{b}  \Big| \Big(   \sum_{x=0}^{\infty}  \sum_{y=-\infty}^{-1}    p(x-y) \Big) \\
\lesssim & \frac{\Theta(n)}{n^{\beta} \sqrt{n}}  \sum_{w=-\ell+1}^{0} \Big|\int    [  \eta(w) -  \eta(w-1) ] g(\eta) d \nu_{b}  \Big|  \\ 
= &  \frac{\Theta(n)}{n^{\beta} \sqrt{n}}    \sum_{w=-\ell+1}^{-1}  \Big| \int    [  \eta(w) -  \eta(w-1) ] f(\eta) d \nu_{b} \Big| +\frac{\Theta(n)}{n^{\beta} \sqrt{n}}  \Big| \int    [  \eta(0) -  \eta(-1) ] f(\eta) d \nu_{b} \Big|,
\end{align*}
From \eqref{young2}, for any positive constants $A_{w,w-1}$ and $A_{0,-1}$ last display is bounded from above by
\begin{align} \label{bgibbs1c}
 \frac{\Theta(n)}{n^{\beta} \sqrt{n}} \Big\{ \sum_{w=-\ell+1}^{-1} \Big[ \frac{I_{w,w-1}  (g, \nu_{b} )}{4A_{w,w-1}} + A_{w,w-1}  \Big] +  \Big[ \frac{I_{0,-1}  (g, \nu_{b} )}{4A_{0,-1}} + A_{0,-1}  \Big] \Big\}.
\end{align}
Now we choose $A_{w,w-1}=\frac{1}{n^{  \beta } \sqrt{n} p(1)}$ and $A_{0,-1} = \frac{1}{\alpha \sqrt{n}  p(1)}$. Then applying \eqref{defellmic}, the sum of \eqref{bgibbs1a} and \eqref{bgibbs1c} is bounded from above by a constant times
\begin{align*}
 \frac{\ell \Theta(n)}{ n^{1+2\beta} } + \frac{\Theta(n)}{n^{1+\beta}} = \varepsilon \Big( \frac{\Theta(n)}{n^{1+\beta}} \Big)^2 + \frac{\Theta(n)}{n^{1+\beta}},
\end{align*}
which vanishes as $n \rightarrow \infty$ and then $\varepsilon \rightarrow 0^{+}$, ending the proof. 
\end{proof}
We prove next result with an analogous reasoning.
\begin{prop} \label{secondbg}
Let $\ell$ be given by \eqref{defellmic}, $(\beta,\gamma) \in R_1$ and $G \in \mathcal{S}_{\beta,\gamma}$. Then
\begin{equation}  \label{limbeta01c}
 \limsup_{\varepsilon \rightarrow 0^+} \limsup_{n \rightarrow \infty} \mathbb{E}_{\nu_b} \Big[\sup_{t \in [0,T]} \Big(  \int_0^t \frac{ \Theta(n) C_{\alpha,\beta,n} }{\sqrt{n}}   \sum_{x=0}^{\ell -1}  \sum_{y=-\varepsilon n +1}^{-1}  [ G( \tfrac{y}{n} ) - G( \tfrac{x}{n} ) ] p(y-x)   [ \bar{\eta}_s(x) -  \overrightarrow{\bar{\eta}}^{\ell}_s(0) ] ds \Big)^2 \Big] =0,
\end{equation}
\begin{equation}  \label{limbeta01d}
 \limsup_{\varepsilon \rightarrow 0^+} \limsup_{n \rightarrow \infty} \mathbb{E}_{\nu_b} \Big[\sup_{t \in [0,T]} \Big(  \int_0^t \frac{ \Theta(n) C_{\alpha,\beta,n} }{\sqrt{n}}   \sum_{x=0}^{\varepsilon n -1}  \sum_{y=-\ell +1}^{-1}  [ G( \tfrac{y}{n} ) - G( \tfrac{x}{n} ) ] p(y-x)  [ \overleftarrow{\bar{\eta}}^{\ell}_s(0) - \bar{\eta}_s^n(y)    ] ds \Big)^2 \Big] =0.
\end{equation}
\end{prop}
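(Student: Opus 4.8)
The plan is to follow the template of the proof of Proposition \ref{firstbg}. It is enough to establish \eqref{limbeta01c}, since \eqref{limbeta01d} follows by an analogous argument in which the roles of the positive and negative half-lines are interchanged, using the symmetry of $p(\cdot)$.

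First I would invoke Lemma 4.3 of \cite{CLO} together with \eqref{defcabn} (so that $|C_{\alpha,\beta,n}| \lesssim n^{-o_{\beta}}$) to bound the expectation in \eqref{limbeta01c} by a constant times
\begin{align*}
\sup_{g \in L^2(\nu_b)} \Big\{ & \frac{\Theta(n)}{\sqrt{n}\, n^{o_{\beta}}} \sum_{x=0}^{\ell-1} \sum_{y=-\varepsilon n +1}^{-1} \big[ G(\tfrac{y}{n}) - G(\tfrac{x}{n}) \big] p(y-x) \langle \bar{\eta}(x) - \overrightarrow{\bar{\eta}}^{\ell}(0), g \rangle_{\nu_b} \\
& \qquad + \Theta(n) \langle \mcb{L}_n g, g \rangle_{\nu_b} \Big\}.
\end{align*}
By \eqref{bound2} the last term is at most $-\tfrac{\Theta(n)}{4} \sum_{w=1}^{\ell} p(1) I_{w,w-1}(g,\nu_b)$, where I keep only the nearest-neighbour bonds $\{w-1,w\}$ with $1 \le w \le \ell$. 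Crucially, all of these bonds lie inside $\mathbb{N}$ and are therefore \emph{fast}, so — unlike in Proposition \ref{firstbg} — no slow bond $\{0,-1\}$ has to be singled out. Next, telescoping as in \eqref{telsum} and using that $\overrightarrow{\bar{\eta}}^{\ell}(0)$ is an average over $\{1,\dots,\ell\}$ while $x \in \{0,\dots,\ell-1\}$, every difference $\bar{\eta}(x)-\bar{\eta}(z)$ involved uses only bonds $\{w-1,w\}$ with $1 \le w \le \ell$, whence, uniformly in $x$,
\begin{align*}
\big| \langle \bar{\eta}(x) - \overrightarrow{\bar{\eta}}^{\ell}(0), g \rangle_{\nu_b} \big| \le \sum_{w=1}^{\ell} \Big| \int_{\Omega} [\eta(w) - \eta(w-1)] g(\eta)\, d\nu_b \Big|.
\end{align*}

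I would then split into the two regimes, exactly as in Proposition \ref{firstbg}. For $\beta \in [0,1)$ we have $o_{\beta}=0$ and $G \in \mathcal{S}(\mathbb{R})$, so a first-order Taylor expansion gives $|G(\tfrac{y}{n}) - G(\tfrac{x}{n})| \le \|G'\|_{\infty} \tfrac{x-y}{n}$; combined with
\begin{align*}
\sum_{x=0}^{\ell-1} \sum_{y=-\varepsilon n +1}^{-1} (x-y) p(x-y) \lesssim \sum_{r=1}^{\ell} r^2 p(r) + \ell \sum_{r > \ell} r\, p(r)
\end{align*}
and the identity $\ell = \varepsilon \Theta(n)/n$ from \eqref{defellmic}, one checks that $\tfrac{\Theta(n)}{n^2}\big(\sum_{r\le\ell} r^2 p(r) + \ell\sum_{r>\ell} r p(r)\big) \lesssim 1$ for every $\gamma \ge 2$ (when $\gamma = 2$ the extra $\log n$ coming from $\Theta(n)$ is absorbed by $\sum_{r\le\ell} r^2 p(r) = c_2 \sum_{r\le\ell} r^{-1} \lesssim \log n$), so the first term in the supremum is $\lesssim \sqrt{n}\sum_{w=1}^{\ell}|\int[\eta(w)-\eta(w-1)]g\,d\nu_b|$. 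For $\beta \ge 1$ we have $o_{\beta}=\beta$ and $G \in \mathcal{S}_{Neu}(\mathbb{R}^{*})$; here I just bound $|G(\tfrac{y}{n}) - G(\tfrac{x}{n})| \le 2\|G\|_{\infty}$ and use $\sum_{x\ge0}\sum_{y\le-1}p(x-y) \le m$, which makes the first term $\lesssim \tfrac{\Theta(n)}{\sqrt{n}\,n^{\beta}}\sum_{w=1}^{\ell}|\int[\eta(w)-\eta(w-1)]g\,d\nu_b|$.

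Finally I would apply Young's inequality \eqref{young2} with $A_{w,w-1} = \tfrac{\sqrt{n}}{\Theta(n)p(1)}$ in the first regime and $A_{w,w-1} = \tfrac{1}{\sqrt{n}\,n^{\beta}p(1)}$ in the second, chosen precisely so that the positive $I_{w,w-1}(g,\nu_b)$-terms are cancelled by the negative Dirichlet term from \eqref{bound2}. What is left is $\sqrt{n}\sum_{w=1}^{\ell}A_{w,w-1} \lesssim \tfrac{\ell n}{\Theta(n)} = \varepsilon$ when $\beta<1$, and $\tfrac{\Theta(n)}{\sqrt{n}\,n^{\beta}}\sum_{w=1}^{\ell}A_{w,w-1} \lesssim \tfrac{\ell\Theta(n)}{n^{1+2\beta}} = \varepsilon\big(\tfrac{\Theta(n)}{n^{1+\beta}}\big)^2 \lesssim \varepsilon$ when $\beta\ge1$ (here $\Theta(n)\le n^2$ and $\beta\ge1$ give $\Theta(n)/n^{1+\beta}\le1$). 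Sending $n\to\infty$ and then $\varepsilon\to0^+$ yields the claim. I expect the main obstacle to be the bookkeeping of which bonds the telescoping touches — so as to be sure they are all fast, hence controllable by the nearest-neighbour part of the Dirichlet form near the origin — together with tracking the exponents in the two regimes, in particular the $\log n$ from $\Theta(n)$ in the case $\gamma = 2$ against the tail sums of $p(\cdot)$, and calibrating the constants $A_{w,w-1}$ so that the residue is genuinely $O(\varepsilon)$ rather than $O(1)$.
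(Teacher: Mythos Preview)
Your proposal is correct and follows essentially the same route as the paper: invoke Lemma 4.3 of \cite{CLO}, keep only the nearest-neighbour fast bonds $\{w-1,w\}$ with $1\le w\le \ell$ from the Dirichlet form, telescope $\bar\eta(x)-\overrightarrow{\bar\eta}^{\ell}(0)$ along those bonds, split into the regimes $\beta<1$ (Taylor expansion) and $\beta\ge1$ (crude $\|G\|_\infty$ bound), and choose $A_{w,w-1}$ so that the residue is $O(\varepsilon)$. The only cosmetic differences are that the paper indexes the bonds as $\{w,w+1\}$, $0\le w\le \ell-1$, and in case $\beta<1$ bounds the double sum directly via \eqref{thetan} (extending the $x$-sum up to $\varepsilon n$) rather than via your sharper split $\sum_{r\le\ell}r^2p(r)+\ell\sum_{r>\ell}rp(r)$; both give the same $\lesssim 1$.
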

\begin{proof}
We present here only the proof of \eqref{limbeta01c}, but the proof of \eqref{limbeta01d} is analogous. From Lemma 4.3 of \cite{CLO}, the expectation in \eqref{limbeta01c} is bounded from above by a constant times
\begin{align} \label{sup02b}
 \sup_{g \in L^2 (  \nu_{b})} \Big\{   \frac{\Theta(n)}{\sqrt{n} n^{ o_{\beta} } }  \sum_{x=0}^{\ell -1}  \sum_{y=-\varepsilon n +1}^{-1}  [ G( \tfrac{y}{n} ) - G( \tfrac{x}{n} ) ] p(y-x)   \langle \bar{\eta}(x) - \overrightarrow{\bar{\eta}}^{\ell}(0) , g \rangle_{\nu_{b}}   +\Theta(n) \langle \mcb {L}_n g, g \rangle_{\nu_{b}}   \Big\},
\end{align}
From \eqref{bound2}, the second term inside this supremum is bounded from above by
\begin{align}  \label{bgibbs2a}
- \frac{\Theta(n)}{4}   \sum_{w= 0}^{\ell-1} p (1)  I_{w,w+1}  (g, \nu_{b} ) 
\end{align}
From \eqref{medbox2} $| \langle \bar{\eta}(x) - \overrightarrow{\bar{\eta}}^{\ell}(0) , g \rangle_{\nu_{b}} |$ is bounded from above by
\begin{align*}
  & \frac{1}{\ell}  \sum_{z=1}^{x-1} \sum_{w=1}^{x-1} \Big| \int_{\Omega} [ \eta(w+1) - \eta(w) ] f (\eta) d \nu_{b} \Big| + \frac{1}{\ell} \sum_{z=x+1}^{\ell} \sum_{w=x}^{z-1} \Big| \int_{\Omega} [ \eta(w) - \eta(w+1) ] g (\eta) d \nu_{b} \Big| \\
\leq & 2 \sum_{w=0}^{\ell-1} \Big| \int_{\Omega} [ \eta(w) - \eta(w+1) ] g (\eta) d \nu_{b} \Big|.
\end{align*}
Above we used that $0 \leq x \leq \ell-1$. For the remainder of the proof, we treat two cases separately: $\beta \in [0,1)$ and $\beta \geq 1$.

\textbf{I.)} In this case $\beta \in [0,1)$, $o_{\beta}=0$ and $G \in \mathcal{S}_{\beta, \gamma} = \mathcal{S}(\mathbb{R})$. Then by performing a Taylor expansion of first order on $G$, the first term inside the supremum in \eqref{sup02b} is bounded from above by
\begin{align*}
& \frac{\Theta(n)}{n \sqrt{n}} \sum_{x=0}^{\ell-1}  \sum_{y=-\varepsilon n +1}^{-1}   \| G'  \|_{\infty}  (x-y) p(x-y) | \langle \bar{\eta}(x) - \overrightarrow{\bar{\eta}}^{\ell}(0) , g \rangle_{\nu_{b}}| \\
\lesssim& \sqrt{n} \sum_{w=0}^{\ell-1} \Big| \int_{\Omega} [ \eta(w) - \eta(w+1) ] g (\eta) d \nu_{b} \Big| \Big( \frac{\Theta(n)}{n^2} \sum_{x=0}^{\varepsilon n-1}  \sum_{y=-\varepsilon n +1}^{-1}    (x-y) p(x-y) \Big) \\
\lesssim & \sqrt{n} \sum_{w=0}^{\ell-1} \Big| \int_{\Omega} [ \eta(w) - \eta(w+1) ] g (\eta) d \nu_{b} \Big| ,
\end{align*}
From \eqref{young2}, for any positive $A_{w,w+1}$ last display is bounded from above by
\begin{align} \label{bgibbs2b}
\sqrt{n} \sum_{w=0}^{\ell-1} \Big[ \frac{I_{w,w+1}  (g, \nu_{b} )}{4A_{w,w+1}} + A_{w,w+1}  \Big].
\end{align}
By choosing $A_{w,w+1}=\frac{ \sqrt{n} }{\Theta(n) p(1)}$ and recalling \eqref{defellmic}, the sum of the expressions in \eqref{bgibbs2a} and \eqref{bgibbs2b} is bounded from above by a constant times $\ell n [\Theta(n) ]^{-1}=\varepsilon$, that vanishes as $n \rightarrow \infty$ and then $\varepsilon \rightarrow 0^{+}$.

\textbf{II.)} In this case $\beta \geq 1$ and $o_{\beta}=\beta$, hence the first term inside the supremum in \eqref{sup02b} is bounded from above by
\begin{align*}
& \frac{\Theta(n)}{ n^{\beta} \sqrt{n} } \sum_{x=0}^{\varepsilon n-1}  \sum_{y=-\varepsilon n +1}^{-1}   \| G  \|_{\infty}   p(x-y) | \langle \bar{\eta}(x) - \overrightarrow{\bar{\eta}}^{\ell}(0) , g \rangle_{\nu_{b}}| \\
\lesssim & \frac{\Theta(n)}{ n^{\beta} \sqrt{n}} \sum_{w=0}^{\ell-1} \Big| \int_{\Omega} [ \eta(w) - \eta(w+1) ] g (\eta) d \nu_{b} \Big| \Big( \sum_{x=0}^{\infty}  \sum_{y=-\infty}^{-1}    p(x-y) \Big) \\
\lesssim & \frac{\Theta(n)}{ n^{\beta} \sqrt{n} }  \sum_{w=0}^{\ell-1} \Big| \int_{\Omega} [ \eta(w) - \eta(w+1) ] g (\eta) d \nu_{b} \Big|,
\end{align*}
By choosing $A_{w,w+1}=\frac{1}{ n^{\beta} \sqrt{n} p(1)}$ in \eqref{young2} and recalling \eqref{defellmic}, the expression inside the supremum in \eqref{sup01b} is bounded from above by
\begin{align*}
 \frac{\ell \Theta(n)}{ n^{1+2\beta} }  = \varepsilon \Big( \frac{\Theta(n)}{n^{1+\beta}} \Big)^2, 
\end{align*}
and this vanishes as $n \rightarrow \infty$ and then $\varepsilon \rightarrow 0^{+}$, ending the proof.
\end{proof}
For $\gamma =2$, we have $\ell = \varepsilon \Theta(n) n^{-1} < \varepsilon n$ and we need a complementary result.
\begin{prop} \label{thirdbg}
Let $\ell$ be given by \eqref{defellmic}, $\gamma=2$ (this implies $(\beta,\gamma) \in R_1$) and $G \in \mathcal{S}_{\beta,\gamma}$. Then
\begin{equation}  \label{limbeta01e}
 \limsup_{\varepsilon \rightarrow 0^+} \limsup_{n \rightarrow \infty} \mathbb{E}_{\nu_b} \Big[\sup_{t \in [0,T]} \Big(  \int_0^t \frac{ \Theta(n) C_{\alpha,\beta,n} }{\sqrt{n}}  \sum_{x=\ell}^{\varepsilon n -1}  \sum_{y=-\varepsilon n +1}^{-1}  [ G( \tfrac{y}{n} ) - G( \tfrac{x}{n} ) ] p(y-x)   [ \bar{\eta}_s(x) - \overrightarrow{\bar{\eta}}^{\ell}_s(0) ] ds \Big)^2 \Big] =0,
\end{equation}
\begin{equation}  \label{limbeta01f}
 \limsup_{\varepsilon \rightarrow 0^+} \limsup_{n \rightarrow \infty} \mathbb{E}_{\nu_b} \Big[\sup_{t \in [0,T]} \Big(  \int_0^t \frac{ \Theta(n) C_{\alpha,\beta,n} }{\sqrt{n}}   \sum_{x=0}^{\varepsilon n -1}  \sum_{y=-\varepsilon n +1}^{-\ell}  [ G( \tfrac{y}{n} ) - G( \tfrac{x}{n} ) ] p(y-x)  [ \overleftarrow{\bar{\eta}}^{\ell}_s(0) - \bar{\eta}_s^n(y)    ] ds \Big)^2 \Big] =0.
\end{equation}
\end{prop}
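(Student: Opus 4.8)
We prove only \eqref{limbeta01e}; the argument for \eqref{limbeta01f} is entirely analogous, after the reflection $x\leftrightarrow -y$ and using the symmetry of $p(\cdot)$. It is worth stressing at the outset that this complementary estimate is needed \emph{only} because $\gamma=2$: in that regime $\Theta(n)=n^2/\log(n)$, so $\ell=\varepsilon\Theta(n)/n=\varepsilon n/\log(n)$ is strictly smaller than $\varepsilon n$ and the index set $\{\ell,\dots,\varepsilon n-1\}$ is genuinely non-empty (for $\gamma>2$ one has $\ell=\varepsilon n$ and there is nothing to prove). The strategy will mirror the proofs of Propositions \ref{firstbg} and \ref{secondbg}: turn the expectation into a static variational problem, bound the Dirichlet-form contribution from below using only nearest-neighbour \emph{fast} bonds, telescope the collapsed increments, and close the estimate with the Young inequality \eqref{young2}.

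First I would apply Lemma 4.3 of \cite{CLO} together with \eqref{defcabn} to bound the expectation in \eqref{limbeta01e} by a constant times
\begin{equation*}
\sup_{g\in L^2(\nu_b)}\Big\{\frac{\Theta(n)}{\sqrt n\,n^{o_\beta}}\sum_{x=\ell}^{\varepsilon n-1}\sum_{y=-\varepsilon n+1}^{-1}\big[G(\tfrac yn)-G(\tfrac xn)\big]p(y-x)\,\langle\bar{\eta}(x)-\overrightarrow{\bar{\eta}}^{\ell}(0),g\rangle_{\nu_b}+\Theta(n)\langle\mcb L_n g,g\rangle_{\nu_b}\Big\}.
\end{equation*}
By \eqref{bound2}, and keeping only the nearest-neighbour bonds $\{w,w+1\}$ with $w\ge1$ (all of which lie in $\mcb F$, since both endpoints are in $\mathbb N$), the last term is at most $-\tfrac{\Theta(n)p(1)}{4}\sum_{w\ge1}I_{w,w+1}(g,\nu_b)$. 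Writing $u_w:=\int_\Omega[\eta(w+1)-\eta(w)]g(\eta)\,d\nu_b$ and telescoping each collapsed increment, one gets, for $x\ge\ell$,
\begin{equation*}
\big|\langle\bar{\eta}(x)-\overrightarrow{\bar{\eta}}^{\ell}(0),g\rangle_{\nu_b}\big|\le\sum_{w=1}^{x-1}\tfrac{\min(w,\ell)}{\ell}\,|u_w|\le\sum_{w=1}^{x-1}|u_w|.
\end{equation*}

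It then remains to control the resulting double sum, splitting into $\beta\in[0,1)$ and $\beta\ge1$ as in the earlier propositions. For $\beta\in[0,1)$ we have $o_\beta=0$ and $G\in\mathcal S(\mathbb R)$, so a first-order Taylor expansion gives $|G(\tfrac yn)-G(\tfrac xn)|\lesssim\tfrac{x-y}{n}$ and, since $\gamma=2$, $\sum_{y=-\varepsilon n+1}^{-1}(x-y)p(x-y)=\sum_{r=x+1}^{x+\varepsilon n-1}r\,p(r)\lesssim x^{-1}$; for $\beta\ge1$ we have $o_\beta=\beta$, we use instead $|G(\tfrac yn)-G(\tfrac xn)|\le 2\|G\|_\infty$ together with $\sum_{y=-\varepsilon n+1}^{-1}p(x-y)\lesssim x^{-2}$, gaining the factor $n^{-\beta}$. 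In both cases, after exchanging the order of summation, the coefficient of $|u_w|$ is a constant times $c_w$, where for $\beta<1$ one has $c_w=\tfrac{\Theta(n)}{n\sqrt n}\,\tfrac{\min(w,\ell)}{\ell}\sum_{x=\max(\ell,w+1)}^{\varepsilon n-1}x^{-1}$ (the tail sum being $\lesssim\log(\varepsilon n/\ell)=\log\log(n)$, the key point being that for $\gamma=2$ the window $\ell$ is only logarithmically smaller than $\varepsilon n$), and analogously with $x^{-2}$ in place of $x^{-1}$, whence the tail sum is $\lesssim\max(\ell,w)^{-1}$, and with the extra $n^{-\beta}$, for $\beta\ge1$. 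Choosing in \eqref{young2} the constants $A_{w,w+1}:=c_w/(\Theta(n)p(1))$, so that the positive contributions $c_w\,I_{w,w+1}/(4A_{w,w+1})$ are absorbed by the negative Dirichlet terms, the whole variational expression is bounded by a constant times $\tfrac1{\Theta(n)}\sum_w c_w^2$. A direct computation using \eqref{defellmic} and $\Theta(n)=n^2/\log(n)$ then bounds this by a constant times $\varepsilon\big[(\log\log n)^2(\log n)^{-2}+(\log n)^{-1}\big]$ when $\beta<1$, and by a constant times $\varepsilon^{-1}n^{-2\beta}$ when $\beta\ge1$; in either case the bound vanishes as $n\to\infty$ (and then, in the first case, as $\varepsilon\to0^+$), which finishes the proof.

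The main obstacle I anticipate is precisely this last bookkeeping step, specific to $\gamma=2$. Unlike for $\gamma>2$, the averaging window $\ell$ is not comparable to $\varepsilon n$ but smaller by a factor $\log n$, so the tails $\sum_{x\ge\ell}x^{-\gamma}$ do not by themselves yield an immediately negligible prefactor; one must use the extra $1/\log n$ hidden in the diffusive time scale $\Theta(n)=n^2/\log n$ (equivalently, the $\gamma=2$ normalisation via $c_2$) to absorb the logarithmic loss, and the $w$-dependence of the Young constants $A_{w,w+1}$ has to be tracked carefully so that the residual $\Theta(n)^{-1}\sum_w c_w^2$ is genuinely small once the limits are taken in the order $n\to\infty$ and then $\varepsilon\to0^+$.
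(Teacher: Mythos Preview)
Your argument is correct and follows the same skeleton as the paper's proof: reduce to the static variational problem via Lemma~4.3 of \cite{CLO}, bound the Dirichlet form from below using only nearest-neighbour fast bonds $\{w,w+1\}$ with $w\ge 1$, telescope the collapsed increments, and close with \eqref{young2}. The difference is only in the bookkeeping, and you work harder than necessary.

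You retain the weighted telescoping factor $\min(w,\ell)/\ell$, exploit the $x$-dependent tail bounds $\sum_{y}(x-y)p(x-y)\lesssim x^{-1}$ (for $\beta<1$) and $\sum_{y}p(x-y)\lesssim x^{-2}$ (for $\beta\ge 1$), and then pick $w$-dependent Young constants $A_{w,w+1}\propto c_w$. The paper instead takes the crudest route at every step: it bounds $|\langle\bar\eta(x)-\overrightarrow{\bar\eta}^{\ell}(0),g\rangle_{\nu_b}|$ uniformly in $x$ by $\sum_{w=1}^{\varepsilon n}|u_w|$; for $\beta<1$ it sums over $x$ first via the single estimate \eqref{boundgamma2}, namely $\frac{\Theta(n)}{n^2}\sum_{x=\ell}^{\varepsilon n-1}\sum_{y=-\varepsilon n+1}^{-1}(x-y)p(x-y)\lesssim\frac{\log\log n}{\log n}$; for $\beta\ge 1$ it just uses $\sum_{\{x,y\}\in\mcb S}p(x-y)=2m$; and it chooses \emph{constant}-in-$w$ Young constants ($A_{w,w+1}=\tfrac{\log\log n}{p(1)n\sqrt n}$ for $\beta<1$, $A_{w,w+1}=\tfrac{1}{p(1)n^{\beta}\sqrt n}$ for $\beta\ge 1$). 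This yields the bounds $\varepsilon(\log\log n)^2/\log n$ and $\varepsilon\Theta(n)/n^{2\beta}$ respectively, both with $\varepsilon$ in the numerator and without any intermediate computation of $\sum_w c_w^2$. Your finer tracking produces marginally sharper rates (e.g.\ $\varepsilon/\log n$ in place of $\varepsilon(\log\log n)^2/\log n$) but no qualitative gain; the paper's version is shorter and already sufficient.
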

\begin{proof}
We present here only the proof of \eqref{limbeta01e}, but the proof of \eqref{limbeta01f} is analogous. From Lemma 4.3 of \cite{CLO}, the expectation in \eqref{limbeta01e} is bounded from above by a constant times
\begin{align} \label{sup03b}
  \sup_{g \in L^2 (  \nu_{b})} \Big\{  \frac{\Theta(n)}{\sqrt{n} n^{ o_{\beta} } }  \sum_{x=\ell}^{\varepsilon n -1}  \sum_{y=-\varepsilon n +1}^{-1}  [ G( \tfrac{y}{n} ) - G( \tfrac{x}{n} ) ] p(y-x)   \langle \bar{\eta}(x) - \overrightarrow{\bar{\eta}}^{\ell}(0) , g \rangle_{\nu_{b}}   +\Theta(n) \langle \mcb{L}_n g, g \rangle_{\nu_{b}}   \Big\},
\end{align}
Since $\gamma=2$, from \eqref{bound2}, the second term inside this supremum is bounded from above by
\begin{align*} 
- \frac{\Theta(n)}{4}   \sum_{w=1}^{\varepsilon n} p (1)  I_{w,w+1}  (g, \nu_{b} )  = - \frac{n^2}{4 \log(n)}   \sum_{w=1}^{\varepsilon n} p (1)  I_{w,w+1}  (g, \nu_{b} ).  
\end{align*}
From \eqref{medbox2} $| \langle \bar{\eta}(x) - \overrightarrow{\bar{\eta}}^{\ell}(0) , g \rangle_{\nu_{b}} |$ is bounded from above by
\begin{align*}
  &   \frac{1}{\ell}  \sum_{z=1}^{\ell} \sum_{w=z}^{x-1} \Big| \int_{\Omega} [ \eta(w+1) - \eta(w) ] f (\eta) d \nu_{b} \Big|  \leq \sum_{w=1}^{\varepsilon n} \Big| \int_{\Omega} [ \eta(w+1) - \eta(w) ] f (\eta) d \nu_{b} \Big|.
\end{align*}
Since $\gamma =2$, $\ell = \varepsilon n / \log(n)$ and we get
\begin{equation} \label{boundgamma2}
\begin{split}
& \frac{\Theta(n)}{n^2} \sum_{x=\ell}^{\varepsilon n-1}  \sum_{y=-\varepsilon n +1}^{-1}  c_2  (x-y)^{-2}  \lesssim \Theta(n) n^{- 2}  \sum_{x=\ell}^{\varepsilon n-1}  x^{-1} \leq \frac{1}{\log(n)} \Big ( \ell^{-1} + \int_{\ell}^{\varepsilon n} u^{-1} du \Big) \\
=& \frac{1}{\varepsilon n} + \frac{1}{\log(n)} \log \big( \log(n) \big) \lesssim \frac{\log \big( \log(n) \big)}{\log(n)}, 
\end{split}
\end{equation}
since $n$ goes to infinity before $\varepsilon$ goes to zero. For the remainder of the proof, we treat two cases separately: $\beta \in [0,1)$ and $\beta \geq 1$.

\textbf{I.)} In this case $\beta \in [0,1)$, $o_{\beta}=0$ and $G \in \mathcal{S}_{\beta, \gamma} = \mathcal{S}(\mathbb{R})$. Then by performing a Taylor expansion of first order on $G$, the first term inside the supremum in \eqref{sup03b} is bounded from above by
\begin{align*}
& \frac{\Theta(n)}{n \sqrt{n}} \sum_{x=\ell}^{\varepsilon n-1}  \sum_{y=-\varepsilon n +1}^{-1}   \| G'  \|_{\infty}  (x-y) p(x-y) |  \langle \bar{\eta}(x) - \overrightarrow{\bar{\eta}}^{\ell}(0) , g \rangle_{\nu_{b}}| \\
\lesssim & \sqrt{n} \sum_{w=1}^{\varepsilon n} \Big| \int_{\Omega} [ \eta(w+1) - \eta(w) ] g (\eta) d \nu_{b} \Big| \Big(  \frac{\Theta(n)}{n^2} \sum_{x=\ell}^{\varepsilon n-1}  \sum_{y=-\varepsilon n +1}^{-1}   c_2 (x-y)^{-2}  \Big) \\
\lesssim &  \sqrt{n} \frac{\log \big( \log(n) \big)}{\log(n)}   \sum_{w=1}^{\varepsilon n} \Big| \int    [  \eta(w+1) -  \eta(w) ] f(\eta) d \nu_{b}  \Big|.
\end{align*}
Above we used \eqref{boundgamma2}. By choosing $A_{w,w+1}=\frac{  \log ( \log(n) ) }{ p(1) n \sqrt{n}}$ for every $1 \leq w \leq \varepsilon n$ in \eqref{young2}, the expression in \eqref{sup03b} is bounded by a constant times
\begin{align*}
 \varepsilon \frac{\big[\log \big( \log(n) \big) \big]^2}{\log(n)},
\end{align*}
that vanishes as $n \rightarrow \infty$ and then $\varepsilon \rightarrow 0^{+}$. 

\textbf{II.)} In this case $\beta \geq 1$ and $o_{\beta}=\beta$, hence the first term inside the supremum in \eqref{sup03b} is bounded from above by
\begin{align*}
& \frac{\Theta(n)}{ n^{\beta} \sqrt{n} } \sum_{x=\ell}^{\varepsilon n-1}  \sum_{y=-\varepsilon n +1}^{-1}   \| G  \|_{\infty}   p(x-y) | \langle \bar{\eta}(x) -\overrightarrow{\bar{\eta}}^{\ell}(0) , g \rangle_{\nu_{b}}| \\
\lesssim& \frac{\Theta(n)}{ n^{\beta} \sqrt{n} } \sum_{w=1}^{\varepsilon n} \Big| \int_{\Omega} [ \eta(w+1) - \eta(w) ] g (\eta) d \nu_{b} \Big| \Big(    \sum_{x=0}^{\infty}  \sum_{y=-\infty}^{-1}    p(x-y) \Big) \\
\lesssim & \frac{\Theta(n)}{ n^{\beta} \sqrt{n} }  \sum_{w=1}^{\varepsilon n} \Big| \int_{\Omega} [ \eta(w+1) - \eta(w) ] g (\eta) d \nu_{b} \Big|, 
\end{align*}
By choosing $A_{w,w+1}=\frac{1}{n^{\beta} \sqrt{n} p(1)}$ in \eqref{young2} and recalling \eqref{defellmic}, the expression inside the supremum in \eqref{sup01b} is bounded from above by
\begin{align*}
 \frac{\varepsilon n \Theta(n)}{ n^{1+2\beta} }  = \varepsilon \frac{\Theta(n)}{n^{2 \beta}} \leq \varepsilon \frac{\Theta(n)}{n^{2}},
 \end{align*}
that vanishes as $n \rightarrow \infty$ and then $\varepsilon \rightarrow 0^{+}$, ending the proof.
\end{proof}

	\appendix

	\section{Auxiliary results} \label{secdiscconv}
	\label{secuseres}

In this section we prove various results that were used along the text. Recall the definition of $R_0$ and $\mathcal{S}_{\beta, \gamma}$ in Definition \ref{defsbetagamma}. 
\begin{prop} \label{convknbeta}
Let $(\beta, \gamma) \in R_0$ and $G \in \mathcal{S}_{\beta, \gamma}$. Then
\begin{equation} \label{convknbetasup}
\sup_{x \in \mathbb{Z}}  |\Theta(n)  (\mathcal{K}_{n,\beta} G) (\tfrac{x}{n} ) |  \lesssim 1
\end{equation}
and
\begin{equation} \label{convknbetasum}
\lim_{n \rightarrow \infty} \frac{1}{n} \sum_{x } | \Theta(n) (\mathcal{K}_{n,\beta} G) (\tfrac{x}{n} ) - \kappa_{\gamma} \Delta G( \tfrac{x}{n} ) | =0.
\end{equation}
\end{prop}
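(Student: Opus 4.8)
The plan is to obtain both estimates from a second-order Taylor expansion of $G$, treating $\beta\in[0,1)$ and $\beta\ge1$ in parallel after a preliminary reduction. When $\beta\in[0,1)$ we have $G\in\mathcal S(\mathbb R)$ and $\mathcal K_{n,\beta}G(\tfrac{x}{n})=\sum_r p(r)\big[G(\tfrac{x+r}{n})-G(\tfrac{x}{n})\big]$ is exactly the discrete generator of the random walk with jump law $p$ evaluated at $G(\cdot/n)$. When $\beta\ge1$, $G$ agrees on each half-line with a Schwartz function $G_\pm$, and for $x\ge0$ I would first use the algebraic identity (obtained by adding back the terms with $y<0$ and rewriting the subtracted linear correction in \eqref{op_Knb} via the symmetry relation $\sum_z zp(z)=0$)
\[
\mathcal K_{n,\beta}G(\tfrac{x}{n})=\sum_r p(r)\big[G_+(\tfrac{x+r}{n})-G_+(\tfrac{x}{n})\big]-\sum_{z<-x}p(z)\Big[G_+(\tfrac{x+z}{n})-G_+(\tfrac{x}{n})-\tfrac{z}{n}\,G_+'(0)\Big].
\]
The point is that the last sum is a \emph{second-order} quantity: the subtracted $\tfrac{z}{n}G_+'(0)$ is exactly the first-order Taylor coefficient ``seen from the boundary'', so writing $G_+(\tfrac{x+z}{n})-G_+(\tfrac{x}{n})-\tfrac{z}{n}G_+'(0)=\big[G_+(\tfrac{x+z}{n})-G_+(\tfrac{x}{n})-\tfrac{z}{n}G_+'(\tfrac{x}{n})\big]+\tfrac{z}{n}\big[G_+'(\tfrac{x}{n})-G_+'(0)\big]$ bounds the bracket by $\tfrac{z^2}{2n^2}\|G_+''\|_\infty+\tfrac{|z|x}{n^2}\|G_+''\|_\infty$; since $|z|>x$ on the summation range and the tails $\sum_{w>x}w^2p(w)$, $\sum_{w>x}wp(w)$ are summable for $\gamma>2$ and only logarithmically large on $\{|z|\le n\}$ for $\gamma=2$ (the complementary range being $O(n^{-\gamma})$, and $m<\infty$ always), one gets that this correction is $O(\Theta(n)^{-1})$ uniformly in $x\ge0$ and is $o(1)$ after multiplication by $\Theta(n)$ on $\{|x|\ge\delta n\}$ for every fixed $\delta>0$. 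Hence in every regime it remains only to study $\Theta(n)\sum_r p(r)\big[G(\tfrac{x+r}{n})-G(\tfrac{x}{n})\big]$ for a Schwartz function $G$ on $\mathbb R$.

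For this two-sided object I would use the exact identity $\sum_r p(r)\big[G(\tfrac{x+r}{n})-G(\tfrac{x}{n})\big]=\sum_r p(r)\big[G(\tfrac{x+r}{n})-G(\tfrac{x}{n})-\tfrac{r}{n}G'(\tfrac{x}{n})\big]$ (valid since $p$ is symmetric), the same symmetry permitting the truncation to $\{|r|\le n\}$ without reinstating a first-order term. On $\{|r|\le n\}$ a Taylor expansion gives the main term $\big(\tfrac{1}{2n^2}\sum_{|r|\le n}r^2p(r)\big)G''(\tfrac{x}{n})$ plus a Lagrange remainder bounded, for each $|r|\le n$, by $\tfrac{|r|^3}{6n^3}$ times the supremum of $|G'''|$ over an interval of length $\le2$ around $\tfrac{x}{n}$ (so the rapid decay of $G'''$ is retained), while $\{|r|>n\}$ contributes at most $2\|G\|_\infty\sum_{|r|>n}p(r)+\tfrac{1}{n}\|G'\|_\infty\sum_{|r|>n}|r|p(r)$. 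Multiplying by $\Theta(n)$ and inserting the (truncated) moment asymptotics of $p$ — for $\gamma>2$: $\sum_{|r|\le n}r^2p(r)\to\sigma^2$, $\sum_{|r|\le n}|r|^3p(r)\lesssim n^{3-\gamma}$, $\sum_{|r|>n}p(r)\lesssim n^{-\gamma}$, $\sum_{|r|>n}|r|p(r)\lesssim n^{1-\gamma}$; for $\gamma=2$: $\sum_{|r|\le n}r^2p(r)\sim 2c_2\log n$, $\sum_{|r|\le n}|r|^3p(r)\sim 2c_2n$, $\sum_{|r|>n}p(r)\lesssim n^{-2}$ — one checks that the leading coefficient $\tfrac{\Theta(n)}{2n^2}\sum_{|r|\le n}r^2p(r)$ converges to $\kappa_\gamma$ and that every error term tends to $0$ uniformly in $x$. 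This yields both $\sup_x|\Theta(n)\sum_r p(r)[G(\tfrac{x+r}{n})-G(\tfrac{x}{n})]|\lesssim1$ (which, with the first paragraph, is \eqref{convknbetasup}) and $\Theta(n)\sum_r p(r)[G(\tfrac{x+r}{n})-G(\tfrac{x}{n})]\to\kappa_\gamma G''(\tfrac{x}{n})$ uniformly on $\{|x|\ge\delta n\}$.

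For \eqref{convknbetasum} I would split $\tfrac{1}{n}\sum_x=\tfrac{1}{n}\sum_{|x|<\delta n}+\tfrac{1}{n}\sum_{|x|\ge\delta n}$. By \eqref{convknbetasup} and $\|\Delta G\|_\infty<\infty$ the first sum is $\lesssim\delta$ uniformly in $n$. On $\{|x|\ge\delta n\}$ the reductions above give uniform convergence of the integrand to $0$ and, keeping the Schwartz decay in the Lagrange remainder and in the large-jump tail, a uniform bound $|\Theta(n)\mathcal K_{n,\beta}G(\tfrac{x}{n})|\lesssim(1+|\tfrac{x}{n}|)^{-2}$; since $\tfrac{1}{n}\sum_x(1+|\tfrac{x}{n}|)^{-2}$ stays bounded, a routine dominated-convergence argument for this Riemann-type sum gives $\tfrac{1}{n}\sum_{|x|\ge\delta n}|\Theta(n)\mathcal K_{n,\beta}G(\tfrac{x}{n})-\kappa_\gamma\Delta G(\tfrac{x}{n})|\to0$ for each fixed $\delta$. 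Letting $n\to\infty$ and then $\delta\to0$ yields \eqref{convknbetasum}.

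The main obstacle is the critical exponent $\gamma=2$, where $p$ has infinite second moment: one cannot truncate freely, the cutoff must be placed at $|r|\sim n$, and the whole computation rests on matching $\sum_{|r|\le n}r^2p(r)\sim 2c_2\log n$ against the $\log n$ hidden in $\Theta(n)=n^2/\log n$, so as to produce exactly $\kappa_2=c_2$; the third-order error $\tfrac{\Theta(n)}{n^3}\sum_{|r|\le n}|r|^3p(r)\sim 2c_2/\log n$ and the large-jump tail $\Theta(n)\sum_{|r|>n}p(r)=O(1/\log n)$ then vanish only because of that same logarithm. A secondary, bookkeeping-heavy issue is the $\beta\ge1$ behaviour near the origin: the correction $-n^{-1}G_\pm'(0)(y-x)$ in \eqref{op_Knb} is precisely what makes $\mathcal K_{n,\beta}G$ genuinely second-order uniformly in $x$, but exploiting this requires combining the ``missing half-line'' sum and the correction sum — each of which blows up after multiplication by $\Theta(n)$ — into the single second-order expression of the first paragraph, which in turn relies on the cancellation $\sum_z zp(z)=0$ rather than the crude bound $\sum_r|r|p(r)=2m$.
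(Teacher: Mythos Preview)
Your approach is correct and genuinely different from the paper's, in two main respects. First, for $\beta\ge1$ you reduce to the full-line generator via the clean identity
\[
\mcb K_{n,\beta}G(\tfrac{x}{n})=\sum_r p(r)\big[G_+(\tfrac{x+r}{n})-G_+(\tfrac{x}{n})\big]-\sum_{z<-x}p(z)\Big[G_+(\tfrac{x+z}{n})-G_+(\tfrac{x}{n})-\tfrac{z}{n}G_+'(0)\Big],
\]
whose second sum you correctly recognise as a genuinely second-order quantity (the crucial cancellation being $|z|>x$ on the summation range). The paper does not make this reduction; instead it treats $\beta\ge1$ by a direct, case-heavy analysis of the one-sided sum (Parts~III of the proof), splitting further into $\{1\le x\le\varepsilon n\}$, $\{\varepsilon n<x\le 2Bn\}$ and $\{x>2Bn\}$ with separate Taylor bookkeeping in each. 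Your route is shorter and more conceptual; the paper's is more explicit and avoids having to track decay of the boundary correction for large $x$. Second, for the full-line piece you use a third-order Lagrange remainder with cutoff $|r|\le n$, whereas the paper uses a second-order remainder with an additional $\varepsilon n$-cutoff in $r$ and then uniform continuity of $G''$; both give the same $o(1)$, but yours exploits $G\in\mathcal S(\mathbb R)$ more directly and avoids the extra $\varepsilon\to0$ limit.

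One point deserves care in your write-up: the claimed envelope $|\Theta(n)\mcb K_{n,\beta}G(\tfrac{x}{n})|\lesssim(1+|x/n|)^{-2}$ does hold, but the large-jump tail $|r|>n$ does not see the Schwartz decay of $G$ automatically (your stated bound there is $2\|G\|_\infty\sum_{|r|>n}p(r)$, which is uniform in $x$ but not decaying). You need the further split $\{n<|r|\le|x|/2\}$ versus $\{|r|>|x|/2\}$ for $|x|>2n$ --- on the first range $|x+r|\ge|x|/2$ and both $G$-values decay, on the second $p(r)\lesssim|x|^{-\gamma-1}$. This is exactly the device the paper uses (with the auxiliary functions $F_1,F_2$), so you should either spell it out or invoke it.
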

\begin{proof}
\textbf{I.:} Proof of \eqref{convknbetasup}. 

If $\beta \in [0,1)$, $G \in G \in \mathcal{S}_{\beta, \gamma} = \mathcal{S}( \mathbb{R})$. Therefore, from the symmetry of $p$ and a Taylor expansion of second order on $G$ we get
\begin{align*}
&|\Theta(n) (\mathcal{K}_{n,\beta} G) (\tfrac{x}{n} )| \leq  \Theta(n) \sum_{|r| \geq n } |G(\tfrac{r+x}{n}) - G( \tfrac{x}{n} )|  p(r) + \Theta(n) \Big| \sum_{|r| < n } [G(\tfrac{r+x}{n}) - G( \tfrac{x}{n} )]  p(r) \Big|  \\
\leq& \frac{\Theta(n)}{n^{\gamma}} \frac{2 \| G \|_{\infty}}{n} 2 \sum_{r=n}^{\infty} \Big( \frac{r}{n} \Big)^{-1-\gamma} +   \| G'' \|_{\infty} \frac{\Theta(n)}{n^2} \sum_{r=1}^{n-1}  c_{\gamma} r^{1-\gamma}\lesssim \frac{\Theta(n)}{n^{\gamma}} + 1 \lesssim 1,
\end{align*}
leading to \eqref{convknbetasup}. In last line we used \eqref{thetan}. Now we study the case $\beta \geq 1$ and $x \geq 0$. If $ x < n$, from some Taylor expansions on $G_{+}$ and \eqref{thetan} we get
\begin{align*}
& | \Theta(n)  (\mathcal{K}_{n,\beta} G) (\tfrac{x}{n} ) | =  \Big|   \Theta(n)  \sum_{r=-x}^{\infty} [G_{+}(\tfrac{x+r}{n}) - G_{+}( \tfrac{z}{x} )]  p(r) - \frac{\Theta(n)}{n} G_{+}'(0) \sum_{r=-x}^{\infty} r p (r)  \Big| \\
 \leq&2 \frac{\Theta(n)}{n^2} \| G'' \|_{\infty} \sum_{r=1}^{ n} r^2p(r) +  \| G \|_{\infty} \frac{\Theta(n)}{n^{\gamma}} \frac{1}{n} \sum_{r=n}^{\infty} \Big( \frac{r}{n} \Big)^{-\gamma-1} +   \| G' \|_{\infty} \frac{\Theta(n)}{n^{\gamma}} \frac{1}{n} \sum_{r=n}^{\infty} \Big( \frac{r}{n} \Big)^{-\gamma} \\
 \lesssim &  \frac{\Theta(n)}{n^2} \sum_{r=1}^{ n} r^2p(r) + \frac{\Theta(n)}{n^{\gamma}} \int_{1}^{\infty} u^{-\gamma-1} du + \frac{\Theta(n)}{n^{\gamma}} \int_{1}^{\infty} u^{-\gamma} du \lesssim 1 + 2 \frac{\Theta(n)}{n^{\gamma}}  \lesssim 1.
\end{align*}
On the other hand, if $x \geq n$, from Taylor expansions on $G_{+}$ and \eqref{thetan} we get
\begin{align*}
& | \Theta(n)  (\mathcal{K}_{n,\beta} G) (\tfrac{x}{n} ) | =  \Big|   \Theta(n)  \sum_{r=-x}^{\infty} [G_{+}(\tfrac{x+r}{n}) - G_{+}( \tfrac{x}{n} )]  p(r) - \frac{\Theta(n)}{n} G_{+}'(0) \sum_{r=-x}^{\infty} r p (r)  \Big| \\
\leq & \frac{\Theta(n)}{n} |G_{+}'(0)| \sum_{r=x+1}^{\infty} r p(r) + \frac{\Theta(n)}{n^2} \| G'' \|_{\infty} \sum_{r=1}^{ n} r^2p(r) +   \| G \|_{\infty} \frac{\Theta(n)}{n^{\gamma}} \frac{1}{n} \sum_{r=n}^{\infty} \Big( \frac{r}{n} \Big)^{-\gamma-1} \\
\lesssim & \frac{\Theta(n)}{n^{2-\gamma}} _{1}^{\infty} u^{-\gamma} du + 1 + \frac{\Theta(n)}{n^{2-\gamma}} _{1}^{\infty} u^{-\gamma-1} du \lesssim 1.
\end{align*}
In an analogous way, the result holds for $x <0$ and $\beta \geq 1$. This ends the proof of \eqref{convknbetasup}. It remains to prove \eqref{convknbetasum}. We treat two cases separately: $\beta \in [0,1)$ and $\beta \geq 1$.

\textbf{II.:} Proof of \eqref{convknbetasum} for $\beta \in [0,1)$. 

In this case, $G \in \mathcal{S}_{\beta, \gamma} = \mathcal{S}( \mathbb{R})$ and $\Delta_{\beta, \gamma}$ coincides with the usual Laplacian $\Delta$. Therefore, if $\gamma >2$, \eqref{convknbetasum} is a direct consequence of item $(b)$ of Lemma A.1 of \cite{goncalves2017stochastic}. It remains to prove it when $\gamma=2$.  Since $\beta \in [0,1)$, from \eqref{op_Knb} we have
\begin{align} \label{knbeta01}
\mcb{K}_{n,\beta} G \left( \tfrac{x}{n} \right): = \sum_{r } [ G( \tfrac{x+r}{n}) -G( \tfrac{x}{n}) ] p(r).
\end{align} 
Therefore, it is enough to prove that
 \begin{align}
& \limsup_{B \rightarrow \infty}  \limsup_{n \rightarrow \infty} \frac{1}{n} \sum_{ |x| \leq Bn } | \Theta(n) (\mathcal{K}_{n,\beta} G) (\tfrac{x}{n} ) - \kappa_{\gamma} \Delta G( \tfrac{x}{n} ) | =0,  \label{convprincnear0} \\
& \limsup_{B \rightarrow \infty}  \limsup_{n \rightarrow \infty} \frac{1}{n} \sum_{ |x| > Bn } | \Theta(n) (\mathcal{K}_{n,\beta} G) (\tfrac{x}{n} ) - \kappa_{\gamma} \Delta G( \tfrac{x}{n} ) | =0. \label{convprincfar0}
 \end{align}
We begin with \eqref{convprincnear0}. From \eqref{knbeta01}, for every fixed $B >0$, it holds 
\begin{align}
&\limsup_{n \rightarrow \infty} \frac{1}{n} \sum_{ |x| \leq Bn } | \Theta(n) (\mathcal{K}_{n,\beta} G) (\tfrac{x}{n} ) - \kappa_{\gamma} \Delta G( \tfrac{x}{n} ) | \nonumber \\
 \leq & \limsup_{\varepsilon \rightarrow 0^{+}}  \limsup_{n \rightarrow \infty} \frac{1}{n} \sum_{ |x| \leq Bn } \Big| \Theta(n) \sum_{|r| \geq \varepsilon n }p( r) \left[ G ( \tfrac{x+r}{n} ) - G ( \tfrac{x}{n} ) \right] \Big| \label{convprincnear0a} \\
 +& \limsup_{\varepsilon \rightarrow 0^{+}}  \limsup_{n \rightarrow \infty} \frac{1}{n} \sum_{ |x| \leq Bn } \Big| \Theta(n) \sum_{|r| < \varepsilon n }p( r) \left[ G ( \tfrac{x+r}{n} ) - G ( \tfrac{x}{n} ) \right] - \kappa_{\gamma} \Delta G( \tfrac{x}{n} )  \Big|. \label{convprincnear0b}
\end{align}
To treat \eqref{convprincnear0a}, note that
\begin{align*}
&  \frac{1}{n} \sum_{|x| \leq B n} \Big| \Theta(n) \sum_{|r| \geq \varepsilon n }p(r) \left[ G( \tfrac{x+r}{n} ) - G( \tfrac{x}{n} ) \right]  \Big|  \lesssim  \frac{ \|  G \|_{\infty} \Theta(n)}{n}  \sum_{|x| \leq B n} \sum_{|r| \geq  \varepsilon n} |r|^{-\gamma-1}   \\
\lesssim &  \frac{\Theta(n)}{n^{\gamma}} \frac{1}{n^2} \sum_{|x| \leq B n}  \sum_{r = \varepsilon n}^{\infty} \Big( \frac{r}{n} \Big)^{-\gamma-1} \lesssim  \frac{\Theta(n)}{n^{\gamma}} \int_{- B}^{ B} \int_{\varepsilon}^{\infty} v^{-\gamma-1} dv du \lesssim B \frac{\Theta(n)}{ n^{\gamma}} \varepsilon^{-\gamma} = \frac{B \varepsilon^{-\gamma}}{\log(n)}, 
\end{align*}
which goes to zero for every $B$ fixed, taking first $n \rightarrow \infty$ and then $\varepsilon \rightarrow 0^{+}$. In last equality we used the expression of $\Theta(n)$ for $\gamma=2$. It remains to analyse \eqref{convprincnear0b}. Due to the symmetry of $p$, $ \sum_{|r| < \varepsilon n }rp(r)=0$ for every $\varepsilon >0$. Hence by performing a Taylor expansion of second order on $G$, we can rewrite the expression inside the absolute value in \eqref{convprincnear0b} as
\begin{align*}
 &  \frac{\Theta(n)}{n^2} \sum_{|r| < \varepsilon n} \frac{r^2 p(r)}{2}  \Delta G (\chi^n_{r,x} ) - \kappa_{\gamma}  \Delta G( \tfrac{x}{n} ) \nonumber \\
 =& \kappa_{\gamma}   \Delta G ( \tfrac{x}{n} ) \Big( - 1 + \frac{\Theta(n)}{2 n^2 \kappa_{\gamma}} \sum_{|r| < \varepsilon n} c_{\gamma} r^{1-\gamma} \mathbbm{1}_{\{r \neq 0\}} \Big) + \frac{\Theta(n)}{2 n^2 } \sum_{|r| < \varepsilon n} c_{\gamma} r^{1-\gamma} \mathbbm{1}_{\{r \neq 0\}}   [\Delta G ( \chi^n_{r,x} ) -  \Delta G ( \tfrac{x}{n} ) ],
\end{align*}
where $\chi^n_{r,x}$ lies between $\tfrac{x}{n}$ and $\tfrac{x+r}{n}$. Therefore the display in \eqref{convprincnear0b} is bounded from above by
\begin{align}
& \limsup_{\varepsilon \rightarrow 0^{+}} \lim_{n \rightarrow \infty}  \frac{1}{n} \sum_{|x| \leq Bn}  \Big| \kappa_{\gamma}   \Delta G ( \tfrac{x}{n} ) \Big( - 1 + \frac{\Theta(n)}{2 n^2 \kappa_{\gamma}} \sum_{|r| < \varepsilon n} c_{\gamma} r^{1-\gamma} \mathbbm{1}_{\{r \neq 0\}} \Big)  \Big| \label{convprinc4dif} \\
+ & \limsup_{\varepsilon \rightarrow 0^{+}} \lim_{n \rightarrow \infty}  \frac{1}{n} \sum_{|x| \leq B n}  \Big| \frac{\Theta(n)}{2 n^2 } \sum_{|r| < \varepsilon n} c_{\gamma} r^{1-\gamma} \mathbbm{1}_{\{r \neq 0\}}   [\Delta G ( \chi^n_{r,x} ) -  \Delta G( \tfrac{x}{n} ) ]  \Big|. \label{convprinc5dif}
\end{align}
From \eqref{convkappagamma}, \eqref{convprinc4dif} is bounded from above by
\begin{align*}
& 3 B \| \Delta G \|_{\infty} \limsup_{\varepsilon \rightarrow 0^{+}} \lim_{n \rightarrow \infty}  \Big| - 1 + \frac{\Theta(n)}{2 n^2 \kappa_{\gamma}} \sum_{|r| < \varepsilon n} c_{\gamma} r^{1-\gamma} \Big| =0,
\end{align*}
for every $B >0$ fixed. Next we bound \eqref{convprinc5dif} from above by
\begin{align*}
 &  \limsup_{\varepsilon \rightarrow 0^{+}}\lim_{n \rightarrow \infty} \frac{1}{n} \sum_{|x| \leq B n}  \frac{\Theta(n)}{2 n^2 } \sum_{|r| < \varepsilon n} c_{\gamma} r^{1-\gamma} \mathbbm{1}_{\{r \neq 0\}} \sup_{ u,v  \in \mathbb{R}: |u- v| \leq \varepsilon} |\Delta G (v ) -  \Delta G ( u ) | \\
\lesssim & B \limsup_{\varepsilon \rightarrow 0^{+}} \sup_{ u,v  \in \mathbb{R}: |u- v| \leq \varepsilon} |\Delta G (v ) -  \Delta G ( u ) | =0,
\end{align*}
for every $B >0$ fixed. In last line we used the fact that $\Delta G \in \mathcal{S}(\mathbb{R})$, hence it is uniformly continuous. Then we conclude that
\begin{align*}
\forall B >0, \quad \limsup_{n \rightarrow \infty} \frac{1}{n} \sum_{ |x| \leq Bn } | \Theta(n) (\mathcal{K}_{n,\beta} G) (\tfrac{x}{n} ) - \kappa_{\gamma} \Delta G( \tfrac{x}{n} ) | =0,
\end{align*}
which leads to \eqref{convprincnear0}. It remains to prove \eqref{convprincfar0}. Since $G \in \mathcal{S}( \mathbb{R})$,  $\Delta G  \in \mathcal{S}(\mathbb{R}) \subset L^1(\mathbb{R})$ and we get
\begin{align*}
\limsup_{B \rightarrow \infty}  \limsup_{n \rightarrow \infty} \frac{1}{n} \sum_{ |x| > Bn } \big|  \Delta G( \tfrac{x}{n} ) \big| \lesssim \lim_{B \rightarrow \infty}   \int_{|u| \geq B} |\Delta G(u)| du =0,
 \end{align*}
then it is enough to prove that
\begin{align*}
\limsup_{B \rightarrow \infty}  \limsup_{n \rightarrow \infty} \frac{1}{n} \sum_{ |x| > Bn } |  \Theta(n) (\mathcal{K}_{n,\beta} G) (\tfrac{x}{n} ) | =0.
 \end{align*}
From \eqref{knbeta01}, we are done if we can prove that
 \begin{align} \label{beta01gamma2sum1}
\limsup_{B \rightarrow \infty}  \limsup_{n \rightarrow \infty} \frac{\Theta(n)}{n} \sum_{ |x| > n }  \big|  \sum_{|r| > |x| / 2} [ G( \tfrac{x+r}{n} ) - G( \tfrac{x}{n} )] p(r)  \big| =0,
 \end{align}
  \begin{align} \label{beta01gamma2sum2}
\limsup_{B \rightarrow \infty}  \limsup_{n \rightarrow \infty} \frac{\Theta(n)}{n} \sum_{ |x| > Bn }  \big|  \sum_{|r| \leq |x| / 2} [ G( \tfrac{x+r}{n} ) - G( \tfrac{x}{n} )] p(r)  \big|=0. 
 \end{align}
We begin with the former display. The expression inside the double limit in \eqref{beta01gamma2sum1} is bounded from above by a constant times
\begin{align*}
&  \frac{\Theta(n)}{n^{\gamma}} \frac{\| G \|_{\infty} }{n^2} \sum_{ |x| > Bn }  \sum_{r > |x| / 2}  \Big( \frac{r}{n} \Big)^{-\gamma-1} \lesssim  \frac{\Theta(n)}{n^{\gamma}} \frac{1}{n} \sum_{ |x| > Bn } \int_{ \frac{|x|}{2n} }^{\infty} u^{-\gamma-1} du \\
 \lesssim & \frac{\Theta(n)}{n^{\gamma}} \frac{1}{n} \sum_{ |x| \geq Bn } \Big( \frac{|x|}{n} \Big)^{-\gamma} \lesssim   \frac{\Theta(n)}{n^{\gamma}} \int_{B}^{\infty} u^{-\gamma} du \lesssim   B^{1-\gamma},
\end{align*}
that (since $\gamma  >1$) vanishes as $B \rightarrow \infty$, leading to \eqref{beta01gamma2sum1}. In order to prove \eqref{beta01gamma2sum2}, define $F_1: \mathbb{R} \rightarrow \mathbb{R}$ by
\begin{align*}
F_1(u):= \sup_{|u-v| \leq |u| /2} |  G''(v) |.
\end{align*}
Since $G \in \mathcal{S}(\mathbb{R})$, we have
\begin{equation} \label{limFaux1}
\limsup_{B \rightarrow \infty}  \frac{1}{n} \sum_{ |x| > Bn } F_1 ( \tfrac{x}{n} ) \Big( 1 + \frac{x^2}{n^2} \Big) \lesssim \int_{|u| \geq B} F_1(u)(1 + u^2) du =0.
\end{equation}
From the symmetry of $p$ and a Taylor expansion of second order on $G$, the expression inside the double limit in \eqref{beta01gamma2sum2} is bounded from above by
  \begin{align*}
  &   \frac{\Theta(n)}{n} \sum_{ |x| > B n } \sum_{|r| \leq |x| / 2} \frac{r^2}{2n^2} F_1 ( \tfrac{x}{n} ) p(r) \lesssim  \frac{\Theta(n)}{n^3} \sum_{ |x| > Bn } F ( \tfrac{z}{n} ) \sum_{r=1 }^{|x| / 2} r^{-1} \\
 \leq &  \frac{\Theta(n)}{n^3} \sum_{ |x| > Bn } F_1 ( \tfrac{x}{n} )[ 1 + \log( |x| )] \leq   \frac{1}{\log(n)}  \frac{1}{n} \sum_{ |x| > Bn } F ( \tfrac{x}{n} ) \Big( 1 + \log \Big( \frac{|x|}{n} \Big) + \log(n) \Big)  \\
 \lesssim &    \frac{1}{n} \sum_{ |x| > Bn } F_1 ( \tfrac{x}{n} ) \Big( 1 + \frac{x^2}{n^2} \Big) \lesssim \int_{|u| \geq B} F(u)(1 + u^2) du,
 \end{align*}
 which goes to zero as $B \rightarrow \infty$, due to \eqref{limFaux1}. Then we conclude that \eqref{convknbetasum} holds for $\gamma=2$. 
 
\textbf{III.:} Proof of \eqref{convknbetasum} for $\beta \geq 1$. In this case, from \eqref{op_Knb} we have  
\begin{equation*}
\mcb{K}_{n,\beta} G \left( \tfrac{x}{n} \right): =
\begin{cases}
 \sum_{y=0}^{\infty} \big[ G( \tfrac{y}{n} )  -  G( \tfrac{x}{n} ) - n^{-1} G_{+}^{'}(0) (y-x) \big]p(y-x),  x \geq 0; \\
  \sum_{y=-\infty}^{-1} \big[ G( \tfrac{y}{n} )  -  G( \tfrac{x}{n} ) - n^{-1} G_{-}^{'}(0) (y-x) \big]p(y-x),  \beta \geq 1, x \leq -1.
\end{cases}
\end{equation*}
Therefore we are done if we can prove the following results:
 \begin{align} \label{eqconvneu0a}
 \lim_{n \rightarrow \infty}   \Big| \frac{\Theta(n)}{n} \sum_{y=0 }^{\infty} \big[  [G_{+}(\tfrac{y}{n}) - G_{+}( \tfrac{0}{n})  ]     - \frac{1}{n}   G'_{+}( 0) (y-0) \big] p(y-0) - \frac{\kappa_{\gamma}}{n} \Delta G_{+} (\tfrac{0}{n} )     \Big|  =0,
\end{align}
\begin{align} \label{eqconvneu0b}
 \lim_{n \rightarrow \infty}  \sum_{x=1}^{\infty}  \Big| \frac{\Theta(n)}{n}  \big[  [G_{+}(\tfrac{0}{n}) - G_{+}( \tfrac{x}{n})  ] - \frac{1}{n} G'_{+}( 0) (0-x) \big]   p(0-x)    \Big|  =0,
\end{align}
\begin{align} \label{eqconvneupos}
 \lim_{n \rightarrow \infty}  \sum_{x=1}^{\infty}  \Big| \frac{\Theta(n)}{n} \sum_{y=1 }^{\infty}\big[  [G_{+}(\tfrac{y}{n}) - G_{+}( \tfrac{x}{n})  ] - \frac{1}{n} G'_{+}(0) (y-x) \big]   p(y-x) - \frac{\kappa_{\gamma}}{n} \Delta G_{+} (\tfrac{x}{n} )     \Big|  =0
\end{align}
and
\begin{align} \label{eqconvneuneg}
 \lim_{n \rightarrow \infty}  \sum_{x=-\infty}^{-1}  \Big| \frac{\Theta(n)}{n} \sum_{y=- \infty }^{-1} \big[  [G_{-}(\tfrac{y}{n}) - G_{-}( \tfrac{x}{n})  ] -\frac{1}{n} G'_{-}(0) (y-x) \big]    p(y-x) - \frac{\kappa_{\gamma}}{n} \Delta G_{-} (\tfrac{x}{n} )     \Big|  =0.
\end{align}
We begin by proving \eqref{eqconvneu0a}. We bound it from above by
\begin{align}
& \lim_{n \rightarrow \infty} \frac{\kappa_{\gamma}}{n} \| \Delta G_{+} \|_{\infty}  +  \lim_{n \rightarrow \infty}    \Big| \frac{\Theta(n)}{n} \sum_{y=0 }^{\infty} \big[  [G_{+}(\tfrac{y}{n}) - G_{+}(0 )  ]     - \frac{y}{n} G'_{+}( 0)  \big] p(y)      \Big| \nonumber  \\
=& \lim_{n \rightarrow \infty}    \Big| \frac{\Theta(n)}{n} \sum_{y=0 }^{\infty} \big[  [G_{+}(\tfrac{y}{n}) - G_{+}( 0)  ]     -  \frac{y}{n} G'_{+}( 0)  \big] p(y) \Big|. \label{eqconvneu0a1}    
\end{align}
If $\gamma=2$, then $G \in \mathcal{S}_{Neu}(\mathbb{R}^{*})$, $G'_{+}(0)=0$ and by performing a Taylor expansion of first order on $G_{+}$, \eqref{eqconvneu0a1} is bounded from above by
\begin{align*}
\lim_{n \rightarrow \infty}   \frac{\Theta(n)}{n^2} \sum_{y=0 }^{\infty} y p(y) \| G'_{+} \|_{\infty} \lesssim  \lim_{n \rightarrow \infty}   \frac{1}{\log(n)} \sum_{y=1 }^{\infty} y^{-2}  =0,
\end{align*}
since the sum above converges. If $\gamma > 2$, $\Theta(n)=n^2$ and by performing a Taylor expansion of second order on $G_{+}$, we bound \eqref{eqconvneu0a1} from above by
\begin{align*}
\lim_{n \rightarrow \infty}   \frac{\Theta(n)}{2n^3} \sum_{y=0 }^{\infty} y^2 p(y) \| \Delta G_{+} \|_{\infty} \lesssim  \lim_{n \rightarrow \infty}   \frac{1}{n} \sum_{y=1 }^{\infty} y^{1-\gamma}  =0,
\end{align*}
since the sum in last display converges. Analogously, performing a Taylor expansion of first order (resp. second order) on $G_{+}$, we conclude that the expression inside the limit in \eqref{eqconvneu0b} vanishes as $n \rightarrow \infty$ for $\gamma=2$ (resp. $\gamma > 2$). 

We observe that the proof of \eqref{eqconvneuneg} is analogous to the proof of \eqref{eqconvneupos}, then we will prove only \eqref{eqconvneupos}. For every $\varepsilon >0$, the expression inside the limit in \eqref{eqconvneupos} is bounded from above by the sum of
\begin{align} \label{eqconvneuposa}
 \limsup_{\varepsilon \rightarrow 0^{+}} \limsup_{n \rightarrow \infty} \sum_{x=1}^{\varepsilon n}  \Big| \frac{\kappa_{\gamma}}{n} \Delta G_{+} (\tfrac{x}{n} )     \Big|,
\end{align}
\begin{align} \label{eqconvneuposb}
\limsup_{\varepsilon \rightarrow 0^{+}} \limsup_{n \rightarrow \infty} \sum_{x=1}^{\varepsilon n}  \Big| \frac{\Theta(n)}{n} \sum_{y=1 }^{\infty}\big[  [G_{+}(\tfrac{y}{n}) - G_{+}( \tfrac{x}{n})  ] - \frac{y-x}{n} G'_{+}( 0)  \big]   p(y-x)      \Big|,
\end{align}
\begin{align} \label{eqconvneuposc}
\limsup_{\varepsilon \rightarrow 0^{+}} \limsup_{n \rightarrow \infty} \sum_{x=\varepsilon n +1}^{\infty} \frac{\Theta(n)}{n^2}  \Big| G'_{+}(0) \sum_{y=1 }^{\infty}    (y-x)    p(y-x)    \Big|
\end{align}
and
\begin{align} \label{eqconvneuposd}
\limsup_{\varepsilon \rightarrow 0^{+}} \limsup_{n \rightarrow \infty} \sum_{x=\varepsilon n +1}^{\infty} \Big| \frac{\Theta(n)}{n} \sum_{y=1 }^{\infty}   [G_{+}(\tfrac{y}{n}) - G_{+}( \tfrac{x}{n})  ]    p(y-x) - \frac{\kappa_{\gamma}}{n} \Delta G_{+} (\tfrac{x}{n} )     \Big|.
\end{align}
We begin by treating \eqref{eqconvneuposa}. We bound it from above by
\begin{align*}
\limsup_{\varepsilon \rightarrow 0^{+}} \limsup_{n \rightarrow \infty} \sum_{x=1}^{\varepsilon n}  \frac{\kappa_{\gamma}}{n} \| \Delta G_{+}  \|_{\infty}    = 0.
\end{align*}
Next we bound \eqref{eqconvneuposb} from above by the sum of
\begin{equation} \label{lemneu1posa}
\limsup_{\varepsilon \rightarrow 0^{+}} \limsup_{n \rightarrow \infty}   \sum_{x=1}^{\varepsilon n } \frac{\Theta(n)}{n} \Big|  \sum_{r=1-x}^{x-1} \Big[  [G_{+}( \tfrac{x+r}{n}) - G_{+}( \tfrac{x}{n}) ] - \frac{r}{n} G'_{+} (0) \Big]   p(r)  \Big| ,
\end{equation}
\begin{equation} \label{lemneu1posb}
\limsup_{\varepsilon \rightarrow 0^{+}} \limsup_{n \rightarrow \infty}  \sum_{x=1}^{\varepsilon n } \frac{\Theta(n)}{n} \Big| \sum_{r=x}^{\varepsilon n -1} \Big[  [G_{+}( \tfrac{x+r}{n}) - G_{+}( \tfrac{x}{n}) ]    - \frac{r}{n} G'_{+}(0) \Big] p(r)  \Big|  ,
\end{equation}
and
\begin{equation} \label{lemneu1posc}
\limsup_{\varepsilon \rightarrow 0^{+}} \limsup_{n \rightarrow \infty}  \sum_{x=1}^{\varepsilon n }  \frac{\Theta(n)}{n} \Big| \sum_{r= \varepsilon n}^{\infty} \Big[ [G_{+}( \tfrac{x+r}{n}) - G_{+}( \tfrac{x}{n}) ]    - \frac{r}{n} G'_{+} (0) \Big] p(r)  \Big|  .
\end{equation}
Due to the symmetry of $p$, by performing a Taylor expansion of second order on $G_{+}$ around $\frac{x}{n}$, the double limit in \eqref{lemneu1posa} is bounded from above by
\begin{align*}
& \limsup_{\varepsilon \rightarrow 0^{+}} \limsup_{n \rightarrow \infty} \frac{\Theta(n) \| \Delta G_{+} \|_{\infty}}{ n^3 }  \sum_{x=1}^{\varepsilon n } \sum_{r=1-x}^{x-1} r^2    p(r)  \lesssim \limsup_{\varepsilon \rightarrow 0^{+}} \limsup_{n \rightarrow \infty} \frac{\Theta(n)}{ n^{3}}  \sum_{x=1}^{\varepsilon n} \sum_{r=1}^{\varepsilon n} r^{1-\gamma} \\
=& \limsup_{\varepsilon \rightarrow 0^{+}} \limsup_{n \rightarrow \infty} \varepsilon \frac{\Theta(n)}{ n^{2}} \sum_{r=1}^{\varepsilon n} r^{1-\gamma} \lesssim \limsup_{\varepsilon \rightarrow 0^{+}} \varepsilon =0.
\end{align*}
From a Taylor expansion of first order on $G_{+}$, the double limit in \eqref{lemneu1posb} is bounded from above by
\begin{align*}
& \limsup_{\varepsilon \rightarrow 0^{+}} \limsup_{n \rightarrow \infty}   \frac{\Theta(n)}{  n^2}  \sum_{x=1}^{\varepsilon n } \sum_{r=x}^{\varepsilon n -1} r    p(r) \sup_{u \in  [0, 2 \varepsilon ] } | G'_{+} (u) - G'_{+} (0)  | \\
  \leq & \limsup_{\varepsilon \rightarrow 0^{+}} \limsup_{n \rightarrow \infty}   \frac{\Theta(n)}{  n^2} \sum_{x=1}^{\varepsilon n } r^2 p (r)  \sup_{ u \in  [0, 2 \varepsilon ] } |G'_{+} (u) - G'_{+} (0) |  \\
  \lesssim &  \lim_{\varepsilon \rightarrow 0^{+}}  \sup_{u \in  [0, 2 \varepsilon ] } | G'_{+} (u) -  G'_{+} (0)|  =0.
\end{align*}
The last limit is true due to the uniform continuity of $G'_{+}$,  since $G'_{+} \in \mathcal{S}(\mathbb{R})$. Above we used \eqref{thetan}. Finally \eqref{lemneu1posc} is bounded from above by a constant times
\begin{align*}
& \limsup_{\varepsilon \rightarrow 0^{+}} \limsup_{n \rightarrow \infty} \Big[  \| G \|_{\infty}  \sum_{x=1}^{\varepsilon n} \frac{\Theta(n)}{n} \sum_{r= \varepsilon n}^{\infty}   r^{-\gamma-1} + \sum_{x=1}^{\varepsilon n} \frac{\Theta(n)}{n^2} \sum_{r= \varepsilon n}^{\infty} c_{\gamma} r^{-\gamma} |  G'_{+} (0)| \Big]   \\
\lesssim & \limsup_{\varepsilon \rightarrow 0^{+}} \limsup_{n \rightarrow \infty} \varepsilon n  \frac{\Theta(n)}{n^{\gamma+1}}  \Big[    \frac{1}{n} \sum_{r= \varepsilon n}^{\infty}   \Big( \frac{r}{n} \Big)^{-\gamma-1} +     \frac{1}{n} \sum_{r= \varepsilon n}^{\infty}   \Big( \frac{r}{n} \Big)^{-\gamma} \Big] \\
\lesssim & \limsup_{\varepsilon \rightarrow 0^{+}} \limsup_{n \rightarrow \infty}  \frac{\Theta(n)}{n^{\gamma}}  \varepsilon^{1-\gamma} \big( 1 +  \varepsilon \big) =0.
\end{align*}
Above we used the fact that  that $\lim_{n \rightarrow \infty}  \Theta(n)/n^{\gamma}=0$; then we conclude that \eqref{eqconvneuposb} is equal to zero.

For $\gamma=2$, $ G'_{+} (0)=0$ and \eqref{eqconvneuposc} is zero. If $\gamma > 2$, since $p$ is symmetric, we rewrite \eqref{eqconvneuposc} as
\begin{align*}
\limsup_{\varepsilon \rightarrow 0^{+}} \limsup_{n \rightarrow \infty} \sum_{x=\varepsilon n +1}^{\infty}  \Big|  \sum_{r=x }^{\infty}   G'_{+}( 0) r    p(r)    \Big| \leq \| G'_{+} \|_{\infty} \limsup_{\varepsilon \rightarrow 0^{+}} \limsup_{n \rightarrow \infty} \sum_{x=\varepsilon n +1}^{\infty} \sum_{r=x }^{\infty} rp(r).
\end{align*}
For every $x \geq 1$, define $a_{x}:= \sum_{r=x }^{\infty} rp(r)$. Since $\gamma >2$, $\sum_{x=1}^{\infty} a_x = \sigma^2 /2 < \infty$ and we can rewrite last display as
\begin{align*}
\| G'_{+} \|_{\infty} \limsup_{\varepsilon \rightarrow 0^{+}} \limsup_{n \rightarrow \infty} \sum_{x=\varepsilon n +1}^{\infty} a_x = 0,
\end{align*}
since we deal with the tail of a convergent series. It remains to treat \eqref{eqconvneuposd}, which is bounded from above by the sum of
\begin{align} \label{eqconvneuposd1}
\limsup_{B \rightarrow \infty}  \limsup_{n \rightarrow \infty} \sum_{x= 2 B n  +1}^{\infty}  \Big| \frac{\Theta(n)}{n} \sum_{r=1-x }^{\infty}   [G_{+}(\tfrac{x+r}{n}) - G_{+}(\tfrac{x}{n})  ]    p(r) - \frac{\kappa_{\gamma}}{n} \Delta G_{+}''(\tfrac{x}{n})    \Big|,
\end{align}
\begin{align} \label{eqconvneuposd2}
\limsup_{B \rightarrow \infty} \limsup_{\varepsilon \rightarrow 0^{+}} \limsup_{n \rightarrow \infty} \sum_{x=\varepsilon n +1}^{2 B n} \frac{\Theta(n)}{n} \sum_{|r| \geq \varepsilon n } 2 \| G \|_{\infty}     p(r)   
\end{align}
and
\begin{align} \label{eqconvneuposd3}
\limsup_{B \rightarrow \infty} \limsup_{\varepsilon \rightarrow 0^{+}} \limsup_{n \rightarrow \infty} \sum_{x=\varepsilon n +1}^{2 B n}  \Big| \frac{\Theta(n)}{n} \sum_{r= 1- \varepsilon n }^{ \varepsilon n -1}   [G_{+}(\tfrac{x+r}{n}) - G_{+}( \tfrac{x}{n})  ]    p(r) - \frac{\kappa_{\gamma}}{n} \Delta G_{+} (\tfrac{x}{n} )     \Big|.
\end{align}
We bound \eqref{eqconvneuposd1} from above by the sum of
\begin{align} \label{eqconvneuposds1}
\limsup_{B \rightarrow \infty}  \limsup_{n \rightarrow \infty} \sum_{x= 2 B n  +1}^{\infty}  \Big|  \frac{\kappa_{\gamma}}{n} \Delta G_{+}(\tfrac{x}{n})    \Big|,
\end{align}
\begin{align} \label{eqconvneuposds2}
\limsup_{B \rightarrow \infty}  \limsup_{n \rightarrow \infty} \sum_{x= 2 B n  +1}^{\infty}   \frac{\Theta(n)}{n} \sum_{|r| >  x / 2  }   |G_{+}(\tfrac{x+r}{n}) - G_{+}(\tfrac{x}{n})  |    p(r)     ,
\end{align}
\begin{align} \label{eqconvneuposds3}
\limsup_{B \rightarrow \infty}  \limsup_{n \rightarrow \infty} \sum_{x= 2 B n  +1}^{\infty}  \Big| \frac{\Theta(n)}{n} \sum_{|r| \leq  x / 2 }   [G_{+}(\tfrac{x+r}{n}) - G_{+}(\tfrac{x}{n})  ]    p(r)    \Big|.
\end{align}
Since $\Delta G_{+} \in \mathcal{S}(\mathbb{R}) \subset L^1(\mathbb{R})$, \eqref{eqconvneuposds1} is bounded by a constant times $\limsup_{B \rightarrow \infty}  \int_{2B}^{\infty} | \Delta G_{+}(u)    | du =0$.
We bound \eqref{eqconvneuposds2} from above by a constant times
\begin{align*}
&  \limsup_{B \rightarrow \infty}  \limsup_{n \rightarrow \infty}  \|  G_{+} \|_{\infty} \frac{\Theta(n)}{n^{\gamma}} \frac{1}{n^2} \sum_{x= 2 B n }^{\infty}     \sum_{r =  x / 2 }^{\infty}    \Big( \frac{r}{n} \Big)^{-\gamma-1}\\
 \lesssim & \limsup_{B \rightarrow \infty}  \limsup_{n \rightarrow \infty}        \frac{\Theta(n)}{n^{\gamma}}  \int_{2B}^{\infty} \int_{  u / 2}^{\infty}     v^{-\gamma-1} dv du \lesssim  \limsup_{B \rightarrow \infty}  \limsup_{n \rightarrow \infty}   \frac{\Theta(n)}{n^{\gamma}} B^{1-\gamma} =0.
\end{align*}
From the symmetry of {$p(\cdot)$}, the expression in \eqref{eqconvneuposds3} can be rewritten as
\begin{align} 
& \limsup_{B \rightarrow \infty}  \limsup_{n \rightarrow \infty} \sum_{x= 2 B n  +1}^{\infty}  \Big| \frac{\Theta(n)}{n} \sum_{|r| \leq  x / 2 }   [G_{+}(\tfrac{x+r}{n}) - G_{+}(\tfrac{x}{n}) - rn^{-1} G'_{+}(\tfrac{x}{n})  ]    p(r)    \Big| \nonumber \\
\leq & \limsup_{B \rightarrow \infty}  \limsup_{n \rightarrow \infty} \sum_{x= 2 B n }^{\infty}   \frac{\Theta(n)}{n^3} \sum_{r \leq  x / 2 }    F_2(\tfrac{x}{n})   r^2p(r),  \label{eqconvneuposds4}
\end{align} 
where $F_2: \mathbb{R} \rightarrow \mathbb{R}$ is defined by $F_2(u):= \sup_{|v| \leq  |x| / 2} | \Delta G_{+} (u+v) |$. Since $\Delta G_{+} \in \mathcal{S}(\mathbb{R})$, we have that 
\begin{align} \label{Faux2}
\lim_{B \rightarrow \infty} \int_{2B}^{\infty} F_2(u) du  = \lim_{B \rightarrow \infty} \int_{2B}^{\infty} F_2(u) [1 + u^2] du =0.
\end{align}
If $\gamma > 2$, \eqref{eqconvneuposds4} is bounded from above by
\begin{align*}
& \limsup_{B \rightarrow \infty}  \limsup_{n \rightarrow \infty}  \frac{\Theta(n)}{n^3} \sum_{x= 2 B n }^{\infty}  F_2(\tfrac{x}{n})  \sum_{r  }       r^2p(r) = \limsup_{B \rightarrow \infty}  \limsup_{n \rightarrow \infty} \frac{\sigma^2}{n} \sum_{x= 2 B n }^{\infty}  F_2(\tfrac{x}{n})  \lesssim \lim_{B \rightarrow \infty} \int_{2B}^{\infty} F_2(u) du =0.
\end{align*}
Above we used \eqref{Faux2}. If $\gamma = 2$, the expression inside \eqref{eqconvneuposds4} is bounded from above by
\begin{align*}
&  \limsup_{B \rightarrow \infty}  \limsup_{n \rightarrow \infty}  \frac{\Theta(n)}{n^3} \sum_{x= 2 B n }^{\infty}  F_2(\tfrac{x}{n})  \sum_{r=1  }^{x}       r^2p(r) \lesssim \limsup_{B \rightarrow \infty}  \limsup_{n \rightarrow \infty}  \frac{1}{n \log(n)} \sum_{x= 2 B n }^{\infty}  F_2(\tfrac{x}{n}) [ 1 + \log( \tfrac{x}{n} ) + \log(n)] \\
  \lesssim &  \limsup_{B \rightarrow \infty}  \limsup_{n \rightarrow \infty} \frac{1}{n } \sum_{x= 2 B n }^{\infty}  F_2(\tfrac{x}{n}) [ 1 + ( \tfrac{x}{n} )^2  ]  \lesssim  \lim_{B \rightarrow \infty}     \int_{2B}^{\infty} F_2(u) [1 + u^2] du =0.
\end{align*}
Above we used \eqref{Faux2}. Next we bound \eqref{eqconvneuposd2} from above by a constant times
\begin{align*}
\limsup_{B \rightarrow \infty} \limsup_{\varepsilon \rightarrow 0^{+}} \limsup_{n \rightarrow \infty} \sum_{x=1}^{2 B n} \frac{\Theta(n)}{n^{\gamma+1}}  \frac{1}{n} \sum_{r = \varepsilon n }^{\infty}  \| G_{+} \|_{\infty}  \Big( \frac{r}{n} \Big)^{-\gamma} \lesssim \limsup_{B \rightarrow \infty} \limsup_{\varepsilon \rightarrow 0^{+}} \limsup_{n \rightarrow \infty} \frac{\Theta(n)}{n^{\gamma}}  B\varepsilon^{-\gamma}   =0. 
\end{align*}
It only remains to treat \eqref{eqconvneuposd3}. Performing a Taylor expansion of second order on $G_{+}$, for every $r,x \in \mathbb{Z}$, there exists $\bar{\xi}^n_{r,x}$ between $\frac{x+r}{n}$ and $\frac{x}{n}$ such that
\begin{align*}
 n[G_{+}( \tfrac{r+x}{n}) - G_{+}( \tfrac{x}{n}) ]  = r  G'_{+}( \tfrac{x}{n}) + \frac{r^2}{2n} \Delta  G_{+}( \bar{\xi}^n_{r,x}).
\end{align*}
Since $p(\cdot)$ is symmetric, $ G'_{+}( \tfrac{x}{n}) \sum_{|r| \leq \varepsilon n-1} r p(r) =0$ and we can rewrite \eqref{eqconvneuposd3} as
\begin{align}
&\limsup_{B \rightarrow \infty} \limsup_{\varepsilon \rightarrow 0^{+}} \limsup_{n \rightarrow \infty}   \Big| \frac{1}{n } \sum_{x=\varepsilon n+1}^{2 B n} \Big( \frac{\Theta(n)}{n^2} \sum_{r=-\varepsilon n +1}^{\varepsilon n -1} c_{\gamma} r^{1-\gamma} \frac{\Delta  G_{+}( \bar{\xi}^n_{r,x})}{2}   - \kappa_{\gamma} \Delta  G_{+} ( \tfrac{x}{n})  \Big)   \Big| \nonumber \\
\leq & \limsup_{B \rightarrow \infty} \limsup_{\varepsilon \rightarrow 0^{+}} \limsup_{n \rightarrow \infty}  \Big| \frac{1}{n} \sum_{x=\varepsilon n+1}^{2 B n}  \Delta  G_{+} ( \tfrac{x}{n})  \Big( \frac{\Theta(n)}{n^2} \sum_{r=1}^{\varepsilon n -1}   c_{\gamma} r^{1-\gamma}   - \kappa_{\gamma}   \Big) \Big| \label{eqconvneuposd31} \\
+ & \limsup_{ B \rightarrow \infty} \limsup_{\varepsilon \rightarrow 0^{+}} \limsup_{n \rightarrow \infty}  \Big| \frac{1}{n } \sum_{x=\varepsilon n+1}^{2 B n} \frac{\Theta(n)}{n^2}  \sum_{r=-\varepsilon n +1}^{\varepsilon n -1} c_{\gamma} r^{1-\gamma} \frac{\Delta  G_{+}( \bar{\xi}^n_{r,x})-  \Delta  G_{+} ( \tfrac{x}{n}) }{2}        \Big|. \label{eqconvneuposd32}
\end{align}
From \eqref{convkappagamma}, \eqref{eqconvneuposd31} is bounded from above by
\begin{align*}
\limsup_{B \rightarrow \infty} \limsup_{\varepsilon \rightarrow 0^{+}} \limsup_{n \rightarrow \infty}   \Big| \frac{1}{n} \sum_{x=1}^{2 B n} \| \Delta  G_{+} \|_{\infty} \Big|   \frac{\Theta(n)}{n^{1+\gamma}} \varepsilon^{1-\gamma} c_{\gamma} + \Big( \frac{\Theta(n)}{n^2} \sum_{r=1}^{\varepsilon n }   c_{\gamma} r^{1-\gamma}   - \kappa_{\gamma}   \Big) \Big| = 0.
\end{align*}
Finally we bound \eqref{eqconvneuposd32} from above by
\begin{align*}
 &\limsup_{B \rightarrow \infty} \limsup_{\varepsilon \rightarrow 0^{+}} \limsup_{n \rightarrow \infty}   \frac{1}{n} \sum_{x=1}^{2 B n}  \frac{\Theta(n)}{n^2} \sum_{r=1}^{\varepsilon n }   c_{\gamma} r^{1-\gamma} \sup_{ |u-v| \leq \varepsilon} |\Delta  G_{+}(v)-  \Delta  G_{+} ( u) | \\
\lesssim &\limsup_{B \rightarrow \infty}  \lim_{\varepsilon \rightarrow 0^{+}}  2 B \kappa_{\gamma} \sup_{ |u-v| \leq \varepsilon} |\Delta  G_{+}(v)-  \Delta  G_{+} (u) |  =0.
\end{align*}
In the last line, we used the uniform continuity of $\Delta  G_{+}$,  since $\Delta  G_{+} \in \mathcal{S}(\mathbb{R})$. 
\end{proof}
Next we state some auxiliary results that have been used along the article.
\begin{prop} \label{aux1}
It holds 
\begin{align} \label{sumslowbonds}
\sum_{\{x,y\} \in \mcb S} p(x-y) = 2m < \infty; \quad  \sum_{\{x,y\} \in \mcb S} |x-y|p(x-y) = \sigma^2 < \infty, \; \text{if} \; \gamma > 2.
\end{align}
Moreover, for every $C >0$ fixed, it holds
\begin{align} \label{convkappagamma}
\lim_{n \rightarrow \infty} \frac{\Theta(n)}{ n^2 } \sum_{r =1}^{C n} r^2 p(r)  = \lim_{n \rightarrow \infty} \frac{\Theta(n)}{ n^2 } \sum_{r =1}^{C n} c_{\gamma} r^{1-\gamma}  = \kappa_{\gamma}.
\end{align}
In particular, we get
\begin{align} \label{thetan}
\frac{\Theta(n)}{n^2}  \sum_{x=0}^{C n  -1} \sum_{y=- C n +1}^{-1}   (x-y) p (x-y) \leq  \frac{\Theta(n)}{n^2} \sum_{r=1}^{2 C n} r^2 p(r) \lesssim 1.
\end{align}
Finally,
\begin{equation} \label{limaux2}
\lim_{n \rightarrow \infty} \frac{\Theta(n)}{n^3} \sum_{x=0}^{C n - 1} \sum_{y= 1}^{C n-1} (x+y)^{1-\gamma} =0.
\end{equation}
\end{prop}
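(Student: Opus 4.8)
The plan is to dispatch the four displays of Proposition~\ref{aux1} in turn, each by an elementary counting or asymptotic argument, deducing \eqref{thetan} and \eqref{limaux2} from \eqref{convkappagamma} and a discrete counting step respectively. \textbf{The identities \eqref{sumslowbonds}.} First I would parametrize the bonds of $\mcb S$ by their length: since $\mcb S$ consists of the bonds with one endpoint in $\mathbb{Z}_{-}^{*}$ and one in $\mathbb{N}$, for every $r\ge 1$ there are exactly $2r$ ordered pairs $(x,y)$ with $\{x,y\}\in\mcb S$ and $|x-y|=r$ (namely $x=-a,\ y=r-a$ for $1\le a\le r$, together with the $r$ pairs obtained by exchanging $x$ and $y$). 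Using $p(r)=c_\gamma r^{-\gamma-1}$ and grouping by $r$ gives $\sum_{\{x,y\}\in\mcb S}p(x-y)=2\sum_{r\ge1}r\,p(r)=2m$, the series converging because $\gamma\ge2>1$; likewise $\sum_{\{x,y\}\in\mcb S}|x-y|p(x-y)=2\sum_{r\ge1}r^2p(r)=\sum_{x\ne0}x^2p(x)=\sigma^2$, which is finite precisely when $\gamma>2$.

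\textbf{The limit \eqref{convkappagamma} and its corollary \eqref{thetan}.} Since $r^2p(r)=c_\gamma r^{1-\gamma}$ for $r\ge1$, the two sums in \eqref{convkappagamma} coincide, so it suffices to analyse $\tfrac{\Theta(n)}{n^2}\sum_{r=1}^{Cn}c_\gamma r^{1-\gamma}$. If $\gamma>2$ then $\Theta(n)=n^2$ and $\sum_{r\ge1}r^{1-\gamma}<\infty$, so the quantity tends to $c_\gamma\sum_{r\ge1}r^{1-\gamma}=\kappa_\gamma$. If $\gamma=2$ then $\Theta(n)=n^2/\log(n)$ and $\sum_{r=1}^{Cn}c_2r^{-1}=c_2\log(Cn)+O(1)=c_2\log(n)+O(1)$, so after dividing by $\log(n)$ the limit is $c_2=\kappa_2$. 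For \eqref{thetan}: for fixed $r\ge1$ the values $x$ with $0\le x\le Cn-1$, $-Cn+1\le y\le-1$ and $x-y=r$ form a subinterval of $\{0,\dots,r-1\}$, hence number at most $r$; grouping the double sum by $r=x-y\le 2Cn-2$ yields $\sum_{x=0}^{Cn-1}\sum_{y=-Cn+1}^{-1}(x-y)p(x-y)\le\sum_{r=1}^{2Cn}r^2p(r)$, and multiplying by $\Theta(n)/n^2$ and applying \eqref{convkappagamma} with $2C$ in place of $C$ gives the bound $\lesssim1$.

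\textbf{The limit \eqref{limaux2}.} Here I would again group by $s=x+y\in\{1,\dots,2Cn-2\}$, bounding the number of pairs $(x,y)$ with $0\le x\le Cn-1$, $1\le y\le Cn-1$, $x+y=s$ by $\min(s,Cn)$, which gives $\sum_{x=0}^{Cn-1}\sum_{y=1}^{Cn-1}(x+y)^{1-\gamma}\le\sum_{s=1}^{Cn}s^{2-\gamma}+Cn\sum_{s=Cn+1}^{2Cn}s^{1-\gamma}$. The first sum is $O(1)$ for $\gamma>3$, $O(\log n)$ for $\gamma=3$ and $O(n^{3-\gamma})$ for $\gamma\in[2,3)$; the second is at most $Cn\cdot Cn\cdot(Cn)^{1-\gamma}\lesssim n^{3-\gamma}$. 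Multiplying by $\Theta(n)/n^3$ and using $\Theta(n)=n^2$ for $\gamma>2$ and $\Theta(n)=n^2/\log n$ for $\gamma=2$, one obtains $1/\log n$ for $\gamma=2$, $n^{2-\gamma}$ for $\gamma\in(2,3)$, $\log(n)/n$ for $\gamma=3$ and $1/n$ for $\gamma>3$, each of which vanishes as $n\to\infty$.

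\textbf{Expected obstacle.} None of this is deep; the only real care is in \eqref{limaux2}, where a naive comparison of the double sum with the integral $\iint(u+v)^{1-\gamma}\,du\,dv$ would fail for $\gamma\ge3$, since that integral diverges at the corner $u=v=0$ while the lattice sum does not (because $x+y\ge1$ there). Working directly with the discrete sum and the counting bound $\#\{(x,y):x+y=s\}\le\min(s,Cn)$ sidesteps this, at the cost of the small $\gamma$-regime split recorded above; the analogous, simpler count $\#\{(x,y):x-y=r\}\le r$ is what underlies \eqref{thetan}.
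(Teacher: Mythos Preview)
Your proof is correct and follows essentially the same route as the paper. The identities \eqref{sumslowbonds}, the limit \eqref{convkappagamma}, and the corollary \eqref{thetan} are handled identically in both (grouping bonds by length $r$, the $\gamma=2$ versus $\gamma>2$ split for the harmonic-type sum, and the count $\#\{(x,y):x-y=r\}\le r$). For \eqref{limaux2} the paper instead bounds $\sum_{y}(x+y)^{1-\gamma}$ by $\int_x^{x+Cn}u^{1-\gamma}\,du$ and then splits into the same $\gamma$-regimes you list; your direct counting via $s=x+y$ and $\#\{(x,y):x+y=s\}\le\min(s,Cn)$ is a slightly cleaner way to arrive at the same case analysis, and your remark about why one should not blindly pass to the double integral is exactly the point.
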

\begin{proof}
First we prove \eqref{sumslowbonds} using the symmetry of $p$. Since $\gamma >1$, we get
\begin{align*}
\sum_{\{x,y\} \in \mcb S} p(x-y) = 2 \sum_{x=0}^{\infty} \sum_{y=-\infty}^{-1} p (y-x) =2 \sum_{r=1}^{\infty} r p(r) = 2m < \infty.
\end{align*}
Moreover, if $\gamma >2$, we have
\begin{align*}
\sum_{\{x,y\} \in \mcb S} |y-x|p(y-x) = 2 \sum_{x=0}^{\infty} \sum_{y=-\infty}^{-1} (x-y) p (x-y) =2 \sum_{r=1}^{\infty} r^2 p(r) = \sigma^2 < \infty.
\end{align*}
Now we prove \eqref{convkappagamma}. For $\gamma > 2$, it holds
 \begin{align*}
\lim_{n \rightarrow \infty} \frac{\Theta(n)}{ n^2 } \sum_{r =1}^{C n} c_{\gamma} r^{1-\gamma} = \lim_{n \rightarrow \infty} \sum_{r =1}^{C n} r^2 p(r)  = \frac{1}{2} \sum_{r} r^2 p(r) = \frac{\sigma^2}{2} = \kappa_{\gamma}.
\end{align*}
Observe that
\begin{align*}
\log(Cn+1) = \sum_{r=1}^{Cn} \int_{r}^{r+1} u^{-1} du \leq \sum_{r=1}^{Cn} r^{-1} = \leq 1 + \sum_{r=2}^{Cn} \int_{r-1}^{r} u^{-1} du = 1+ \log(Cn).
\end{align*}
Then for $\gamma=2$ we get
\begin{align*}
c_2 \frac{\log(C n + 1)}{\log(n)} \leq \frac{\Theta(n)}{ n^2 } \sum_{r =1}^{C n} c_{\gamma} r^{1-\gamma}  =  \frac{c_{2}}{\log(n) } \sum_{r=1}^{C n}  r^{-1} \leq  c_2 \Big[ 1 + \frac{1+\log(C)}{\log(n)} \Big], \forall n \geq 1.
\end{align*}
When $n \rightarrow \infty$, the left-hand and right-hand sides of the display above both go to $c_2=\kappa_{\gamma}$, proving \eqref{convkappagamma}. It remains to prove \eqref{limaux2}. We bound it from above by
\begin{align} \label{eqdif2}
 \frac{\Theta(n)}{n^3} \sum_{x=1}^{C n }  x^{1-\gamma} + \frac{\Theta(n)}{n^3} \sum_{x=1}^{C n }   \int_{x}^{x+ Cn} u^{1-\gamma} du. 
\end{align}
Let us treat the leftmost term in \eqref{eqdif2}. For $\gamma > 2$, it is equal to
\begin{align*}
 \frac{n^2}{n^3} \sum_{x=1}^{C n }  x^{1-\gamma} \leq \frac{1}{n} \sum_{x=1}^{\infty}x^{1-\gamma} \lesssim \frac{1}{n}, 
\end{align*}
that vanishes as $n \rightarrow \infty$. For $\gamma=2$, it is equal to
\begin{align*}
 \frac{n^2 \log(n)}{n^3}  \sum_{x=0}^{C n - 1}  (x+1)^{1-2}  \leq  \frac{1}{n \log(n)} \sum_{x=1}^{C n}  x^{1-\gamma} \leq  \frac{1+ \log (C n)}{n \log(n)},
\end{align*}
and this also vanishes as $n \rightarrow \infty$. Now let us treat the rightmost term in \eqref{eqdif2}. For $\gamma >2$, it is equal to
\begin{align*}
 \frac{n^2}{n^3} \sum_{x=1}^{C n}   \int_{x}^{x+ Cn } u^{1-\gamma} \lesssim \frac{1}{n} \sum_{x=1}^{C n} x^{2-\gamma}.
\end{align*}
If $\gamma \in (2,3)$, we have
\begin{align*}
\frac{1}{n} \sum_{x=1}^{C n} x^{2-\gamma} = n^{2-\gamma} \frac{1}{n} \sum_{x=1}^{C n} \Big( \frac{x}{n} \Big)^{2-\gamma} \lesssim n^{2-\gamma} \int_0^C u^{2-\gamma} du \lesssim n^{2-\gamma}.
\end{align*}
If $\gamma =3$, it holds
\begin{align*}
\frac{1}{n} \sum_{x=1}^{C n} x^{2-\gamma} =  \frac{1}{n} \sum_{x=1}^{C n} x^{-1} \leq \frac{1+\log(C n)}{n}.
\end{align*}
And if $\gamma > 3$, we get
\begin{align*}
\frac{1}{n} \sum_{x=1}^{C n} x^{2-\gamma} \leq  \frac{1}{n} \sum_{x=1}^{\infty} x^{2-\gamma} \lesssim \frac{1}{n},
\end{align*}
and we conclude that the second term in \eqref{eqdif2} vanishes as $n \rightarrow \infty$ for $\gamma >2$. Finally, it remains to treat it for $\gamma=2$. In this case, it can be rewritten as
\begin{align*} 
 &\frac{n^2}{n^3 \log(n)} \sum_{x=1}^{C n}   \int_{x}^{x+ Cn } u^{1-2} du = \frac{1}{n \log(n)}  \sum_{x=1}^{C n} [ \log(x+Cn)   - \log(x)   ] \\
= &  \frac{1}{n \log(n)}  \sum_{x=Cn + 1}^{2 C n} \log ( \tfrac{x}{n}   ) +  \frac{1}{ n\log(n)}  \sum_{x=1}^{\varepsilon n } \log ( \tfrac{n}{x}   ) +  \frac{1}{ n\log(n)}  \sum_{x=\varepsilon n+1}^{C n} \log ( \tfrac{n}{x}   )   \\
 \lesssim&   \frac{1}{  \log(n)} \int_{C}^{2C} \log(u) du  +  \frac{1}{ n\log(n)}  \sum_{x=1}^{\varepsilon n} \log ( n  ) + \frac{C \log(\varepsilon^{-1})  }{\log(n)}, 
\end{align*}
and this vanishes as $n \rightarrow \infty$ and then $\varepsilon \rightarrow 0^+$, ending the proof.
\end{proof}
Next result was useful to prove Lemma \ref{lemconvmartterm1}. Recall from \eqref{op_Bnb} the definition of $\mathcal{B}_{n,\beta} (G)$.
\begin{prop} \label{prop2lem1convmart}
Let $(\beta,\gamma) \in R_0$. For every $G \in \mathcal{S}_{\beta,\gamma}$, we have
\begin{align*}
\lim_{n \rightarrow \infty} \mathcal{B}_{n,\beta} (G)= \mathbbm{1}_{ \{ \gamma > 2\}} \mathbbm{1}_{ \{ \beta=1 \} } \frac{2 \kappa_{\gamma}}{\hat{\alpha}}  [ \nabla_{\beta,\gamma} G(0)]^2.
\end{align*}
\end{prop}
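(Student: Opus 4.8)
The plan is to evaluate $\mathcal{B}_{n,\beta}(G)$, as given in \eqref{op_Bnb}, separately in four regimes: $\beta\in[0,1)$; $\beta>1$; $\beta=1$ with $\gamma=2$; and $\beta=1$ with $\gamma>2$. Only the last regime will produce a nonzero limit, matching the indicator $\mathbbm{1}_{\{\gamma>2\}}\mathbbm{1}_{\{\beta=1\}}$. Throughout I set $S_n(G):=\sum_{\{x,y\}\in\mcb S}p(y-x)[G(\tfrac yn)-G(\tfrac xn)]^2$ and recall from \eqref{sumslowbonds} that $\sum_{\{x,y\}\in\mcb S}p(y-x)=2m<\infty$, so that $S_n(G)\leq 8m\|G\|_\infty^2$ for every $n$ and every $G\in\mathcal{S}(\mathbb{R}^*)$. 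I will also use that a bond $\{x,y\}\in\mcb S$ has $x<0\leq y$, hence $G(\tfrac xn)=G_-(\tfrac xn)$ and $G(\tfrac yn)=G_+(\tfrac yn)$, together with the crude bound $[G(\tfrac yn)-G(\tfrac xn)]^2\leq\min\{2\|G\|_\infty,\ \|G'\|_\infty\,|y-x|/n\}^2$ valid when $G\in\mathcal{S}(\mathbb{R})$.

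\textbf{The three vanishing regimes.} For $\beta>1$ and for $\beta=1,\gamma=2$ one has $\mathcal{B}_{n,\beta}(G)=\alpha\,\Theta(n)\,n^{-1-\beta}S_n(G)$; since $S_n(G)$ is bounded and $\Theta(n)n^{-1-\beta}\to 0$ (it equals $n^{1-\beta}$ or $n^{1-\beta}/\log n$ for $\beta>1$, and $1/\log n$ for $\beta=1,\gamma=2$), the limit is $0$. For $\beta\in[0,1)$ one has $\mathcal{B}_{n,\beta}(G)=(\alpha n^{-\beta}-1)\tfrac{\Theta(n)}{n}S_n(G)$ with $|\alpha n^{-\beta}-1|\leq 1+\alpha$, so it suffices to show $\tfrac{\Theta(n)}{n}S_n(G)\to 0$. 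Parametrizing $\mcb S$ by $x=-b$ ($b\geq1$), $y=a$ ($a\geq0$) and grouping by $r=a+b$ (there are $r$ such pairs, each contributing at most $\min\{4\|G\|_\infty^2,\|G'\|_\infty^2 r^2/n^2\}$), one gets $S_n(G)\lesssim n^{-2}\sum_{r=1}^{n}c_\gamma r^{2-\gamma}+\sum_{r>n}c_\gamma r^{-\gamma}$. Inserting $\Theta(n)$ from \eqref{timescale} and summing these power series gives $\tfrac{\Theta(n)}{n}S_n(G)\lesssim n^{2-\gamma}$ for $2<\gamma<3$, $\lesssim n^{-1}\log n$ for $\gamma=3$, $\lesssim n^{-1}$ for $\gamma>3$, and $\lesssim 1/\log n$ for $\gamma=2$; all vanish.

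\textbf{The critical regime $\beta=1,\ \gamma>2$.} Here $\Theta(n)=n^2$, so $\mathcal{B}_{n,1}(G)=\alpha\,S_n(G)$. For each fixed bond $\{x,y\}\in\mcb S$ we have $\tfrac xn\uparrow0$ and $\tfrac yn\downarrow0$, hence $p(y-x)[G(\tfrac yn)-G(\tfrac xn)]^2\to p(y-x)[G_+(0)-G_-(0)]^2$; the summands are dominated by $4\|G\|_\infty^2 p(y-x)$, which is summable over $\mcb S$ with sum $8m\|G\|_\infty^2$, so dominated convergence yields $S_n(G)\to[G_+(0)-G_-(0)]^2\sum_{\{x,y\}\in\mcb S}p(y-x)=2m[G_+(0)-G_-(0)]^2$, i.e. $\mathcal{B}_{n,1}(G)\to 2\alpha m[G_+(0)-G_-(0)]^2$. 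Since $G\in\mathcal{S}_{Rob}(\mathbb{R}^*)$, the $k=0$ Robin relation gives $\nabla_{\beta,\gamma}G(0)=G_+^{(1)}(0)=\hat\alpha[G_+(0)-G_-(0)]$, so $[G_+(0)-G_-(0)]^2=\hat\alpha^{-2}[\nabla_{\beta,\gamma}G(0)]^2$; using $\hat\alpha=\alpha m/\kappa_\gamma$ from \eqref{alphahat} one rewrites $2\alpha m\,\hat\alpha^{-2}=2\kappa_\gamma/\hat\alpha$, which is exactly the asserted constant.

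\textbf{Main obstacle.} The only genuinely delicate point is the power counting in the regime $\beta\in[0,1)$ when $\gamma$ is near $2$: the naive bound $S_n(G)\lesssim n^{-1}$ combined with $\Theta(n)/n$ only gives $O(1)$, so one must split the $r$-sum at scale $n$ and, for $\gamma=2$, exploit the extra $\log n$ in $\Theta(n)$. Everything else is either an elementary boundedness estimate or a routine dominated-convergence argument.
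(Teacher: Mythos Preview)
Your proof is correct and follows essentially the same case split as the paper (which treats $\beta\in[0,1)$ and $\beta\geq1$ separately). Two minor differences are worth noting: in the regime $\beta\in[0,1)$ you group slow bonds by their length $r=|y-x|$ and split at $r=n$, whereas the paper splits by whether $|x|$ or $|y|$ exceeds $n$ and then invokes \eqref{limaux2}; in the critical regime $\beta=1,\ \gamma>2$ your dominated-convergence argument (pointwise limit plus the summable majorant $4\|G\|_\infty^2 p(y-x)$) is cleaner than the paper's explicit three-term decomposition, which requires a separate $\delta$-H\"older estimate to kill the square term. Both routes yield the same constant via the identity $2\alpha m=2\kappa_\gamma\hat\alpha$.
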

\begin{proof}
we treat two cases separately: $\beta \in [0,1)$ and $\beta \geq 1$. 

\textbf{I.} In the case $\beta \in [0,1)$, $G \in \mathcal{S}_{\beta,\gamma}=\mathcal{S}( \mathbb{R})$ and
\begin{align*}
\mathcal{B}_{n,\beta} (G) = ( \alpha n^{-\beta} -1) \frac{\Theta(n)}{n} \sum_{   \{x,y\} \in \mcb S } p(y-x) [G( \tfrac{y}{n}) - G( \tfrac{x}{n})]^2.
\end{align*}
Since $|\alpha n^{-\beta}-1| \leq \alpha + 1$, from the symmetry of $p$ it is enough to prove that
\begin{align}
&  \frac{\Theta(n)}{n} \sum_{x=n}^{\infty} \sum_{y=-\infty}^{-1} p(y-x) |G( \tfrac{y}{n}) - G( \tfrac{x}{n})|^2 + \frac{\Theta(n)}{n} \sum_{x=0}^{n-1} \sum_{y=-\infty}^{-n} p(y-x) |G( \tfrac{y}{n}) - G( \tfrac{x}{n})|^2 \label{prop2lem1convmart1}  \\
+ &   \frac{\Theta(n)}{n}\sum_{x=0}^{n-1} \sum_{y=-n+1}^{-1} p(y-x) |G( \tfrac{y}{n}) - G( \tfrac{x}{n})|^2   \label{prop2lem1convmart2}
\end{align}
goes to zero as $n \rightarrow \infty$. Since $\gamma \geq 2 >1$, the first term in \eqref{prop2lem1convmart1} is bounded from above by a constant times
\begin{align*}
 (  \| G \|_{\infty})^{2}  \frac{\Theta(n)}{n^{\gamma}} \frac{1}{n^2} \sum_{x=n}^{\infty} \sum_{y=-\infty}^{-1} \Big( \frac{x-y}{n} \Big)^{-\gamma-1} \lesssim \frac{\Theta(n)}{n^{\gamma}} \int_{1}^{\infty} \int_{-\infty}^{0} (u-v)^{-\gamma-1} dv du \lesssim \frac{\Theta(n)}{n^{\gamma}}.
\end{align*}
An analogous procedure leads to an upper bound of same order to the second term in \eqref{prop2lem1convmart1}. From a Taylor expansion of first order on $G$, the term in \eqref{prop2lem1convmart2} is bounded from above by
\begin{align*}
\frac{\Theta(n)}{n}\sum_{x=0}^{n-1} \sum_{y=-n+1}^{-1}  p(y-x) ( \| G' \|_{\infty})^2 \Big( \frac{x-y}{n} \Big)^2  \lesssim \frac{\Theta(n)}{n^3} \sum_{x=0}^{ n - 1} \sum_{y= 1}^{ n-1} (x+y)^{1-\gamma},
\end{align*}
which goes to zero as $n \rightarrow \infty$, due to \eqref{limaux2}.

\textbf{II.} In the case $\beta\geq 1$ we have
\begin{align*}
\mathcal{B}_{n,\beta} (G) =\alpha \Theta(n) n^{-1-\beta} \sum_{   \{x,y\} \in \mcb S }    p(y-x) [G( \tfrac{y}{n}) - G( \tfrac{x}{n})]^2.
\end{align*}
First we treat the case $(\beta, \gamma) \in \big( [1, \infty) \times \{2 \} \big) \cup \big( (1,\infty) \times (2, \infty) \big)$, where $G \in \mathcal{S}_{\beta,\gamma} = \mathcal{S}_{Neu}(\mathbb{R}^{*})$ and $G'_{-}(0)=G'_{+}(0)=0$. From \eqref{sumslowbonds} $\mathcal{B}_{n,\beta} (G)$ is bounded from above by a constant times
\begin{align*}
 (  \| G \|_{\infty})^{2}  \frac{\Theta(n)}{n^{1+\beta}} \sum_{\{x, y\} \in \mcb S} p(y-x) \lesssim \frac{\Theta(n)}{n^{1+\beta}} m \lesssim \frac{\Theta(n)}{n^{1+\beta}},
\end{align*}
which goes to zero as $n \rightarrow \infty$. It remains to treat the case $(\beta, \gamma) \in \{1\} \times (2,\infty)$, where $G \in \mathcal{S}_{\beta,\gamma} = \mathcal{S}_{Rob}(\mathbb{R}^{*})$. Then $\mathcal{B}_{n,\beta} (G)$ can be rewritten as
\begin{align*}
2 \alpha  \sum_{   x=0 }^{\infty} \sum_{y=-\infty}^{-1}    p(y-x) [G( \tfrac{y}{n}) - G( \tfrac{x}{n})]^2. 
\end{align*}
Above we used the symmetry of $p$. Last display is equal to
\begin{align}
&4 \alpha [G_{-}(0) - G_{+}(0)]  \sum_{   x=0 }^{\infty} \sum_{y=-\infty}^{-1}    p(y-x) \big[ [G_{-}( \tfrac{y}{n}) - G_{-}( 0)] + [G_{+}( 0) - G_{+}( \tfrac{x}{n})] \big] \label{prop2lem1tightmarta} \\
+& 2 \alpha  \sum_{   x=0 }^{\infty} \sum_{y=-\infty}^{-1}    p(y-x) \big[ [G_{-}( \tfrac{y}{n}) - G_{-}( 0)] + [G_{+}( 0) - G_{+}( \tfrac{x}{n})] \big]^2 \label{prop2lem1tightmartb} \\
+ & 2 \alpha [G_{-}(0) - G_{+}(0)]^2  \sum_{   x=0 }^{\infty} \sum_{y=-\infty}^{-1}    p(y-x). \label{prop2lem1tightmartc}
\end{align}
We claim that \eqref{prop2lem1tightmarta} and \eqref{prop2lem1tightmartb} vanish as $n \rightarrow \infty$. First we treat the former. From Taylor expansions on $G_{-}$ and $G_{+}$, \eqref{prop2lem1tightmarta} is bounded from above by a constant times
\begin{align*}
\frac{1}{n} \sum_{   x=0 }^{\infty} \sum_{y=-\infty}^{-1}    p(y-x) \big[ (-y) \|  G'_{-} \|_{\infty}  + x \|  G'_{+}  \|_{\infty}  \big] \lesssim \frac{1}{n} \sum_{\{x,y\} \in \mcb S} |x-y| p(x-y) \lesssim n^{-1},
\end{align*}  
and \eqref{prop2lem1tightmarta} goes to zero as $n \rightarrow \infty$. In last bound we used \eqref{sumslowbonds}. Now we rewrite \eqref{prop2lem1tightmartb} as
\begin{align}
& 2 \alpha  \sum_{   x=n }^{\infty} \sum_{y=-\infty}^{-1}    p(y-x) \big[ [G_{-}( \tfrac{y}{n}) - G_{-}( 0)] + [G_{+}( 0) - G_{+}( \tfrac{x}{n})] \big]^2 \label{prop2lem1tightmartd} \\
+& 2 \alpha  \sum_{   x=0 }^{n-1} \sum_{y=-\infty}^{-n}    p(y-x) \big[ [G_{-}( \tfrac{y}{n}) - G_{-}( 0)] + [G_{+}( 0) - G_{+}( \tfrac{x}{n})] \big]^2 \label{prop2lem1tightmarte} \\
+& 2 \alpha  \sum_{   x=0 }^{n-1} \sum_{y=-n+1}^{-1}    p(y-x) \big[ [G_{-}( \tfrac{y}{n}) - G_{-}( 0)] + [G_{+}( 0) - G_{+}( \tfrac{x}{n})] \big]^2 \label{prop2lem1tightmartf}.
\end{align}
Since $G_{-},G_{+}$ are bounded, \eqref{prop2lem1tightmartd} is bounded from above by a constant times
\begin{align*}
 \sum_{   x=n }^{\infty} \sum_{y=-\infty}^{-1}    p(y-x) \lesssim n^{1-\gamma} \frac{1}{n^2} \sum_{   x=n }^{\infty} \sum_{y=-\infty}^{-1} \Big( \frac{x-y}{n} \Big)^{-\gamma-1} \lesssim n^{1-\gamma} \int_{1}^{\infty} \int_{-\infty}^{0} (u-v)^{-\gamma-1} dv du \lesssim n^{1-\gamma},
\end{align*}
and we conclude that \eqref{prop2lem1tightmartd} goes to zero as $n \rightarrow \infty$. With an analogous reasoning, we conclude that the same holds for \eqref{prop2lem1tightmarte}. In order to treat \eqref{prop2lem1tightmartf}, choose $\delta=1$ when $\gamma>3$ and $\delta=\frac{\gamma}{2}-1$ when $\gamma \in (2,3]$. In particular, $\delta \in (0,1] \cap (0, \frac{\gamma-1}{2})$. Since $G_{-},G_{+} \in \mathcal{S}(\mathbb{R})$, then they are globally Lipschitz and $\delta$- H\"older on the compact $[-1,1]$. Therefore there exists $C_{\delta}$ such that
\begin{align*}
|G_{-}(u) - G_{-}(v)| + |G_{+}(u) - G_{+}(v)| \leq C_{\delta} |u-v|^{\delta}, \forall u,v \in [-1,1].
\end{align*}
Then \eqref{prop2lem1tightmartf} is bounded from above by a constant times
\begin{align*}
n^{-2 \delta} \sum_{   x=0 }^{n-1} \sum_{y=-n+1}^{-1}    p(y-x)  \big[ |y|^{ \delta} + x^{ \delta}  \big]^2 \lesssim n^{-2 \delta} \sum_{   x=0 }^{\infty} \sum_{y=-\infty}^{-1} (x-y)^{2 \delta -1 - \gamma}.
\end{align*}
Since $\delta \in (0, \frac{\gamma-1}{2})$, the double sum in last display is convergent and we conclude that \eqref{prop2lem1tightmartf} goes to zero as $n \rightarrow \infty$. Finally from the symmetry of $p$, \eqref{sumslowbonds} and \eqref{defkdifgamma} we rewrite \eqref{prop2lem1tightmartc} as
\begin{align*}
2 \alpha m [G_{+}(0) - G_{-}(0)]^2  =  \frac{\alpha m}{\hat{\alpha}}  \frac{2}{\hat{\alpha}}  [ \nabla_{\beta,\gamma} G(0)]^2 = \frac{2 \kappa_{\gamma}}{\hat{\alpha}}  [ \nabla_{\beta,\gamma} G(0)]^2,
\end{align*}
ending the proof. Above we used the fact that $\hat{\alpha} [G_{+}(0) - G_{-}(0)] = \nabla_{\beta,\gamma} G(0)$, since $G \in \mathcal{S}_{Rob}(\mathbb{R}^{*})$.
\end{proof}
Now we present a result which was useful in the case $\beta=1$ and $\gamma>2$ in order to prove Proposition \ref{prop1lem1convmart}.
\begin{lem} \label{prop1alem1convmart}
Let $(\beta,\gamma) \in \{1\} \times (2,\infty)$ and $G \in \mathcal{S}_{\beta,\gamma}=\mathcal{S}_{Rob}(\mathbb{R}^{*})$. It holds
\begin{align*}
\lim_{n \rightarrow \infty} \Big\{  G_{+}'(0)  \sum_{x=0}^{\infty} G( \tfrac{x}{n}) \sum_{r=x+1}^{\infty} r  p(r) +  G_{-}'(0)  \sum_{x=-\infty}^{-1} G( \tfrac{x}{n}) \sum_{r=-\infty}^{x} r  p(r) \Big\} =\frac{ \kappa_{\gamma}}{\hat{\alpha}}  [ \nabla_{\beta,\gamma} G(0)]^2.
\end{align*}
\end{lem}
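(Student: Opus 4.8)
The plan is to treat the two sums separately, evaluating each limit by dominated convergence, and then to combine the results using the boundary relations that define $\mathcal{S}_{Rob}(\mathbb{R}^{*})$.

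Write $a_x := \sum_{r=x+1}^{\infty} r\,p(r)$ for $x\ge 0$; these are nonnegative, and since $\gamma>2$ an interchange of summation gives $\sum_{x=0}^{\infty} a_x = \sum_{r=1}^{\infty} r^2 p(r) = \kappa_{\gamma} < \infty$ (the identity $\sum_{r\ge 1} r^2 p(r)=\kappa_\gamma$ being the $\gamma>2$ case of \eqref{convkappagamma}, where $\Theta(n)=n^2$). For each fixed $x\ge 0$ one has $G(\tfrac{x}{n})\to G_{+}(0)$ as $n\to\infty$ (using the convention $G(0)=G_{+}(0)$ and the continuity of $G_{+}$), while $|G(\tfrac{x}{n})\,a_x|\le \|G\|_{\infty}\,a_x$ with $\sum_x a_x<\infty$; hence dominated convergence yields $\sum_{x=0}^{\infty} G(\tfrac{x}{n})\sum_{r=x+1}^{\infty} r\,p(r) \longrightarrow G_{+}(0)\,\kappa_{\gamma}$. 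For the second sum, the symmetry of $p$ gives, for $x\le -1$, $\sum_{r=-\infty}^{x} r\,p(r) = -\sum_{s\ge |x|} s\,p(s) = -\,a_{|x|-1}$, so after the change of variable $x=-(k+1)$ we obtain $\sum_{x=-\infty}^{-1} G(\tfrac{x}{n})\sum_{r=-\infty}^{x} r\,p(r) = -\sum_{k=0}^{\infty} G\big(-\tfrac{k+1}{n}\big) a_k$, and the same dominated convergence argument (now with $G(-\tfrac{k+1}{n})\to G_{-}(0)$) gives the limit $-\,G_{-}(0)\,\kappa_{\gamma}$.

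Combining, the bracketed expression in the statement converges to $\kappa_{\gamma}\big[G_{+}'(0)G_{+}(0) - G_{-}'(0)G_{-}(0)\big]$. Now use that $G\in\mathcal{S}_{Rob}(\mathbb{R}^{*})$ at level $k=0$: $G_{-}'(0)=G_{+}'(0)=\hat{\alpha}\,[G_{+}(0)-G_{-}(0)]$. Substituting first $G_{-}'(0)=G_{+}'(0)$ and then $G_{+}(0)-G_{-}(0)=G_{+}'(0)/\hat{\alpha}$, the limit becomes $\kappa_{\gamma}\,G_{+}'(0)\,[G_{+}(0)-G_{-}(0)] = \tfrac{\kappa_{\gamma}}{\hat{\alpha}}\,[G_{+}'(0)]^{2} = \tfrac{\kappa_{\gamma}}{\hat{\alpha}}\,[\nabla_{\beta,\gamma}G(0)]^{2}$, since $\nabla_{\beta,\gamma}G(0)=G_{+}^{(1)}(0)$. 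This is exactly the asserted identity.

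The argument is elementary; the points demanding care are purely bookkeeping: verifying the absolute summability $\sum_x a_x<\infty$ that licenses dominated convergence — this is precisely where $\gamma>2$ is used, since for $\gamma=2$ the series $\sum_x a_x$ diverges — correctly tracking the identification of $\sum_{r\ge 1}r^2 p(r)$ with $\kappa_\gamma$ (consistently with $\hat{\alpha}=\alpha m/\kappa_\gamma$), and applying the $\mathcal{S}_{Rob}$ relations in the right order. There is no genuine analytic obstacle.
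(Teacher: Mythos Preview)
Your proof is correct and follows essentially the same route as the paper: both arguments use the absolute summability $\sum_{x\ge 0} a_x=\sum_{r\ge1}r^2p(r)=\sigma^2/2=\kappa_\gamma<\infty$ (valid precisely because $\gamma>2$) together with the pointwise convergence $G(\tfrac{x}{n})\to G_\pm(0)$, and then apply the Robin relations to rewrite $\kappa_\gamma[G_{+}'(0)G_{+}(0)-G_{-}'(0)G_{-}(0)]$ as $\tfrac{\kappa_\gamma}{\hat\alpha}[\nabla_{\beta,\gamma}G(0)]^2$. The only difference is packaging: the paper splits each sum into a far piece $|x|\ge\varepsilon n$ (shown to vanish as a tail of a convergent series) and a near piece $|x|<\varepsilon n$ (where $G(\tfrac{x}{n})$ is replaced by $G_\pm(0)$ up to a uniformly small error), whereas you invoke dominated convergence directly, which is cleaner and avoids the auxiliary $\varepsilon$.
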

\begin{proof}
We begin by claiming that for every $\varepsilon >0$, it holds
\begin{align}
& \lim_{n \rightarrow \infty} \Big|  G_{+}'(0)  \sum_{x=\varepsilon n}^{\infty} G( \tfrac{x}{n}) \sum_{r=x+1}^{\infty} r  p(r) \Big| =0, \label{eqcrit1a} \\
& \lim_{n \rightarrow \infty} \Big| G_{-}'(0)  \sum_{x=-\infty}^{-\varepsilon n} G( \tfrac{x}{n}) \sum_{r=-\infty}^{x} r  p(r) \Big| =0. \label{eqcrit1b}
\end{align}
We prove only \eqref{eqcrit1a} but we observe that the proof of \eqref{eqcrit1b} is analogous. We bound the expression inside the limit in \eqref{eqcrit1a} from above by
\begin{align*}
\|G_{+}' \|_{\infty} \|G \|_{\infty}  \sum_{x=\varepsilon n}^{\infty}  \sum_{r=x+1}^{\infty} r  p(r) \lesssim  \sum_{x=\varepsilon n}^{\infty}  \sum_{r=x+1}^{\infty} r  p(r),
\end{align*}
Then \eqref{eqcrit1a} follows from the fact that last sum is the tail of a convergent series, since $\gamma>2$ and
\begin{align} \label{asterisco}
 \sum_{x=0}^{\infty} \sum_{r=x+1}^{\infty} r  p(r) = \sum_{r=1}^{\infty} r^2 p(r) = \frac{\sigma^2}{2} < \infty.
\end{align}
Now we claim that 
\begin{align}
& \limsup_{\varepsilon \rightarrow 0^{+}} \limsup_{n \rightarrow \infty} \Big|  G_{+}'(0)  \sum_{x=0}^{\varepsilon n -1} [G_{+}( 0) - G_{+}( \tfrac{x}{n})]  \sum_{r=x+1}^{\infty} r  p(r) \Big| =0, \label{eqcrit1c} \\
& \limsup_{\varepsilon \rightarrow 0^{+}} \limsup_{n \rightarrow \infty} \Big|  G_{-}'(0)  \sum_{x=- \varepsilon n+1}^{-1} [G_{-}( 0) - G_{-}( \tfrac{x}{n})] \sum_{r=-\infty}^{x} r  p(r) \Big| =0. \label{eqcrit1d}
\end{align}
We prove only \eqref{eqcrit1c} but the proof of \eqref{eqcrit1d} is analogous. We bound the expression in \eqref{eqcrit1c} from above by
\begin{align*}
 & \limsup_{\varepsilon \rightarrow 0^{+}} \limsup_{n \rightarrow \infty}  \|G_{+}^{'} \|_{\infty}  \sum_{x=0}^{\varepsilon n -1}   \sup_{u \in [0,\varepsilon]}|G_{+}( 0) - G_{+}( u)|  \sum_{r=x+1}^{\infty} r  p(r) \\
 \lesssim & \limsup_{\varepsilon \rightarrow 0^{+}}  \sup_{u \in [0,\varepsilon]}|G_{+}( 0) - G_{+}( u)| \limsup_{n \rightarrow \infty}   \sum_{x=0}^{\infty}    \sum_{r=x+1}^{\infty} r  p(r) \\
  =&\frac{\sigma^2}{2} \limsup_{\varepsilon \rightarrow 0^{+}}  \sup_{u \in [0,\varepsilon]}|G_{+}( 0) - G_{+}( u)|     =0.
\end{align*}
In first equality (resp. last equality) we used \eqref{asterisco} (resp. used the continuity of $G_{+}$). From \eqref{eqcrit1c} and \eqref{eqcrit1d}, we get
\begin{align*}
& \limsup_{\varepsilon \rightarrow 0^{+}} \limsup_{n \rightarrow \infty} \Big\{  G_{+}'(0)  \sum_{x=0}^{\varepsilon n -1} G( \tfrac{x}{n}) \sum_{r=x+1}^{\infty} r  p(r) + G_{-}'(0)  \sum_{x=- \varepsilon n+1}^{-1} G( \tfrac{x}{n}) \sum_{r=-\infty}^{x} r  p(r) \Big\} \\
=& \limsup_{\varepsilon \rightarrow 0^{+}} \limsup_{n \rightarrow \infty} \Big\{  G_{+}'(0)  \sum_{x=0}^{\varepsilon n -1} G_{+}( 0) \sum_{r=x+1}^{\infty} r  p(r) + G_{-}'(0)  \sum_{x=- \varepsilon n+1}^{-1} G_{-}( 0) \sum_{r=-\infty}^{x} r  p(r) \Big\} \\
=&  G_{+}'(0) G_{+}( 0) \frac{\sigma^2}{2} -   G_{-}'(0) G_{-}( 0) \frac{\sigma^2}{2} = \kappa_{\gamma} [ G_{+}'(0) G_{+}( 0) - G_{-}'(0) G_{-}( 0) ] =\frac{ \kappa_{\gamma}}{\hat{\alpha}}  [ \nabla_{\beta,\gamma} G(0)]^2.
\end{align*}
In last line we used \eqref{defkdifgamma} and the fact that $G \in \mathcal{S}_{Rob}(\mathbb{R}^{*})$. Combining this with \eqref{eqcrit1a} and \eqref{eqcrit1b}, we have the desired result.
\end{proof}
Next result was useful to prove Lemma \ref{lemconvmartterm1}. Recall the definition of $\mathcal{A}_{n,\beta} (G)$ from \eqref{op_Anb}.
\begin{prop} \label{prop1lem1convmart}
Let $(\beta,\gamma) \in R_0$. For every $G \in \mathcal{S}_{\beta,\gamma}$, it holds
\begin{align*} 
\lim_{n \rightarrow \infty} \mathcal{A}_{n,\beta} (G)  = 2 \kappa_{\gamma} \int_{\mathbb{R}} [\nabla_{\beta,\gamma} G(u)]^2 du + 2  \frac{ \kappa_{\gamma}}{\hat{\alpha}}  \mathbbm{1}_{\{ \gamma > 2 \} } \mathbbm{1}_{\{ \beta = 1 \} }  [ \nabla_{\beta,\gamma} G(0)]^2 = \| \nabla_{\beta,\gamma} G \|^2_{\infty} .
\end{align*} 
\end{prop}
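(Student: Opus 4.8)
The plan is to treat the regimes $\beta\in[0,1)$ and $\beta\ge1$ separately. After the change of variable $y=x+r$, $\mathcal A_{n,\beta}(G)$ is in both cases a weighted sum of squared increments $[G(\tfrac{x+r}{n})-G(\tfrac xn)]^2$, with $x$ ranging over $\mathbb Z$ when $\beta\in[0,1)$ and over each of the half-lines $\{x\ge0\}$, $\{x\le-1\}$ separately when $\beta\ge1$ (since $\mcb{F}$ consists of the same-side bonds), $G$ being read as $G_+$, respectively $G_-$, on the corresponding side. First I would localise the jump length: splitting the $r$-sum into $|r|<\varepsilon n$ and $|r|\ge\varepsilon n$ and bounding the far part by $[G(\tfrac{x+r}{n})-G(\tfrac xn)]^2\le(|G(\tfrac{x+r}{n})|+|G(\tfrac xn)|)^2$, $\sum_x(\cdots)^2\lesssim n\|G_\pm\|_2^2$ and $\sum_{|r|\ge\varepsilon n}p(r)\lesssim(\varepsilon n)^{-\gamma}$, one gets a contribution $\lesssim\tfrac{\Theta(n)}{n^{\gamma}}\varepsilon^{-\gamma}$, which vanishes upon letting $n\to\infty$ and then $\varepsilon\to0^{+}$, since $\Theta(n)/n^{\gamma}\to0$ for every $\gamma\ge2$ by \eqref{timescale}. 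Hence only the window $|r|<\varepsilon n$ contributes.

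\noindent\textbf{The bulk term.} On $|r|<\varepsilon n$ a second-order Taylor expansion of $G_\pm$ gives $G(\tfrac{x+r}{n})-G(\tfrac xn)=\tfrac rn(\nabla_{\beta,\gamma}G)(\tfrac xn)+\tfrac{r^2}{2n^2}G''(\chi^n_{r,x})$ with $\chi^n_{r,x}$ within distance $\varepsilon$ of $\tfrac xn$. Squaring, the leading term produces
\[
\Big(\tfrac{\Theta(n)}{n^{2}}\!\!\sum_{|r|<\varepsilon n}\!\! r^{2}p(r)\Big)\Big(\tfrac1n\sum_x[(\nabla_{\beta,\gamma}G)(\tfrac xn)]^{2}\Big)\ \longrightarrow\ 2\kappa_\gamma\int_{\mathbb R}[(\nabla_{\beta,\gamma}G)(u)]^{2}\,du ,
\]
using \eqref{convkappagamma} (note $\sum_{|r|<\varepsilon n}r^2p(r)=2\sum_{1\le r<\varepsilon n}r^2p(r)$) for the first factor and a Riemann sum for the second; when $\beta\ge1$ the integral is $\int_{-\infty}^{0}[G_-']^2+\int_{0}^{\infty}[G_+']^2$. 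The remaining cross and quadratic terms are dominated, after replacing $|G_\pm''|$ by its $\varepsilon$-modulus-of-continuity majorant $F$, by a constant times $\big(\tfrac1n\sum_x F(\tfrac xn)(1+(\tfrac xn)^2)\big)\tfrac{\Theta(n)}{n^{3}}\sum_{|r|<\varepsilon n}|r|^{3}p(r)$; since $F(1+u^2)\in L^1(\mathbb R)$ and $\tfrac{\Theta(n)}{n^3}\sum_{|r|<\varepsilon n}|r|^3p(r)\to0$ for all $\gamma\ge2$ (splitting on $\gamma>3$, $\gamma=3$, $2<\gamma<3$, $\gamma=2$, as in the proof of \eqref{convkappagamma}), they vanish in the iterated limit. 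This settles the case $\beta\in[0,1)$, where $\nabla_{\beta,\gamma}G=G'$ and there is no boundary term.

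\noindent\textbf{The interface term for $\beta\ge1$.} Here the localised window keeps $x\pm r$ on the correct side automatically only for $|x|\ge\varepsilon n$, so the region $|x|<\varepsilon n$ must be isolated. If $G\in\mathcal{S}_{Neu}(\mathbb{R}^{*})$ (i.e.\ $\beta>1$, or $\gamma=2$), then $G_-'(0)=G_+'(0)=0$, hence $G_\pm(\tfrac{x+r}{n})-G_\pm(\tfrac xn)=O(\varepsilon|r|/n)$ there and the contribution is $\lesssim\varepsilon^{2}\tfrac{\Theta(n)}{n^{2}}\sum_{|r|<\varepsilon n}r^{2}p(r)$, which vanishes, giving $\mathcal A_{n,\beta}(G)\to2\kappa_\gamma\int_{\mathbb R}[\nabla_{\beta,\gamma}G]^2$. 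If $\beta=1$ and $\gamma>2$, then $G\in\mathcal{S}_{Rob}(\mathbb{R}^{*})$ and $G_\pm'(0)$ need not vanish; writing $G_\pm(\tfrac zn)=G_\pm(0)+\tfrac zn G_\pm'(0)+R_\pm(\tfrac zn)$ with $R_\pm(v)=O(v^{2})$ and using the symmetry of $p(\cdot)$ to discard the even-in-$r$ pieces, the one-sided range of $r$ near the origin leaves precisely
\[
G_+'(0)\sum_{x=0}^{\infty}G(\tfrac xn)\!\!\sum_{r=x+1}^{\infty}\!\! r\,p(r)\ +\ G_-'(0)\sum_{x=-\infty}^{-1}G(\tfrac xn)\!\!\sum_{r=-\infty}^{x}\!\! r\,p(r),
\]
up to errors controlled as above together with \eqref{limaux2}; Lemma \ref{prop1alem1convmart} identifies its limit as $\tfrac{\kappa_\gamma}{\hat\alpha}[\nabla_{\beta,\gamma}G(0)]^{2}$, and collecting this with the mirror half and the bulk term yields $2\kappa_\gamma\int_{\mathbb R}[\nabla_{\beta,\gamma}G]^2+2\tfrac{\kappa_\gamma}{\hat\alpha}[\nabla_{\beta,\gamma}G(0)]^2=\|\nabla_{\beta,\gamma}G\|_{2,\beta,\gamma}^{2}$.

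\noindent\textbf{Main obstacle.} The delicate point is this interface analysis in the Robin regime: one must verify that, among all pieces produced by expanding $[G_\pm(\tfrac{x+r}{n})-G_\pm(\tfrac xn)]^{2}$ near $x=0$ with a truncated one-sided $r$-range, exactly the term reducing to the expression of Lemma \ref{prop1alem1convmart} persists in the limit while the square of the linear part, the terms involving $R_\pm$, the far-$r$ tail, and the portion with $|r|$ comparable to $|x|$ all vanish; this requires controlling the tails of $\sum r^{k}p(r)$ uniformly in $x$, case distinctions on $\gamma$, and the correct order of limits ($n\to\infty$ before $\varepsilon\to0^{+}$) — all of which reduce to the elementary asymptotics collected in Proposition \ref{aux1}.
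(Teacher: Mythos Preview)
Your approach is genuinely different from the paper's, and the difference is worth understanding. The paper does \emph{not} Taylor-expand the squared increments directly. Instead it uses the elementary polarisation identity
\[
\frac{\Theta(n)}{n}\sum_{\{x,y\}\in\mcb F} p(y-x)\big[G(\tfrac yn)-G(\tfrac xn)\big]^2
\;=\; -\frac{2}{n}\sum_x G(\tfrac xn)\,\Theta(n)\!\!\sum_{y:\{x,y\}\in\mcb F}\!\! p(y-x)\big[G(\tfrac yn)-G(\tfrac xn)\big],
\]
recognises the inner sum as $\Theta(n)\mcb{K}_{n,\beta}G(\tfrac xn)$ plus the boundary correction $\tfrac{G'_\pm(0)}{n}\sum_{r}rp(r)$ over the truncated range, invokes Proposition~\ref{convknbeta} (already proved) to pass to $\kappa_\gamma\Delta_{\beta,\gamma}G$, and then performs a single continuous integration by parts; the boundary correction is exactly the expression of Lemma~\ref{prop1alem1convmart}. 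This route recycles a heavy result and avoids any direct interface analysis of squared increments.

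Your bulk and Neumann analyses are sound and would go through. The weak point is the Robin interface step. You assert that expanding $[G_+(\tfrac{x+r}{n})-G_+(\tfrac xn)]^2$ near the origin with the one-sided range $r\ge -x$ ``leaves precisely'' the quantity
\(
G_+'(0)\sum_{x\ge0}G(\tfrac xn)\sum_{r>x}rp(r)+\text{(mirror)}
\)
of Lemma~\ref{prop1alem1convmart}. But this expression carries the factor $G(\tfrac xn)$ and the weight $\sum_{r>x}rp(r)$, neither of which arises naturally from \emph{squaring} a Taylor increment: squaring the leading term $\tfrac{r}{n}G_+'(\tfrac xn)$ and accounting for the missing range $r<-x$ produces instead a correction of the form $\tfrac{\Theta(n)}{n^3}\sum_{x\ge0}[G_+'(\tfrac xn)]^2\sum_{r>x}r^2p(r)$, which for $\gamma>2$ is $O(n^{-1}\sum_{r}r^3p(r))\to 0$ after the $\varepsilon$-truncation. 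The expression in Lemma~\ref{prop1alem1convmart} is a \emph{bilinear} object in $(G,G')$ and appears only after the summation-by-parts step that you have bypassed; it does not drop out of the quadratic expansion you wrote. So either you must carry out a much finer interface computation than sketched (keeping track of how the Robin condition $G'_+(0)=\hat\alpha[G_+(0)-G_-(0)]$ interacts with \emph{all} cross terms, not just the leading one), or — much cleaner — adopt the paper's polarisation identity, which delivers Lemma~\ref{prop1alem1convmart} and the integration-by-parts boundary term in one stroke.
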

\begin{proof}
First we treat the case $\beta \in [0,1)$. From the symmetry of $p$ and performing a change of variables, we can rewrite $\mathcal{A}_{n,\beta} (G)$ as
\begin{align}
  & \frac{\Theta(n)}{n}   \sum_{x,y} p(y-x) [G( \tfrac{x}{n})]^2 + \frac{\Theta(n)}{n}   \sum_{x,y } p(x-y) [ G( \tfrac{x}{n})]^2 - 2 \frac{\Theta(n)}{n}   \sum_{x,y } p(y-x) G( \tfrac{x}{n})  G( \tfrac{y}{n}) \nonumber  \\
 =&   - \frac{2}{n}  \sum_{x } G( \tfrac{x}{n}) \Big[ \Theta(n) \sum_{y }  p(y-x)   [G( \tfrac{y}{n}) - G( \tfrac{x}{n})] \Big] \nonumber \\
 =& - \frac{2}{n} \sum_{x } G( \tfrac{x}{n}) \Big[\Theta(n)  ( \mcb{K}_{n, \beta} G) ( \tfrac{x}{n}) - \kappa_{\gamma} \Delta_{\beta,\gamma} G( \tfrac{x}{n}) \Big] - \frac{2 \kappa_{\gamma}}{n} \sum_{x } G( \tfrac{x}{n}) \Delta_{\beta,\gamma} G( \tfrac{x}{n}). \label{eqprop1lem1convmarta}
\end{align}
In an analogous way, for $\beta \geq 1$, performing a change of variables we can rewrite $\mathcal{A}_{n,\beta} (G)$ as
\begin{align}
 &   - \frac{2}{n}  \sum_{x } G( \tfrac{x}{n}) \Big[ \Theta(n) \sum_{y: \{x,y\} \in \mcb F }  p(y-x)   [G( \tfrac{y}{n}) - G( \tfrac{x}{n})] \Big] \nonumber \\
 =& - \frac{2}{n} \sum_{x } G( \tfrac{x}{n}) \Big[\Theta(n)  ( \mcb{K}_{n, \beta} G) ( \tfrac{x}{n}) - \kappa_{\gamma} \Delta_{\beta,\gamma} G( \tfrac{x}{n}) \Big] - \frac{2 \kappa_{\gamma}}{n} \sum_{x } G( \tfrac{x}{n}) \Delta_{\beta,\gamma} G( \tfrac{x}{n}) \label{eqprop1lem1convmartb} \\
 - & 2 \Big\{ G_{+}'(0)  \sum_{x=0}^{\infty} G( \tfrac{x}{n}) \sum_{r=x+1}^{\infty} r  p(r) +  G_{-}'(0)  \sum_{x=-\infty}^{-1} G( \tfrac{x}{n}) \sum_{r=-\infty}^{x} r  p(r) \Big\}  \label{eqprop1lem1convmartc}.
\end{align}
For $\beta \geq 1$, we claim that \eqref{eqprop1lem1convmartc} converges, as $n \rightarrow \infty$, to
\begin{align} \label{eqprop1lem1convmartc2}
 - 2  \frac{ \kappa_{\gamma}}{\hat{\alpha}}  \mathbbm{1}_{\{ \gamma > 2 \} } \mathbbm{1}_{\{ \beta = 1 \} }  [ \nabla_{\beta,\gamma} G(0)]^2.
\end{align}
Indeed, if $(\beta,\gamma) \in \{1\} \times (2,\infty)$, this convergence is a direct consequence of Lemma \ref{prop1alem1convmart}. Otherwise, we get $G \in  \mathcal{S}_{Neu}(\mathbb{R}^{*})$, then $G'_{-}(0)=G'_{+}(0)=0$ and \eqref{eqprop1lem1convmartc} is equal to zero. 

Next, we bound the absolute value of the first term in both \eqref{eqprop1lem1convmarta} and \eqref{eqprop1lem1convmartb} from above by
\begin{align*}
   \frac{2}{n} \sum_{x } |G( \tfrac{x}{n})|  \big| \Theta(n) ( \mathcal{K}_{n, \beta} G) ( \tfrac{x}{n}) - \kappa_{\gamma} \Delta_{\beta,\gamma} G( \tfrac{x}{n})  \big|  \leq     \frac{2\|G\|_{\infty}}{n}  \sum_{x }   \big| \Theta(n) ( \mathcal{K}_{n, \beta} G) ( \tfrac{x}{n}) - \kappa_{\gamma}  \Delta_{\beta,\gamma} G( \tfrac{x}{n})  \big|,
\end{align*}
which goes to zero as $n \rightarrow \infty$, from Proposition \ref{convknbeta}. For $\beta <1$, it remains to treat the second term in \eqref{eqprop1lem1convmarta}; we do so by observing that $G \in \mathcal{S}_{\beta,\gamma}=\mathcal{S}(\mathbb{R})$ and performing an integration by parts:
\begin{align*}
& \lim_{n \rightarrow \infty} \Big[ -  \frac{2 \kappa_{\gamma}}{n} \sum_{x } G( \tfrac{x}{n}) \Delta_{\beta,\gamma} G( \tfrac{x}{n}) \Big] = - 2 \kappa_{\gamma} \int_{\mathbb{R}} G(u) \Delta_{\beta,\gamma} G(u) du  =  2 \kappa_{\gamma} \int_{\mathbb{R}} [\nabla_{\beta,\gamma} G(u)]^2 du.
\end{align*}
Finally, for $\beta \geq 1$, we treat the second term in \eqref{eqprop1lem1convmartb} by performing two integrations by parts:
\begin{align*}
& \lim_{n \rightarrow \infty} \Big[ -  \frac{2 \kappa_{\gamma}}{n} \sum_{x } G( \tfrac{x}{n}) \Delta_{\beta,\gamma} G( \tfrac{x}{n}) \Big] = - 2 \kappa_{\gamma} \Big[ \int_{-\infty}^{0} G_{-}(u) \Delta_{\beta,\gamma} G_{-}(u) du  + \int_{0}^{\infty} G_{+}(u) \Delta_{\beta,\gamma} G_{+}(u) du \Big] \\
=&  2 \kappa_{\gamma} \Big[ G_{+}(0) G_{+}'(0) - G_{-}(0) G_{-}'(0) +   \int_{-\infty}^{0} [G_{-}'(u)]^2  du  + \int_{0}^{\infty} [G_{+}'(u)]^2 du \Big]  \\
=&  2 \kappa_{\gamma}   \int_{\mathbb{R}} [\nabla_{\beta,\gamma} G(u)]^2 du + 2 \kappa_{\gamma} [G_{+}(0) G_{+}'(0) - G_{-}(0) G_{-}'(0)]    \\
=&  2 \kappa_{\gamma}   \int_{\mathbb{R}} [\nabla_{\beta,\gamma} G(u)]^2 du + 2  \frac{ \kappa_{\gamma}}{\hat{\alpha}}  \mathbbm{1}_{\{ \gamma > 2 \} } \mathbbm{1}_{\{ \beta = 1 \} }  [ \nabla_{\beta,\gamma} G(0)]^2.
\end{align*}
Indeed, if $\gamma >2$ and $\beta=1$ $G \in  \mathcal{S}_{Rob}(\mathbb{R}^{*})$ and the second term in the third line of last display is equal to $2 \kappa_{\gamma} [\nabla_{\beta,\gamma} G(0)]^2 / \hat{\alpha}$. Otherwise, we get $G \in  \mathcal{S}_{Neu}(\mathbb{R}^{*})$, then $G'_{-}(0)=G'_{+}(0)=0$ and the second term in the third line is equal to zero. By combining the expression in last line with \eqref{eqprop1lem1convmartc2}, the proof ends.
\end{proof}
We end this section with a result that was useful to prove Lemma \ref{lemconvmartterm2}.
\begin{prop} \label{proplemconvmart2}
For every $G \in \mathcal{S}(\mathbb{R})$, we have
\begin{align}  \label{eqproplemconvmart2a}
\lim_{n \rightarrow \infty}   \frac{[ \Theta(n) ]^2}{n^2} \Big\{ \sum_{ x,y } [ G( \tfrac{y}{n} ) - G( \tfrac{x}{n} )]^4 p^2(y-x) +  \sum_{ x } \Big[ \sum_{y  } p(y-x) [ G( \tfrac{y}{n} ) - G( \tfrac{x}{n} )]^2 \Big]^2 \Big\} =0.
\end{align}
\end{prop}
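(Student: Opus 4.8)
The plan is to bound the two sums in \eqref{eqproplemconvmart2a} separately, exploiting that $G$ and all its derivatives decay faster than any polynomial, together with the elementary facts $\sum_x [G(\tfrac xn)]^2\lesssim n$, $\sum_x[G(\tfrac xn)]^4\lesssim n$, $\sum_{|r|\ge n}p(r)\lesssim n^{-\gamma}$, $\sum_{|r|\ge n}p^2(r)\lesssim n^{-2\gamma-1}$, and $\frac1{n^2}\sum_{1\le|r|<n}r^2p(r)\lesssim\Theta(n)^{-1}$ (this last one being a consequence of \eqref{convkappagamma} with $C=1$). We will also use repeatedly that $[\Theta(n)]^2 n^{-2}\lesssim n^2$. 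The one technical ingredient I would isolate first is the \emph{envelope estimate}
\[
\sup_{1\le|r|<n}\ \sum_{x\in\mathbb{Z}}\ \sup_{t\in[0,1]}\big[G'\big(\tfrac{x+tr}{n}\big)\big]^{4}\ \lesssim\ n,
\]
proved by splitting the range of $x$: when $|x|\ge 2|r|$ one has $|x+tr|\ge|x|/2$, so $[G'(\tfrac{x+tr}{n})]^4\le\Psi(\tfrac{|x|}{2n})$ with $\Psi(s):=\sup_{|z|\ge s}[G'(z)]^4$ nonincreasing and in $L^1([0,\infty))$ by the Schwartz decay of $G'$, whence $\sum_x\Psi(\tfrac{|x|}{2n})\lesssim n$ by comparison with $\int_0^\infty\Psi$; when $|x|<2|r|<2n$ there are at most $4n$ terms, each bounded by $\|G'\|_\infty^4$. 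The analogous statement with $[G']^{4}$ replaced by $[G']^{2}$ holds by the same argument.

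\textbf{First sum.} Writing $S_1:=\sum_{x,y}[G(\tfrac yn)-G(\tfrac xn)]^4p^2(y-x)$ and setting $r=y-x$, I would split into $|r|\ge n$ and $1\le|r|<n$. For $|r|\ge n$, bound $[G(\tfrac{x+r}{n})-G(\tfrac xn)]^4\le 8([G(\tfrac{x+r}{n})]^4+[G(\tfrac xn)]^4)$, sum over $x$ to get $\lesssim n$, and then $\sum_{|r|\ge n}p^2(r)\lesssim n^{-2\gamma-1}$ gives a contribution $\lesssim n^{-2\gamma}$, so that $[\Theta(n)]^2n^{-2}\cdot n^{-2\gamma}\to0$ for every $\gamma\ge 2$. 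For $1\le|r|<n$, the mean value theorem gives $[G(\tfrac{x+r}{n})-G(\tfrac xn)]^4\le(\tfrac{|r|}{n})^4\sup_{t\in[0,1]}[G'(\tfrac{x+tr}{n})]^4$; applying the envelope estimate and $r^4p^2(r)=c_\gamma^2r^{2-2\gamma}$ yields a contribution $\lesssim n^{-4}\cdot n\cdot\sum_{1\le r<n}r^{2-2\gamma}\lesssim n^{-3}$, using $\gamma\ge2>3/2$ so that the series converges; hence $[\Theta(n)]^2n^{-2}\cdot n^{-3}\to0$. Thus $[\Theta(n)]^2n^{-2}S_1\to0$.

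\textbf{Second sum.} Put $a_x:=\sum_r p(r)[G(\tfrac{x+r}{n})-G(\tfrac xn)]^2$, so the second sum is $S_2:=\sum_x a_x^2\le(\sup_x a_x)\sum_x a_x$. Splitting the sum defining $a_x$, for $|r|<n$ one uses $[G(\tfrac{x+r}{n})-G(\tfrac xn)]^2\le\|G'\|_\infty^2 r^2/n^2$ and $\frac1{n^2}\sum_{|r|<n}r^2p(r)\lesssim\Theta(n)^{-1}$, while for $|r|\ge n$ one uses $[\,\cdot\,]^2\le(2\|G\|_\infty)^2$ and $\sum_{|r|\ge n}p(r)\lesssim n^{-\gamma}\le\Theta(n)^{-1}$; this gives $\sup_x a_x\lesssim\Theta(n)^{-1}$. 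For $\sum_x a_x$ I would note that $\sum_x a_x=\tfrac{n}{\Theta(n)}\,\mathcal{A}_{n,0}(G)$ in the notation of \eqref{op_Anb}, which is $\lesssim n\,\Theta(n)^{-1}$ by Proposition \ref{prop1lem1convmart} (alternatively, repeat the $|r|<n$/$|r|\ge n$ split, now with the $[G']^2$ version of the envelope estimate for the inner sum over $x$). Therefore $S_2\lesssim n\,[\Theta(n)]^{-2}$ and $[\Theta(n)]^2n^{-2}\cdot n\,[\Theta(n)]^{-2}=n^{-1}\to0$. Combining the two parts establishes \eqref{eqproplemconvmart2a}.

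The main obstacle is precisely the envelope estimate $\sum_x\sup_{t\in[0,1]}[G'(\tfrac{x+tr}{n})]^{4}\lesssim n$ uniformly in $1\le|r|<n$: it is where the rapid decay of $G$ is genuinely used, and care is needed so that the bound does not degrade as $|r|$ approaches $n$. Once that is in hand the remaining work is bookkeeping of powers of $n$ and routine Riemann-sum comparisons.
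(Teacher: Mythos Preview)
Your argument is correct and somewhat more economical than the paper's. The paper handles both sums simultaneously via a three-region split in $(|x|,|r|)$: (i) $|x|\le 4n$, $|r|\ge n$; (ii) $|x|\ge 4n$, $|r|\ge n$; (iii) all $x$, $|r|\le n$. In region (iii) it introduces $F_3(u):=\sup_{|v-u|\le 1}|G'(v)|\in L^4(\mathbb R)$ and bounds both the quartic term and the squared-sum term with it; regions (i)--(ii) are handled by direct tail estimates, the far-field case (ii) requiring an extra decomposition in $v$ that makes the argument rather lengthy.

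You instead treat $S_1$ and $S_2$ separately. For $S_1$ you split only by $|r|$; your envelope estimate $\sum_x\sup_{t\in[0,1]}[G'(\tfrac{x+tr}{n})]^4\lesssim n$ uniformly in $|r|<n$ plays exactly the role of the paper's $F_3$, but you avoid the separate far-field analysis in $x$ by shifting the sum over $x$ (so $\sum_x[G(\tfrac{x+r}{n})]^4=\sum_x[G(\tfrac xn)]^4$), which is cleaner. Your bound on $S_2$ via $\sum_x a_x^2\le(\sup_x a_x)\sum_x a_x$, with $\sup_x a_x\lesssim\Theta(n)^{-1}$ and $\sum_x a_x=\tfrac{n}{\Theta(n)}\mathcal A_{n,0}(G)\lesssim n\Theta(n)^{-1}$ from Proposition~\ref{prop1lem1convmart}, is a genuine shortcut: the paper re-estimates this quantity from scratch rather than recycling the already-proved convergence of $\mathcal A_{n,\beta}(G)$. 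Both routes yield the same final rates; yours is shorter.
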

\begin{proof}
Since $(x_1+x_2)^2 \leq 2(x_1^2+x_2^2)$, \eqref{eqproplemconvmart2a} is a consequence of the next three results:
\begin{align}
 &\lim_{n \rightarrow \infty}  \frac{[ \Theta(n) ]^2}{n^2} \sum_{|x|=1 }^{4n} \Big\{ \sum_{|r|=n}^{\infty}  p^2(r)   [G( \tfrac{x+r}{n}) - G( \tfrac{x}{n})]^4  + \Big[ \sum_{|r|=n}^{\infty} p(r) [ G( \tfrac{x+r}{n} ) - G( \tfrac{x}{n} ) ]^2 \Big]^2 \Big \} =0, \label{eqproplemconvmart2a1} \\
  &\lim_{n \rightarrow \infty}  \frac{[ \Theta(n) ]^2}{n^2} \sum_{|x|=4n}^{\infty} \Big\{ \sum_{|r|=n}^{\infty}  p^2(r)   [G( \tfrac{x+r}{n}) - G( \tfrac{x}{n})]^4  + \Big[ \sum_{|r|=n}^{\infty} p(r) [ G( \tfrac{x+r}{n} ) - G( \tfrac{x}{n} ) ]^2 \Big]^2 \Big \}=0, \label{eqproplemconvmart2a2} \\
 & \lim_{n \rightarrow \infty}  \frac{[ \Theta(n) ]^2}{n^2} \sum_{x} \Big\{ \sum_{|r| = 1}^n  p^2(r) [G( \tfrac{x+r}{n}) - G( \tfrac{x}{n})]^4 + \Big[ \sum_{|r|=1}^{n} p(r) [ G( \tfrac{x+r}{n} ) - G( \tfrac{x}{n} ) ]^2 \Big]^2  \Big\} =0. \label{eqproplemconvmart2a3}
\end{align}
In order to prove \eqref{eqproplemconvmart2a1}, we bound the expression inside the limit from above by a constant times
\begin{align*}
& \frac{[ \Theta(n) ]^2}{n^2} ( \| G \|_{\infty})^{4} \sum_{|x|=1 }^{4n} \Big\{ \sum_{|r|=n}^{\infty}  |r|^{-2\gamma-2}   + \Big[ \sum_{|r|=n}^{\infty}  |r|^{-\gamma-1}  \Big]^2 \Big \} \\
\lesssim & \frac{[ \Theta(n) ]^2}{n^2}  \sum_{x=1 }^{4n} \Big\{ n^{-2 \gamma -1} \frac{1}{n} \sum_{r=n}^{\infty}  \Big( \frac{r}{n} \Big) ^{-2\gamma-2}   + \Big[ n^{-\gamma} \frac{1}{n} \sum_{r=n}^{\infty}  \Big( \frac{r}{n} \Big) ^{-\gamma-1}  \Big]^2\Big \}  \\
\lesssim & \frac{[ \Theta(n) ]^2}{n^2}  \sum_{x=1 }^{4n} \Big\{ n^{-2 \gamma -1} \int_{1}^{\infty} u^{-2 \gamma-2} du +   \Big[ n^{-\gamma} \int_{1}^{\infty} u^{- \gamma-1} du \Big]^2 \Big \} \lesssim \frac{[ \Theta(n) ]^2}{n^{2 \gamma +1}},
\end{align*}
which goes to zero as $n \rightarrow \infty$. Now we prove \eqref{eqproplemconvmart2a2}. Since $(x_1-x_2)^k \leq C_k (x_1^{k}+x_2^{k})$ for some positive constant $C_k$ depending only on $k$, the expression inside the limit is bounded from above by a constant times
\begin{align}
& \frac{[ \Theta(n) ]^2}{n^{2\gamma+1}} \int_{|u| \geq 4} [G( u)]^4 \Big\{ n^{-1}  \int_{|v| \geq 1}  |v|^{-2\gamma - 2} dv + \Big[  \int_{|v| \geq 1} |v|^{-\gamma - 1} dv \Big]^2 \Big\} du \label{eqproplemconvmart2a2a}  \\
+ & \frac{[ \Theta(n) ]^2}{n^{2\gamma+1}} \int_{|u| \geq 4}  \Big\{ n^{-1}  \int_{ |v| \geq \frac{|u|}{2}}  [G( u+v)]^4 |v|^{-2\gamma - 2} dv + \Big[  \int_{ |v| \geq \frac{|u|}{2}} [G( u+v)]^2 |v|^{-\gamma - 1} dv \Big]^2 \Big\} du \label{eqproplemconvmart2a2b} \\
+ & \frac{[ \Theta(n) ]^2}{n^{2\gamma+1}} \int_{|u| \geq 4}  \Big\{ n^{-1}  \int_{1 \leq |v| \leq \frac{|u|}{2}}  [G( u+v)]^4 |v|^{-2\gamma - 2} dv + \Big[  \int_{1 \leq |v| \leq \frac{|u|}{2}} [G( u+v)]^2 |v|^{-\gamma - 1} dv \Big]^2 \Big\} du. \label{eqproplemconvmart2a2c}
\end{align}
We bound the expression in \eqref{eqproplemconvmart2a2a} from above by a constant times
\begin{align*}
\frac{[ \Theta(n) ]^2}{n^{2\gamma+1}} \int_{|u| \geq 4} [G( u)]^4 \Big\{ n^{-1}   + 1  ]^2 \Big\} du \lesssim \frac{[ \Theta(n) ]^2}{n^{2\gamma+1}} \int_{|u| \geq 4} [G( u)]^4 du \lesssim\frac{[ \Theta(n) ]^2}{n^{2\gamma+1}} ,
\end{align*}
and this goes to zero as $n \rightarrow \infty$. Above we used $G \in \mathcal{S}(\mathbb{R}) \subset L^4(\mathbb{R})$. Finally, we bound the expression in \eqref{eqproplemconvmart2a2b} from above by
\begin{align*}
& \frac{[ \Theta(n) ]^2}{n^{2\gamma+1}} \int_{|u| \geq 4}  \Big\{ n^{-1}  \int_{ |v| \geq \frac{|u|}{2}}  [G( u+v)]^4 \Big( \frac{|u|}{2} \Big)^{-2\gamma - 2} dv + \Big[  \int_{ |v| \geq \frac{|u|}{2}} [G( u+v)]^2  \Big( \frac{|u|}{2} \Big)^{-\gamma - 1} dv \Big]^2 \Big\} du \\
\lesssim & \frac{[ \Theta(n) ]^2}{n^{2\gamma+1}} \int_{|u| \geq 4}  \Big\{ n^{-1} |u|^{-2\gamma-2} \int_{ |v| \geq \frac{|u|}{2}}  [G( u+v)]^4  dv + \Big[   |u|^{-\gamma-1} \int_{ |v| \geq \frac{|u|}{2}} [G( u+v)]^2   dv \Big]^2 \Big\} du \\
\leq & \frac{[ \Theta(n) ]^2}{n^{2\gamma+1}} \int_{|u| \geq 4}  \Big\{ n^{-1} |u|^{-2\gamma-2} \int_{ \mathbb{R}}  [G(w)]^4  dw + \Big[   |u|^{-\gamma-1} \int_{ \mathbb{R}} [G( w)]^2   dw \Big]^2 \Big\} du \\
\lesssim & \frac{[ \Theta(n) ]^2}{n^{2\gamma+1}} \int_{|u| \geq 4}  \{ n^{-1} |u|^{-2\gamma-2}  +   |u|^{-2\gamma-2}  \} du \lesssim \frac{[ \Theta(n) ]^2}{n^{2\gamma+1}} ( n^{-1} + 1) \lesssim \frac{[ \Theta(n) ]^2}{n^{2\gamma+1}}, 
\end{align*}
which goes to zero as $n \rightarrow \infty$. Above we used $G \in \mathcal{S}(\mathbb{R}) \subset  L^2(\mathbb{R}) \cap  L^4(\mathbb{R}) $. We bound the expression in \eqref{eqproplemconvmart2a2c} from above by
\begin{align*}
& \frac{[ \Theta(n) ]^2}{n^{2\gamma+1}} \int_{|u| \geq 4}  \Big\{ n^{-1}  \int_{1 \leq |v| \leq \frac{|u|}{2}}  [G( u+v)(u+v)]^4 \frac{|v|^{-2\gamma - 2}}{(u+v)^{4}}  dv + \Big[  \int_{1 \leq |v| \leq \frac{|u|}{2}} [G( u+v)(u+v)]^2 \frac{|v|^{-\gamma - 1}}{(u+v)^2}  dv \Big]^2 \Big\} du \\
\leq &\frac{[ \Theta(n) ]^2}{n^{2\gamma+1}} \int_{|u| \geq 4}  \Big\{ n^{-1}  \int_{1 \leq |v| \leq \frac{|u|}{2}}  [ \sup_{w \in \mathbb{R}} |w G(w)| ]^4 16 \frac{|v|^{-2\gamma - 2}}{(u)^{4}}  dv + \Big[  \int_{1 \leq |v| \leq \frac{|u|}{2}} [\sup_{w \in \mathbb{R}} |w G(w)|]^2 4 \frac{|v|^{-\gamma - 1}}{(u)^2}  dv \Big]^2 \Big\} du \\
\lesssim & \frac{[ \Theta(n) ]^2}{n^{2\gamma+1}} \int_{|u| \geq 4}  \Big\{ n^{-1} u^{-4}  \int_{|v| \geq 1} |v|^{-2\gamma - 2}   dv + \Big[  u^{-2} \int_{|v| \geq 1}  |v|^{-\gamma - 1}   dv \Big]^2 \Big\} du \\
\lesssim &\frac{[ \Theta(n) ]^2}{n^{2\gamma+1}} \int_{|u| \geq 4} u^{-4} (  n^{-1}  + 1 ) du \lesssim \frac{[ \Theta(n) ]^2}{n^{2\gamma+1}} (n^{-1}  +  1)  \lesssim \frac{[ \Theta(n) ]^2}{n^{2\gamma+1}}, 
\end{align*}
and this goes to zero as $n \rightarrow \infty$. This ends the proof of \eqref{eqproplemconvmart2a2}. In order to prove \eqref{eqproplemconvmart2a3}, we define $F_3: \mathbb{R} \rightarrow \mathbb{R}$ by
\begin{align*}
F_3(u):= \sup_{| v- u| \leq 1}  |G' (v )|, \forall u \in \mathbb{R}.
\end{align*}
From $G' \in \mathcal{S}( \mathbb{R})$, we get $F_3 \in L^4(\mathbb{R})$. From the Mean Value Theorem the expression in \eqref{eqproplemconvmart2a3} is bounded from above by
\begin{align*}
 & \frac{[ \Theta(n) ]^2}{n^{2}} \sum_{x } \Big\{ \sum_{|r|=1}^{n} p^2(r)   [  r n^{-1} F_3 ( \tfrac{x}{n})]^4 + \Big[ \sum_{|r|=1}^{n} p(r)  [ r n^{-1} F_3 ( \tfrac{x}{n} ) ]^2 \Big]^2  \Big\} \\
=& \frac{[ \Theta(n) ]^2}{n^{6}} \sum_{x} [   F_3 ( \tfrac{x}{n})]^4 \Big\{\sum_{|r|=1}^{n} r^{4}p^2(r) + \Big[ \sum_{|r|=1}^{n} r^2p(r)   \Big]^2   \Big\}  \\
\lesssim &  \frac{1}{n} \sum_{x} [   F_3 ( \tfrac{x}{n})]^4 \sum_{r=1}^{\infty} r^{2-2 \gamma} +n^{-1} \frac{1}{n} \sum_{x} [   F_3( \tfrac{x}{n})]^4 \Big\{ \Big[ \frac{\Theta(n)}{n^2} \sum_{r=1}^{n} r^2p(r)   \Big]^2   \\
\lesssim& \Big(\frac{[ \Theta(n) ]^2}{n^{5}} + n^{-1} \Big) \int_{\mathbb{R}} [F_3(u)]^{4} du  \lesssim n^{-1},
\end{align*}
which goes to zero as $n \rightarrow \infty$. In the third line we used \eqref{thetan}. This ends the proof.
\end{proof}

\quad

\thanks{ {\bf{Acknowledgements: }}
P.C. thanks FCT/Portugal for financial support through the project Lisbon Mathematics PhD (LisMath), during which most of this work was done. P.C. and P.G. thank  FCT/Portugal for financial support through the
projects UIDB/04459/2020 and UIDP/04459/2020.  B.J.O. thanks  Universidad Nacional de Costa Rica  for sponsoring the participation in  this article. This project has received funding from the European Research Council (ERC) under  the European Union's Horizon 2020 research and innovative programme (grant agreement   n. 715734).}  \textbf{Pedro Cardoso}:  Writing - Original Draft, Formal analysis, Investigation; \textbf{Patr\'icia Gon\c calves}: Conceptualization, Methodology, Supervision, Investigation, Writing - Review  and  Editing \textbf{Byron Jim\'enez-Oviedo}: Writing - Review  and  Editing

\bibliographystyle{plain}
\bibliography{bibliografia}

\end{document}